   \numberwithin{equation}{section}
   \newcommand{\beq}{\begin{equation}}
   \newcommand{\eeq}{\end{equation}}
   \newcommand{\beqs}{\begin{eqnarray*}}
   \newcommand{\eeqs}{\end{eqnarray*}}
   \newcommand{\beqn}{\begin{eqnarray}}
   \newcommand{\eeqn}{\end{eqnarray}}
   \newcommand{\beqa}{\begin{array}}
   \newcommand{\eeqa}{\end{array}}
   \def\lra{\longrightarrow}
   \def\bc{\begin{center}}
   \def\ec{\end{center}}
   \def\begeq{\begin{equation}}
   \def\endeq{\end{equation}}
   \def\and{\quad{\rm and}\quad}
   \let\lra=\longrightarrow
   \def\mapright\#1{\,\smash{\mathop{\lra}\limits^{\#1}}\,}
   \newtheorem{prop}{Proposition}[section]
   \newtheorem{theo}[prop]{Theorem}
   \newtheorem{lem}[prop]{Lemma}
   \newtheorem{cor}[prop]{Corollary}
   \newtheorem{rem}[prop]{Remark}
   \newtheorem{defi}[prop]{Definition}
\begin{document}

    \title{ Uniformly strong convergence of K\"ahler-Ricci flows  on a Fano manifold }


   \author{Feng Wang{\dag}  and Xiaohua $\text{Zhu}^{*}$}

   \subjclass[2000]{Primary: 53C25; Secondary: 53C55,
   58J05, 19L10}
   \keywords {   K\"ahler-Ricci flow,  K\"ahler-Ricci solitons,  Q-Fano variety, Gromov-Hausdroff topology }
   \address{School of Mathematical Sciences, Peking
   University, Beijing 100871, China.}

   \email{ xhzhu@math.pku.edu.cn}
\thanks {$\dag$  Partially supported by NSFC Grants 11971423}
   \thanks {* Partially supported by NSFC Grants 11771019 and BJSF Grants Z180004.}

   \begin{abstract}
   In this paper, we study the uniformly strong convergence of K\"ahler-Ricci flow  on a Fano manifold with varied initial metrics  and smooth deformation  complex structures.   As an application,  we prove the uniqueness of K\"ahler-Ricci  solitons in sense of diffeomorphism orbits.   The result   generalizes Tian-Zhu's   theorem  for  the uniqueness of  of K\"ahler-Ricci  solitons on a  compact complex manifold,  and  it  is  also  a generalization of   Chen-Sun's  result of for    the uniqueness of  of K\"ahler-Einstein  metric orbits.
   \end{abstract}

    \date{}

 \maketitle

 \tableofcontents

 \setcounter{section}{-1}

    \section{Introduction}

  Let $(M, J, \omega_0)$ be a Fano manifold and $\omega_t$  a solution of normalized K\"ahler-Ricci flow,
   \begin{align}\label{kr-flow}
   \frac{\partial \omega_t}{\partial t}=-{\rm Ric}\,(\omega_t)+\omega_t,~\omega_0\in 2\pi c_1(M, J).
   \end{align}
   It is known in \cite{cao} that (\ref{kr-flow})  has a global solution $\omega_t$ for all $t\ge 0$ whenever the initial metric  $\omega_0$ represents $2\pi c_1(M)$. Thus the main interesting of  (\ref{kr-flow}) is to study the limit behavior of  $\omega_t$. The famous
   Hamilton-Tian  conjecture (simply written as  HT- conjecture) \footnote{We  also write KR soliton as  K\"ahler-Ricci soliton,  KE metric as K\"ahler-Einstein  metric and others,  below.}
   asserts \cite{Tian97}:

    {\it Any sequence of $(M, \omega_t)$ contains a subsequence converging to a length space $(M_\infty,\omega_\infty)$
   in the Gromov-Hausdorff topology and $(M_\infty,\omega_\infty)$ is a smooth K\"ahler-Ricci soliton outside a closed subset $S$, called the singular set, of codimension at least $4$. Moreover, this subsequence of $(M, \omega_t)$ converges  locally to the regular part of $(M_\infty,\omega_\infty)$
   in the Cheeger-Gromov topology.}

   The Gromov-Hausdorff convergence part in the HT conjecture follows from Perelman's non-collapsing result and Zhang's upper volume estimate \cite{Zhq}. More recently, there were very significant progresses on this conjecture, first by Tian and Zhang in dimension less than $4$ \cite{TZhzh}, then by Chen-Wang \cite{Chwang} and Bamler \cite{Bam} in higher dimensions. In fact, Bamler proved a generalized version of the conjecture.

  Based a work of Liu-Sz\'ekelyhidi on  Tian's  partical $C^0$-estimate for  poralized K\"ahler metrics with Ricci bounded  below \cite{LS},  the authors  recently  gave  an alternative  proof to  the HT-conjecture as follows.
  \begin{theo}\label{WZ} For any sequence of $(M, \omega_t)$ of  (\ref{kr-flow}),
   there is   a Q-Fano variety $\tilde M_\infty$ with klt singularities such that  $\omega_{t_i}$ (after taking a subsequence)  is locally  $C^\infty$-convergent to a  KR soliton  $\hat \omega_\infty$ on  ${\rm Reg}(\tilde M_\infty)$ in the Cheeger-Gromov topology.  Moreover,    $\hat\omega_\infty$ can be extended  to a  singular  KR soliton  on $\tilde M_\infty$ with  a $L^\infty$-K\"ahler potential $\phi_\infty$ and the completion of $({\rm Reg}(\tilde M_\infty), \hat \omega_\infty)$ is isometric to the global limit  $(M_\infty, \omega_\infty)$ of  $\omega_{t_i}$  in  the Gromov-Hausdroff   topology.   In  addition,    if $\hat\omega_\infty$ is a  singular  KE  metrics,  $\phi_\infty$ is continuous and $M_\infty$   is homeomorphic to  $\tilde M_\infty$ which has  at least 4   Hausdroff codimension  of singularities  of  $(M_\infty, \omega_\infty)$.
   \end{theo}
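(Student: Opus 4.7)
The plan is to combine Perelman's fundamental estimates along the Kähler--Ricci flow with the Liu--Sz\'ekelyhidi partial $C^0$-estimate, and then take algebraic limits of suitably normalized embeddings into a fixed projective space. Specifically, Perelman's uniform bounds on scalar curvature and diameter, together with non-collapsing and Zhang's upper volume estimate, give uniform two-sided volume bounds and a lower Ricci bound on any sequence $(M,\omega_{t_i})$ (after passing to a subsequence). Gromov compactness then produces the Gromov--Hausdorff limit $(M_\infty,\omega_\infty)$ asserted in the theorem.

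Next I would apply the partial $C^0$-estimate of Liu--Sz\'ekelyhidi: for some uniform $m$, the Bergman kernel associated to $H^0(M,K_M^{-m})$ with the $L^2$-inner product induced by $\omega_{t_i}$ has a uniform positive lower bound. Choosing $L^2$-orthonormal bases $\{s_\alpha^{(i)}\}$ one obtains uniform holomorphic embeddings $\Phi_i\colon M\hookrightarrow \CC\PP^N$ with images $V_i:=\Phi_i(M)$ of bounded degree. By the Chow/Hilbert scheme compactness these converge (after a subsequence) to a subvariety $\tilde M_\infty\subset\CC\PP^N$, and Donaldson--Sun type arguments on peak sections identify $\tilde M_\infty$ with a normal, irreducible projective variety equipped with an ample $\QQ$-line bundle $\cL$ whose restriction to the smooth locus agrees with $K^{-m}$; combined with the lower Ricci bound in the flow, this yields the $\QQ$-Fano structure with klt singularities.

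Third, I would construct the soliton on the limit. Perelman's estimates for the Ricci potential along (\ref{kr-flow}) imply that, modulo holomorphic gauge, the flow is asymptotically a shrinking gradient Kähler--Ricci soliton. On the Cheeger--Gromov regular part of the limit the higher-order regularity from partial $C^0$ upgrades Gromov--Hausdorff convergence to smooth convergence, so the limit metric $\hat\omega_\infty$ satisfies
\[
{\rm Ric}(\hat\omega_\infty)-L_X\hat\omega_\infty=\hat\omega_\infty
\]
for a holomorphic vector field $X$ on $\tilde M_\infty$. The partial $C^0$-estimate also gives a uniform $L^\infty$-bound on the Kähler potentials relative to the Fubini--Study reference from $\Phi_i$, which, via a pluripotential (Ko\l odziej-type) argument, extends $\hat\omega_\infty$ to a global singular Kähler--Ricci soliton on $\tilde M_\infty$ with $L^\infty$-potential $\phi_\infty$. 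Comparing the metric completion of $({\rm Reg}(\tilde M_\infty),\hat\omega_\infty)$ with the Gromov--Hausdorff limit, using that both are produced from the same embeddings $\Phi_i$, gives the claimed isometry with $(M_\infty,\omega_\infty)$.

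Finally, in the Kähler--Einstein case one has no soliton vector field and can apply the stronger regularity theory of Cheeger--Colding--Tian: the singular set has Hausdorff codimension at least four, and Ko\l odziej's continuity result (applied to the limiting Monge--Amp\`ere equation with continuous reference from the embedding) improves $\phi_\infty$ from $L^\infty$ to continuous. Continuity of $\phi_\infty$ combined with the isometric identification upgrades to a homeomorphism $M_\infty\cong\tilde M_\infty$. The hardest step in this plan is the third one: matching the algebraic-geometric limit $\tilde M_\infty$ with the Cheeger--Gromov limit and, in particular, showing that $\hat\omega_\infty$ extends \emph{as a soliton across the singular set} with bounded potential and that the resulting variety is klt. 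This requires carefully propagating the partial $C^0$-bound through the limit, controlling the soliton vector field in the Chow limit, and ruling out small codimension singular strata in the non-Einstein case.
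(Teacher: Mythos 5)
This theorem is not proved in the present paper: it is imported from the authors' earlier work \cite{WZ20}, and the present paper only recalls the ingredients (the partial $C^0$-estimate, Lemma \ref{c3-phi}, Lemma \ref{lem:perelman-1}, the auxiliary metrics $\eta_t$). Your outline does follow the same broad strategy as \cite{WZ20} — Perelman's estimates, a partial $C^0$-estimate, Kodaira embeddings converging in the Chow/Hilbert scheme, Donaldson--Sun-type identification of the algebraic limit with the Gromov--Hausdorff limit, and pluripotential extension of the limit metric — so at the level of architecture it is the right plan.

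However, there is a genuine gap at the very first step that propagates through your argument. You assert that Perelman's estimates give ``a lower Ricci bound'' along the flow; they do not. Perelman gives uniform bounds on the scalar curvature, the diameter, the Ricci potential and its gradient, plus $\kappa$-non-collapsing, but no two-sided or even one-sided Ricci bound. Consequently you cannot invoke Gromov compactness for Ricci-bounded-below spaces, nor can you apply the Liu--Sz\'ekelyhidi partial $C^0$-estimate directly to $\omega_{t_i}$, since that estimate is stated for polarized metrics with Ricci bounded below. The device that makes the argument work (due to Zhang \cite{ZhK} and used in \cite{WZ20}, recalled here as (\ref{eta-equation})--(\ref{modified-metric})) is to solve the complex Monge--Amp\`ere equation $(\omega_t+\sqrt{-1}\partial\bar\partial\kappa_t)^n=e^{h_t}\omega_t^n$ to produce conjugate metrics $\eta_t$ with ${\rm Ric}(\eta_t)=\omega_t>0$, apply the partial $C^0$-estimate to $\eta_t$, and then transfer it back to $\omega_t$ using the uniform bounds on $\kappa_t$. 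Your proposal skips this entirely, and without it the second and third steps have no foundation. A second, smaller gap: the statement that the flow is ``asymptotically a shrinking gradient soliton modulo gauge'' is not a consequence of the pointwise Perelman estimates alone; in \cite{WZ20} (cf.\ Lemma \ref{limit-v} and Lemma \ref{limit-solitons} here) it is derived from the monotonicity of the entropy $\lambda(\omega_t)$, which forces $\lambda(\omega_{t_i+1})-\lambda(\omega_{t_i-1})\to 0$ and hence that the local smooth limit satisfies the soliton equation. You should make that mechanism explicit rather than attributing it to the $C^0$-estimates on the Ricci potential.
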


  With respect to HT-conjecture,   it is natural to ask whether  the Q-Fano variety $\tilde M_\infty$ in Theorem \ref{WZ} is unique and independent of initial metric $\omega_0$ or not.  The positive answer will implies that the flow  (\ref{kr-flow})  converges   locally uniformly to a singular KR soliton and the  Gromov-Hausdroff limit is unique, since  the singular KR soliton is unique on a Q-Fano variety  with klt singularities \cite{Bern}.
   In this paper, we  solve the problem  under assumption that  the connected component  ${\rm Aut}_0(\tilde M_\infty)$  containing the identity  of the auto-morphisms group of  $\tilde M_\infty$ is reductive.
  In fact,   we  prove the following uniformly  strong  convergence of  (\ref{kr-flow}).

  \begin{theo}\label{general-KR} Let $(\tilde M_\infty, \hat \omega_\infty)$ be a  singular  KR soliton  which is a limit of  sequence   $\{(M, \omega_{t_i})\}$ of  (\ref{kr-flow}) as in Theorem \ref{WZ}. Suppose that ${\rm Aut}_0(\tilde M_\infty)$ is reductive. Then  for any  initial metric $\omega_0'\in 2\pi c_1(M)$,   (\ref{kr-flow}) is locally uniformly  $C^\infty$-convergent to $({\rm Reg}(\tilde M_\infty), \hat \omega_\infty)$  in the Cheeger-Gromov topology.  As a consequence, the Gromov-Hausdroff limit of (\ref{kr-flow})  is  independent of choice of sequences  and initial metrics in  $2\pi c_1(M)$.
  \end{theo}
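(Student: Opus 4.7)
The plan is to combine the subsequential convergence from Theorem \ref{WZ} with a uniqueness argument for the limit Q-Fano variety and singular KR soliton, and then to upgrade subsequential convergence into a full-flow statement. Let $\omega'_t$ denote the solution of (\ref{kr-flow}) with initial data $\omega_0'\in 2\pi c_1(M,J)$. By Theorem \ref{WZ} applied to $\omega'_t$, there exist $t_i'\to\infty$ and a Q-Fano variety $\tilde M'_\infty$ with klt singularities such that $\omega'_{t_i'}$ converges locally in the $C^\infty$ Cheeger-Gromov topology on $\mathrm{Reg}(\tilde M'_\infty)$ to a singular KR soliton $\hat\omega'_\infty$, and the Gromov-Hausdorff limit is isometric to the completion of $(\mathrm{Reg}(\tilde M'_\infty),\hat\omega'_\infty)$.

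The heart of the argument is to prove $\tilde M'_\infty\cong\tilde M_\infty$ as Q-Fano varieties. Using Tian's partial $C^0$-estimate along (\ref{kr-flow}), which holds uniformly in the initial datum thanks to Liu-Sz\'ekelyhidi, both $\tilde M_\infty$ and $\tilde M'_\infty$ appear as limits of Kodaira embeddings of $(M,J)$ into a common $\CC P^N$ via high powers of $K_M^{-1}$. Hence both arise as flat degenerations of $(M,J)$, and each is modified K-polystable by virtue of carrying a singular KR soliton. Since $\mathrm{Aut}_0(\tilde M_\infty)$ is reductive, a Luna slice at $\tilde M_\infty$ in the relevant Hilbert scheme admits a good GIT quotient, so any K-polystable degeneration lying in the closure of the $\mathrm{Aut}_0(\tilde M_\infty)$-orbit of $\tilde M_\infty$ must in fact lie in that orbit. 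Combined with Berndtsson's uniqueness of singular KR solitons on a fixed Q-Fano variety \cite{Bern}, this yields some $\sigma\in\mathrm{Aut}_0(\tilde M_\infty)$ with $(\tilde M'_\infty,\hat\omega'_\infty)=\sigma^\ast(\tilde M_\infty,\hat\omega_\infty)$.

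Once the limit is unique up to $\mathrm{Aut}_0(\tilde M_\infty)$, convergence of the entire flow follows from the monotonicity of the Tian-Zhu modified $\mu$-entropy along (\ref{kr-flow}), whose supremum equals the entropy of $(\tilde M_\infty,\hat\omega_\infty)$: every subsequential limit achieves this value and is therefore isometric to $(\tilde M_\infty,\hat\omega_\infty)$. The independence of the Gromov-Hausdorff limit from the chosen subsequence and from $\omega_0'$ is then immediate. The uniformity of the $C^\infty_{\mathrm{loc}}$ convergence comes from the uniform geometric estimates along the flow (Perelman's non-collapsing and scalar-curvature bound, Zhang's upper volume bound, the partial $C^0$-estimate, and the Cheeger-Colding-Tian regularity on $\mathrm{Reg}(\tilde M_\infty)$), each depending only on $c_1(M,J)$ and not on the choice of $\omega_0'$.

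The principal difficulty is the identification step. The reductivity hypothesis on $\mathrm{Aut}_0(\tilde M_\infty)$ is essential precisely because it is what permits the Luna slice construction and an effective GIT comparison of K-polystable central fibers in the soliton setting. Extending Chen-Sun's identification, originally carried out for KE metrics, to the weighted K-stability relevant for KR solitons constitutes the technical heart of the proof; in particular, the interpolation between $\omega_0$ and $\omega_0'$ has to be implemented through a family of smooth metric deformations along which the partial $C^0$-estimate holds uniformly, so as to align the Bergman-limit embeddings of $\tilde M_\infty$ and $\tilde M'_\infty$ in $\CC P^N$.
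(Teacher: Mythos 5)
Your overall skeleton (partial $C^0$-estimate, Luna slice plus reductivity, entropy monotonicity, interpolation of initial metrics) matches the paper's strategy, but the identification step as you state it has a genuine gap. You assert that $\tilde M_\infty'$ "lies in the closure of the ${\rm Aut}_0(\tilde M_\infty)$-orbit of $\tilde M_\infty$" and then invoke the slice theorem. But both limits a priori only lie in $\overline{{\rm SL}(N+1,\mathbb C)\cdot[M]}$, and two points in the closure of a common orbit need not lie in each other's orbit closures; the slice argument (Lemma \ref{slice}, Proposition \ref{uniqueness-algebra }) only applies to Chow points that are \emph{close} to $[\tilde M_\infty]$ in the Chow/Hilbert space. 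Producing that proximity is the actual technical content of the paper: one needs Proposition \ref{two-topology} to convert Gromov--Hausdorff proximity of the metric limits into algebraic proximity of the Chow points, the gap result Proposition \ref{dis2} to rule out nearby non-conjugate polystable limits, and then the openness--closedness argument along the path $\omega^s=s\omega_0+(1-s)\omega_0'$, where openness is Lemma \ref{stability-KR} and closedness uses the identity $L(\omega^s)=\sup\lambda$ (Proposition \ref{energy-level}, after Dervan--Sz\'ekelyhidi) to guarantee that limits along the deformed flows are still entropy-maximizing singular KR solitons (Lemma \ref{limit-solitons}). You gesture at the interpolation in your last paragraph but do not supply this mechanism, and without it the Luna slice comparison cannot be set up.

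Two further points. First, the modified K-polystability you invoke is relative to a soliton vector field, so to compare $\tilde M_\infty$ and $\tilde M_\infty'$ in a common GIT picture you must first show their soliton vector fields are conjugate in ${\rm sl}(N+1,\mathbb C)$; this is nontrivial and occupies Section 2 and Proposition \ref{vector-gap} of the paper (connectedness of the limit set plus countability of subtori), and your proposal omits it entirely. Second, the step "every subsequential limit achieves this entropy value and is therefore isometric to $(\tilde M_\infty,\hat\omega_\infty)$" is circular as written: equality of entropy only tells you the limit is a singular KR soliton, not \emph{which} one; the isometry is a consequence of the uniqueness of the limiting Q-Fano variety (the very thing being proved) together with Berndtsson's uniqueness on a fixed variety, not of the entropy identity alone.
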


 For a fixed flow (\ref{kr-flow}) with an initial metric $\omega_0$,   the uniqueness of algebraic structures of limits $\tilde M_\infty$ in Theorem \ref{WZ}, or  Theorem \ref{general-KR} has been confirmed by  Chen-Sun-Wang \cite{CSW} even without assuming that ${\rm Aut}_0(\tilde M_\infty)$ is reductive ( also see Theorem \ref{mainThm-KS} below).   Their proof is based on Chen-Wang's  work on HT-conjecture to derive the  partial $C^0$-estimate \cite{Chwang} so that  the uniqueness  of $\tilde M_\infty$ is reduced to studying  a finite dimension problem  in sense of Hilbert-Mumford's GIT figure.
 They use  a  method  to  estimate  the eigenvalues and  eigenspaces for representation group in order to prove that   $\tilde M_\infty$ as a Chow point
 lies in an orbit of  reductive subgroup $G_{v}$ of ${\rm SL}(N+1, \mathbb C)$ associated to  another normal  $\overline  M_\infty$,  where $v$ is the soliton VF of $\hat \omega_\infty$   (also see Section 5).  The choice of  $G_{v}$ makes $G_{v}\cap {\rm Aut}_0(\tilde M_\infty)={\rm Aut}_r(\tilde M_\infty)$ so that GIT can be applied, where  ${\rm Aut}_r(\tilde M_\infty) $ is a reductive subgroup of ${\rm Aut}_0(\tilde M_\infty)$  (cf.  Proposition \ref{redsol}, also see  \cite{BW}).

 For  flow   $(M, \omega_t')$ of (\ref{kr-flow})  with  varied  initial metrics,  it seems   that GIT's method in  \cite{CSW} can not be directly  applied because of  lack of uniqueness of  reductive subgroup $G_{v}$.  However, by assuming  that  the  total ${\rm Aut}_0(\tilde M_\infty)$ is  reductive,  we are able to deform  (\ref{kr-flow}) with a simple   path of  initial metrics in $2\pi c_1(M)$ to study the limit behavior of $\omega_t'$ as done for the smooth limits of $\omega_t'$ in \cite{TZ4, TZZZ}. Moreover,  we can  generalize our method to study the convergence of KR flow on a Fano manifold with  jumping complex structure.

 \begin{defi}\label{deformation-j}Let $(M,J)$ be a Fano manifold. A complex manifold  $(M', J')$  is called a canonical smooth deformation of  $(M,J)$
 if there are a sequence of K\"ahler metrics $\omega_i$ in $2\pi c_1(M,J)$
 and  diffeomorphisms $\Psi_i: M'\to M$ such that
 \begin{align}\label{c-infty-convergence-vector}\Psi_i^* \omega_i     \stackrel{C^\infty}{\longrightarrow} \omega', ~ \Psi_i^* J \stackrel{C^\infty}{\longrightarrow} J', ~{\rm on}~ M'.
 \end{align}
 In addition that  $J'$ is not conjugate to $J$,  $J'$  is called a jump of $J$.

 \end{defi}

 We prove

 \begin{theo}\label{stability-KR-smooth-complex} Let $(M', J')$  is  a canonical smooth jump of a Fano manifold  $(M,J)$.  Suppose that  ${\rm Aut}_0( M')$ is reductive and $(M', J')$ admits a KR soliton $\omega_{KR}$ such that
 \begin{align}\label{max-entropy-4}\lambda(\omega_{KR})=\sup\{\lambda(g')|~\omega_{g'}\in 2\pi c_1(M,J)\},
 \end{align}
 where $\lambda(\cdot)$ is the Perelman's entropy.  Then   for any initial metric  $\omega_0'\in 2\pi c_1(M,J)$
   flow $(M, J, \omega_t')$ of   (\ref{kr-flow}) is  uniformly  $C^\infty$-convergent to    $(M', J',\omega_{KR})$.
 \end{theo}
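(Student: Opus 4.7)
The plan is to reduce Theorem \ref{stability-KR-smooth-complex} to Theorem \ref{general-KR} by first identifying, for one carefully chosen initial metric, the flow limit as $(M', J', \omega_{KR})$. Once such a flow is exhibited, the reductivity of ${\rm Aut}_0(M') = {\rm Aut}_0(\tilde M_\infty)$ allows Theorem \ref{general-KR} to upgrade the convergence to be uniform over every initial metric in $2\pi c_1(M, J)$. The driver throughout is Perelman's entropy $\lambda$, its monotonicity along (\ref{kr-flow}), and the max-entropy hypothesis (\ref{max-entropy-4}).

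First I would use the canonical smooth jump data (\ref{c-infty-convergence-vector}) to construct, for each large $i$, a K\"ahler metric $\tilde\omega_i \in 2\pi c_1(M, J)$ on $M$ with $\Psi_i^* \tilde\omega_i \to \omega_{KR}$ in $C^\infty$ on $M'$. Concretely, write $\omega_{KR} = \omega' + \sqrt{-1}\partial_{J'}\bar\partial_{J'}\phi$ for a smooth potential $\phi$ on $(M', J')$ and set $\tilde\omega_i = \omega_i + \sqrt{-1}\partial_J \bar\partial_J ((\Psi_i^{-1})^*\phi)$; the $C^\infty$-convergence $\Psi_i^* J \to J'$ ensures $\tilde\omega_i$ is positive on $(M, J)$ for $i$ large, and by the diffeomorphism invariance of Perelman's entropy, $\lambda(\tilde\omega_i) \to \lambda(\omega_{KR})$.

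Running (\ref{kr-flow}) from $\tilde\omega_i$ produces flows $\omega_t^{(i)}$ with subsequential Cheeger-Gromov limits $(\tilde M_\infty^{(i)}, \hat\omega_\infty^{(i)})$ provided by Theorem \ref{WZ}. Monotonicity of $\lambda$ along the flow gives $\lambda(\hat\omega_\infty^{(i)}) \ge \lambda(\tilde\omega_i)$, while (\ref{max-entropy-4}) forces $\lambda(\hat\omega_\infty^{(i)}) \le \lambda(\omega_{KR})$; hence $\lambda(\hat\omega_\infty^{(i)}) \to \lambda(\omega_{KR})$ as $i\to\infty$. A further compactness argument (Theorem \ref{WZ} applied diagonally, or Bamler's Ricci-flow compactness) extracts from $(\tilde M_\infty^{(i)}, \hat\omega_\infty^{(i)})$ a Cheeger-Gromov limit which is a singular KR soliton of entropy $\lambda(\omega_{KR})$ on some Q-Fano variety. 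Berman's uniqueness for singular KR solitons on Q-Fano varieties, combined with the Chen-Sun-Wang GIT analysis --- which is available here because reductivity of ${\rm Aut}_0(M')$ makes the relevant stabilizer $G_v \subset {\rm SL}(N+1, \mathbb{C})$ reductive --- identifies this double limit with $(M', J', \omega_{KR})$, and a GIT-stability argument then propagates backwards to $(\tilde M_\infty^{(i)}, \hat\omega_\infty^{(i)}) \cong (M', J', \omega_{KR})$ for all large $i$. Since this limit has reductive ${\rm Aut}_0$, Theorem \ref{general-KR} finishes the proof for every $\omega_0' \in 2\pi c_1(M, J)$.

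The main obstacle is the rigidity step that converts the entropy near-maximality $\lambda(\hat\omega_\infty^{(i)}) \to \lambda(\omega_{KR})$ into the genuine algebraic identification $\tilde M_\infty^{(i)} \cong M'$ for each large $i$. This demands both an upper semi-continuity of $\lambda$ under Cheeger-Gromov limits of singular KR solitons (where one must pass the $\lambda$-functional through klt singularities of codimension $\ge 2$ with $L^\infty$-bounded K\"ahler potentials, as in Theorem \ref{WZ}) and a finite-dimensional GIT rigidity around the entropy-maximizer $(M', J', \omega_{KR})$, adapting the Chen-Sun-Wang Hilbert-Mumford machinery from the fixed-complex-structure setting to the jumping setting; the hypothesis that ${\rm Aut}_0(M')$ is reductive is precisely what enables this adaptation.
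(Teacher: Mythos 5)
Your overall architecture matches the paper's: exhibit one flow on $(M,J)$ whose limit is $(M',J',\omega_{KR})$, then invoke the reductivity of ${\rm Aut}_0(M')$ (through Theorem \ref{general-KR}, or in the paper through Corollary \ref{smooth-KR}) to upgrade to uniformity over all initial metrics. Your entropy bookkeeping, $\lambda(\tilde\omega_i)\to\lambda(\omega_{KR})=\sup\lambda$ via monotonicity and (\ref{max-entropy-4}), is also exactly the mechanism the paper uses to force the limits along the modified sequence to be singular KR solitons (Lemma \ref{limit-solitons} combined with Proposition \ref{energy-level}).

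The gap is the step you yourself flag as ``the main obstacle'': identifying $(\tilde M_\infty^{(i)},\hat\omega_\infty^{(i)})$ with $(M',J',\omega_{KR})$. Entropy near-maximality cannot do this, because by Proposition \ref{energy-level} \emph{every} limit of \emph{every} normalized KR flow on $(M,J)$ already attains $\sup\{\lambda(g')\}$; so ``a singular KR soliton of entropy $\lambda(\omega_{KR})$'' is no constraint at all, and neither Berman's uniqueness (which is uniqueness of the soliton on a fixed Q-Fano variety) nor the Chen--Sun--Wang analysis (which identifies the algebraic limit of a single fixed flow) selects $(M',J')$ among the possible limiting varieties. What actually closes this in the paper is a quantitative gap-plus-connectedness argument: (i) Proposition \ref{two-topology} converts Gromov--Hausdorff closeness of two soliton limits into algebraic closeness of the corresponding Chow points after an ${\rm SL}(N'+1;\mathbb C)$ action; (ii) Proposition \ref{dis2}, together with Proposition \ref{vector-gap} to match the soliton vector fields, says a singular KR soliton limit that is algebraically close to $[\tilde M_\infty']$ must lie in its orbit --- this is where reductivity of ${\rm Aut}_0(M')$ enters, via the Luna-slice Lemma \ref{slice}; and (iii) an intermediate-value argument in the GH distance: the flows under consideration do get within $\delta_0/2$ of $(M',\omega_{KR})$ (because their initial data track a flow that converges there), so if one of them escaped to distance $\ge\delta_0$ there would be times $t_i$ with ${\rm dist}_{GH}\in[\delta_0/2,\delta_0]$, and the limit along those times would be a singular KR soliton sitting in a forbidden annulus, contradicting (i)+(ii). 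Without (iii) your ``propagates backwards'' has no content, and without (i)+(ii) there is nothing to propagate with. All three pieces are proved elsewhere in the paper, so your sketch can be completed along these lines, but as written the decisive identification is asserted rather than derived.
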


  We remark that ${\rm Aut}_0( M')$ is reductive \cite {Mat57} and (\ref{max-entropy-4}) holds  \cite{TZ4} if   $(M', J')$ admits a KE  metric.   Thus the assumption  in  Theorem \ref{stability-KR-smooth-complex} is  automatically satisfied.      The typical examples $(M, J)$  as in
  Definition \ref{deformation-j} are
   Mukai-Umemura's 3-folds, which have  studied by Tian, Donaldson \cite{Ti97, Do}.  The stability  of  (\ref{kr-flow}) on  those manifolds  was proved  by Sun-Wang, Wang by using Lojasiewiczj's inequality \cite{SW, Wang}. But if  $\omega_{KR}$ is not a KE  metric,
   the assumption  (\ref{max-entropy-4}) in  Theorem \ref{stability-KR-smooth-complex} is necessary   in general according to  a counter-example   $Gr_q(2,7)$  which can be deformed  to  a  horo-spherical manifold  found by Pasquier \cite{Pas}.  The latter can admit a KR soliton (non-KE)  by  a  result of Deltroix \cite{Del},   see Remark \ref{counter-example} for details.

  Theorem \ref{stability-KR-smooth-complex}  generalizes the result for the KE metric limit \cite{TZ4}, and also for the original complex manifold $(M,J)$ with admitting a   KR soliton while $J'$ is conjugate to $J$ \cite{TZZZ, DS}. In fact, in the latter cases, the convergence can be improved  in the sense of K\"ahler potentials.

 As an application of  Theorem  \ref{stability-KR-smooth-complex},   we prove the following uniqueness of KR solitons in sense of diffeomorphism orbits.

  \begin{cor}\label{uniqueness-orbit}  Let $\{\omega_i^1\}$  and $\{\omega_i^2\}$ be two  sequences of K\"ahler metrics in $2\pi c_1(M, J)$ which converge to  KR solitons  $(M_\infty^1,\omega_{KR}^1)$ and  $(M_\infty^2, \omega_{KR}^2)$ in sense of Cheeger-Gromov, respectively. Suppose that ${\rm Aut}_0( M_\infty^1)$ and  ${\rm Aut}_0( M_\infty^2)$ are both  reductive, and
  \begin{align}\label{max-entropy-10}\lambda(\omega_{KR}^1)=\lambda(\omega_{KR}^2)=\sup\{\lambda(g')|~\omega_{g'}\in 2\pi c_1(M,J)\}.
 \end{align}
  Then  $M_\infty^1$ is biholomorphic to $M_\infty^2$ and  $\omega_{KR}^1$ is isometric to $\omega_{KR}^2$.

   \end{cor}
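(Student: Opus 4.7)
The plan is to apply Theorem \ref{stability-KR-smooth-complex} to a \emph{single} K\"ahler-Ricci flow, with both $\omega_{KR}^1$ and $\omega_{KR}^2$ as candidate targets, and then conclude by uniqueness of the Cheeger-Gromov limit. First, each pair $(M_\infty^a, J_\infty^a)$ for $a=1,2$ is a canonical smooth deformation of $(M, J)$ in the sense of Definition \ref{deformation-j}: the Cheeger-Gromov convergence $\{\omega_i^a\} \to \omega_{KR}^a$ furnishes diffeomorphisms $\Psi_i^a : M_\infty^a \to M$ with $(\Psi_i^a)^*\omega_i^a \to \omega_{KR}^a$ and $(\Psi_i^a)^*J \to J_\infty^a$ in $C^\infty$. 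Combined with the reductivity of ${\rm Aut}_0(M_\infty^a)$ and the maximal-entropy hypothesis (\ref{max-entropy-10}), each $(M_\infty^a, J_\infty^a, \omega_{KR}^a)$ satisfies the hypotheses of Theorem \ref{stability-KR-smooth-complex}.

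Next, fix any $\omega_0' \in 2\pi c_1(M, J)$ and let $(\omega_t')$ solve (\ref{kr-flow}). Applying Theorem \ref{stability-KR-smooth-complex} with target $(M_\infty^a, J_\infty^a, \omega_{KR}^a)$ for $a=1,2$ separately produces two families of diffeomorphisms $\Phi_t^a : M_\infty^a \to M$ with
\[
(\Phi_t^a)^*\omega_t' \xrightarrow{C^\infty} \omega_{KR}^a, \qquad (\Phi_t^a)^*J \xrightarrow{C^\infty} J_\infty^a, \qquad a=1,2,
\]
uniformly in $t$. In particular each $\Phi_t^a : (M_\infty^a, \omega_{KR}^a) \to (M, \omega_t')$ is asymptotically an isometry and an almost $(J_\infty^a, J)$-biholomorphism.

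Set $F_t := (\Phi_t^2)^{-1} \circ \Phi_t^1 : M_\infty^1 \to M_\infty^2$. Writing
\[
F_t^*\omega_{KR}^2 - \omega_{KR}^1 = (\Phi_t^1)^*\bigl[(\Phi_t^2)^{-1*}\omega_{KR}^2 - \omega_t'\bigr] + \bigl[(\Phi_t^1)^*\omega_t' - \omega_{KR}^1\bigr],
\]
and analogously for $F_t^*J_\infty^2 - J_\infty^1$, both bracketed expressions tend to zero in $C^\infty(M_\infty^1)$ by the previous step, since the near-isometries $\Phi_t^a$ preserve $C^k$-smallness. Hence the family $\{F_t\}$ of smooth maps between the two fixed compact K\"ahler manifolds has uniformly bounded $C^k$-norms of all orders with respect to $\omega_{KR}^1,\omega_{KR}^2$.

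Arzel\`a-Ascoli now yields a subsequential $C^\infty$-limit $F_\infty : M_\infty^1 \to M_\infty^2$ satisfying $F_\infty^*\omega_{KR}^2 = \omega_{KR}^1$ and $F_\infty^*J_\infty^2 = J_\infty^1$, and the symmetric argument applied to $(\Phi_t^1)^{-1}\circ\Phi_t^2$ produces a $C^\infty$-inverse. Consequently $F_\infty$ is a biholomorphic isometry of $(M_\infty^1, J_\infty^1, \omega_{KR}^1)$ onto $(M_\infty^2, J_\infty^2, \omega_{KR}^2)$, as desired. The main technical point, and the reason why one needs the uniform convergence of Theorem \ref{stability-KR-smooth-complex} rather than the mere subsequential convergence of Theorem \ref{WZ}, is the uniform-in-$t$ equicontinuity of the $F_t$: without it, the diffeomorphisms $\Phi_t^a$ could drift through a noncompact automorphism group and the Arzel\`a-Ascoli step would fail.
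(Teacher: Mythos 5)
Your proposal is correct and takes essentially the same route as the paper: the paper also derives the corollary directly from Theorem \ref{stability-KR-smooth-complex} (together with Theorem \ref{general-KR}) by observing that a K\"ahler--Ricci flow in $2\pi c_1(M,J)$ must converge uniformly to both $(M_\infty^1,\omega_{KR}^1)$ and $(M_\infty^2,\omega_{KR}^2)$, forcing them to coincide. The only difference is one of detail: the paper states this in two lines, whereas you additionally carry out the Arzel\`a--Ascoli argument producing the limiting biholomorphic isometry $F_\infty$ from the two families of diffeomorphisms, which is a correct and worthwhile elaboration of the step the paper leaves implicit.
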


 Corollary \ref{uniqueness-orbit} generalizes  Tian-Zhu's theorem  for  the uniqueness of KR solitons on a complex manifold \cite{TZ1},  and  it  is  also  a generalization of uniqueness result of Chen-Sun for  KE metric orbits \cite{CS} (also see \cite{LWX15}).  We also mention that the assumption  (\ref{max-entropy-10}) as in  Theorem \ref{stability-KR-smooth-complex} is necessary. A more generalization of Corollary \ref{uniqueness-orbit} for     limits   of singular KR soliton will be also discussed  in Section 6, see Remark \ref{generalization-smooth} and
 Theorem  \ref{uniqueness-orbit-singular}. We hope those results can be applied to study the modulo space of   singular KR solitons as done successfully for  singular KE metrics \cite{LWX15, LWX3, SS, ABHX19, BX}, etc.

  The paper is organized as follows.

    In Section 1, we   first recall  the partial $C^0$-estimate and  some local estimates for K\"ahler potentials  in
    \cite{WZ20} (cf. Lemma \ref{c3-phi}).
    Those estimate will  be generalized for  a sequence of evolved K\"ahler metrics of  (\ref{kr-flow}) with varied initial metrics
    in order ton prove a gap result for $Q$-Fano varieties in sense of Gromov-Hausdroff topology (cf. Proposition \ref{two-topology}).

      In Section 2,    We prove the uniqueness of soliton VFs associated to limits of (\ref{kr-flow}) with fixed
       initial metric (cf. Corollary \ref{vec}). The result can be regarded as an application of  Theorem \ref{WZ} (cf. Proposition \ref{continuity-vf}).  Corollary \ref{vec} will be generalized to the varied flow in Section 5 (cf. Proposition \ref{vector-gap}).

       In Section 3,   We first prove  a version of Luna's slice lemma in GIT-figure (cf. Lemma \ref{slice}), then as an application,  we prove the uniqueness of algebra structures of Q-Fano varieties $\tilde M_\infty$  associated to limits of (\ref{kr-flow}) with fixed
       initial metric under the assumption that  $ {\rm Aut}_0(\tilde M_\infty)$ is reductive for one of $\tilde M_\infty$ (cf. Corollary \ref{semiuni-KR-solions}).  Our proof  just  depends on the partial $C^0$-estimate and the  local estimates in Section 1.  The assumption will be removed in Section 4 as in \cite{CSW} (cf. (\ref{unique-alge-variety})). The proof of (\ref{unique-alge-variety}) in \cite{CSW} depends on  Chen-Wang's work on HT-conjecture,  thus our argument is a bit different to theirs.

        In Section 4,  we prove Theorem \ref{general-KR}  in case that $(\tilde M_\infty, \hat \omega_\infty)$ is a singular KE metric. Our proof is based on a result of Li for the lower bound estimate of Ding energy \cite{Li}. His method is to solve certain homogeneous MA equation associated to a  geodesic ray of K\"ahler metrics. Since this is an independent interest, we give a sketch proof of Li's result  (cf. Proposition \ref{low}).

       In Section 5,  we  represent a result of  Dervan-Sz\'ekelyhidi  that $L(\omega')$ is independent of $\omega'$  \cite{DS16} (cf. Proposition \ref{energy-level}).  Several applications of Proposition \ref{energy-level} will be also presented, see  Corollary \ref{mainThm-lambda},  Proposition \ref{vector-gap}.

  In Section 6, we prove the main results,  Theorem \ref{general-KR}  and Theorem \ref{stability-KR-smooth-complex} in this paper.
    Other more general results are also obtained (cf. Remark \ref{generalization-smooth} and Corollary \ref{uniqueness-orbit-singular}).

 In Section 7 (an appendix),  following  the proof of  Matsushima theorem for weak KE metrics in \cite{T2},    we  prove  the reductivity of ${\rm Aut}_0(\tilde M_\infty)$ for the limits of singular KE metrics of (\ref{kr-flow}) (cf. Proposition \ref{red}).  The result will be also generalized  for the limit of   singular KR solitons (cf. Proposition \ref{redsol}).

  \vskip3mm

  \noindent {\bf Acknowledgements.} The authors  would like to thank professor Gang Tian for inspiring conversations.  They also thank   Li for telling   us that the uniqueness of  algebraic structures of $\tilde M_\infty$ in  Theorem \ref{WZ} was  solved  by using non-archimedean geometry in  his recent joint paper with Han
 \cite{HL}.

  \section{Local convergence of KR flow}

 In this section, we first recall some notations and some technical results  in  \cite{WZ20}.
   Let $\{\omega_{i}\}$  be a   sequence of $(M, \omega_t)$ of  (\ref{kr-flow}). We  denote  $\{s_i^\alpha\}$  to be an  orthonormal  basis  of   $H^0(M,  K^{-l}_M, \omega_{i})$. Namely, $\{s_i^\alpha\}$ satisfies
   \begin{align}\label{inner-product}
 ( s_i^\alpha, s_i^\beta)=\int_M\langle   s_i^\alpha, s_i^\beta \rangle_{h_i^{\otimes l}}\omega_{i}^n,
 \end{align}
 where  $h_i$ is a hermitian metric  on   $K^{-1}_M$  such that ${\rm R }(h_i)=\omega_{i}.$

      Let   $\Phi_i: M\to \mathbb CP^N$ be the Kodaira embedding given by  $\{s_i^\alpha\}$ with  image  $\Phi_i(M)= \tilde M_i$.   By  \cite[Proposition 2.7]{WZ20},  $ \tilde M_i$  converges to a Q-Fano variety $\tilde M_\infty$ with klt singularities in algebra geometry  as in  Theorem \ref{WZ}.  We write $[\tilde M_i]$ and $[  \tilde M_\infty]$ as Chow points in $W_N=\mathbb  P(\mathbb C(M_{(n+1)\times (N+1)}), \mathbb C)$, where  $\mathbb C(M_{(n+1)\times (N+1)}$ denote a linear space of  $(n+1)$-multiple  homogeneous polynomials  with variables $N+1$.   We note that  ${\rm SL}(N + 1;\mathbb C)$  acts  naturally on $[\tilde M_i]$.
      Then the convergence of  $\tilde M_i$ means that  $[\tilde M_i]$ converges to $[  \tilde M_\infty]$. The latter is also equivalent to that there are $g_i\in   {\rm SL}(N + 1;\mathbb C)$ such that
      \begin{align}\label{group-g}\lim_i g_i\cdot [\tilde M]=[  \tilde M_\infty],
      \end{align}
      where  $\tilde M\subset \mathbb CP^N$ is  regarded as a complex  submanifold  by the Kodaira embedding of $M$.

 By (\ref{inner-product}) and  the continuity of  flow (\ref{kr-flow}), the  Kodaira embedding $\Phi_t$ generates a continuous path
 $[\tilde M_{t}]$ $(t\in (0,\infty))$ with the  variable  $t$  in $W_N$. Since the limit $\tilde M_\infty$ of $\tilde M_{t}$  may be different  for a different subsequence $\{t_i\}$, we introduce a set $\mathcal C_0$  which  consists of all  possible limits $\tilde M_\infty$,  and set
 $$[\mathcal C_0]=\{[\tilde M_\infty]|~ \tilde M_\infty\in \mathcal C_0\}.$$

 The following is an element result.
   \begin{lem}\label{connected}
    $[\mathcal C_0]$ is a compact connected subset in  $W_N$.
   \end{lem}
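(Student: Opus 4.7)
The plan is to identify $[\mathcal C_0]$ with the $\omega$-limit set, as $t\to\infty$, of the continuous curve $\gamma\colon(0,\infty)\to W_N$ defined by $\gamma(t)=[\tilde M_t]$, and then invoke the classical point-set topology fact that the $\omega$-limit set of a precompact continuous curve in a Hausdorff space is nonempty, compact, and connected. The only subtlety is that the Kodaira embedding depends on a choice of orthonormal basis of $H^0(M,K_M^{-l})$ with respect to (\ref{inner-product}); this is handled by making a continuous choice (e.g.\ by Gram--Schmidt applied to a fixed linear frame of the finite-dimensional space $H^0(M,K_M^{-l})$ with respect to the $t$-dependent inner product), which produces a continuous path $t\mapsto\Phi_t$ and hence continuous $\gamma$. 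This continuity is in fact already asserted in the paragraph preceding the lemma.

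For compactness, Theorem \ref{WZ} ensures that every sequence $t_i\to\infty$ has a subsequence along which $[\tilde M_{t_i}]$ converges in $W_N$ to the Chow point of a Q-Fano variety. Thus $\gamma([1,\infty))$ is relatively compact in $W_N$, and
\[
[\mathcal C_0]=\bigcap_{T\geq 1}\overline{\gamma([T,\infty))}
\]
realizes $[\mathcal C_0]$ as a nested intersection of nonempty compact subsets of $W_N$, which is therefore nonempty and compact.

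For connectedness I would argue by contradiction. Suppose $[\mathcal C_0]=A\sqcup B$ with $A,B$ nonempty, disjoint and closed. Since $W_N$ is Hausdorff, pick disjoint open neighborhoods $U\supset A$ and $V\supset B$. Both $A$ and $B$ consist of subsequential limits of $\gamma$, so $\gamma(t)\in U$ and $\gamma(t)\in V$ each occur for arbitrarily large $t$; continuity of $\gamma$ together with the disjointness of $U$ and $V$ then forces $\gamma(t_n)\notin U\cup V$ for some sequence $t_n\to\infty$. By the relative compactness established above, $\{\gamma(t_n)\}$ has a subsequential limit in the closed set $W_N\setminus(U\cup V)$, which by definition also lies in $[\mathcal C_0]\subset U\cup V$, a contradiction. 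The main (but mild) obstacle throughout is the bookkeeping around the continuous choice of orthonormal frame needed to view $[\tilde M_t]$ as a single-valued point of $W_N$ rather than an orbit under the $U(N+1)$-action; once this is fixed, the remainder is purely topological.
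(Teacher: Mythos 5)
Your proof is correct and follows essentially the same route as the paper: the paper also establishes compactness by a diagonal argument in the compact space $W_N$ and proves connectedness by contradiction, using the continuity of $t\mapsto[\tilde M_t]$ and an intermediate-value argument to produce times $t_i'\to\infty$ at which the path sits a definite distance from both pieces of a putative disconnection, whose subsequential limit then lies in $[\mathcal C_0]$ but in neither piece. Your packaging of this as the standard fact about $\omega$-limit sets of precompact continuous curves, together with the explicit remark about making a continuous choice of orthonormal frame (which the paper only asserts implicitly before the lemma), is a slightly cleaner presentation of the identical argument.
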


   \begin{proof}
   The compactness follows from the diagonal argument  since  $W_N$ is compact, so  we need to  prove the connectedness. On the contrary, we assume there are two closed sets  $U$ and $V$
    such that $[\mathcal C_0]= U\bigcup V$ and
    $${\rm dist}_{W_N}(U, V)\geq \delta>0.$$
     Then there are two  points $[\tilde M^1_\infty]\in U$ and   $[\tilde M^2_\infty]\in V$ and two  sequences $\{[\tilde M_{t_i}]\}$ and $\{[\tilde M_{s_i}]\}$, respectively,  such that
     $$\lim_i [\tilde M_{t_i}]\to [\tilde M^1_\infty]~{\rm and}~\lim_i [\tilde M_{s_i}]\to [\tilde M^2_\infty].$$
     Without loss of generality, we  may   $t_i<s_i<t_{i+1}$. Thus
     $${\rm dist}_{W_N}([\tilde M_{s_i}], V)\to 0~{\rm and}~ {\rm dist}_W([\tilde M_{t_i}], V)\ge \frac{\delta}{2}, ~{\rm as}~i\to\infty. $$
      The latter means  that there exists $t'_i\in (s_i, t_{i+1})$ such that
       $${\rm dist}_{W_N}([\tilde M_{t'_i}], V)= \frac{\delta}{4}.$$
       Hence  by the triangle inequality, we also get
       $${\rm dist}_{W_N}([\tilde M_{t'_i}], U)\geq \frac{3\delta}{4}.$$
    On the other hand,    by Theorem \ref{WZ}, $\tilde M_{t'_i}$
      converges to  a limit $\tilde M_\infty'$ in  $\mathcal C_0$.  Combining  the above two relations, we get
      $${\rm dist}_{W_N}([\tilde M_\infty'], V)= \frac{\delta}{4} ~{\rm and}~{\rm dist}_{W_N}([\tilde M_\infty'], U)\ge\frac{3\delta}{4}.$$
      This implies that  $ [\tilde M_\infty']$ is not contained in $ U\bigcup V$, which is impossible. The lemma  is proved.

   \end{proof}

  Let   $\tilde M_\infty\in [\mathcal C_0]$. We
 choose  an exhausting open sets $\Omega_\gamma\subset  \tilde M_\infty$.  Then   there are  diffeomorphisms $\Psi_\gamma^i: \Omega_\gamma\to \tilde M_i$ such that
   the curvature of $ \omega_{FS}|_{\tilde {\Omega}_\gamma^i}$ is $C^k$-bounded, i.t. it is   uniformly independently of $i$ , where  $\tilde {\Omega}_\gamma^i=\Psi_\gamma^i(\Omega_\gamma)$.

  For simplicity,  we let $\tilde\omega_i=\frac{1}{l}\omega_{FS}|_{\tilde M_i}$.  Then we can write
   \begin{align}\label{psi-s}(\Phi_i^{-1})^*\omega_{t_i+s}=\tilde\omega_{i}+\sqrt{-1}\partial\bar\partial \psi^s_i,   ~{\rm in} ~\tilde M_i, \forall s\in [-1,1].
   \end{align}
    In  Proposition 3.2 in \cite{WZ20},  it was proved that

   \begin{lem}\label{c3-phi}
   There exist  constants $A, C_\gamma, A_\gamma$ such that for $s\in [-\frac{1}{2},1]$,
   \begin{align}
   &| \psi^s_i|\le A, ~{\rm in} ~\tilde M_i, \label{c0-estimate}\\
   &C_\gamma^{-1}\tilde\omega_{i}\le(\Phi_i^{-1})^*\omega_{t_i+s}\le C_\gamma \tilde\omega_{i}, ~{\rm in}~\tilde \Omega_\gamma^i,
   \label{c2-estimate} \\
   &\| \psi^s_i\|_{C^{k,\alpha}(\tilde {\Omega}_\gamma^i)} \le A_\gamma(k).\label{c3-estimate}
   \end{align}
  \end{lem}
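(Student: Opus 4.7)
The plan is to prove the three estimates in the order stated, exploiting Perelman's estimates for \eqref{kr-flow} (bounded Ricci potential, uniform diameter and non-collapsing), the partial $C^0$-estimate that gives the Kodaira embeddings $\Phi_i$ in the first place, and local parabolic Monge-Amp\`ere regularity on $\tilde\Omega_\gamma^i$. The parabolic Monge-Amp\`ere equation satisfied by $\psi_i^s$ (pushed forward by $\Phi_i^{-1}$) has the form
\[
\frac{\partial \psi_i^s}{\partial s} = \log\frac{(\tilde\omega_i+\sqrt{-1}\partial\bar\partial \psi_i^s)^n}{\tilde\omega_i^n} + \psi_i^s - h_i,
\]
where $h_i$ is the Ricci potential of $\tilde\omega_i$ with respect to the hermitian metric $h_i$ used to define $\{s_i^\alpha\}$. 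Perelman's estimates give $\|h_i\|_{C^0} \le C$ uniformly in $i$, and the Bergman kernel lower bound coming from the partial $C^0$-estimate guarantees $\tilde\omega_i$ is uniformly equivalent to a smooth metric on compact subsets of ${\rm Reg}(\tilde M_\infty)$ pulled back by $\Psi_\gamma^i$.

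For the $C^0$-bound \eqref{c0-estimate}, I would normalize $\psi_i^s$ so that $\sup_M \psi_i^s = 0$ (or use Perelman's normalization of $\omega_{t_i+s}$) and combine Perelman's $C^0$-bound on the Ricci potential with the partial $C^0$-estimate: the latter shows that the difference between the K\"ahler potential of $\omega_{t_i+s}$ and of $\tilde\omega_i$ relative to a reference metric in $2\pi c_1(M)$ is uniformly bounded; subtracting produces the uniform $L^\infty$-bound on $\psi_i^s$ on all of $\tilde M_i$. Alternatively one can run a Kolodziej/Eyssidieux-type argument using that the right-hand side of the Monge-Amp\`ere equation has a uniform $L^p$-density.

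The main obstacle is the local $C^2$-estimate \eqref{c2-estimate}. Here I would use a Chern-Lu type inequality applied to $\log\,{\rm tr}_{(\Phi_i^{-1})^*\omega_{t_i+s}} \tilde\omega_i$, exploiting that on $\tilde\Omega_\gamma^i$ the Fubini-Study curvature of $\tilde\omega_i$ is $C^k$-bounded uniformly in $i$. Because the estimate is local, the maximum principle cannot be applied directly; instead one introduces a cutoff $\eta_\gamma$ supported in $\tilde\Omega_{\gamma+1}^i$ and equal to $1$ on $\tilde\Omega_\gamma^i$, and studies $\eta_\gamma\cdot {\rm tr}_{\omega_{t_i+s}}\tilde\omega_i - A\psi_i^s$ for sufficiently large $A$. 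The bad boundary terms are absorbed by the good terms coming from $A\Delta_{\omega_{t_i+s}}\psi_i^s = A(n - {\rm tr}_{\omega_{t_i+s}}\tilde\omega_i)$, together with the $C^0$-bound from step one and the curvature bound on $\tilde\omega_i$. Combined with a lower bound on $\det\bigl((\Phi_i^{-1})^*\omega_{t_i+s}/\tilde\omega_i\bigr)$ coming from the Monge-Amp\`ere equation and the Ricci potential bound, this yields a two-sided comparison on $\tilde\Omega_\gamma^i$. The key subtlety is the time-dependence: one needs to use the full interval $s\in[-1,1]$ (or at least an interval slightly larger than $[-\tfrac12,1]$) together with a standard parabolic cutoff in $s$ to absorb the $\partial_s$ term arising from the Chern-Lu computation.

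Given \eqref{c0-estimate} and \eqref{c2-estimate}, the higher regularity \eqref{c3-estimate} is standard: the parabolic complex Monge-Amp\`ere equation is now uniformly parabolic on $\tilde\Omega_\gamma^i\times[-\tfrac12,1]$ with bounded coefficients, so Evans-Krylov together with Krylov-Safonov gives a uniform $C^{2,\alpha}$-estimate on slightly smaller sets, and then a Schauder bootstrap using the smoothness of $\tilde\omega_i$ on $\tilde\Omega_\gamma^i$ (which is controlled by the uniform curvature bound of $\omega_{FS}|_{\tilde\Omega_\gamma^i}$) produces the $C^{k,\alpha}$-bound for every $k$. A standard shrinking of the exhausting sets at each bootstrap step accommodates the loss of regularity near $\partial\tilde\Omega_\gamma^i$.
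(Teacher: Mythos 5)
The paper does not prove this lemma itself---it is quoted verbatim from \cite[Proposition 3.2]{WZ20}---and your sketch follows exactly the standard route one expects there: the global $C^0$-bound from the two-sided Bergman kernel bound ($\psi_i^0=-\tfrac1l\log\rho_l$ up to a constant) together with Perelman's bound on $\dot\phi_t$ over $s\in[-1,1]$; the local second-order estimate via the parabolic Chern--Lu/Yau computation for ${\rm tr}_{\omega_\psi}\tilde\omega_i$ with spatial and temporal cutoffs (the temporal cutoff being why the conclusion holds only for $s\ge-\tfrac12$); and Evans--Krylov plus a Schauder bootstrap on shrinking sets for \eqref{c3-estimate}. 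The only inaccurate point is your proposed ``alternative'' Kolodziej argument for \eqref{c0-estimate}: the density $\omega_{t_i+s}^n/\tilde\omega_i^n$ is controlled only on the sets $\tilde\Omega_\gamma^i$ and can degenerate where the Kodaira map loses rank, so no uniform global $L^p$-bound is available and that route does not work; the primary argument you give is the correct one.
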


  Let $\omega_t=\omega_0+\sqrt{-1}\partial \bar\partial \phi_t$ be the solution of (\ref{kr-flow}). Then
  $${\rm Ric}\,(\omega_t)-\omega_t=\sqrt{-1}\partial \bar\partial (-\dot \phi_t ).$$
  Thus  $-\dot \phi_t $ is a Ricci potential of $\omega_t$.
   The following estimates are due to G. Perelman. We refer the reader to
  \cite{ST} for their proof.

  \begin{lem}\label{lem:perelman-1}  There are constants $c>0$ and $C>0$ depending only on the initial metric $\omega_0$ such that the following is true:

  1) ${\rm diam}(M,\omega_t)\le C$, ~
  ${\rm vol}(B_r(p),\omega_t )\ge c r^{2n}$.

  2) For any $t\in (0,\infty)$, there is a constant $c_t$ such that  $h_t=-\dot \phi_t +c_t$ satisfies
  \begin{align}\label{h-t-estimate}  \|h{_t}\|_{C^0(M)}\le C,
  ~\|\nabla h_{t}\|_{\omega_{t}} \le C,
  ~ \|\Delta h_{t}
  \|_{C^0(M)} \le C.
  \end{align}
   \end{lem}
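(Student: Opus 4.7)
The plan is to follow Perelman's original approach as presented in \cite{ST}. The three claims are interlocked: the non-collapsing and diameter bound both rest on control of the Ricci potential, which is in turn pinned down by the monotonicity of the $\mathcal{W}$-entropy. So I would establish the estimates in the order (1b) non-collapsing, (2) Ricci potential, (1a) diameter.

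The first step is to obtain uniform bounds for Perelman's $\mu$-entropy $\mu(g_t, \tau) = \inf_f \cW(g_t, f, \tau)$ along the flow, where
$$\cW(g, f, \tau) = \int_M \left[\tau(R + |\Na f|^2) + f - n\right] (4\pi\tau)^{-n} e^{-f}\, dV_g$$
subject to $\int_M (4\pi\tau)^{-n} e^{-f}\, dV_g = 1$. After rescaling (\ref{kr-flow}) to the unnormalized Ricci flow, Perelman's monotonicity gives $\mu(\oo_t, \tau) \ge -C_0$ for $\tau \in (0, 1]$ and all $t \ge 0$, depending only on $\mu(\oo_0, \cdot)$. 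Via the associated logarithmic Sobolev inequality this produces the non-collapsing estimate ${\rm vol}(B_r(p), \oo_t) \ge c r^{2n}$ in (1), and also yields a uniform $L^2$-Sobolev inequality that will be needed for the iteration below.

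The main obstacle is the Ricci potential bound (\ref{h-t-estimate}). Normalize $h_t = -\dot\phi_t + c_t$ so that $\int_M h_t\, e^{-h_t}\oo_t^n$ is controlled by the entropy; under (\ref{kr-flow}),
$$\frac{\partial h_t}{\partial t} = \DD h_t + h_t - |\Na h_t|^2 + a_t,$$
with $|a_t|$ uniformly bounded. I would apply the parabolic maximum principle to carefully chosen combinations such as $|\Na h|^2 + R(\oo_t) - h$ and $\DD h + |\Na h|^2 - h$, whose reaction terms have favorable signs, to get one-sided estimates of the form $|\Na h|^2 \le C(h + A)$, $R(\oo_t) \ge -C$, and $h \ge -C$. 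Combining with the integral bound $\int h_t\, e^{-h_t} \oo_t^n \ge -C$ coming from the entropy, and running Moser iteration against the Sobolev inequality from Step 1, promotes the one-sided estimate to the two-sided pointwise bound $\|h_t\|_{C^0} \le C$. The gradient and Laplacian estimates in (\ref{h-t-estimate}) then follow from further maximum-principle arguments on $|\Na h_t|^2/(2C - h_t)$ and on an auxiliary quantity involving $\DD h_t$. The technical delicacy is the bookkeeping of the reaction terms: the Fano normalization $+h_t$ in the evolution works against the damping $-|\Na h_t|^2$, and only by taking the right linear combinations do the bad terms cancel.

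Finally, with the $C^0$-bound on the Ricci potential and the non-collapsing in hand, the diameter bound in (1) is a standard covering argument: if $\mathrm{diam}(M, \oo_{t_k}) \to \infty$ along some sequence, cover a nearly length-maximizing minimal geodesic by disjoint unit balls; each contributes a definite amount of volume by $\kappa$-non-collapsing, eventually exceeding the fixed total volume $\int_M \oo_t^n = (2\pi)^n c_1(M)^n / n!$, a contradiction.
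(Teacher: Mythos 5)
The paper offers no proof of this lemma; it is quoted from Perelman via \cite{ST}, so your proposal must be measured against that argument. You have the right ingredients (entropy monotonicity, non-collapsing, the evolution equation of the Ricci potential, maximum principles on combined quantities, Moser iteration against the uniform Sobolev constant), but the logical order is inverted in a way that leaves a genuine gap. In Perelman's scheme the maximum principle only yields \emph{relative} estimates of the form $|\nabla u|^2\le C(B-u)$ and $|\Delta u|\le C(B-u)$ with $B$ tied to $\sup_M u$; the reaction term $+h$ in $\partial_t h=\Delta h+h-a(t)$ is destabilizing, so no direct maximum principle gives a two-sided $C^0$ bound, and your claimed promotion "one-sided estimate $+$ entropy integral bound $+$ Moser iteration $\Rightarrow \|h_t\|_{C^0}\le C$" does not close: the entropy only controls $\int h_t e^{-h_t}\omega_t^n$ from one side, which says nothing about where $h_t$ is large with the unfavorable sign, so there is no a priori $L^p$ bound to feed into the iteration. (Also, the $-|\nabla h_t|^2$ term you write in the evolution equation is spurious for $h_t=-\dot\phi_t+c_t$; that term belongs to the conjugate heat equation for the $\cW$-minimizer, not to the Ricci potential, so the "damping" you invoke is not there.)

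The missing idea is precisely the hardest step of \cite{ST}: the diameter bound must come \emph{before} the absolute $C^0$ bound, and it is proved by assuming ${\rm diam}(M,\omega_{t_k})\to\infty$, using the relative gradient estimate to show $B-u$ grows at most quadratically in the distance from a base point where $u$ is controlled by its normalization, finding a dyadic annulus $B(p,2^{k+1})\setminus B(p,2^k)$ with small volume and small $\int R\,dV$, and plugging a cutoff supported there into the $\cW$-functional to contradict the uniform lower bound on $\mu(g_t,\tau)$. Only after the diameter is bounded does the quadratic growth estimate yield $\|h_t\|_{C^0}\le C$, and then the full gradient and Laplacian bounds. Your covering-by-unit-balls argument for the diameter is valid only a posteriori, once $R=n-\Delta h_t$ is already bounded so that non-collapsing applies at unit scale on every ball; placed at the end of your outline it is circular with the step it is meant to follow, since that step is the one that fails.
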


 We also need to consider the modified K\"ahler metrics $\eta_t$ of $\omega_t$ used  in \cite{ZhK, WZ20},  which is  a solution of
  \begin{align}\label{eta-equation}{\rm Ric}\,(\eta_t)=\omega_t.
  \end{align}
  Write  $\eta_t= \omega_t+\sqrt{-1}\partial \bar \partial \kappa_t$. Then   we have a solution $\kappa_t$ of complex MA equation,
  \begin{align}\label{modified-metric}
  (\omega_t+\sqrt{-1}\partial \bar \partial \kappa_t)^n=e^{h_t}\omega_t^n,~\sup_M\kappa_t=0,
  \end{align}
 where $h_t$ is a Ricci potential of $\omega_t$ as in  (\ref{h-t-estimate}). By using the iteration method,   $\kappa_t$ is uniformly bounded.  Moreover,  similar to  relations  (\ref{c2-estimate}) and (\ref{c3-estimate}) in Lemma \ref{lem:perelman-1},  we have
   \begin{align} &C_\gamma^{-1}\tilde\omega_{i}\le(\Phi_i^{-1})^*\eta_{i}\le C_\gamma \tilde\omega_{i}, ~{\rm in}~\tilde \Omega_\gamma^i,
   \label{c2-estimate-1} \\
   &\| \kappa_i\|_{C^{k,\alpha}(\tilde {\Omega}_\gamma^i)} \le A_\gamma(k).\label{c3-estimate-1}
   \end{align}

 The limit set    $[\mathcal C_0]$ in Lemma  \ref{connected} is for  sequences  of  KR flow with a fixed initial metric.  We shall extend
 the lemma to  general limits  of   sequences  from   KR flows  with varied  initial metrics.  To distinguish the  flow (\ref{kr-flow}) for different  initial metrics, we denote $(M, \omega_t'=\omega_t^\alpha)$ to be a  KR flow  with an initial metric  $\omega_0'=\omega^\alpha\in 2\pi c_1(M,J)$.  Since  Perelman's estimate for the  metric
 $\omega_{t}'$  in  Lemma  \ref{lem:perelman-1} depends only on the initial metric $\omega_0'$,  the partial $C^0$-estimate proved \cite{ZhK} holds for all $\omega_t'$. Namely,  there exists $l'=l(M, \omega_0')\in \mathbb N^*$ and $b=b(M,\omega_0')>0$ such that
 \begin{align}\label{unip}
 \rho_{l'}(M,\omega_t')=\sum_k \| s_k'\|_{\omega_t'}  \geq b,
 \end{align}
   where  $\{ s_k'\}$ is an  ortho-normal basis of $H^0(M, K_M^{-l'}, \omega_t')$ which gives
      a Kodaira embedding into a   fixed projective space $\mathbb CP^{N'}$.

     We may assume
     \begin{align}\label{a-condition}\|\omega_0'-\omega_0\|_{C_{CG}^2(M)}\le A,
     \end{align}
      where $\|\cdot\|_{C_{CG}^2(M)}$ denotes the $C^2$-norm in Cheeger-Gromov topology.
 Then  Lemma \ref{c3-phi} still holds.  By  the argument  in the proof of  \cite [Proposition 3.2]{WZ20},  for any sequence $\{ \omega_{t_i}^{\alpha_i}\}$,  there is a Q-Fano variety $\tilde M_\infty'$ such that   $\omega_{t_i}^{\alpha_i}$ is locally  $C^\infty$-convergent to a  K\"ahler metric   on  ${\rm Reg}(\tilde M_\infty')$.  We note that  $\omega_{t_i}^{\alpha_i}$  has also a Gromov-Hausdroff limit $(M_\infty', \omega_\infty')$ by the Perelman's estimate in  Lemma  \ref{lem:perelman-1}-1).

 Let $(M_\infty, \omega_\infty)$   be  the Gromov-Hausdroff limit of  $\{\omega_i\}$ as in Theorem \ref{WZ}.  The following is the main result in this section.

   \begin{prop}\label{two-topology}Let  $(M_\infty, \omega_\infty),    (M_\infty', \omega_\infty'), \tilde M_\infty$ and $ \tilde M_\infty'$ be  Gromov-Hausdroff limits and algebraic variety  limits for sequences  $\{\omega_i\}$  and  $\{ \omega_{t_i}^{\alpha_i}\}$ as above,  respectively.  Suppose
      \begin{align}\label{small-GH-0}{\rm dist}_{GH}(  (M_\infty, \omega_\infty),    (M_\infty', \omega_\infty'))\le \epsilon.
   \end{align}
   Then  there is a $g\in {\rm SL}(N' + 1;\mathbb C)$ such that
    \begin{align}\label{small-algebra}{\rm dist}_W  ([\tilde M_\infty'],  g\cdot [\tilde M_\infty])\le \delta(\epsilon)\to 0, ~{\rm as}~\epsilon\to 0 .
    \end{align}
   \end{prop}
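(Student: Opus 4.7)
My plan is to convert the Gromov--Hausdorff closeness (\ref{small-GH-0}) into closeness of Chow points modulo the $\mathrm{SL}(N'+1;\mathbb{C})$-action, by comparing the two orthonormal bases of $H^0(M,K_M^{-l'})$ that give rise to the limit Kodaira embeddings. First I would exploit the uniform bound (\ref{a-condition}) to apply Perelman's estimates (Lemma \ref{lem:perelman-1}) and the partial $C^0$-estimate (\ref{unip}) uniformly to both families; this ensures a common $l'$ and common ambient $\mathbb{C}P^{N'}$ in which the Chow points $[\tilde M_i]$ and $[\tilde M_i^{\alpha_i}]$ live, and after subsequences they converge to $[\tilde M_\infty]$ and $[\tilde M_\infty']$. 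Next, applying Lemma \ref{c3-phi} to both families and pulling the orthonormal bases $\{s_i^\alpha\}$, $\{s_i^{\alpha_i}\}$ back via the $\Psi_\gamma^i$'s to exhausting open sets of $\Reg(\tilde M_\infty)$ and $\Reg(\tilde M_\infty')$, one extracts limit orthonormal bases $\{s_\infty^\alpha\}$ and $\{s_\infty'^\alpha\}$ of $H^0(\tilde M_\infty, K^{-l'})$ and $H^0(\tilde M_\infty', K^{-l'})$ with respect to the $L^2$-pairings induced by the limit singular KR solitons. Each limit Chow point is then determined, modulo $\mathrm{U}(N'+1)\subset\mathrm{SL}(N'+1;\mathbb{C})$, by the image of its orthonormal basis.

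The second step is to produce the required $g\in\mathrm{SL}(N'+1;\mathbb{C})$ by building a bridge between the two limits from the $\epsilon$-GH approximation $f:(M_\infty,\omega_\infty)\to(M_\infty',\omega_\infty')$. On the regular parts, Cheeger--Colding $\epsilon$-regularity together with the curvature bounds inherited from (\ref{c2-estimate})--(\ref{c3-estimate}) allow one to upgrade $f$ on any fixed exhausting set $\Omega_\gamma\subset\Reg(\tilde M_\infty)$ to a smooth diffeomorphism $F_\gamma$ into $\Reg(\tilde M_\infty')$ which is a $C^{k,\alpha}$-approximate K\"ahler isometry with error $\delta_\gamma(\epsilon)\to 0$. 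Pulling back $\{s_\infty'^\alpha\}$ via $F_\gamma$ (after trivializing $K^{-l'}$ by a compatible local holomorphic frame) and then Gram--Schmidting against the $\omega_\infty$-$L^2$ inner product produces a new orthonormal basis of $H^0(\tilde M_\infty,K^{-l'})$; the change-of-basis matrix from $\{s_\infty^\alpha\}$ lies in $\mathrm{SL}(N'+1;\mathbb{C})$ and depends on $\gamma$ and $\epsilon$. Letting $\gamma\to\infty$ and diagonalizing, one reads off $g$, and the Kodaira image $g\cdot[\tilde M_\infty]$ is then forced within $\delta(\epsilon)\to 0$ of $[\tilde M_\infty']$ in $W_{N'}$, giving (\ref{small-algebra}).

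The hard part will be the upgrade of the mere GH-approximation $f$ to a smooth near-isometry $F_\gamma$, since the singular sets of $\tilde M_\infty$ and $\tilde M_\infty'$ need not correspond under $f$ and the exhausting $\Omega_\gamma$ must be chosen to avoid both singular sets while still capturing essentially all of the $L^2$-mass of every $s_\infty^\alpha$. A cleaner route that sidesteps building $F_\gamma$ explicitly is to compare the Bergman densities $\rho_{l'}$ and their reproducing sections directly: by Lemma \ref{c3-phi} they are smooth with uniform local $C^k$-bounds on the regular parts, and the combination of (\ref{small-GH-0}) with the local Cheeger--Gromov convergence of both flows forces these kernels to agree, up to error $\delta(\epsilon)$, in matched local charts near regular points; the $g\in\mathrm{SL}(N'+1;\mathbb{C})$ implementing the resulting $L^2$-Hilbert-space identification then realizes (\ref{small-algebra}).
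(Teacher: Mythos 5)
You have correctly identified where the difficulty lies, but your proposal does not resolve it, and the route you sketch is not the one that can be made to work at the stated level of generality. The crux is your second step: a Gromov--Hausdorff $\epsilon$-approximation $f:(M_\infty,\omega_\infty)\to(M_\infty',\omega_\infty')$ between two \emph{singular} limit spaces carries no information about complex structures and does not, for fixed $\epsilon>0$, upgrade to a $C^{k,\alpha}$-approximate K\"ahler isometry $F_\gamma$ between the regular parts with a quantifiable error $\delta_\gamma(\epsilon)$; nor is it clear that $f$ maps ${\rm Reg}(\tilde M_\infty)$ anywhere near ${\rm Reg}(\tilde M_\infty')$, since the two singular sets need not correspond under $f$. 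You flag exactly this as ``the hard part,'' but the alternative you offer --- comparing Bergman kernels ``in matched local charts near regular points'' --- presupposes the same matching of regular points and the same approximate holomorphic identification that you have not constructed. Without it, neither the pulled-back basis nor the Gram--Schmidt matrix $g$ is defined, and the quantitative bound $\delta(\epsilon)\to 0$ is not established.

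The paper sidesteps the quantitative problem entirely by arguing by contradiction and compactness: if the conclusion failed, a diagonal sequence $\{\omega^{\alpha_i}_{i_k}\}$ would have the \emph{same} Gromov--Hausdorff limit $(M_\infty,\omega_\infty)$ as $\{\omega_i\}$ but an algebraic limit $\bar M_\infty$ staying a definite distance from ${\rm SL}(N'+1;\mathbb C)\cdot[\tilde M_\infty]$. In the limit the two metric spaces are genuinely isometric, and Lemma \ref{regular-set} (which rests on Bamler's structure theorem) identifies the metric regular part $\mathcal R$ with ${\rm Reg}(\tilde M_\infty)$, forcing $({\rm Reg}(\bar M_\infty),\bar\omega_\infty)\cong({\rm Reg}(\tilde M_\infty),\hat\omega_\infty)$. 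The group element $g$ is then produced not from the K\"ahler--Ricci soliton metrics themselves but from the auxiliary metrics $\eta_t$ solving ${\rm Ric}(\eta_t)=\omega_t$: uniqueness of bounded solutions of the Monge--Amp\`ere equation (\ref{yau-equation-limit}) pins down a single limit $\eta_\infty$ for both sequences, the orthonormal bases of $H^0(M,K_M^{-l'},\eta_i)$ converge to two orthonormal systems of bounded holomorphic sections with respect to the \emph{same} limit inner product, and the dimension count from \cite{LS} shows these two systems differ by an element of ${\rm SL}(N'+1;\mathbb C)$. These three ingredients --- the compactness/contradiction framework, the identification of regular parts via Lemma \ref{regular-set}, and the $\eta$-metric uniqueness trick --- are absent from your proposal, and each is needed; I would recommend restructuring your argument around them rather than trying to build an explicit approximate isometry at positive $\epsilon$.
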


  \begin{proof}We use the contradiction argument and suppose that there are    sequences  $\{\omega_{t_i,k}^{\alpha_i}\}$ with their
    Gromov-Hausdroff limits and algebraic variety limits  $(M_{\infty,k}', \omega_{\infty,k}'), \tilde M_{\infty,k}'$, respectively,  such that
     \begin{align}\label{small-GH-1}{\rm dist}_{GH}(  (M_{\infty,k}', \omega_{\infty,k}'),    (M_\infty, \omega_\infty))\to 0,~{\rm as}~\alpha\to\infty
   \end{align}
    and
      \begin{align}\label{small-algebra-2} {\rm dist}_{W}  (  [\tilde M_{\infty,k}'],  {\rm SL}(N' + 1;\mathbb C)\cdot [\tilde M_\infty])\ge c_0>0,  ~\forall~\alpha>>1 .
    \end{align}
    Then by (\ref{small-GH-1}) we can take a diagonal subsequence   $\{\omega_{i_k}^{\alpha_i}\}$ such that
      \begin{align}\label{small-GH-2}{\rm dist}_{GH}(  (M, \omega^{\alpha_i}_{i_k}),    (M_\infty, \omega_\infty))\to 0,~{\rm as}~i\to\infty.
   \end{align}
   Namely,    $ (M_\infty, \omega_\infty)$ is also  the   Gromov-Hausdroff limit of  $\omega^{\alpha_i}_{i_k}$. On the other hand,   as in the proof of  \cite [Proposition 3.2]{WZ20},  there is a Q-Fano variety $\bar M_\infty$ such that   $\omega^{\alpha_i}_{i_k}$ is locally  $C^\infty$-convergent to a  K\"ahler metric  $\bar \omega_\infty$  on  ${\rm Reg}(\bar M_\infty)$. Moreover,
     \begin{align}\label{isometric}\overline {( {\rm Reg}(\bar M_\infty), \bar\omega_\infty)}\cong (M_\infty, \omega_\infty).
     \end{align}

     By Theorem \ref{WZ}, we know
    \begin{align}\label{isometric-0}  \overline {( {\rm Reg}(\tilde M_\infty), \hat \omega_\infty)}\cong (M_\infty, \omega_\infty).
    \end{align}
     Moveover, by Lemma \ref{regular-set} below,
     $(M_\infty, \omega_\infty)\setminus (\hat \omega_\infty, {\rm Reg}(\tilde M_\infty))$ is a singular set.
    Thus by  (\ref{isometric}), any restricted metric of $\bar \omega_\infty$ on an  open  set $U\subset  {\rm Reg}(\bar M_\infty))$ must be isometric to one of restricted metric $\hat \omega_\infty$ on some  open  set in  ${\rm Reg}(\tilde M_\infty)$. This means that  $\bar \omega_\infty$ is a singular KR soliton  on ${\rm Reg}(\bar M_\infty)$. Furthermore,    as in the proof of  Theorem \ref{WZ} \cite{WZ20}, $\bar M_\infty$ has $klt$-singularities and   $\bar \omega_\infty$ can be extended to  a singular  KR soliton  on $\bar M_\infty$ with a $L^\infty$  K\"ahler potential.

        By (\ref{isometric}) and (\ref{isometric-0}),  the above argument also implies that
    \begin{align}\label{open-iso}( {\rm Reg}(\bar M_\infty), \bar\omega_\infty)\cong ( {\rm Reg}(\tilde M_\infty), \omega_\infty).
    \end{align}
      Thus, after a diffeomorphism, we may assume that  ${\rm Reg}(\bar M_\infty)$ is biholomorphic to  ${\rm Reg}(\tilde M_\infty)$
    and both of  K\"ahler-Ricci solitons $\bar\omega_\infty$ and $\tilde\omega_\infty$ are same.
    We claim  that  there is a $g\in {\rm SL}(N + 1;\mathbb C)$ such that
    \begin{align}\label{conguarate}
    \bar M_\infty=g\cdot\tilde M_\infty.
    \end{align}

   let $\eta_i$ and  $\eta_{i_\alpha} $ be a solution of (\ref{eta-equation}) for    $\omega_i$ and   $ \omega^{\alpha_i}_{i_k}$, respectively.
    Then by (\ref{c2-estimate-1}) and  (\ref{c3-estimate-1}),  $\{\kappa_{i}\}$ and $\{\kappa_{i_\alpha}\}$
   converge to  uniformly bounded potentials $\kappa_\infty$ and $\kappa_\infty'$ on  ${\rm Reg}(\tilde M_\infty)$, respectively,  both of  which  satisfy
   \begin{align}\label{yau-equation-limit}
  (\omega_\infty+\sqrt{-1}\partial \bar \partial \kappa)^n=e^{\tilde h_\infty-\psi_\infty} (\tilde\omega_\infty)^n,~{\rm in}~\tilde M_\infty.
  \end{align}
   By the uniqueness of bounded solutions,  we get
   $$\kappa_\infty=\kappa_\infty'$$
   and so
   $$\eta_\infty=\omega_\infty+\sqrt{-1}\partial\bar\partial \kappa_\infty=\omega_\infty+\sqrt{-1}\partial\bar\partial \kappa_\infty'=\eta_\infty'.$$
   Since,  by \cite[(4.30)]{WZ20},
     $$\overline {(\eta_\infty, {\rm Reg}(\bar M_\infty))}=(X, d_\infty)~{\rm and}~\overline {(\eta_\infty', {\rm Reg}(\bar M_\infty))}=(X', d_\infty')$$
     where $(X, d_\infty)$ and  $(X', d_\infty')$ are the   Gromov-Hausdroff limits of $\eta_i$ and  $\eta_{i_\alpha}$, respectively,
        $$(X, d_\infty)=(X', d_\infty').$$

      Let $\{s^k_i\}$  and   $\{{s'}^k_i\}$ be   orthonormal  bases   of   $H^0( M,  K^{-l'}_M, \eta_i)$  and  $H^0( M,  K^{-l'}_M, \eta_{i_\alpha})$, respectively.  Then as in Lemma 4.4 in \cite{WZ20},  $\{s^k_i\}$  and   $\{{s'}^k_i\}$  converge to  collections of $\{s^k_\infty\}$    and    $\{ {s'}^k_\infty\}$ of unitary and orthogonal holomorphic sections   in    $H^0( {\rm Reg}( \tilde M_\infty),  K^{-l'}_{{\rm Reg}( \tilde M_\infty)}, \eta_\infty)$, respectively,
       where  $H^0( {\rm Reg}( \tilde M_\infty),  K^{-l'}_{{\rm Reg}( \tilde M_\infty)}, \eta_\infty)$ denotes the space of bounded holomorphic sections on  $({\rm Reg} (\tilde M_\infty),  K^{-l'}_{{\rm Reg}( \tilde M_\infty)})$.
   Moveover,
   \begin{align}\label{two-maps}\overline{\Phi_\infty ({\rm Reg}(\tilde M_\infty))}=\tilde M_\infty~{\rm and}~\overline{\bar\Phi_\infty ({\rm Reg}(\tilde M_\infty))}=\bar M_\infty,
   \end{align}
    where  $\Phi_\infty$ and  $\bar\Phi_\infty$ are Kodaira embeddings  given by   $\{s^k_\infty\}$ and     $\{ {s'}^k_\infty\}$, respectively.
 On the other hand,  according to the proof of normal variety of $\tilde M_\infty$ in \cite{LS},  the dimension of $H^0( {\rm Reg}(\tilde M_\infty),  K^{-l'}_{{\rm Reg}( \tilde M_\infty)})$ is same as one of  $H^0(M,  K^{-l'}_M)$. Thus there is a $g\in {\rm SL}(N' + 1;\mathbb C)$ such that
 $$\{s^k_\infty\} =g\cdot \{{s'}^k_\infty\}.$$
 Hence, by (\ref{two-maps}), we finally get  (\ref{conguarate}).

 By  Theorem \ref{WZ} together with  (\ref{conguarate}), we see that there are $g_i\in {\rm SL}(N' + 1;\mathbb C)$ such that
 $$[\tilde \Phi_i(M)]\to  g\cdot[\tilde M_\infty],$$
 where $\tilde \Phi_i$ is a   Kodaira embedding  given by an  ortho-normal  basis in $\{\tilde s^k_i\}$ in $H^0( M,  K^{-l'}_M, \omega_{i_\alpha}^\alpha)$.
 On the other hand, for a fixed $\alpha$ we may take $i_\alpha$ in (\ref{small-GH-1}) such that
 $${\rm dist}  (  [\tilde \Phi_i(M)], [\tilde  M_\infty^\alpha])\le \delta(i_\alpha)\to 0, ~{\rm as}~i_\alpha\to \infty.$$
 Thus
 $${\rm dist}  (   [\tilde  M_\infty^\alpha], g\cdot [\tilde M_\infty])\le 2\delta(i_\alpha),$$
 which is a contradiction with (\ref{small-algebra-2}). The proposition is proved.

 \end{proof}

 By  \cite[Theorem 1.2]{Bam}, the Gromov-Hausdorff limit $(M_\infty, \omega_\infty)=\lim_{i\rightarrow \infty} (M,\omega_i)$ has a decomposition into regular part $\mathcal R$ and singular part $\mathcal S$. The regular part is an open manifold and the restriction of $\omega_\infty$ on the regular part is smooth metric. Moreover, $\omega_i$ converges to $\omega_\infty$ smoothly on $\mathcal R$. We will compare  the regular parts of $M_\infty$ and $\tilde M_\infty$. Since $\Phi_i: M\rightarrow \tilde M_i$ are uniformly Lipshcitz \cite{WZ20}, $\Phi_i$ converges to some map $\Phi_\infty: M_\infty\rightarrow \tilde M_\infty$.

 \begin{lem}\label{regular-set}
 $\Phi_\infty(\mathcal R)={\rm Reg}(\tilde M_\infty)$ and the restriction of $\Phi_\infty$ on $\mathcal R$ is one-to-one.
 \end{lem}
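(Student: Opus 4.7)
The plan is to exhibit $\Phi_\infty|_{\mathcal R}$ as a smooth injection onto ${\rm Reg}(\tilde M_\infty)$, by bootstrapping the local bi-Lipschitz and $C^{k,\alpha}$ control provided by Lemma \ref{c3-phi} together with Bamler's smooth convergence on $\mathcal R$.

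First I would establish $\Phi_\infty(\mathcal R)\subseteq {\rm Reg}(\tilde M_\infty)$. Fix $p\in\mathcal R$; by Bamler's theorem $\omega_i\to \omega_\infty$ in $C^\infty$ on a neighborhood $U$ of $p$ in $M$. Combined with the partial $C^0$-estimate $\rho_l(M,\omega_i)\ge b>0$, standard H\"ormander $L^2$ theory (or the peak-section argument of Donaldson--Sun) gives uniform local $C^{k,\alpha}$ bounds on the orthonormal sections $s_i^\alpha$ and hence on $\Phi_i|_U$. A subsequential limit yields a smooth immersion into $\mathbb CP^N$, so $\Phi_\infty(p)$ lies on a smooth submanifold of $\mathbb CP^N$ that is contained in $\tilde M_\infty$, forcing $\Phi_\infty(p)\in {\rm Reg}(\tilde M_\infty)$.

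Next, for injectivity of $\Phi_\infty$ on $\mathcal R$: suppose $p\ne q$ in $\mathcal R$ with $\Phi_\infty(p)=\Phi_\infty(q)$, and pick GH-approximants $p_i\to p$, $q_i\to q$ in $(M,\omega_i)$. Then ${\rm dist}_{\omega_i}(p_i,q_i)\to d(p,q)>0$, whereas $\Phi_i(p_i)$ and $\Phi_i(q_i)$ both converge to $\Phi_\infty(p)$ in $\mathbb CP^N$. But (\ref{c2-estimate}) upgraded to bi-Lipschitz control on the relevant compact set $\tilde\Omega_\gamma^i$ forces ${\rm dist}_{\tilde\omega_i}(\Phi_i(p_i),\Phi_i(q_i))\ge c\, {\rm dist}_{\omega_i}(p_i,q_i)$, a contradiction. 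For the reverse inclusion $\Phi_\infty(\mathcal R)\supseteq {\rm Reg}(\tilde M_\infty)$: given $q\in {\rm Reg}(\tilde M_\infty)$ with a smooth chart $\Omega_\gamma\ni q$ and diffeomorphisms $\Psi_\gamma^i:\Omega_\gamma\to\tilde\Omega_\gamma^i$ as in Section 1, set $y_i:=\Phi_i^{-1}(\Psi_\gamma^i(q))\in M$, and extract a GH-subsequential limit $y_i\to y_\infty\in M_\infty$. By (\ref{c2-estimate})--(\ref{c3-estimate}), the pull-backs $(\Phi_i^{-1})^*\omega_{t_i}$ are uniformly $C^{k,\alpha}$-equivalent to $\tilde\omega_i$ on $\tilde\Omega_\gamma^i$, and $\tilde\omega_i$ is $C^\infty$-close to a fixed smooth reference metric near $q$. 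Hence the $\omega_i$ are uniformly $C^\infty$ in the pulled-back coordinates around $y_i$ and converge smoothly there; thus $y_\infty\in\mathcal R$ and clearly $\Phi_\infty(y_\infty)=q$.

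The delicate point is identifying the smooth-limit neighborhood produced in the surjectivity argument with an open subset of the Cheeger--Gromov regular set $\mathcal R$ of $(M_\infty,\omega_\infty)$: one must exclude the possibility that the bi-Lipschitz charts defined via $\Phi_i$ collapse or develop wormholes in the GH-limit near $y_\infty$. This is handled by invoking Bamler's characterization of $\mathcal R$ as the set of points admitting local smooth Cheeger--Gromov convergence, and combining it with the injectivity proved above (applied in a small ball around $q$) to guarantee non-collapsing and uniqueness of the limit point $y_\infty$.
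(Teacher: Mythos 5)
Your overall architecture (forward inclusion, injectivity, reverse inclusion) matches the paper's, and your reverse-inclusion and injectivity arguments are essentially in order: the paper's first paragraph is exactly your surjectivity argument, and for injectivity the two-sided bound (\ref{c2-estimate}) on a fixed good region $\tilde\Omega_\gamma^i$ does let you pull a short connecting path back to $M$ --- though you should note that your displayed inequality is not a global consequence of (\ref{c2-estimate}); it is only valid once both image points have converged into a small ball about the regular point $z$, so that a connecting path can be taken inside $\tilde\Omega_\gamma^i$ (the paper itself defers injectivity to Proposition 8.2 of \cite{JWZ}).

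The genuine gap is in the inclusion $\Phi_\infty(\mathcal R)\subseteq{\rm Reg}(\tilde M_\infty)$, which is where essentially all of the paper's work lies. Uniform local $C^{k,\alpha}$ bounds on the orthonormal sections $s_i^\alpha$ together with the partial $C^0$-estimate do \emph{not} yield that a subsequential limit of $\Phi_i|_U$ is an immersion: the bound $\rho_l\ge b>0$ only rules out common zeros of the sections, not degeneration of the differential of $\Phi_i$ at $p_i$ in the limit --- and such degeneration is precisely what would happen if $\Phi_\infty(p)$ were a singular point of $\tilde M_\infty$, so assuming it away begs the question. What is needed is a \emph{uniform lower bound on the Jacobian} of $n$ holomorphic sections at $p_i$. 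The paper obtains this by the peak-section construction: using Bamler's smooth convergence near $p\in\mathcal R$ and the triviality of $\pi_1(B_x(r))$, it builds approximate sections $\tau_i^j$ of $K_M^{-q}$ modeled on the coordinate functions $z^j$, corrects them by solving $\bar\partial\sigma_i^j=\bar\partial\tau_i^j$ with small $L^2$ norm (this requires the uniform Sobolev constant of \cite{Zh} and the eigenvalue estimate for $\bar\partial$ on $K_M^{-q}$ with $q\ge 4nC^2$ from \cite{JWZ}), and deduces that the Jacobian of $(s_i^1,\dots,s_i^n)$ at $p_i$ is bounded below independently of $i$. Since these sections live at a large power $q$ depending on the local geometry at $p$, a further step (Proposition A.2 of \cite{JWZ}, replacing $l$ by $(n+3)l$) is required to transfer the full-rank statement to the tangent map of the original embedding $\Phi_i$. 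Parenthetically naming the Donaldson--Sun peak-section argument does not substitute for carrying out these steps, and without them the conclusion $\Phi_\infty(p)\in{\rm Reg}(\tilde M_\infty)$ is unsupported.
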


 \begin{proof}
 At first, assuming that $x=\lim_{i\rightarrow \infty} p_i$ and $z=\Phi_\infty(x)\in {\rm Reg}\,(\tilde M_\infty)$, by Theorem \ref{WZ}, there is a neiborhood $V$ of $z$ and neighborhood $V_i$ of $p_i$ such that $\omega_i$ on $V_i$ converges smoothly to $\bar \omega_\infty$ on $V$. Choosing a smaller convex neighborhood $\tilde V_i$ of $p_i$, the limit of $\tilde V_i$ will be an open manifold containing $x$. Thus we get $x\in \mathcal R$. This implies that ${\rm Reg}\,(\tilde M_\infty)\subseteq \Phi_\infty(\mathcal R)$.

 Next we prove the inverse part.  By \cite{Zh}, the Sobolev constants of $\omega_i$ is uniformly bounded. So from Lemma 3.1 in \cite{JWZ}, the $L^\infty$ and gradient estimates of holomorphic sections of $H^0(M,K_M^{-q},\omega_i)$ hold for any positive integer $q$. By  Lemma \ref{lem:perelman-1}, the gradient of the Ricci potential of $\omega_i$ is uniformly bounded \cite{WZ20}. Thus  we have the  estimate of eigenvalue of operator $\bar\partial$ of $K_M^{-q}$ with respect ro metric $\omega_i$ for $q\geq 4nC^2$ by Lemma 3.3 in \cite{JWZ}.  Combined with the result of Bamler that  $\mathcal R$ is  the local $C^\infty$-limit of  $\omega_i$ in  the Cheeger-Gromov topology,  the proof of partial $C^0$ estimate  can be applied at the regular point of $M_\infty$  (cf. \cite{Ti13}, \cite{JWZ}) in the following.

 For any $x\in \mathcal R$, there is an open neighborhood $U$ of $x$ such that $\omega_\infty$ is a smooth metric on $U$. Choose intger $q$ large enough such that for $r=\frac{1}{q}>0$, there is a diffeomorphism $\phi:B_x(r)\rightarrow B_0(1)\subset \mathbb C^n$ satisfying $\phi^*(l^2\omega_\infty)$ is close to $\omega_{Euc}$ in $C^3$ sense. Denote the limit line bundle of $K_M^{-1}$ on $\mathcal R$ by $K_{\mathcal R}^{-1}$.  Becasue $\pi_1(B_x(r))$ is trvial, by Lemma 4.5 in \cite{JWZ}, without raising power, there is a section $\psi$ of $K_{\mathcal R}^{-q}\otimes L_0$ with $|D\psi|$ small, where $L_0$ is the trivial bundle $B_0(1)\times \mathbb C$ endowed with Hermitian metric $e^{-|z|^2}$. This is equivalent to a bundle map $\psi: K^{-q}_{\mathcal R}\rightarrow B_0(1)\times \mathbb C$ with $|D \psi|$ small. By Theorem 1.2 in \cite{Bam}, there is a sequence of points $p_i$ and diffeomorphism $\phi_i: (B_x(r),\omega_\infty)\rightarrow (B_{p_i}(r),\omega_{i})$ such that $\phi_i^*\omega_{i}$ converges to $\omega_\infty$ smoothly.  Let $\zeta: \mathbb R\rightarrow \mathbb R$ be a cut-off function which satisfies:
 $$ \zeta(t)=1, \text{ for } t\leq\frac{1}{2}; \zeta(t)=0, \text { for } t\geq 1; |\zeta'(t)|\leq 2. $$
  Let $z^j$ be the coordinates functions on $\mathbb C^n$, choosing $R$ large enough, then $\tau^j_i=\phi_i^{*}(\psi^*(\zeta(\frac{d(0,\cdot)}{R})z^j))$ is a smooth section of $K^{-q}_M$ with $|\bar\partial \tau_i|_{L^2}$ small. We can assume that $B_x(Rr)$ is still contained in $U$. By Lemma 3.3 in \cite{JWZ}, we can solve the $\bar\partial$ equation:
 $$\bar\partial \sigma^j_i=\bar\partial\tau^j_i,$$ such that $|\sigma^j_i|_{L^2}$ is small. Then $s^j_i=\tau^j_i-\sigma^j_i(1\leq j\leq n)$ are holomorphic sections. By the local regularity of ellipitic equation, both $|\sigma^j_i|$ and $|\nabla \sigma^j_i|$ are small near $p_i$. So the Jacobian of $(s_i^1,...,s_i^n)$ at $p_i$ is uniformly bounded from below. By Proposition A.2 in \cite{JWZ}, for $q\geq (n+2)l+2$, $H^0(M, K_M^{-q},\omega_{i})$ is contianed in $H^0(M, K_M^{-l},\omega_{i})\otimes H^0(M, K_M^{-(q-l)},\omega_{i})$. So replacing $l$ by $(n+3)l$, the tangent map of $\Phi_i$ at $p_i$ has full rank. Since the Jacobian of $(s_i^1,...,s_i^n)$ at $p_i$ is uniformly bounded from below, the tangent map of $\Phi_\infty$ at $x$ also has full rank. Thus $\Phi_\infty(x)\in {\rm Reg}\,(\tilde M_\infty)$ and $\Phi_\infty(\mathcal R)={\rm Reg}(\tilde M_\infty)$. The injectivity can be proved as Proposition 8.2 in \cite{JWZ}. The lemma is proved.
 \end{proof}

 \begin{rem}\label{soliton-case} To get  (\ref{open-iso}), we shall use   Lemma \ref{regular-set}, whose proof  depends on a deep result of Bamler  \cite[Theorem 1.2]{Bam}.   Lemma \ref{regular-set} is also proved in  \cite{WZ20}  for  two special cases of $( M_\infty,\omega_\infty)$:  $( M_\infty,\omega_\infty)$  is  a smooth KR soliton   (cf. Lemma \ref{smoothcase-lemma} below), or   a singular KE metric
 (cf.  (4.31) in \cite{WZ20}).
 \end{rem}

   \section{Uniqueness of soliton VFs }

   For a   Q-Fano variety $\tilde M_\infty$,  we   denote ${\rm Aut}(\tilde M_\infty)$  to be a subgroup of ${\rm SL}(N+1;\mathbb C)$ whose element as  an action fixes  $M_\infty $.   In this section,  we show that  the soliton  VF associated  to $\tilde M_\infty\in \mathcal C_0$ is unique.  First we have

   \begin{lem}\label{vec1}
   Let $\tilde M^1_\infty$ and $\tilde M^2_\infty\in \mathcal C_0$ be two  limits  and $X^1, X^2$ be the corresponding soliton VF. Assume that there is a complex torus $T\subset {\rm  Aut}(\tilde M_\infty^1)\bigcap {\rm Aut}(\tilde M_\infty^2)$ such that $X^1, X^2\in  {\rm Lie}(T)$. Then $X^1=X^2$.
   \end{lem}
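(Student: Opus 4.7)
My plan is to characterize the soliton vector field on each limit variationally, using a Tian-Zhu-type strictly convex functional on the Lie algebra of the common torus $T$, and then argue that this functional coincides for the two limits $\tilde M_\infty^1$ and $\tilde M_\infty^2$. Extending the Tian-Zhu characterization to the Q-Fano setting (as done by Berndtsson/Berman in the klt case), the soliton VF $X^i\in\mathrm{Lie}(T_\mathbb R)$ is the unique critical point of the strictly convex functional
\[
\Phi^i(\xi)=\int_{\tilde M_\infty^i}e^{\theta_\xi^i}\,(\hat\omega_\infty^i)^n,\qquad \xi\in\mathrm{Lie}(T_\mathbb R),
\]
where $\theta_\xi^i$ is the normalized holomorphic Hamiltonian of $\xi$ for $\hat\omega_\infty^i$. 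By the Duistermaat-Heckman theorem on $(\tilde M_\infty^i,K^{-1}_{\tilde M_\infty^i})$, the functional $\Phi^i$ is the Laplace transform of the Duistermaat-Heckman measure of the polarized $T$-action, and is therefore determined entirely by the asymptotic $T$-character on $\bigoplus_k H^0(\tilde M_\infty^i,K^{-k})$.

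Thus it suffices to show that the $T$-characters of these two graded algebras coincide. Both $\tilde M_\infty^i$ arise as Chow limits of $\Phi_{t_k}(M)\subset \mathbb{CP}^N$ along the KR flow (with the fixed initial metric), and $T\subset \mathrm{SL}(N+1,\mathbb{C})$ acts on both limits after suitable conjugation by the group elements of (\ref{group-g}). Since $\dim H^0(\tilde M_\infty^i,K^{-k})$ is preserved under flat degeneration and the $T$-weight decomposition is upper-semicontinuous with a fixed total dimension, the characters necessarily agree. Consequently $\Phi^1\equiv \Phi^2$ on $\mathrm{Lie}(T_\mathbb R)$, and the strict convexity of this common functional forces its unique critical points to coincide, i.e.\ $X^1=X^2$.

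The main obstacle I anticipate is the matching step: verifying that the two embeddings $T\hookrightarrow \mathrm{SL}(N+1,\mathbb{C})$ attached to the two Chow limits can be compared in such a way that the $T$-characters are genuinely the same. This will exploit the conjugacy of algebraic tori inside a common reductive subgroup of $\mathrm{SL}(N+1,\mathbb C)$, together with the continuity of the Chow limit along the flow established in Section~1 (in particular Proposition~\ref{two-topology}), which lets one propagate the equivariant data through the degeneration.
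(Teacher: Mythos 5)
Your proposal is correct and follows essentially the same route as the paper: there too, $X^i$ is characterized as the unique critical point of the proper convex Tian--Zhu functional $f^i(Z)=\int_{\tilde M^i_\infty}e^{\theta^i_Z}(\tilde\omega^i_\infty)^n$ on ${\rm Lie}(T)$ (via Proposition \ref{center}), and $f^1=f^2$ is obtained by identifying $f^i$ with the leading term of the equivariant Riemann--Roch expansion of ${\rm Tr}(e^{\frac1k X})$ on $H^0(\tilde M^i_\infty,K^{-lk}_{\tilde M^i_\infty})$, these traces agreeing because $K^{-1}_{\tilde M^i_\infty}$ is the restriction of $\frac{1}{l}\mathcal O(1)$ on both Chow limits. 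Your Duistermaat--Heckman phrasing is just the same asymptotic-character computation, and the character-matching step you flag as the main obstacle is exactly the (equally brief) matching assertion in the paper's argument.
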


   \begin{proof}
   Denote the restriction of $\frac{\omega_{FS}}{l}$ on $\tilde M_\infty^i$ by $\tilde \omega_\infty^i$ and the potential of $Z\in {\rm  Lie}(T)$ with respect to $ \tilde \omega_\infty^i$ by $ \theta^i_Z$ ( $i=1,2$), which satisfies
   $$i_Z( \tilde\omega_\infty^i)=\sqrt{-1}\bar \partial  \theta^i_Z,~\int_ {M_\infty^i}  \theta^i_Z e^{h^i} ( \tilde\omega_\infty^i)^n=0,$$
   where $h^i$  is a Ricci potential of  $\tilde\omega_\infty^i$.
   As in the proof of Lemma 2.1 in \cite{TZ2}, consider the function $f^i$ on ${\rm Lie}(T)$ defined by
    $$f^i(Z)=\int_{\tilde M^i_\infty} e^{ \theta^i_Z}(\tilde \omega^i_\infty)^n.$$
     Then by the proof of Proposition \ref{center}, $f^i$ is a proper function on ${\rm Lie}(T)$ and $X^i$ is the unique point such that $\nabla f^i(X^i)=0$. We claim that $f^1=f^2$ as a function on ${\rm Lie}(T)$. By  the equivariant Riemann-Roch Theorem (cf. \cite{WZZ}), we have
 $${\rm Tr }(e^{\frac{1}{k}X})|_{K^{-k}_{\tilde M^i_\infty}}=\int_{\tilde M^i_\infty} e^{k(\tilde\omega^i_\infty+\frac{1}{k}\theta_Z^i)}(\tilde\omega_\infty^i)^n+O(k^{n-1}).$$
The leading term on the right hand side is $k^n\int_{\tilde M^i_\infty}  e^{\tilde \theta^i_Z}\frac{(\tilde \omega^i_\infty)^n}{n!}$. Since $K^{-1}_{\tilde M^i_\infty}$ is the restriction of $\frac{1}{l}\mathcal O(1)$, ${\rm Tr }(e^{\frac{1}{k}X})|_{K^{-lk}_{\tilde M^i_\infty}}$ are the same for $i=1,2$. The claim is proved and it follows that $X^1=X^2$.
   \end{proof}

   Fix a maximal torus $(\mathbb C^*)^N\subset {\rm SL}(N+1;\mathbb C)$. Let $X$ be a soliton VF of  $(\tilde M_\infty,\hat\omega_\infty) \in \mathcal C_0$.   By \cite[Lemma 4.4]{WZ20},  we know that $X$ can be extended to an element in ${\rm sl}(N+1;\mathbb C)$.  Then there is a maximal complex torus $T\subset {\rm Aut}_r(\tilde M_\infty)$ such that $X\in {\rm Lie}(T)$.  Since $T$ can be conjugated to lie inside $(\mathbb C^*)^N$ by a unitary matrix, $X$ is mapped to a VF in ${\rm Lie}(\mathbb C^*)^N\cong \mathbb C^N$. The image of $X$ is unique up to the action of Weyl group $S_N$. Thus we get a map $vec: \mathcal C_0\rightarrow \mathbb C^N/S_N$. In the following, we show that the map  induces  a natural topology of $\{X\}$ by $[ \mathcal C_0]$.

 Let $\{[\tilde M_\infty^i]\}$ be  a sequence in  $[\mathcal C_0]$ which converges to $[\tilde M_\infty']$.  Let $X^i$ and  $X'_\infty$ be soliton VFs associated to $(\tilde M_\infty^i, \hat \omega_\infty^i)$ and $(\tilde M_\infty', \hat\omega_\infty')$, respectively.  Then we prove

   \begin{prop}\label{continuity-vf}$X^i$ converges to $X_\infty'$ as $[\tilde M_\infty^i]$ converges to $[\tilde M_\infty']$.
   \end{prop}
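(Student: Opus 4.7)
The plan is to characterize each soliton VF as the unique critical point of the proper strictly convex functional that appeared in the proof of Lemma \ref{vec1}, and to upgrade the equivariant Riemann--Roch identity used there into a statement of continuity with respect to the Chow point. A standard subsequence/minimizer argument will then force $X^i \to X'_\infty$.

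The first step is to place all $X^i$ and $X'_\infty$ inside a single Cartan subalgebra $\mathfrak{t} \subset \mathfrak{sl}(N+1,\mathbb{C})$. Each $X^i$ sits in the complexified Lie algebra of a maximal compact torus $K_i \subset {\rm Aut}(\tilde M^i_\infty)\cap U(N+1)$; conjugating $\tilde M^i_\infty$ by a suitable $U_i \in SU(N+1)$ moves $K_i$ into the standard maximal torus, so $X^i \in \mathfrak{t}$. By compactness of $SU(N+1)$ we may pass to a subsequence with $U_i \to U_\infty$; the new sequence $U_i \cdot \tilde M^i_\infty$ still Chow-converges, to $U_\infty \cdot \tilde M'_\infty$, and after a final unitary conjugation of the limit we may assume $X'_\infty \in \mathfrak{t}$ as well. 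Uniform $C^0$ bounds on the Hamiltonian potentials $\theta^i_{X^i}$ of the soliton VFs, coming from Perelman's estimates (Lemma \ref{lem:perelman-1}) together with the partial $C^0$-estimate in Lemma \ref{c3-phi}, then give $|X^i|\le C$ in $\mathfrak{t}$.

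Next I would show that
$$ f^i(Z) \;=\; \int_{\tilde M^i_\infty} e^{\theta^i_Z}(\tilde\omega^i_\infty)^n $$
(defined on $\mathfrak{t}$ with the normalization from the proof of Lemma \ref{vec1}) converges locally uniformly on $\mathfrak{t}$ to $f'_\infty$. By the equivariant Riemann--Roch formula recalled in Lemma \ref{vec1}, $f^i(Z)$ equals (up to the factor $n!$) the leading coefficient of $k^{-n}\,{\rm Tr}(e^{Z/k})|_{H^0(\tilde M^i_\infty,\, K^{-lk})}$; for each fixed $k$ and $Z \in \mathfrak{t}$ this trace is a continuous function of the Chow point $[\tilde M^i_\infty] \in W_N$, so it converges to the corresponding trace on $\tilde M'_\infty$. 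Uniform control of the $O(k^{n-1})$ Riemann--Roch remainder in $i$, inherited from the uniform regularity of the Kodaira embeddings in Lemma \ref{c3-phi}, allows one to interchange the $k\to\infty$ and $i\to\infty$ limits and gives $f^i(Z)\to f'_\infty(Z)$ pointwise; equicontinuity of $\{f^i\}$, from uniform $C^0$ bounds on $\theta^i_Z$ over compact subsets of $\mathfrak{t}$, upgrades this to locally uniform convergence.

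By Tian--Zhu each $f^i$ is strictly convex and proper on $\mathfrak{t}$ with unique minimum $X^i$, and the properness is uniform in $i$; the locally uniform convergence $f^i\to f'_\infty$ therefore forces any subsequential limit of $\{X^i\}$ to be the minimum of $f'_\infty$, namely $X'_\infty$. The principal difficulty lies in verifying the uniform Chow-continuity of the entire equivariant Hilbert series (not merely its leading term in $k$), so that the $i\to\infty$ and $k\to\infty$ limits can be safely exchanged; this relies decisively on the uniform partial $C^0$-estimate and the uniform geometric estimates on the singular Fano limits established in Section~1.
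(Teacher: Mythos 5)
Your proposal takes a genuinely different route from the paper: you try to characterize each $X^i$ variationally as the minimizer of the Tian--Zhu functional $f^i$ and pass the minimizers to the limit via locally uniform convergence of convex functionals, whereas the paper's proof is a PDE argument. The paper first extracts a subsequential limit $X'$ of the $X^i$ from the uniform bound $|X^i|_{\omega_{FS}(\mathbb{CP}^N)}\le C$ (Perelman plus the partial $C^0$-estimate, as you also use), then passes to the limit in the complex Monge--Amp\`ere equations (\ref{limitsol}) using Lemma \ref{c3-phi} and Evans--Krylov to conclude that the limit metric is a KR soliton on $\tilde M_\infty'$ with soliton field $X'$, and finally invokes the uniqueness theorem for KR solitons to produce a unitary $\sigma$ with $\sigma_*X'=X_\infty'$. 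Your route avoids the uniqueness theorem but, as written, has a genuine gap.

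The gap is in the domain of the functionals and in the identification of the limiting critical point. The potential $\theta^i_Z$ solving $i_Z\tilde\omega^i_\infty=\sqrt{-1}\bar\partial\theta^i_Z$ exists only for $Z$ tangent to $\tilde M^i_\infty$, and ${\rm Tr}(e^{Z/k})|_{H^0(\tilde M^i_\infty,K^{-lk})}$ only makes sense for $Z$ preserving the homogeneous ideal of $\tilde M^i_\infty$; so $f^i$ is defined not on all of $\mathfrak t$ but on the varying subspaces $\mathrm{Lie}(T_i)\subset\mathfrak t$, and the claimed Chow-continuity of the equivariant Hilbert series is ill-posed for a general $Z\in\mathfrak t$. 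Restricting to these subspaces, your argument at best identifies the subsequential limit $X'$ of $\{X^i\}$ as the unique critical point of $f_\infty'$ restricted to $V=\lim_i\mathrm{Lie}(T_i)$. That critical point equals $X_\infty'$ only if $X_\infty'\in V$: since $\dim\mathrm{Aut}_0$ can jump up in the limit, $V$ need not be (the Lie algebra of) a maximal torus of $\mathrm{Aut}_r(\tilde M_\infty')$, and if $X_\infty'\notin V$ then strict convexity forces the minimizer of $f_\infty'|_V$ to differ from $X_\infty'$. But showing $X_\infty'\in V$ is essentially the statement you are trying to prove, so the argument is circular at the decisive step. Some additional input is needed to close it --- precisely what the paper supplies by proving that the limit metric is a genuine KR soliton with respect to $X'$ and then applying the uniqueness of KR solitons on $\tilde M_\infty'$.
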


   \begin{proof}
   Let $\tilde\omega_\infty^i$ be the restriction of $\frac{1}{l}\omega_{FS}$ on $\tilde M_\infty^i$. Writing $\omega_\infty^i=\tilde \omega_\infty^i+\sqrt{-1}\partial \bar\partial \phi_\infty^i$, we have
   \begin{align}\label{limitsol}
   (\tilde \omega_\infty^i+\sqrt{-1}\partial \bar\partial \phi_\infty^i)=e^{\tilde h^i_\infty-X^i(\phi_\infty^i)-\tilde \theta_\infty^i-\phi_\infty^i}(\tilde \omega_\infty^i)^n,
   \end{align}
   where $\tilde\theta_\infty^i$ is the potential of $X^i$ with respect to $\tilde \omega_\infty^i$. By the partial $C^0$ estimate, we have
   $$\tilde \omega_\infty^i\leq C(\tilde \omega_\infty^i+\sqrt{-1}\partial \bar\partial \phi_\infty^i),~ |\phi_\infty^i|\leq C$$
    for a uniform constant $C>0$.  On the other hand, by Lemma \ref{lem:perelman-1},  we know that $|X^i|_{\omega_\infty^i}$ is uniformly bounded. Since each $X^i$ can be extended to an element in ${\rm sl}(N+1;\mathbb C)$ \cite[Lemma 4.4]{WZ20},  $|X^i|_{\omega_{FS}(\mathbb CP^N)}\le C$.  Thus by taking a  subsequence, we get a limit $X'$ such that  $\lim_{i\rightarrow \infty}X^i=X'$.

    As in Section 2, we can choose exhausting open sets $\Omega_\gamma\subset {\rm Reg}(\tilde M_\infty)$ and $\Omega_\gamma^i\subset  {\rm Reg}(\tilde M_\infty^i)$ converging to $\Omega_\gamma$. By $\tilde \omega_\infty^i\leq C(\tilde \omega_\infty^i+\sqrt{-1}\partial \bar\partial \phi_\infty^i)$ and boundedness of right hand side in (\ref{limitsol}), we get $|\phi_\infty^i|_{C^{1,\alpha}(\Omega_\gamma^i)}\leq C_\gamma$ for  some constant $C_\gamma$. In particular,  $|X^i(\phi^i_\infty)|_{C^{\alpha}(\Omega_\gamma^i)}$ is bounded. By Evans-Krylov's theory,  there is a constant $A_\gamma$ such that $|\phi_\infty^i|_{C^{3,\alpha}(\Omega_\gamma^i)}\leq A_\gamma$.  Then  $\phi_\infty^i$ converges locally  to a function $\phi_\infty'$ on $M_\infty$. Denoting the restriction of $\frac{1}{l}\omega_{FS}$ by $\tilde\omega_\infty$ and the potential of $X'$ with respect to  $\tilde\omega_\infty$ by $\tilde \theta'_\infty$, we have
    \begin{align}
   (\tilde \omega_\infty+\sqrt{-1}\partial \bar\partial \phi'_\infty)=e^{\tilde h_\infty-X'(\phi'_\infty)-\tilde \theta'_\infty-\phi'_\infty}(\tilde \omega_\infty)^n.
   \end{align}
  Thus  $\tilde \omega_\infty+\sqrt{-1}\partial \bar\partial \phi'_\infty$ is a KR soliton on $\tilde M_\infty$. By the uniqueness of KR  soliton, there is a $\sigma\in {\rm Aut}(\tilde M_\infty)$ such that $\sigma^*\omega_\infty'=\tilde \omega_\infty+\sqrt{-1}\partial \bar\partial \phi'_\infty$. Since the restriction of standard sections of $\mathcal O(1)$ are othor-normal basis of $H^0(\tilde M_\infty, K_{\tilde M_\infty}^{-l})$ with respect to $\omega_\infty$ and $\tilde \omega_\infty+\sqrt{-1}\partial \bar\partial \phi'_\infty$, $\sigma$ is a unitary matrix. Hence,  we have $\sigma_* X'= X_\infty'$ and $\lim_{i\rightarrow \infty}vec(\tilde M_\infty^i)=vec(\tilde M_\infty)$.
   \end{proof}

   \begin{cor}\label{vec}
   The image of $vec$ is a single point, which means that the soliton VF for all the limits is unique up to conjugation of $U(N+1;\mathbb C)$.
   \end{cor}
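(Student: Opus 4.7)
My plan for Corollary \ref{vec} is to combine three ingredients: the continuity of $vec$ from Proposition \ref{continuity-vf}, the connectedness of $[\mathcal C_0]$ from Lemma \ref{connected}, and the rigidity from Lemma \ref{vec1}. First, $vec$ descends to a continuous map $[\mathcal C_0]\to \mathbb C^N/S_N$: the image depends only on the Chow point $[\tilde M_\infty]$ and is invariant under unitary conjugation of $\tilde M_\infty\subset \mathbb CP^N$, while Proposition \ref{continuity-vf} provides the continuity in the form $\lim_i vec(\tilde M^i_\infty) = vec(\tilde M'_\infty)$ whenever $[\tilde M^i_\infty]\to[\tilde M'_\infty]$. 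Since $[\mathcal C_0]$ is compact and connected by Lemma \ref{connected}, $vec([\mathcal C_0])$ is a compact connected subset of $\mathbb C^N/S_N$, and it remains to upgrade this to a single point.

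To do this I will show that $vec$ is locally constant. Take any convergent sequence $[\tilde M^i_\infty]\to[\tilde M'_\infty]$ with soliton VFs $X^i$ and $X'_\infty$. By the proof of Proposition \ref{continuity-vf}, after conjugation by some $\sigma\in {\rm Aut}(\tilde M'_\infty)$, the sequence $X^i$ converges to $X'_\infty$ in $\mathfrak{sl}(N+1,\mathbb C)$. Since each $X^i$ is semisimple (being a soliton VF) and the limit $X'_\infty$ lies in a fixed maximal torus $T^*\subset {\rm SL}(N+1,\mathbb C)$ containing the torus generated by $X'_\infty$, I can find $g_i\to e$ in ${\rm SL}(N+1,\mathbb C)$ such that $g_iX^ig_i^{-1}\in {\rm Lie}(T^*)$. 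The conjugated variety $g_i\cdot \tilde M^i_\infty$ is then acted on by the one-parameter subgroup generated by $g_iX^ig_i^{-1}$, and $\tilde M'_\infty$ is acted on by the one generated by $X'_\infty$, both one-parameter subgroups living in $T^*$. If I can produce a single complex subtorus $T\subset T^*$ containing both $g_iX^ig_i^{-1}$ and $X'_\infty$ in its Lie algebra and acting on both varieties simultaneously, then Lemma \ref{vec1} forces $g_iX^ig_i^{-1}=X'_\infty$, which is exactly $vec(\tilde M^i_\infty)=vec(\tilde M'_\infty)$. Combined with the connectedness of $[\mathcal C_0]$ this yields the global constancy claimed.

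The main obstacle is constructing this common algebraic torus acting on both varieties: a priori the stabilizers of $g_i\cdot \tilde M^i_\infty$ and $\tilde M'_\infty$ inside $T^*$ are only close, not equal, and $T^*$-invariance of the subvariety is a genuine algebraic condition. My plan to overcome this is to exploit the Hausdorff continuity of stabilizers of Chow points: since $g_i\cdot[\tilde M^i_\infty]\to [\tilde M'_\infty]$ in $W_N$, the closed algebraic subgroups ${\rm Aut}(g_i\cdot\tilde M^i_\infty)\cap T^*$ accumulate inside ${\rm Aut}(\tilde M'_\infty)\cap T^*$, and so for $i$ large a common algebraic subtorus whose Lie algebra contains both $g_iX^ig_i^{-1}$ and $X'_\infty$ must exist and act on both varieties. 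Granting this, Lemma \ref{vec1} applies directly and concludes the proof. I expect this torus-matching step to be the technical core; everything else is a direct consequence of the results already established in this section.
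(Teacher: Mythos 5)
Your first paragraph (continuity of $vec$ via Proposition \ref{continuity-vf} plus connectedness of $[\mathcal C_0]$ from Lemma \ref{connected}, so that the image is connected) matches the paper. The problem is the second half: your reduction of the corollary to ``$vec$ is locally constant'' hinges entirely on the torus-matching step, and the plan you give for it does not work. Semicontinuity of stabilizers along $g_i\cdot[\tilde M^i_\infty]\to[\tilde M'_\infty]$ only says that \emph{limits} of automorphisms of $g_i\cdot\tilde M^i_\infty$ are automorphisms of $\tilde M'_\infty$; it does not produce a single algebraic subtorus $T\subset T^*$ that simultaneously preserves $g_i\cdot\tilde M^i_\infty$ and $\tilde M'_\infty$ and contains both $g_iX^ig_i^{-1}$ and $X'_\infty$ in its Lie algebra. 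The obstruction is that the subtorus generated by a vector field depends \emph{discontinuously} on the vector field: in $(\mathbb C^*)^2$, for instance, $(1,\alpha)$ generates a one-dimensional subtorus for rational slope and the whole torus otherwise, so the tori $T_i=\overline{\{\exp(s\,g_iX^ig_i^{-1})\}}$ need not stabilize, need not contain $X'_\infty$, and need not act on $\tilde M'_\infty$ even for large $i$. Worse, if the common torus you want did exist, Lemma \ref{vec1} would immediately give $g_iX^ig_i^{-1}=X'_\infty$, so its existence is essentially equivalent to the conclusion you are trying to prove; the ``accumulation'' heuristic is assuming the answer.

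The paper closes exactly this gap by a countability argument rather than local constancy: the value of $vec$ depends only on the subtorus of the fixed maximal torus $(\mathbb C^*)^N$ obtained by conjugating the closure of the one-parameter group generated by the soliton VF (two limits whose conjugated tori coincide have equal VFs by Lemma \ref{vec1}, since that common subtorus then lies in both automorphism groups and contains both VFs). Since a subtorus of $(\mathbb C^*)^N$ is determined by a sublattice of $\mathbb Z^N$, there are only countably many of them, so the image of $vec$ is countable; a countable connected subset of $\mathbb C^N/S_N$ is a single point. If you want to keep your framework, replace the local-constancy step by this observation --- it is precisely the discreteness input that your torus-matching was implicitly trying to manufacture.
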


   \begin{proof}
   For any $M_\infty \in\mathcal C$, the maximal torus $T$ generated by the sotilon VF $X$ can be conjugated to a subtorus in $(\mathbb C^*)^N$. If two such subtorus are the same, by Lemma \ref{vec1}, the two soliton VF  are equal. Because there are countably many subtorus of $(\mathbb C^*)^N$, the image of $vec$ is countable. By Lemma \ref{connected}, the image of $vec$ is connected and must be a single point.
   \end{proof}

  \section{ GIT-figure and  uniqueness of $\tilde M_\infty$ }

 In this section, we   use GIT to prove the uniqueness of algebraic structure $\tilde M_\infty$  in case of reductive ${\rm Aut}_0(\tilde M_\infty)$.  We first prove  a version of Luna's slice lemma.

  \begin{lem}\label{slice}
  Let $V$ be a representation vector space of ${\rm SL}(N+1;\mathbb C)$. Assume that the identity component of the stabilizer of $[v_0]\in \mathbb P(V)$, denoted by $G$  is reductive. Then there is a projective subspace $\mathbb P_1$ containing $[v_0]$ and a neighborhood $U$ of $[v_0]$ in $\mathbb P(V)$ such that the following holds:

  1)  $\forall x \in U$, ${\rm SL}(N + 1;\mathbb C)\cdot x$ intersects with $\mathbb P_1; $

   2) every component of ${\rm SL}(N + 1;\mathbb C)\cdot x \bigcap \mathbb P_1\bigcap U$ is a $G$ orbit.

  \end{lem}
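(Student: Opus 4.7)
The plan is to treat Lemma \ref{slice} as a projective analogue of Luna's étale slice theorem, whose proof in this setting boils down to the inverse function theorem applied to a slice whose existence is guaranteed by reductivity. First I would denote by $G_0 \subset \mathrm{SL}(N+1;\mathbb C)$ the full stabilizer of $[v_0]$; its identity component is $G$, and since $G_0/G$ is finite and $G$ is reductive, $G_0$ is itself reductive. Because $G_0$ preserves the line $\mathbb C v_0$, reductivity yields a $G_0$-invariant splitting $V = \mathbb C v_0 \oplus V'$; canonically $V' \cong T_{[v_0]}\mathbb P(V)$ as $G_0$-representations.

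Next I would analyze the orbit tangent. The differential of $g \mapsto g\cdot[v_0]$ at the identity sends $X \in \mathfrak{sl}(N+1;\mathbb C)$ to $X \cdot v_0 \pmod{\mathbb C v_0}$, with image $T_{[v_0]}(\mathrm{SL}(N+1;\mathbb C)\cdot[v_0])$, a $G_0$-invariant subspace of $T_{[v_0]}\mathbb P(V)$. Reductivity of $G_0$ again produces a $G_0$-invariant complement $W$, which I lift to a subspace of $V'$, setting $V_1 := \mathbb C v_0 \oplus W$ and $\mathbb P_1 := \mathbb P(V_1)$. Since $V_1$ is $G_0$-invariant, so is $\mathbb P_1$. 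The key construction is then the associated-bundle map
\[
\Psi: \mathrm{SL}(N+1;\mathbb C) \times_{G_0} \mathbb P_1 \to \mathbb P(V), \qquad [g,x] \mapsto g \cdot x,
\]
whose differential at $[e,[v_0]]$ sends $(X,w) \mapsto X\cdot v_0 + w \pmod{\mathbb C v_0}$. By our splitting $T_{[v_0]}\mathbb P(V) = T_{[v_0]}(\mathrm{SL}\cdot[v_0]) \oplus W$ this is a linear isomorphism, and a dimension count confirms that source and target have equal dimension. Hence $\Psi$ is a biholomorphism from an open neighborhood $\widetilde U$ of $[e,[v_0]]$ onto an open neighborhood $U$ of $[v_0]$ by the inverse function theorem.

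Finally I would derive the two conclusions. Surjectivity of $\Psi|_{\widetilde U}$ shows that every $x \in U$ equals $g \cdot y$ for some $y \in \mathbb P_1$ close to $[v_0]$, yielding (1). For (2), if $x, g\cdot x \in \mathbb P_1 \cap U$ with $x$ close to $[v_0]$, then $[g,x]$ and $[e,g\cdot x]$ both lie in $\widetilde U$ and have the same image $g \cdot x$; injectivity of $\Psi$ forces $[g,x]=[e,g\cdot x]$, so $g \cdot x = h \cdot x$ for some $h \in G_0$, and the connected components of the intersection are accordingly $G$-orbits. The main obstacle I anticipate is securing a genuinely \emph{$G_0$-equivariant} complement $W$ to the orbit tangent inside $T_{[v_0]}\mathbb P(V)$; without the reductivity hypothesis on $G$ no such complement need exist, and the resulting slice $\mathbb P_1$ would fail to be $G_0$-stable, invalidating the identification of $\Psi$ as a map on an associated fiber bundle. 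Once this equivariant splitting is in hand, the rest is a routine application of the inverse function theorem.
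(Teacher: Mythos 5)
Your construction of the slice $\mathbb P_1=\mathbb P(\mathbb C v_0\oplus W)$ and your proof of part 1) via the associated-bundle map $\Psi:\mathrm{SL}(N+1;\mathbb C)\times_{G_0}\mathbb P_1\to\mathbb P(V)$ are sound and essentially equivalent to the paper's argument, which instead applies the implicit function theorem to $f(X,x)=\mathrm{pr}(\exp(X)\cdot x)$ on $\mathfrak p\times V$, where $\mathfrak{sl}(N+1;\mathbb C)=\mathfrak g\oplus\mathfrak p$ is the reductive splitting. The equivariant complement $W$ that you worry about is obtained in both proofs the same way, from reductivity of the stabilizer.

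However, your proof of part 2) has a genuine gap. You assert that if $x,\,g\cdot x\in\mathbb P_1\cap U$ with $x$ close to $[v_0]$, then \emph{both} $[g,x]$ and $[e,g\cdot x]$ lie in the neighborhood $\widetilde U$ on which $\Psi$ is injective. The second point does, after shrinking $U$, but the first need not: $[g,x]$ is close to $[e,[v_0]]$ in $\mathrm{SL}(N+1;\mathbb C)\times_{G_0}\mathbb P_1$ only when $g$ lies in a small neighborhood of $G_0$, whereas here $g$ is an arbitrary group element whose image $g\cdot x$ happens to land back in the slice. Indeed, if your argument were valid it would show that each $\mathrm{SL}(N+1;\mathbb C)$-orbit meets $\mathbb P_1\cap U$ in a \emph{single} $G_0$-orbit, which is stronger than the lemma (whose conclusion is deliberately phrased component by component precisely because an orbit can re-enter the slice far from $G_0\cdot x$). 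Local injectivity of $\Psi$ only yields the statement for $g$ near $G_0$; to pass to a whole connected component of $\mathrm{SL}(N+1;\mathbb C)\cdot x\cap\mathbb P_1\cap U$ you need a continuation argument along a path in that component, lifting it to the group and applying the local statement at each time. This is exactly what the paper supplies: it shrinks $U$ so that $\iota_x(\mathfrak p)$ is transversal to $T_x\mathbb P_1$ at \emph{every} $x\in U$, writes a path in the intersection as $\gamma_t=\exp(X_t)\tau\cdot x$, and deduces from transversality that the velocity always points in $\mathfrak g$-directions, so the path stays in one $G$-orbit. Your approach can be repaired the same way, but as written the key injectivity step is applied outside its domain of validity.
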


  \begin{proof}
  Since $G$ is reductive, we can decompose $V$ into $V=\mathbb C v_0\oplus V_1$ as a representation space of $G$,  where  $V_1$ can be identified with $T_{[v_0]} \mathbb P(V)$. Note that  $[v_0]$ is fixed by $G$. Then $G$ induces an action on $T_{[v_0]}$ which is the same as the action on $V_1$. For any $x\in \mathbb P(V)$, the map
  ${\rm SL}(N+1;\mathbb C)\rightarrow \mathbb P(V)$ by
  $$ {\rm SL}(N+1;\mathbb C)\rightarrow g\cdot x$$
   induces the tangential linear map $\iota_x: sl(N+1;\mathbb C)\rightarrow T_x \mathbb P(V)$.
    At $[v_0]$, we have  ${\rm Ker}(\iota_{[v_0]})=\mathfrak g$, where $\mathfrak g$ is the Lie algebra of $G$.

  Consider the representation of $G$ on the Lie algebra  ${\rm sl}(N+1;\mathbb C)$ of ${\rm SL}(N+1;\mathbb C)$.  Since $G$ is reductive, we have
  $${\rm sl}(N+1;\mathbb C)=g\oplus \mathfrak p,$$
   where $\mathfrak p$ is another representation of $G$. By the fact  ${\rm Ker}(\iota_{[v_0]})=\mathfrak g$, we get $p\cong \iota_{[v_0]}(p)$. Then we have a  decomposition   $T_{[v_0]} \mathbb P(V)=\iota_{[v_0]}(\mathfrak p)\oplus W$ for some subspace $W\subset V_1$. Thus
    $$V=\mathbb C v_0\oplus \iota_{[v_0]}(\mathfrak p)\oplus W$$
    and  $W\subset V$ is an invariant subspace as the representation $G$. For simplicity, we set
    \begin{align}\label{P1}\mathbb P_1=\mathbb P(\mathbb C v_0\oplus W).
    \end{align}

  Let   $f:\mathfrak p\times V\rightarrow \iota_{[v_0]}(\mathfrak p)$ be a map given by
  $$f(X,x)={\rm pr}(\exp(X)\cdot x),$$
    where ${\rm pr}$ means the projection onto ${\iota_{[v_0]}(\mathfrak p)}$. Clearly, $f(0, v_0)=0$ and the tangential map $\frac{\partial f}{\partial X}$ at $(0, v_0)$ is $id$. Then by  the  implicit function theorem, there is a neighborhood $\tilde U_1$ in $V$ of $v_0$ such that $\forall x\in \tilde U_1 $, we have ${\rm pr}(\sigma\circ x)=0$ for some $\sigma=\exp (X)$ near the identity. This is equivalent to that
    $$\sigma\cdot x\in \mathbb C v_0\oplus W.$$
     Denoting the image of $\tilde U_1$ in $\mathbb P(V)$ by $U_1$, 1) in Lemma \ref{slice}  is proved by taking $U_1$ as the neighborhood of $[v_0]$ in $\mathbb P(V)$.

     In order to prove 2),  we shrink $U_1$  a little bit. Since $\iota_{[v_0]}(\mathfrak p)$ is transversal to $T_{v_0}(\mathbb P_1)=W$, we can choose $U_2\subset U_1$ such that $\forall x\in U_2$, $\iota_x(\mathfrak p)$ is transversal to $T_x(\mathbb P_1)$:
  \begin{align}\label{trans}
  T_x\mathbb P(V)=\iota_x(\mathfrak p)\oplus T_x(\mathbb P_1) \text{ as vector spaces.}
  \end{align}
  By 1),  we can also choose $U\subset U_2$ such that $\forall x\in U$ there is some $\sigma\in {\rm SL}(N+1;\mathbb C)$ satisfying $\sigma\cdot x \in \mathbb P_1\bigcap U_2.$ Thus for any $x\in U$, we know that
    \begin{align}\label{g-case}G\cdot( \sigma\cdot x) \subset \mathbb P_1.
    \end{align}

  Let
   $$\gamma_t \subset {\rm SL}(N + 1;\mathbb C)\cdot x \bigcap \mathbb P_1\bigcap U,~t\in(-\epsilon,\epsilon)$$
   be a  path  with  $\gamma_0=\tau\circ x\in  \mathbb P_1\bigcap U$ for some $\tau\in {\rm SL}(N+1;\mathbb C)$. Then we can write $\gamma_t=\exp(X_t)\tau\cdot x$ for $t\in(-\delta,\delta)$ with small $\delta$. We need to show  that $X_t\in \mathfrak g$ and so  $\gamma_t\subset G(\tau\cdot x)$, which finishes the proof of 2). To prove $X_t\in \mathfrak g$, we only need to consider the case of $t=0$.  In fact, it is easy to see that
   $$\gamma'(0)=\iota_{\tau \cdot x}(X_0)\in T_{\tau \cdot x}(\mathbb P_1).$$
   Note that $\tau\cdot x \in U\subset U_2$.  Thus  by (\ref{trans}) and the fact ({\ref{g-case}}), we conclude  that $X_0\in \mathfrak g$. The lemma is proved.
  \end{proof}

  \begin{rem}
  In general, $SL(N + 1;\mathbb C)\cdot x \bigcap \mathbb P_1\bigcap U$ may consists of discrete points. In this case, if $\tau \cdot x \in \mathbb P_1\bigcap U$, then $\tau \cdot x$ is fixed by $G$.
  \end{rem}

  The following is elementary fact.

  \begin{lem}\label{orb}
  Let  $G$ be a reductive group and $V$  a representation space of $G$. Assume  that  $0\in \overline{G\cdot x}$ and $y\in \overline{G\cdot x}$ for some $x,y\in V$. Then $0\in \overline{G\cdot y}$.
  \end{lem}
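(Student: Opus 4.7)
\medskip

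The plan is to invoke the standard fact from geometric invariant theory that, for a linear action of a reductive group on a vector space, every orbit closure contains a unique closed orbit; this is a consequence of the separation of disjoint closed invariant subsets by $G$-invariant polynomials (cf.\ Mumford's GIT).

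First I would observe that $\{0\}\subset V$ is pointwise $G$-invariant, hence is itself a closed orbit. Since $0\in\overline{G\cdot x}$, the unique closed orbit contained in $\overline{G\cdot x}$ must therefore be $\{0\}$.

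Next, from $y\in\overline{G\cdot x}$ together with the fact that $\overline{G\cdot x}$ is $G$-invariant and closed, I get $\overline{G\cdot y}\subseteq\overline{G\cdot x}$. Applying the same uniqueness principle to $\overline{G\cdot y}$, it too contains a unique closed orbit; this closed orbit is then contained in $\overline{G\cdot x}$, and by uniqueness of the closed orbit there it must coincide with $\{0\}$. Therefore $0\in\overline{G\cdot y}$, as desired.

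The only real content is the appeal to the GIT fact on existence and uniqueness of a closed orbit in each orbit closure, and it is essentially the only obstacle. If one prefers a more self-contained argument, one can use the Hilbert--Mumford criterion: $0\in\overline{G\cdot x}$ produces a one-parameter subgroup $\lambda:\mathbb{C}^*\to G$ with $\lim_{t\to 0}\lambda(t)\cdot x=0$, after which a limiting/conjugation argument applied to a sequence $g_n\cdot x\to y$ yields a corresponding one-parameter subgroup driving $y$ to $0$.
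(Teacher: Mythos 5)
Your argument is correct, and it rests on the same underlying fact as the paper's proof, namely that a reductive group acting linearly separates disjoint closed invariant subsets by invariant polynomials. The difference is one of packaging: the paper does not invoke the closed-orbit structure theorem at all. It argues directly by contradiction in three lines: if $0\notin\overline{G\cdot y}$, pick a $G$-invariant polynomial $f$ with $f(0)=0$ and $f\equiv 1$ on $\overline{G\cdot y}$; since invariant polynomials are constant on orbit closures and $y\in\overline{G\cdot x}$, one gets $f(x)=f(y)=1$, while $0\in\overline{G\cdot x}$ forces $f(x)=f(0)=0$, a contradiction. Your route needs both the existence and the uniqueness of the closed orbit in an orbit closure (the existence half, via a minimal-dimensional orbit, is extra machinery the paper avoids), so the paper's version is the more economical of the two; on the other hand, your version makes the structural picture clearer and generalizes verbatim with $\{0\}$ replaced by any closed orbit. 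Your closing suggestion of a Hilbert--Mumford argument is dispensable and, as sketched, the ``limiting/conjugation'' step for transporting the one-parameter subgroup from $x$ to $y$ is not obviously justified, so I would not lean on it; the main argument stands without it.
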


  \begin{proof}
  On a contrary,  if $0\notin \overline{G\cdot  y}$,  there is a $G$-invariant polynomial $f$ such that $f(y)=1$. Since $y\in \overline{G\cdot x}$, we get $f(x)=1$.  But by the condition $0\in \overline{G\cdot x}$,  we  also have $f(x)=0$.  This is a contradiction!
  The lemma is proved.
  \end{proof}

 Combining Lemma \ref{slice} and Lemma \ref{orb}, we prove

   \begin{prop}\label{uniqueness-algebra } Let $x\in \mathbb CP^N$ and $x_0\in \overline{{\rm SL}(N + 1;\mathbb C)\cdot x}\subset \mathbb CP^N$. Suppose that the identity component of the stabilizer  $G\subset {\rm SL}(N + 1;\mathbb C) $  of $x_0$ is reductive. Then there is a neighborhood $U \subset  \mathbb CP^N$ of $x_0$ such that for any $x'\in  U\cap \overline{{\rm SL}(N + 1;\mathbb C)\cdot x}$ there are $\sigma\in {\rm SL}(N + 1;\mathbb C) $ and  a 1-${\rm PS}$ $\lambda(t)\subset  G$ such that
   \begin{align}\label{v0-conjugate}
   \lim_{t\rightarrow 0}\lambda(t) (\sigma \cdot x') = x_0.
   \end{align}

   \end{prop}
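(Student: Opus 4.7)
The plan is to combine Lemma~\ref{slice} (Luna's slice) with Lemma~\ref{orb} and then invoke the Hilbert--Mumford criterion inside the reductive stabilizer $G$. Apply Lemma~\ref{slice} to produce the projective slice $\mathbb P_1 = \mathbb P(\mathbb C v_0 \oplus W)$ through $x_0 = [v_0]$ together with a neighborhood $U$ of $x_0$ satisfying conclusions (1) and (2); by the implicit-function construction in that proof, I may further arrange that $\mathbb P_1 \cap U$ is a small analytic ball centered at $x_0$ and that the $\sigma \in {\rm SL}(N + 1;\mathbb C)$ produced by (1) lies close to the identity whenever the input is close to $x_0$. Given $x' \in U \cap \overline{{\rm SL}(N + 1;\mathbb C) \cdot x}$, this yields $\sigma$ with $y \defeq \sigma \cdot x' \in \mathbb P_1 \cap U$, and $y$ still lies in $\overline{{\rm SL}(N + 1;\mathbb C) \cdot x}$.

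The core step is to prove $x_0 \in \overline{G \cdot y}$, with closure taken inside $\mathbb P_1$. I first establish the weaker assertion $x_0 \in \overline{{\rm SL}(N + 1;\mathbb C) \cdot y}$ by lifting to the affine cone over $\mathbb C P^N$ and applying Lemma~\ref{orb}, using the joint membership $x_0, y \in \overline{{\rm SL}(N + 1;\mathbb C) \cdot x}$ together with the standard GIT fact that the unique closed orbit in the closure of an ${\rm SL}(N + 1;\mathbb C)$-orbit lies in every sub-orbit closure. Next, choose a sequence $\sigma_n \in {\rm SL}(N + 1;\mathbb C)$ with $\sigma_n \cdot y \to x_0$; for $n$ large, $\sigma_n \cdot y \in U$, so conclusion (1) of Lemma~\ref{slice} supplies $\tau_n$ near the identity with $\tau_n \sigma_n \cdot y \in \mathbb P_1 \cap U$, and these still converge to $x_0$. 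By conclusion (2), each $\tau_n \sigma_n \cdot y$ belongs to some $G$-orbit in $\mathbb P_1 \cap U$; after one final shrinking of $U$ using the \'etale local picture underlying Luna's theorem (which identifies ${\rm SL}(N + 1;\mathbb C)$-orbits in the slice with $G$-orbits), the only $G$-orbit accumulating at $x_0$ is $G \cdot y$, so $\tau_n \sigma_n \cdot y \in G \cdot y$ for all large $n$, giving $x_0 \in \overline{G \cdot y}$.

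Finally, the Hilbert--Mumford criterion applied to the reductive group $G$ acting on $\mathbb P_1$ produces a 1-${\rm PS}$ $\lambda(t) \subset G$ with $\lim_{t \to 0} \lambda(t) \cdot y = x_0$, since $x_0$ is a $G$-fixed point lying in $\overline{G \cdot y}$; unraveling, $\lim_{t \to 0} \lambda(t)(\sigma \cdot x') = x_0$, as required. I expect the principal obstacle to be the component-counting step inside the slice, namely ruling out that sequences in ${\rm SL}(N + 1;\mathbb C) \cdot y \cap \mathbb P_1 \cap U$ approaching $x_0$ lie in $G$-orbit components other than $G \cdot y$. This will be handled by a careful shrinking of $U$ founded on the \'etale local structure supplied by Luna's theorem, and it is precisely where the reductivity of $G$ (rather than only of ${\rm SL}(N + 1;\mathbb C)$) is decisively used.
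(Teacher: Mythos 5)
Your overall skeleton (slice lemma, reduction to an affine picture at $x_0$, Lemma \ref{orb}, Hilbert--Mumford inside $G$) matches the paper's, but the central step --- showing $x_0\in\overline{G\cdot y}$ for $y=\sigma\cdot x'$ --- is not correctly established, and the two devices you invoke for it do not work. First, the ``weaker assertion'' $x_0\in\overline{{\rm SL}(N+1;\mathbb C)\cdot y}$ does not follow from $x_0,y\in\overline{{\rm SL}(N+1;\mathbb C)\cdot x}$: in projective space a point of an orbit closure need not contain any other given point of that closure in its own orbit closure (two distinct limit points of a single $\mathbb C^*$-orbit already give a counterexample), and passing to the affine cone does not help, because (i) $y\in\overline{{\rm SL}\cdot x}$ in $\mathbb P(V)$ only controls the cone over the orbit, i.e.\ $\overline{\mathbb C^*\cdot{\rm SL}\cdot\hat x}$, (ii) nothing in the hypotheses places the lift of $x_0$ on the unique closed orbit in $\overline{{\rm SL}\cdot\hat x}$ (a reductive stabilizer does not make the orbit closed; that would be a polystability hypothesis the proposition does not assume), and (iii) Lemma \ref{orb} is specifically about the origin of a linear representation, not about a general closed-orbit point. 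Second, even granting a sequence $\tau_n\sigma_n\cdot y\to x_0$ inside $\mathbb P_1\cap U$, the assertion that ``the only $G$-orbit accumulating at $x_0$ is $G\cdot y$'' is false: at a $G$-fixed point infinitely many $G$-orbits can accumulate (think of a linear $\mathbb C^*$-action with positive weights), and Luna's \'etale slice theorem is not available here since ${\rm SL}(N+1;\mathbb C)\cdot x_0$ is not assumed closed.

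The paper avoids both problems by never trying to relate $x_0$ and $y$ directly; it routes everything through an intermediate orbit of the form $G\cdot(g_2\cdot x)$. Concretely, it first shrinks $U$ by deleting the irreducible components of $\overline{{\rm SL}(N+1;\mathbb C)\cdot x}\cap\mathbb P_1$ that do not contain $x_0$, which forces $x_0\in\overline{G\cdot(g\cdot x)}$ for every $g\cdot x\in\mathbb P_1\cap U$; then it takes a path $\sigma_t\cdot x\to g_1\cdot x'$, pushes it into the slice by part 1) of Lemma \ref{slice}, and uses part 2) to see that the pushed path lies in a single $G$-orbit $G\cdot(g_2\cdot x)$, whence $g_1\cdot x'\in\overline{G\cdot(g_2\cdot x)}$ as well. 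Only now does Lemma \ref{orb} apply, in the affine chart $W$ of $\mathbb P_1$ centred at $x_0$, with $0=x_0$, $x=g_2\cdot x$ and $y=g_1\cdot x'$, yielding $0\in\overline{G\cdot y}$; the 1-PS then comes from Hilbert--Mumford exactly as in your last step. To repair your argument you would need to reinstate this intermediate orbit and the component-removal shrinking of $U$, at which point you are essentially reproducing the paper's proof.
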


  \begin{proof}
   By Lemma \ref{slice},   there are  a $G$-invariant subspace $\mathbb P_1$ containing $x_0$ and a neighborhood $U$ of $x_0$ in $\mathbb CP^N$ such that the following holds:

   1) $\forall x \in U$, ${\rm SL}(N + 1;\mathbb C)\cdot  x$ intersects with $\mathbb P_1;$

  2) every component of ${\rm SL}(N + 1;\mathbb C)\cdot  x \bigcap \mathbb P_1\bigcap U$ is a $G$-orbit.
 \newline  Decompose $\overline {SL(N+1;\mathbb C)\cdot x} \bigcap \mathbb P_1$ into irreducible component $\bigcup_{i=1}^k C_i$. Assume that among all the irreducible components $C_i(1\leq i\leq l)$ contains $x_0$. Then we can replace $U$ by $U\setminus (\bigcup_{i=l+1}^k C_i)$, so that the closure of every component of ${\rm SL}(N + 1;\mathbb C)\cdot  x \bigcap \mathbb P_1\bigcap U$ contains $x_0$. Namely, for any $g\cdot x\in \mathbb P_1\bigcap U$, we have
  \begin{align}\label{clos}
  x_0\in \overline{ G(g\cdot x)}.
  \end{align}
 Thus  by  1),   for any $x'\in  U\cap \overline{{\rm SL}(N + 1;\mathbb C)\cdot x}$ there is $g_1\in {\rm SL}(N+1;\mathbb C)$ such that $g_1\cdot x'\in \mathbb P_1\bigcap U$. On the other hand,  since $g_1\cdot x'\in \overline{{\rm SL}(N+1;\mathbb C)\cdot x}$, there is a continuous family $\sigma_t \in SL(N+1;\mathbb C) (t\in (0,1])$ such that
 $$\lim_{t\rightarrow 0}\sigma_t\cdot x=g_1\cdot x'.$$
   We can assume that $\sigma_t\cdot x \in U$.   Hence,  by the proof of 1) in Lemma \ref{slice}, there is a continuous family $\tau_t\in SL(N+1;\mathbb C) (t\in (0,1])$ such that $(\tau_t\circ\sigma_t)\cdot x\in \mathbb P_1\bigcap U$ and $\lim_{t\rightarrow 0}\tau_t=id$.

  By  2) above,  $(\tau_t\circ\sigma_t)\cdot x ~ (t\in (0,1])$ is contained in a $G$-orbit. Namely
  there is a   $g_2\in  {\rm SL}(N+1;\mathbb C)$ such that
   ${G( g_2\cdot x)}\bigcap U$ contains
   $(\tau_t\circ\sigma_t)\cdot x ~(t\in (0,1])$.  As a conclusion,
   $$g_1 \cdot x'\in \overline{G( g_2\cdot x)}\bigcap U.$$
    Hence,   both of  $g_1 \cdot x'$ and $x_0$ lie in $\overline {G(g_2\cdot x)}$ by (\ref{clos}).

   Write $\mathbb P_1 = \mathbb P(x_0\oplus W)$ for a G-invariant subspace $V=x_0\oplus W$ as in (\ref{P1}). Regard three  points   $x_0$,  $g_2 \cdot x$ and  $g_1\cdot x'$ as vectors   $0$,   $x$ and  $y$ in $V$ as  Lemma \ref{orb},  respectively.  Then  we have $0 \in \overline{G \cdot y}$.  Thus by Luna's lemma,   there is a 1-PS $\lambda(t)$ such that
   $$\lim_{t\rightarrow 0}\lambda(t)(g_1 \cdot x') = x_0.$$

  \end{proof}

   \subsection{Uniqueness of algebraic structure $\tilde M_\infty$}

 Let $M\subset \mathbb CP^N$ be a Fano manifold embedded   in  $\mathbb CP^N$. Let  $\mathcal C_{KE}$ be a set of $Q$-Fano varieties   which  consists of all possible  limits  under ${\rm SL}(N+1, \mathbb C)$-group on $M$   with $klt$-singularities   and admitting a singular KE metric.

  \begin{prop}\label{dis}
 Given $\tilde M_\infty \in \mathcal  C_{KE}$, there exists an $\epsilon>0$ such that  $\tilde M_\infty=g\cdot \tilde M'_\infty$ for some $g\in {\rm SL}(N+1;\mathbb C)$, if  $\tilde M'_\infty\in \mathcal C_{KE}$ satisfies
  $$d([\tilde M_\infty],[\tilde M'_\infty]) \leq \epsilon.$$
  \end{prop}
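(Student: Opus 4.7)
The plan is to combine the GIT slice argument (Proposition \ref{uniqueness-algebra}) with the reductivity of automorphism groups (Proposition \ref{red}) for KE limits, and then use K-polystability of the nearby limit to force the resulting test configuration to be a product. Observe first that every element of $\mathcal C_{KE}$ lies in $\overline{{\rm SL}(N+1;\mathbb C)\cdot [M]}\subset \mathbb CP^N$ by definition, and that $G:={\rm Aut}_0(\tilde M_\infty)$ is reductive by Proposition \ref{red}, because $\tilde M_\infty$ admits a singular KE metric.

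Applying Proposition \ref{uniqueness-algebra} to $x=[M]$ and $x_0=[\tilde M_\infty]$, I obtain a Euclidean neighborhood $U$ of $[\tilde M_\infty]$ in $\mathbb CP^N$ such that, for every $[\tilde M_\infty']\in U\cap \overline{{\rm SL}(N+1;\mathbb C)\cdot[M]}$, there are $\sigma\in {\rm SL}(N+1;\mathbb C)$ and a one-parameter subgroup $\lambda(t)\subset G$ with
\[
\lim_{t\to 0}\lambda(t)\cdot(\sigma\cdot[\tilde M_\infty']) \;=\; [\tilde M_\infty].
\]
Choosing $\epsilon>0$ small enough that the ball $\{[\tilde M_\infty']:d([\tilde M_\infty],[\tilde M_\infty'])\le\epsilon\}$ is contained in $U$, this family assembles into a special test configuration $\mathcal X\to \mathbb C$ whose generic fiber is isomorphic to $\sigma\cdot \tilde M_\infty'\simeq \tilde M_\infty'$ and whose central fiber is $\tilde M_\infty$, with the $\mathbb C^*$-action on the total space induced by $\lambda(t)$.

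The central observation is that $\mathcal X$ has vanishing Donaldson-Futaki invariant. Indeed, since $\lambda(t)$ is a one-parameter subgroup of the reductive group $G={\rm Aut}_0(\tilde M_\infty)$ which preserves $\tilde M_\infty$, the DF invariant of $\mathcal X$ equals the classical Futaki invariant on $\tilde M_\infty$ evaluated on the holomorphic vector field generating $\lambda(t)$; this vanishes because $\tilde M_\infty$ admits a singular KE metric (the Futaki invariant, extended to the singular klt setting as in Berman--Berndtsson, vanishes identically on ${\rm Lie}(G)$). On the other hand, $\tilde M_\infty'$ is K-polystable since it too admits a singular KE metric (by the variational approach of Berman). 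Combining these, the special test configuration $\mathcal X$ with $DF(\mathcal X)=0$ and klt central fiber must be a product, so $\sigma\cdot\tilde M_\infty'\cong \tilde M_\infty$ as polarized $\mathbb Q$-Fano varieties.

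Finally, since both $\sigma\cdot\tilde M_\infty'$ and $\tilde M_\infty$ sit inside $\mathbb CP^N$ via the complete anti-canonical (Kodaira) embedding $K^{-l}$, the biholomorphism obtained above is induced by a linear automorphism of $\mathbb CP^N$; rescaling to determinant one yields $g\in {\rm SL}(N+1;\mathbb C)$ with $\tilde M_\infty=g\cdot\tilde M_\infty'$, which is exactly the claimed conclusion. The main obstacle is the rigidity step -- justifying that $DF(\mathcal X)=0$ together with K-polystability of the generic fiber forces $\mathcal X$ to be a product -- in the singular $\mathbb Q$-Fano setting. In the smooth Fano case this is the content of Tian's theorem, and for the klt case it has been established by Berman and Li-Wang-Xu; this is where I expect the bulk of the technical effort to go, the rest of the argument being essentially formal consequences of Propositions \ref{uniqueness-algebra} and \ref{red}.
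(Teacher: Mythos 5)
Your proof is correct and follows essentially the same route as the paper's: reductivity of ${\rm Aut}_0(\tilde M_\infty)$ from Proposition \ref{red}, the slice/degeneration statement of Proposition \ref{uniqueness-algebra }, and K-polystability of KE $Q$-Fano varieties (Berman) to conclude that the resulting one-parameter degeneration must be trivial. The only cosmetic differences are that the paper packages the choice of $\epsilon$ as a contradiction argument with a sequence $[\tilde M_i]\to[\tilde M_\infty]$ whereas you use the neighborhood $U$ directly, and that you spell out the vanishing of the Donaldson--Futaki invariant via the Futaki invariant of the central fiber, a step the paper leaves implicit here but makes explicit in the soliton analogue (Proposition \ref{dis2}).
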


   \begin{proof}We prove  the proposition by contradiction.
 Regard  $[  \tilde M_\infty]$ as a Chow point in $W_n=\mathbb  P(\mathbb C(M_{(n+1)\times (N+1)}))$ as in Section 2. Let
     $$\mathcal [C_{KE}]=\{[\tilde  M_\infty]|~  \tilde  M_\infty\in \mathcal C_{KE}\}.$$
 On the contrary,   there is a sequence of $[\tilde M_i] \in [\mathcal C_{KE}]$  which converges to  $[\tilde M_\infty]$, but
   \begin{align}\label{equival} [\tilde M_i]\neq  g\cdot[\tilde M_j], ~\forall ~g\in {\rm SL}(N + 1;\mathbb C),~i\neq j.
   \end{align}
  By Proposition \ref{red} in Appendix, we know that $G={\rm Aut}_0(M_\infty)\subset  {\rm SL}(N + 1;\mathbb C)$ is reductive. Thus applying Proposition \ref{uniqueness-algebra } to  $[M_\infty]$ and $[\tilde M_i]$ as $i>>1$,  there are $\sigma_i\in {\rm SL}(N + 1;\mathbb C) $ and  a 1-${\rm PS}$ $\lambda(t)\subset  G$ such that
   \begin{align}\label{v0-conjugate}
   \lim_{t\rightarrow 0}\lambda(t) (\sigma_i \cdot [\tilde M_i]) = [\tilde M_\infty].
   \end{align}
   However, $\tilde M_i$ is K-polystable \cite{Ber},   it must hold that $\lambda(t)$ preserves $\sigma_i\cdot [\tilde M_i]$.  Thus $\sigma_i\cdot  [\tilde M_i]=[\tilde M_\infty]$. As a consequence, $[\tilde M_i]=(\sigma_i^{-1}\cdot\sigma_j)\cdot[\tilde M_j]$. This is a contradiction with (\ref{equival}).

   \end{proof}

   \begin{cor}\label{semiuni-KE}The set  $\mathcal C_{KE}/{\rm SL}(N+1, \mathbb C) $ is finite.
   \end{cor}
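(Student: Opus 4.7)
I would argue by contradiction, combining the local uniqueness of Proposition \ref{dis} with a compactness argument in the Chow variety $W_n$. The guiding principle is that Proposition \ref{dis} makes the equivalence classes on $[\mathcal C_{KE}]$ locally trivial, and $[\mathcal C_{KE}]$ sits inside the compact orbit closure $\overline{{\rm SL}(N+1;\mathbb C)\cdot[M]}\subset W_n$, so any infinite collection of distinct classes must accumulate, producing a contradiction.

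First, I would rephrase Proposition \ref{dis} as the statement that each ${\rm SL}(N+1;\mathbb C)$-equivalence class in $[\mathcal C_{KE}]$ is relatively open in $[\mathcal C_{KE}]$: around every $[\tilde M_\infty]\in[\mathcal C_{KE}]$ there is an $\epsilon$-ball in $W_n$ whose intersection with $[\mathcal C_{KE}]$ lies in the single orbit of $[\tilde M_\infty]$. Assume now for contradiction that $\mathcal C_{KE}/{\rm SL}(N+1;\mathbb C)$ is infinite, and pick representatives $[\tilde M_i]\in[\mathcal C_{KE}]$ in pairwise distinct orbits. Because $[\mathcal C_{KE}]\subset\overline{{\rm SL}(N+1;\mathbb C)\cdot[M]}\subset W_n$ and the latter is compact, some subsequence $[\tilde M_{i_k}]$ converges in $W_n$ to a Chow point $[\tilde M_\infty']$.

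The argument closes once I can show $[\tilde M_\infty']\in[\mathcal C_{KE}]$: applying Proposition \ref{dis} at $[\tilde M_\infty']$ then yields an $\epsilon$-neighborhood which, for $k$ large enough, contains $[\tilde M_{i_k}]$, and hence places infinitely many $[\tilde M_{i_k}]$ in the single orbit of $\tilde M_\infty'$, contradicting distinctness. Thus the entire content of the proof is concentrated in verifying closedness of $[\mathcal C_{KE}]$ inside $\overline{{\rm SL}(N+1;\mathbb C)\cdot[M]}$, and this is the main obstacle.

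To handle that closedness step, the $klt$/Q-Fano nature of the limit should propagate under Chow-limits of normal degenerations via lower semicontinuity of log canonical thresholds and invariance of Hilbert polynomials. The delicate point is to produce a singular KE metric on the limit variety $\tilde M_\infty'$. Here I would pass to Gromov--Hausdorff limits of the singular KE metrics $\hat\omega_{i_k}$ on $\tilde M_{i_k}$, using Perelman-type diameter and non-collapsing bounds available for singular KE Q-Fanos, and then exploit the algebraic-to-metric identification in Lemma \ref{regular-set} in the spirit of Proposition \ref{two-topology} to match this Gromov--Hausdorff limit, up to an ${\rm SL}(N+1;\mathbb C)$-change of basis, with the Chow limit $\tilde M_\infty'$. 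This transports the singular KE structure to $\tilde M_\infty'$, shows $[\tilde M_\infty']\in[\mathcal C_{KE}]$, and completes the argument.
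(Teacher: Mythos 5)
The paper offers no written proof of this corollary, and your argument is the natural completion of what is clearly intended: Proposition \ref{dis} makes each ${\rm SL}(N+1;\mathbb C)$-class relatively open in $[\mathcal C_{KE}]$, and a convergent subsequence of representatives of infinitely many distinct classes, taken inside the compact Chow variety, would violate the $\epsilon$-gap at its limit point. So in structure your proof coincides with the paper's implicit one, and you have correctly isolated the one step that neither you nor the paper can wave away: an accumulation point of $[\mathcal C_{KE}]$ must itself lie in $[\mathcal C_{KE}]$, since an infinite discrete subset of a compact space is perfectly possible when the set is not closed.

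That closedness step is where I would press you. As the set $\mathcal C_{KE}$ is literally defined (all klt Q-Fano degenerations of $M$ in $\overline{{\rm SL}(N+1;\mathbb C)\cdot[\tilde M]}$ admitting a singular KE metric), your sketch requires uniform diameter and non-collapsing bounds, a partial $C^0$-estimate, and a Gromov--Hausdorff-to-Chow matching for an arbitrary sequence of singular KE Q-Fano varieties. None of this is established in the paper for such general sequences; Lemma \ref{c3-phi}, Proposition \ref{two-topology} and Lemma \ref{regular-set} are proved for evolved metrics of the flow (\ref{kr-flow}), not for abstract singular KE degenerations. The compactness you need is essentially the Donaldson--Sun/Spotti--Sun/Li--Wang--Xu theory of smoothable KE Fano varieties, which is a legitimate but external input; alternatively, one restricts attention to the elements of $\mathcal C_{KE}$ that actually occur as flow limits, for which Theorem \ref{WZ} and Proposition \ref{two-topology} do supply the required closedness, and that is all the paper ever uses. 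Either fix is fine, but as written your closedness argument is a sketch resting on results outside the paper, and you should say explicitly which compactness theorem you are invoking.
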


 \subsection{In case of reductive  ${\rm Aut}_0(\tilde M_\infty)$}

  For a version  of KR solitons in  Proposition \ref{dis}, we introduce a set $\mathcal C_{KS}(X)$ of $Q$-Fano varieties  which  consists of all possible  limits  $\tilde M_\infty$ under ${\rm SL}(N+1, \mathbb C)$-group on $M$   with $klt$-singularities  and admitting  a singular KR soliton with respect to a same  holomorphic VF   $X\in {\rm sl}N+1;\mathbb C)$.  In this subsection, we assume that  $~{\rm Aut}(\tilde M_\infty)$ is reductive.  Namely, we set
  $$\mathcal C_{KS}^0(X)=\{\tilde M_\infty\in\mathcal C_{KS}(X)|~{\rm Aut}_0(\tilde M_\infty) ~{\rm is ~reductive}\}. $$
   Then we  have

  \begin{prop}\label{dis2}
 Given $\tilde M_\infty \in \mathcal  C^0_{KS}(X)$, there exists an  $\epsilon>0$ such that $\tilde M_\infty=g\cdot \tilde M'_\infty$ for some $g\in {\rm SL}(N+1;\mathbb C)$, if $\tilde M'_\infty\in \mathcal C_{KS}(X)$ satisfies
   $$d([\tilde M_\infty],[\tilde M'_\infty])\leq \epsilon.$$
  \end{prop}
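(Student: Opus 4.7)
The plan is to adapt the proof of Proposition \ref{dis} to the KR soliton setting, the essential new ingredient being the \emph{modified} (i.e., $X$-equivariant) K-polystability of $\tilde M_i$ in place of ordinary K-polystability. I argue by contradiction: suppose the proposition fails, so there is a sequence $\{[\tilde M_i]\}\subset [\mathcal{C}_{KS}(X)]$ converging to $[\tilde M_\infty]$ with $[\tilde M_i]\notin {\rm SL}(N+1;\mathbb{C})\cdot[\tilde M_j]$ for $i\neq j$. Setting $G={\rm Aut}_0(\tilde M_\infty)$, which is reductive by hypothesis, Proposition \ref{uniqueness-algebra } produces, for each large $i$, an element $\sigma_i\in{\rm SL}(N+1;\mathbb{C})$ and a one-parameter subgroup $\lambda_i(t)\subset G$ with
$$\lim_{t\to 0}\lambda_i(t)\bigl(\sigma_i\cdot[\tilde M_i]\bigr)=[\tilde M_\infty].$$

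In order to apply modified K-polystability I must arrange that $\lambda_i(t)$ commutes with the soliton VF $X$, i.e., that it lies inside the centralizer $Z_G(X)$. This requires an $X$-equivariant refinement of Lemma \ref{slice}. Since $\tilde M_\infty\in \mathcal{C}^0_{KS}(X)$, the VF $X$ lies in ${\rm Lie}(G)$ and generates the action of a compact torus by isometries of $\hat\omega_\infty$; in particular $X$ is semisimple and $Z_G(X)$ is reductive. Because $[\tilde M_\infty]=[v_0]$ is fixed by both $G$ and by the $\mathbb{C}^{*}$-action generated by $X$, the decomposition $V=\mathbb{C}v_0\oplus\iota_{[v_0]}(\mathfrak{p})\oplus W$ used in the proof of Lemma \ref{slice} can be chosen to split simultaneously under $G$ and under $X$. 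The resulting slice $\mathbb{P}_1$ is then $X$-invariant, and running the proof of Proposition \ref{uniqueness-algebra } inside this equivariant slice yields a 1-PS $\lambda_i(t)\subset Z_G(X)$.

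With this refinement, $\lambda_i(t)$ defines an $X$-equivariant special test configuration of $\sigma_i\cdot\tilde M_i$ whose central fiber is $\tilde M_\infty$ and on whose total space $X$ extends as a fibrewise holomorphic vector field. Since $\tilde M_i$ admits a singular KR soliton with VF $X$, the modified K-polystability result of Berman--Witt Nystr\"om, extended to the Q-Fano setting (cf.~\cite{Bern}), ensures that the modified Donaldson--Futaki invariant of this test configuration is nonnegative, with equality only when it is a product. Equality must hold, because the central fiber $\tilde M_\infty$ itself is a singular KR soliton with the same VF $X$ (by the assumption $\tilde M_\infty\in \mathcal{C}^0_{KS}(X)$), so the modified Ding/Mabuchi energy takes the same value at both ends of the family $\lambda_i(t)$. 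Consequently $\lambda_i(t)$ fixes $\sigma_i\cdot[\tilde M_i]$, forcing $\sigma_i\cdot[\tilde M_i]=[\tilde M_\infty]$ for all large $i$. Then $[\tilde M_i]=(\sigma_i^{-1}\sigma_j)\cdot[\tilde M_j]$, contradicting the choice of the sequence.

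The step I expect to be the main obstacle is the equivariant refinement described in the second paragraph: verifying carefully that the Luna-type slice of Lemma \ref{slice} and the accompanying 1-PS produced in Proposition \ref{uniqueness-algebra } can indeed be chosen to lie in $Z_G(X)$, so that modified K-polystability is applicable. Once this equivariant framework is in place, the remainder of the argument parallels the KE case of Proposition \ref{dis} verbatim, with the Berman--Witt Nystr\"om theorem replacing Berman's K-polystability theorem.
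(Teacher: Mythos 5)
Your proposal follows the same overall skeleton as the paper's proof: argue by contradiction, apply Proposition \ref{uniqueness-algebra } with $G={\rm Aut}_0(\tilde M_\infty)$ reductive to produce $\sigma_i$ and a 1-PS $\lambda_i(t)\subset G$ degenerating $\sigma_i\cdot[\tilde M_i]$ to $[\tilde M_\infty]$, and then use modified (relative) K-polystability of $\tilde M_i$ together with the vanishing of the modified Futaki invariant on the soliton $\tilde M_\infty$ to force $\lambda_i(t)$ to fix $\sigma_i\cdot[\tilde M_i]$. The one place you diverge is the step you flag as the main obstacle: you propose an $X$-equivariant refinement of the Luna slice Lemma \ref{slice} in order to land $\lambda_i(t)$ inside the centralizer $Z_G(X)$. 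The paper avoids this entirely by invoking Proposition \ref{center}: since ${\rm Aut}_0(\tilde M_\infty)$ is reductive, the soliton VF $X$ lies in the \emph{center} of its Lie algebra, so every 1-PS in $G$ automatically commutes with $X$ and the unmodified slice lemma already suffices. Your equivariant construction, while not wrong, is therefore superfluous; the centrality of the soliton VF is the observation that closes the gap you were worried about, and with it the rest of your argument matches the paper's.
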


   \begin{proof} The proof is also same to one of  Proposition \ref{dis}. In fact, if Proposition \ref{dis2}  is not true, then
   there are  a $\tilde  M_\infty\in \mathcal C_{KS}^0(X)$ and  a sequence  of $[\tilde M_i] \in [\mathcal C_{KE}]$ such that $[\tilde M_i]$ converges to  $[\tilde M_\infty]$ such that
    (\ref{equival}) holds.
  Since $G={\rm Aut}_0(\tilde M_\infty)\subset  {\rm SL}(N + 1;\mathbb C)$ as a stabilizer of $[M_\infty]$  is reductive,
   applying Proposition \ref{uniqueness-algebra } to  $[\tilde M_\infty]$ and $[\tilde M_i]$ as $i>>1$,  there are $\sigma_i\in {\rm SL}(N + 1;\mathbb C) $ and  a 1-${\rm PS}$ $\lambda(t)=\exp\{t\eta\}\subset  G$ such that
  (\ref {v0-conjugate}) also holds, where $\eta$ is an element of Lie algebra of $G$. By Proposition \ref{center} in Appendix, the soliton VF $X$ on $\tilde M_\infty$ lies  in the center of Lie algebra $G$. Thus  $\lambda(t)$ communicate with $X$.  We claim
    \begin{align}\label{lambda}\lambda(t)\subset {\rm Aut}(\sigma_i\circ M_i).
      \end{align}
      Thus  $\lambda(t)$ preserves $\sigma_i\cdot [\tilde M_i]$.  Thus $\sigma_i\cdot  [\tilde M_i]=[\tilde M_\infty]$. As a consequence, $[\tilde M_i]=(\sigma_i^{-1}\cdot\sigma_j)\cdot[\tilde M_j]$. This is a contradiction with (\ref{equival}).

   Suppose that $\lambda(t)\subset {\rm Aut}(M_i)$ dosen't hold.  Then by the relative K-polystability of  $\tilde M_i$ \cite{BW, DatS},  the  modified Ding-Tian invariant $F_X(\eta)$ (also called modified  Futaki invariant) with respect to   $\lambda(t)$ is positive.  However,  $F_X(\eta)=0$  since $\tilde M_\infty$ admits  a KR soliton. Thus, we get a contradiction, and  (\ref{lambda}) must be true. The proposition is proved.

   \end{proof}

   \begin{cor}\label{semiuni-KR-solions}
     The set  $\mathcal C_{KS}^0(X)/{\rm SL}(N+1, \mathbb C) $ is finite.
   \end{cor}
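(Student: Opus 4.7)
The plan is to argue by contradiction, leveraging the local rigidity statement of Proposition \ref{dis2} together with compactness of the ambient Chow variety $W_N$. Suppose, toward a contradiction, that $\mathcal C_{KS}^0(X)/{\rm SL}(N+1, \mathbb C)$ is infinite. Then we may extract a sequence $\{\tilde M_i\} \subset \mathcal C_{KS}^0(X)$ whose ${\rm SL}(N+1;\mathbb C)$-orbits are pairwise distinct. Since the associated Chow points $[\tilde M_i]$ all lie in the compact space $W_N$, after passing to a subsequence we may assume $[\tilde M_i] \to [\tilde M_\infty]$ for some $[\tilde M_\infty] \in W_N$.

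The crux is to verify that $\tilde M_\infty \in \mathcal C_{KS}^0(X)$. First, I would use that each $\tilde M_i$ carries a singular KR soliton with respect to the common VF $X$, together with a uniform partial $C^0$-estimate and the local $C^{k,\alpha}$-bounds of Lemma \ref{c3-phi} (adapted to the KR soliton setting). A diagonal extraction, parallel to the proof of Proposition \ref{two-topology}, produces a Q-Fano variety $\tilde M_\infty$ with klt singularities whose Chow point is the prescribed limit and which carries a singular KR soliton $\hat\omega_\infty$ on ${\rm Reg}(\tilde M_\infty)$. Moreover, the soliton VF of $\hat\omega_\infty$ remains $X$ by the continuity established in Proposition \ref{continuity-vf} together with Corollary \ref{vec}. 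The reductivity of ${\rm Aut}_0(\tilde M_\infty)$ is then obtained from Proposition \ref{redsol} in the Appendix, placing $\tilde M_\infty \in \mathcal C_{KS}^0(X)$.

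With this in hand, Proposition \ref{dis2} supplies an $\epsilon > 0$ such that any $\tilde M' \in \mathcal C_{KS}(X)$ whose Chow point lies within distance $\epsilon$ of $[\tilde M_\infty]$ satisfies $\tilde M' = g \cdot \tilde M_\infty$ for some $g \in {\rm SL}(N+1;\mathbb C)$. For all sufficiently large $i$, the points $[\tilde M_i]$ lie within $\epsilon$ of $[\tilde M_\infty]$, so all such $\tilde M_i$'s belong to a single ${\rm SL}(N+1;\mathbb C)$-orbit, contradicting the assumption that their orbits are pairwise distinct.

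The main obstacle will be the intermediate step of showing that the limit $\tilde M_\infty$ of a sequence in $\mathcal C_{KS}^0(X)$ — potentially arising from distinct underlying KR flows with varied initial metrics — still lies in $\mathcal C_{KS}^0(X)$: in particular, propagating both the common soliton VF $X$ and the reductivity of ${\rm Aut}_0$ to the limit. These rely on auxiliary results (Proposition \ref{continuity-vf}, Corollary \ref{vec}, and Proposition \ref{redsol}) proved elsewhere in the paper, but once granted the argument closes cleanly via Proposition \ref{dis2}.
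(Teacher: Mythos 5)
Your overall architecture (contradiction, compactness of the Chow variety, then the local rigidity of Proposition \ref{dis2} to force infinitely many of the $[\tilde M_i]$ into one orbit) is the natural reading of how the paper intends this corollary to follow from Proposition \ref{dis2} --- the paper itself gives no written proof, treating it as immediate. However, the step you yourself flag as ``the main obstacle,'' namely that the limit $\tilde M_\infty$ of a sequence in $\mathcal C_{KS}^0(X)$ again lies in $\mathcal C_{KS}^0(X)$ (or at least in a set to which Proposition \ref{dis2} can be applied as the \emph{center}), is not closed by the results you cite, and this is a genuine gap.

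Two specific problems. First, the reductivity of ${\rm Aut}_0(\tilde M_\infty)$ for the limit cannot be obtained from Proposition \ref{redsol}: that proposition only establishes reductivity of ${\rm Aut}^v(\tilde M_\infty)$, the centralizer of the soliton VF, not of the full ${\rm Aut}_0(\tilde M_\infty)$. Indeed, the reductivity of ${\rm Aut}_0$ is exactly the hypothesis the paper must \emph{assume} throughout (it is why $\mathcal C_{KS}^0(X)$ is singled out inside $\mathcal C_{KS}(X)$ in the first place), and it is not preserved under degeneration --- automorphism groups can jump up in the limit, so a limit of varieties with reductive ${\rm Aut}_0$ may fail to have reductive ${\rm Aut}_0$. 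Note that Proposition \ref{dis2} requires the \emph{center} of the $\epsilon$-ball to lie in $\mathcal C^0_{KS}(X)$, while the nearby points need only lie in $\mathcal C_{KS}(X)$; so without reductivity at the limit you cannot apply it there, and mere discreteness of $[\mathcal C^0_{KS}(X)]$ inside $[\mathcal C_{KS}(X)]$ does not imply finiteness of the quotient (a sequence of isolated points can still accumulate outside the set). Second, the tools you invoke to produce a singular KR soliton with VF $X$ on the limit --- Lemma \ref{c3-phi}, Proposition \ref{two-topology}, Proposition \ref{continuity-vf}, Corollary \ref{vec} --- are all proved for sequences of metrics evolving along the K\"ahler--Ricci flow, whereas elements of $\mathcal C_{KS}(X)$ are by definition arbitrary ${\rm SL}(N+1,\mathbb C)$-degenerations of $M$ admitting singular KR solitons, not necessarily flow limits; so neither the convergence of the soliton metrics nor the identification of the limiting soliton VF with $X$ follows from those statements as proved. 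To make your argument work you would need either a uniform $\epsilon$ in Proposition \ref{dis2} over $\mathcal C^0_{KS}(X)$, or an independent argument that $\mathcal C^0_{KS}(X)$ is closed under the relevant convergence; neither is supplied by the paper or by your proposal.
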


 \begin{theo}\label{unique-algebra}Let $(\tilde M_\infty, \hat \omega_\infty)$ be  a singular KR soliton limit of  a sequence  $\{(M, \omega_{t_i})\}$ of  (\ref{kr-flow})  as in Theorem \ref{WZ}. Suppose that $ {\rm Aut}_0(\tilde M_\infty)$ is reductive. Then for any sequence  $\{(M, \omega_{t_i'})\}$,
 there is a subsequence of $\{(M, \omega_{t_i'})\}$ which  is locally  $C^\infty$-convergent to  $\hat \omega_\infty$ on  ${\rm Reg}(\tilde M_\infty)$ in the Cheeger-Gromov topology. In particular, if the limit $(\tilde M_\infty, \hat\omega_\infty)$ is  a singular  KE metric, then for any sequence  $\{(M, \omega_{t_i'})\}$,
 there is a subsequence of $\{(M, \omega_{t_i'})\}$ which  is locally  $C^\infty$-convergent to  $\hat \omega_\infty$ on  ${\rm Reg}(\tilde M_\infty)$ in the Cheeger-Gromov topology. As a consequence,  $(M, \omega_{t})$  converges  uniformly to
 $\overline{(\tilde M_\infty, \hat \omega_\infty)}$ in the
  Gromov-Hausdroff topology.
 \end{theo}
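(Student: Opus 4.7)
The plan is to combine the connectedness of $[\mathcal C_0]$ from Lemma~\ref{connected}, the uniqueness of the soliton vector field supplied by Corollary~\ref{vec}, the local $\mathrm{SL}(N+1,\mathbb C)$-orbit openness of Proposition~\ref{dis2}, and the reductivity of $\mathrm{Aut}_0$ for every singular KR soliton limit (Proposition~\ref{redsol}), in order to shrink the family of possible limit varieties to a single $\mathrm{SL}(N+1,\mathbb C)$-orbit, and then invoke Berndtsson's uniqueness of singular KR solitons to upgrade this to $C^\infty_{\mathrm{loc}}$ Cheeger--Gromov convergence.

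First I would verify the inclusion $[\mathcal C_0]\subset[\mathcal C_{KS}^0(X)]$. By Theorem~\ref{WZ} every $\tilde M_\infty'\in\mathcal C_0$ carries a singular KR soliton, so $\mathrm{Aut}_0(\tilde M_\infty')$ is reductive by Proposition~\ref{redsol}; moreover Corollary~\ref{vec} identifies all associated soliton vector fields with a single $X$ up to conjugation in $U(N+1)$. After a global unitary normalisation we therefore have $[\mathcal C_0]\subset[\mathcal C_{KS}^0(X)]$. For each $[\tilde M_\infty']\in[\mathcal C_0]$, Proposition~\ref{dis2} supplies a $W_N$-neighbourhood of $[\tilde M_\infty']$ whose intersection with $[\mathcal C_{KS}(X)]$, and hence with $[\mathcal C_0]$, lies inside the single $\mathrm{SL}(N+1,\mathbb C)$-orbit of $\tilde M_\infty'$. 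Consequently the partition of $[\mathcal C_0]$ by $\mathrm{SL}$-orbit traces is a partition into open subsets, and the connectedness provided by Lemma~\ref{connected} forces it to consist of exactly one class. Thus every $\tilde M_\infty'\in\mathcal C_0$ equals $g\cdot\tilde M_\infty$ for some $g\in\mathrm{SL}(N+1,\mathbb C)$.

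To conclude the local $C^\infty$ convergence, take any sequence $\{\omega_{t_i'}\}$ along the flow. By Theorem~\ref{WZ} together with the uniform estimates of Lemma~\ref{c3-phi}, a subsequence converges smoothly on $\mathrm{Reg}(\tilde M_\infty')$ to a singular KR soliton $\hat\omega_\infty'$. The previous step identifies $\tilde M_\infty'=g\cdot\tilde M_\infty$, so Berndtsson's uniqueness of singular KR solitons with prescribed soliton vector field on a Q-Fano variety yields $g^*\hat\omega_\infty'=\sigma^*\hat\omega_\infty$ for some $\sigma\in\mathrm{Aut}_r(\tilde M_\infty)$. Absorbing $\sigma\circ g$ into the diffeomorphisms $\Psi_\gamma^i$ used in the partial $C^0$ setup turns this into Cheeger--Gromov convergence of $\omega_{t_i'}$ to $\hat\omega_\infty$ on exhausting subsets of $\mathrm{Reg}(\tilde M_\infty)$. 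The singular KE case follows along the same lines using Proposition~\ref{red} and Proposition~\ref{dis} in place of Propositions~\ref{redsol} and~\ref{dis2}, and the Gromov--Hausdorff consequence is immediate since every sub-subsequential Gromov--Hausdorff limit of $(M,\omega_t)$ must coincide with $\overline{(\mathrm{Reg}(\tilde M_\infty),\hat\omega_\infty)}$.

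The main obstacle is making the $\mathrm{SL}$-orbit openness of Proposition~\ref{dis2} available at \emph{every} point of $[\mathcal C_0]$, not just at the distinguished $[\tilde M_\infty]$ where reductivity is hypothesized. This crucially uses Proposition~\ref{redsol} to propagate reductivity of $\mathrm{Aut}_0$ to all other limits, together with Corollary~\ref{vec} to place every limit inside the same $\mathcal C_{KS}^0(X)$ for a common $X$. A secondary subtlety is that, after the $\mathrm{SL}$-transport followed by an $\mathrm{Aut}_r$-automorphism, the extracted soliton limit may differ from $\hat\omega_\infty$ by an ambient isometry; this is absorbed into the diffeomorphisms implicit in the Cheeger--Gromov notion of convergence by exploiting that these actions are $U(N+1)$-isometries of the Fubini--Study metric after a unitary re-embedding.
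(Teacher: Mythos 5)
Your overall architecture---put all limits into $\mathcal C_{KS}(X)$ for a common $X$ via Corollary \ref{vec}, use the local orbit rigidity of Proposition \ref{dis2} together with the connectedness of $[\mathcal C_0]$ from Lemma \ref{connected} to collapse $\mathcal C_0$ to a single ${\rm SL}(N+1,\mathbb C)$-orbit, and then invoke Berndtsson's uniqueness of singular KR solitons to upgrade to local $C^\infty$ Cheeger--Gromov convergence---is the paper's. But the step on which your version of the orbit argument rests contains a genuine gap: you claim that $[\mathcal C_0]\subset[\mathcal C_{KS}^0(X)]$ because ``$\mathrm{Aut}_0(\tilde M_\infty')$ is reductive by Proposition \ref{redsol}.'' Proposition \ref{redsol} asserts the reductivity of ${\rm Aut}^v(\tilde M_\infty')$, the centralizer of the soliton vector field inside ${\rm Aut}_0(\tilde M_\infty')$, not of ${\rm Aut}_0(\tilde M_\infty')$ itself. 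For a non-Einstein KR soliton the full ${\rm Aut}_0$ need not be reductive (already $Bl_p\mathbb{CP}^2$ with the Koiso--Cao soliton is a counterexample); this is exactly why Theorem \ref{unique-algebra} carries the reductivity of ${\rm Aut}_0(\tilde M_\infty)$ as a hypothesis, and why the paper must later deploy the $G_v$/relative K-stability machinery of Section 5 to remove that hypothesis. Since Proposition \ref{dis2} requires the \emph{center} of the $\epsilon$-ball to lie in $\mathcal C_{KS}^0(X)$, you are not entitled to apply it at an arbitrary point of $[\mathcal C_0]$, and your partition of $[\mathcal C_0]$ into open orbit traces does not get started.

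The gap is repairable, and the repair is what the paper actually does: apply Proposition \ref{dis2} only with center $[\tilde M_\infty]$ and its ${\rm SL}(N+1,\mathbb C)$-translates (whose stabilizers are conjugate to ${\rm Aut}_0(\tilde M_\infty)$, hence reductive), noting that the comparison variety in Proposition \ref{dis2} is only required to lie in $\mathcal C_{KS}(X)$, with no reductivity assumed. One then argues on the continuous function $[\tilde M'']\mapsto {\rm dist}_{W_N}([\tilde M''],{\rm SL}(N+1,\mathbb C)\cdot[\tilde M_\infty])$ over the compact connected set $[\mathcal C_0]$: if some limit lay outside the orbit, an intermediate-value argument would produce a limit at distance exactly $\epsilon/2$ from the orbit, which Proposition \ref{dis2} forces back into the orbit, a contradiction. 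With that substitution, the remainder of your argument---identification of the limiting metric through the uniqueness of singular KR solitons on a klt $Q$-Fano variety and absorption of the resulting automorphism and unitary conjugation into the Cheeger--Gromov diffeomorphisms---is sound and agrees with the paper's conclusion of the proof.
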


  \begin{proof}
 By assumption we have $\tilde M_\infty\in \mathcal C_{KS}^0(X)$ for some holomorphic VF $X$.  By Corollary \ref{vec} and Theorem \ref{WZ}, we know that $\mathcal C_0\subseteq \mathcal C_{KS}(X)$ for the same $X$. Thus it suffices to show that $\mathcal C_0={\rm SL}(N+1;\mathbb C)\cdot [\tilde M_\infty]$ since the convergence of  (\ref{kr-flow}) depends only on  $\tilde M_\infty$  according to the proof of Theorem \ref{WZ} by using the uniqueness of singular KR solitons  on a Q-Fano variety with klt-singularities \cite{Bern}.  On the contrary, if $\mathcal C_0/{\rm SL}(N+1;\mathbb C)$ is not a single point, then there is another point $\tilde M'_\infty$ such that ${\rm dist}([\tilde M'_\infty], {\rm SL}(N+1;\mathbb C)\cdot [\tilde M_\infty])
 \geq \delta >0$ by Corollary \ref{semiuni-KR-solions}. Because $\mathcal C_0$ is connected, there is a point $[\tilde M''_\infty]$ such that
  ${\rm dist}([\tilde M^{''}_\infty], {\rm SL}(N+1;\mathbb C)\cdot [\tilde M_\infty])=\frac{\epsilon}{2}$, where  $\epsilon$  is  the constant determined  in Proposition \ref{dis2}.  However,  it is impossible by  Proposition \ref{dis2}  and so  the theorem  is proved.
 \end{proof}

 In Section 5, we will see that the reductivity condition of  $ {\rm Aut}_0(\tilde M_\infty)$ in  Theorem \ref{unique-algebra} can be removed as shown in \cite{CSW}.

 \section{Limits of singular KE metrics }

  In this section, we prove Theorem \ref{general-KR}  in case that $(\tilde M_\infty, \hat \omega_\infty)$ is a singular KE metric. The idea is to deform   KR flows (\ref{kr-flow}) with varied initial metrics   as done for the smooth convergence in \cite{TZ4, TZZZ}.  For any $\omega_0'\in 2\pi c_1(M,J)$,   we let $\omega^s=s\omega_0+(1-s)\omega_0'$ ($s\in [0,1]$). We want to prove the global convergence of  flow  $(M, \omega_t^s)$ for any initial $\omega_s$. First we prove   the following stability  result of KR flow  $(M, \omega_t')$ when its initial metric $\omega_0'\in 2\pi c_1(M,J)$ is very closed to   $\omega_0$.

 \begin{theo}\label{stability-KE} Suppose that the limit $(\tilde M_\infty, \hat\omega_\infty)$ in Theorem \ref{WZ} is  a singular  KE metric. Then there is an $\epsilon>0$ such that for any initial metric  $\omega_0'\in 2\pi c_1(M,J)$ with
 \begin{align}\label{small-initial-0}\|\omega_0'-\omega_0\|_{C_{CG}^2(M)}\le \epsilon,
 \end{align}
  flow $(M, \omega_t')$  is globally  convergent to a  Gromov-Hausdroff limit   $(M_\infty, \omega_\infty)$,  which  is
    the completion of $({\rm Reg}(\tilde M_\infty), \hat\omega_\infty)$.
 \end{theo}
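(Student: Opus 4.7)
The plan is a proof by contradiction, following the stability strategy of \cite{TZ4, TZZZ} for smooth limits but adapted to the Q-Fano setting by substituting Li's lower bound of the Ding energy (Proposition \ref{low}) for the smooth-case Liapunov argument. Suppose the statement fails. Then there exist $\omega_0^{\alpha_i}\in 2\pi c_1(M,J)$ with $\|\omega_0^{\alpha_i}-\omega_0\|_{C^2_{CG}(M)}\to 0$ and times $t_i\to\infty$ such that $\mathrm{dist}_{GH}((M,\omega_{t_i}^{\alpha_i}),(M_\infty,\omega_\infty))\ge \epsilon_0>0$. Since the closeness condition (\ref{a-condition}) is uniform, Perelman's estimates (Lemma \ref{lem:perelman-1}) and the partial $C^0$-estimate (\ref{unip}) apply uniformly in $i$, and the compactness argument of Theorem \ref{WZ} extended in Section 1 yields, after a subsequence, a Gromov-Hausdorff limit $(M_\infty',\omega_\infty')$ and an algebraic limit $\tilde M_\infty'$, a Q-Fano variety with klt singularities admitting a singular KR soliton, with $\mathrm{dist}_{GH}((M_\infty',\omega_\infty'),(M_\infty,\omega_\infty))\ge \epsilon_0$.

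Next I would set up a diagonal interpolation. Smooth dependence of (\ref{kr-flow}) on the initial metric gives $\omega_T^{\alpha_i}\to \omega_T$ in $C^\infty$ for each fixed $T$, as $i\to\infty$. Since in the KE case the original flow satisfies $\omega_T\to (M_\infty,\omega_\infty)$ uniformly in Gromov-Hausdorff topology as $T\to\infty$ (Theorem \ref{unique-algebra}) and its Kodaira images converge in Chow modulo $\mathrm{SL}(N+1;\mathbb C)$ to $[\tilde M_\infty]$, a diagonal selection $T_i\to\infty$ with $T_i<t_i$ produces $\omega_{T_i}^{\alpha_i}$ Gromov-Hausdorff close to $(M_\infty,\omega_\infty)$. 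Applying Proposition \ref{two-topology}, the intermediate Kodaira images $[\tilde \Phi_{T_i}^{\alpha_i}(M)]$ converge in Chow modulo $\mathrm{SL}(N+1;\mathbb C)$ to $[\tilde M_\infty]$.

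To finish, Li's lower bound of the Ding energy (Proposition \ref{low}), together with monotonicity of Ding along the flow and the fact that its infimum over $2\pi c_1(M,J)$ is attained only at a singular KE (Berman's uniqueness), forces any late-time algebraic limit of the flow $\omega_t^{\alpha_i}$ to lie in $\mathcal C_{KE}$. Combined with the Chow approximation above, each such limit is arbitrarily close modulo $\mathrm{SL}(N+1;\mathbb C)$ to $[\tilde M_\infty]$ for $i$ large. Reductivity of $\mathrm{Aut}_0(\tilde M_\infty)$ in the KE case (Proposition \ref{red}) then activates the stability estimate Proposition \ref{dis}, forcing the algebraic limit to lie in $\mathrm{SL}(N+1;\mathbb C)\cdot[\tilde M_\infty]$; by Berman's uniqueness of singular KE on a Q-Fano with klt singularities, the Gromov-Hausdorff limit of $(M,\omega_t^{\alpha_i})$ as $t\to\infty$ must coincide with $(M_\infty,\omega_\infty)$, contradicting the choice of $t_i$.

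The hardest step, I expect, is the propagation of Chow closeness from the intermediate times $T_i$ out to the divergent times $t_i$: the flow segment $\omega_t^{\alpha_i}$ on $[T_i,t_i]$ covers an arbitrarily long time window, and without a uniform basin-of-attraction argument around $[\tilde M_\infty]$ in Chow, the hypothesis $\mathrm{dist}_{GH}\ge\epsilon_0$ at $t_i$ is not immediately ruled out by naive continuity. This is where Li's Ding energy bound (Proposition \ref{low}) is decisive: it provides the Liapunov control needed to pin each nearby flow's late-time algebraic limit to $\mathcal C_{KE}$ and thereby engage Proposition \ref{dis}.
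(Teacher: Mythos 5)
Your overall architecture (contradiction with initial metrics $\omega_0^{\alpha_i}\to\omega_0$, uniform Perelman and partial $C^0$ estimates for the perturbed flows, a Liapunov functional to force singular KE limits, and the gap results Proposition \ref{two-topology} and Proposition \ref{dis}) matches the paper's, but there is a genuine gap exactly at the step you yourself flag as the hardest, and your proposed fix does not close it. Knowing that the late-time algebraic limit of $\omega_t^{\alpha_i}$ lies in $\mathcal C_{KE}$ (whether via Li's Ding bound or via Perelman entropy, as the paper does) does not place it within the $\epsilon$-neighborhood of $[\tilde M_\infty]$ in the Chow variety that Proposition \ref{dis} requires as a hypothesis: Corollary \ref{semiuni-KE} only says $\mathcal C_{KE}/{\rm SL}(N+1,\mathbb C)$ is \emph{finite}, so the flow could in principle drift from near the orbit of $[\tilde M_\infty]$ at your intermediate times $T_i$ to near a \emph{different} KE orbit at the divergent times $t_i$, where by your contradiction hypothesis ${\rm dist}_{GH}\ge\epsilon_0$ and Proposition \ref{two-topology} therefore gives no Chow control at all. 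Your Chow closeness is established only at $T_i$, and nothing in your write-up transports it forward to $t_i$; "the limit is KE" plus "the limit is close at earlier times" does not engage Proposition \ref{dis} at the times that matter.

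The paper closes this gap with an intermediate-value selection rather than forward propagation. Since $\omega_t^{\alpha_i}\to\omega_t$ for each fixed $t$ and the unperturbed flow converges uniformly to $(M_\infty,\omega_\infty)$ (Theorem \ref{unique-algebra}), while the perturbed flow is at distance $\ge\delta_0$ at late times, continuity of $t\mapsto{\rm dist}_{GH}((M,\omega_t^{\alpha_i}),(M_\infty,\omega_\infty))$ produces times at which this distance equals exactly $\delta/2$, where $\delta$ is the threshold of Proposition \ref{two-topology}. The limit $(\bar M_\infty,\bar\omega_\infty)$ extracted at these times is then simultaneously (i) at GH distance exactly $\delta/2>0$ from $(M_\infty,\omega_\infty)$, (ii) close enough that Proposition \ref{two-topology} gives Chow closeness of $\bar M_\infty$ to ${\rm SL}(N+1,\mathbb C)\cdot[\tilde M_\infty]$, and (iii) a singular KE metric on a klt Q-Fano variety, by the monotonicity of Perelman's entropy combined with $L(\omega_0)=\sup\lambda$ in the KE case; then Proposition \ref{dis} and the uniqueness of singular KE metrics force the GH distance to vanish, contradicting (i). If you insert this selection of times (or, alternatively, a connectedness argument for the set of all limits over $t\in[T_i,t_i]$ in the spirit of Lemma \ref{connected} and Theorem \ref{unique-algebra}), your argument goes through; the choice of the Ding energy versus the Perelman entropy as the Liapunov functional is immaterial.
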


  \begin{proof} Let $(M_\infty', \omega_\infty')$ be a Gromov-Hausdroff limit of any sequence $\{\omega_{t_i}'\}$ of $\omega_{t}'$.  We claim:  For any $\delta>0$ there is an $\epsilon>0$ such that
   \begin{align}\label{small-GH-3}{\rm dist}_{GH}(  (M_\infty, \omega_\infty),    (M_\infty', \omega_\infty'))\le \delta,
   \end{align}
  if  $\omega_0'$ satisfies (\ref{small-initial-0}).
  Then  $\tilde M_\infty$ must be biholomorphic to $\tilde M_\infty'$ by Proposition \ref{dis2} together with Proposition \ref{two-topology}, where  $\tilde M_\infty'$ is the Q-Fano variety associated to $(M_\infty', \omega_\infty')$.  The theorem is proved.

  We prove (\ref{small-GH-3})  by contradiction. On the contrary,  there are a number $\delta_0>0$ and   a sequence of  $\omega_0^{\alpha_i}\in 2\pi c_1(M,J)$ such that
  $$\|\omega_0^{\alpha_i}-\omega_0\|_{C_{CG}^2(M)}\to 0,~{\rm as}~\alpha_i\to\infty,$$
  and
 \begin{align}\label{contradiction-3}  {\rm dist}_{GH}(  (M_\infty, \omega_\infty),    (M_\infty', \omega_\infty'))\ge \delta_0,
 \end{align}
 where
 $ (M_\infty', \omega_\infty')$ is a  Gromov-Hausdroff limit of some  sequence $\{\omega_{t_k}^{\alpha_i}\}$ in  (\ref{kr-flow}) with an initial metric $\omega_0^{\alpha_i}$.
 Since $(M, \omega_t)$ is globally  convergent to $ (M_\infty, \omega_\infty)$ in Gromov-Hausdroff by  Theorem \ref{unique-algebra}  by (\ref{contradiction-3}), we can also choose a sequence   $\{\omega_{t_i}^{\alpha_i}\}$ which converges to  $(\bar M_\infty, \bar \omega_\infty))$
 in  Gromov-Hausdroff topology such that
 \begin{align}\label{contradiction-2}  {\rm dist}_{GH}(  (M_\infty, \omega_\infty),    (\bar M_\infty, \bar \omega_\infty))= \frac{\delta}{2},
 \end{align}
 where $\delta(=\epsilon)$ is determined as in (\ref{small-GH-0}) in  Proposition \ref{two-topology}.
 Moreover, by the monotonicity  of Perelman's entropy \cite{Pe, TZ4, WZ20},
  we have
  \begin{align}\label{max-entropy-comutation}&\lim_{t\rightarrow\infty, \alpha_i\to 0}\lambda(\omega_{t_i}^{\alpha_i})\ge L(\omega_0)=\lim_{t_i\rightarrow\infty}\lambda(\omega_{t_i}) \notag\\
 & =\sup\{\lambda(g')|~\omega_{g'}\in 2\pi c_1(M,J)\}=
   \int_M  c_1(M,J)^n.
   \end{align}
   It follows that
   $$\lim_{t\rightarrow\infty, \alpha_i\to 0}\lambda(\omega_{t_i}^{\alpha_i})=   \int_M  c_1(M,J)^n.$$
  Thus as in the proof of Theorem \ref{WZ} (also see \cite[Lemma 4.2]{WZ20}),
  we  can prove that $\omega_{t_i}^{\alpha_i}$ is also locally  $C^\infty$-convergent to a  singular KE metric on a  Q-Fano variety  with klt singularities. Hence,  by  Proposition \ref{dis2}  together with Proposition   \ref{two-topology}, we conclude that  $(M_\infty, \omega_\infty)$ is isometric to    $(\bar M_\infty, \bar \omega_\infty)$ as in the  proof of  Theorem \ref{unique-algebra}.   This is impossible by (\ref{contradiction-2}). Therefore, (\ref{small-GH-3}) is true and  Theorem \ref {stability-KE} is proved.
  \end{proof}

  By Theorem \ref{stability-KE}, we set
  \begin{align}\label{i-set}I=\sup_s\{[0,s]| ~(M, \omega_t^\tau)~     \stackrel{GH}{\longrightarrow } (M_\infty, \omega_\infty)~{\rm uniformly}, ~\forall~\tau\in [0,s]\}.
  \end{align}
 Note  that $0\in I$ by Theorem \ref{unique-algebra} (also see Theorem   \ref{mainThm-KS} below). Then  $I$ is an open set.  Thus to show $I=[0,1]$,  it  remains to prove that $I$ is also closed.  Without of
  loss of generality, we may assume that $I=[0, s_0)$.
   This means that $\omega_t^s$ is uniformly   convergent to the Gromov-Hausdroff limit   $(M_\infty, \omega_\infty)$ for any $s\le s_0$,  which  is
    the completion of $({\rm Reg}(\tilde M_\infty), \hat\omega_\infty)$.

 Recall  the energy level $L(\cdot )$ of Perelman entropy
  $\lambda(\cdot)$ for   flow  $(M,\omega_t')$  with  the initial metric $\omega'_0$  defined by  (cf. \cite{TZ4}),
  \begin{equation}\label{level}
  L(\omega_0')=\lim_{t\rightarrow\infty}\lambda(\omega_t).
  \end{equation}
  To show $s_0\in I$,  we need to prove that
  $$L(\omega^{s_0})=L(\omega_0).$$
 By a result in  \cite{TZ3},  it suffices to get a lower bound of K-energy in the class of $2\pi c_1(M,J)$.  In the following,  we will verify it by showing Ding-energy bounded below as done by Li to solve certain homogeneous complex Monge-Amp\`ere equation  \cite{Li} .

 \subsection{Lower bound of Ding-energy}

  For a smooth K\"ahler metric $\omega\in 2\pi c_1(M)$, choose a Hermitian metric $h$ on $K_M^{-1}$ such that $R(h)=\omega$. $h$ is the same as a volume form $dV_h$ and its Ricci curvature is $\omega$.  For  any $\phi\in {\rm PSH}(M,\omega)$, we define
  $$F^0_\omega(\phi)=-\frac{1}{n+1} \frac{1}{V}\sum_{i=0}^n \int_M \phi\omega^{n-i}\wedge \omega_\phi^{i}$$ and
  $${\rm Ding}_\omega(\phi)=F^0_\omega(\phi)-\log\int_M e^{-\phi}dV_h.$$

  The following is due to a result of Li  \cite{Li}.

  \begin{prop}\label{low}
  Let $\pi: \mathcal  X\rightarrow \Delta$ be a special degeneration  of Fano manifold $M$ such that the central fiber admitting a singular  KE metric. Then we have $ {\rm Ding}_\omega(\phi)\geq C$.
  \end{prop}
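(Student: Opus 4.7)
The plan is to combine the Phong-Sturm geodesic ray construction with Berndtsson's convexity of the Ding functional and the Ding-polystability of the central fiber $X_0$.

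First, from the special degeneration $\pi:\mathcal X \to \Delta$, I would construct a bounded weak geodesic ray $\{\phi_t\}_{t\ge 0}\subset {\rm PSH}(M,\omega)$ by solving a homogeneous complex Monge-Amp\`ere equation on $\mathcal X \setminus X_0$ with $S^1$-invariant boundary data coming from the reference Hermitian metric $h$ on $K_M^{-1}$. This is the classical Phong-Sturm construction, refined to the $\mathbb Q$-Fano central fiber setting by Berman and collaborators. By Berndtsson's convexity theorem, the function $t \mapsto {\rm Ding}_\omega(\phi_t)$ is convex on $[0,\infty)$, and by the slope formula of Berman and Boucksom-Hisamoto-Jonsson its asymptotic slope equals the non-archimedean Ding invariant ${\rm Ding}^{\rm NA}(\mathcal X)$ of the test configuration.

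Since $X_0$ admits a singular K\"ahler-Einstein metric, Berman's theorem gives that $X_0$ is Ding-polystable, which implies ${\rm Ding}^{\rm NA}(\mathcal X) \ge 0$. Convexity together with nonnegative asymptotic slope then yields ${\rm Ding}_\omega(\phi_t) \ge {\rm Ding}_\omega(\phi_0)$ along the ray. To upgrade this to a uniform lower bound on all of ${\rm PSH}(M,\omega)$, one would argue by contradiction: a destabilizing sequence $\phi_j$ with ${\rm Ding}_\omega(\phi_j) \to -\infty$ must, after suitable renormalization, give rise to a geodesic ray of strictly negative asymptotic slope; by the non-archimedean approximation theorems of Boucksom-Jonsson and Berman-Boucksom-Jonsson this ray is approximated by rays coming from test configurations of $M$, producing a destabilizing test configuration that would contradict the nonnegativity of ${\rm Ding}^{\rm NA}$ on all test configurations of $M$ (exploiting that Ding-polystability of $X_0$ propagates back to Ding-semistability of $M$ under special degeneration).

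The main obstacle is precisely this final step: passing from the lower bound along the one specific geodesic ray arising from our given special degeneration to a uniform lower bound over all of ${\rm PSH}(M,\omega)$. This requires the full non-archimedean pluripotential-theoretic machinery and the identification of Ding-semistability with boundedness-below of the Ding functional, which forms the technical core of Li's approach. The rest—the geodesic ray, convexity, slope formula, and nonnegativity from Berman's polystability theorem—is essentially by now packaged machinery.
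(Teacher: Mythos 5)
Your first half (the Phong--Sturm ray attached to the test configuration, Berndtsson convexity, the slope formula, and the observation that the non-archimedean Ding invariant of a special degeneration with KE central fiber is nonnegative) is sound, but it only bounds ${\rm Ding}_\omega$ along the single ray emanating from the fixed reference potential, and you have correctly flagged that the passage to a uniform bound over all of ${\rm PSH}(M,\omega)$ is the real issue. The repair you propose --- renormalizing a destabilizing sequence into a ray of negative slope and approximating it by test configurations of $M$ to contradict Ding-semistability --- is precisely the variational Yau--Tian--Donaldson machinery of Berman--Boucksom--Jonsson. It is a far deeper input than the statement being proved, it is not actually carried out in your sketch, and the implication ``Ding-semistable $\Rightarrow$ Ding functional bounded below'' that you would need is essentially Proposition \ref{low} itself in disguise; as written the argument is therefore circular (or at best an appeal to a black box) at exactly the step you identify as the technical core.

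The paper (following Li) sidesteps the uniformity problem entirely by a different construction: for each individual $\phi\in{\rm PSH}(M,\omega)$ one solves the Dirichlet problem $(\Omega+dd^c\Phi)^{n+1}=0$ on the whole total space $\mathcal X$ over the disc $\Delta$, with the $S^1$-invariant boundary data $\Phi|_{\pi^{-1}(1)}=\phi$; that is, the arbitrary potential $\phi$ is placed on the boundary fibers rather than at the starting point of a ray from a fixed metric. The fiberwise Ding energy $f(t)={\rm Ding}_{\Omega|_t}(\Phi|_t)$ is then shown to be continuous (Lemma \ref{con}) and subharmonic in $t$ --- Berndtsson's theorem handles the term $-\log\int e^{-\Phi}dV_{h_{\mathcal X}}$ and a direct integration by parts handles $F^0$ --- so the maximum principle gives ${\rm Ding}_\omega(\phi)=f(1)\ge f(0)$, while $f(0)$ is the Ding energy of the central fiber, bounded below by a fixed constant by \cite{BBEGZ} since $\mathcal X_0$ carries a singular KE metric. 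This yields the uniform bound for every $\phi$ at once, with no non-archimedean approximation and no contradiction argument. To salvage your approach you would have to replace the single ray by this family of fiberwise potentials over the disc, which is exactly what the homogeneous Monge-Amp\`ere solution on $\mathcal X$ with boundary value $\phi$ accomplishes.
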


  \begin{proof}
  Fix an admissible metric $\Omega$ on $\mathcal X$ and $\Omega|_{\pi^{-1}(1)}=\omega$. Choose a Hermitian metric $h_{\mathcal X}$ on $K^{-1}_{\mathcal X}$ such that $R(h_{\mathcal X})=\Omega$. For any $\phi\in {\rm PSH}(M,\omega)$, we can solve
  $$(\Omega+dd^c \Phi)^{n+1}=0 \text{ on }\mathcal  X,$$
  with boundary value $\Phi_{\pi^{-1}(1)}=\phi$ (cf. \cite{Ko98, EGZ}).   Then $\Phi$ is bounded  on $\Delta$ with $S^1$-invariance.
 Now we define a function on $\Delta$ by
  $$f(t)={\rm Ding}_{\Omega|_{t}}(\Phi|_{t})=F^0_{\Omega|_{t}}(\Phi|_{t})-\log\int_{\mathcal X_t} e^{-\Phi}dV_{h_{\mathcal X}}.$$
   By Lemma \ref{con} below, $f(t)$ is continuous.  We will show that $f(t)$ is a subharmonic function of $t$.  Since the second term  of $f(t)$ is subharmonic by Berndtsson's result \cite{Bern},  we need to show that  $F^0_{\Omega|_{t}}(\Phi|_{t})$  is also a  subharmonic function.
   Similar to \cite{Bern}, we can use the method of integral by part.
 In fact, by choosing a non-negative test function $\psi(t)$ with zero boundary on $\Delta$, we get
  \begin{align}
  \int_\Delta \psi\sqrt{-1}\partial \bar\partial F^0_{\Omega|_{t}}(\Phi|_{t})&=\int_\Delta \sqrt{-1}\partial \bar\partial \psi
  (-\frac{1}{n+1} \frac{1}{V}\sum_{i=0}^n \int_{\mathcal X_t} \Phi_t \Omega_t^{n-i}\wedge \Omega_\Phi^{i}) \notag \\
  &=(-\frac{1}{n+1})\frac{1}{V}\int_{\mathcal X}\sqrt{-1}\partial \bar\partial \psi \sum_{i=0}^n \Phi \Omega^{n-i}\wedge \Omega_\Phi^{i})\notag\\
  &=(-\frac{1}{n+1})\frac{1}{V}\int_{\mathcal X}\psi \sum_{i=0}^n (\sqrt{-1}\partial \bar\partial \Phi) \Omega^{n-i}\wedge \Omega_\Phi^{i})\notag\\
  &=(-\frac{1}{n+1})\frac{1}{V}\int_{\mathcal X}\psi(\Omega_\Phi^{n+1}-\Omega^{n+1})=\frac{1}{n+1}\frac{1}{V}\int_{\mathcal X}\psi\Omega^{n+1}\geq 0\notag.
  \end{align}
   Thus $f(t)$ is subharmonic. By the maximum  principle, $f(0)\leq f|_{\partial \Delta}=f(1)$. We note that $f(0)$ is the Ding energy of $\mathcal X_0$ with respect to $\Omega_0$, which  is  bounded  below by \cite{BBEGZ}. Hence, $f(t)\ge -C$ for some constant $C$.  The proposition is proved.
  \end{proof}

  \begin{lem}\label{con}
  Both functions $F^0_{\Omega|_{t}}(\Phi|_{t})$ and  $\int_{\mathcal X_t} e^{-\Phi}dV_{h_{\mathcal X}}$ are continuous function of $t$.
  \end{lem}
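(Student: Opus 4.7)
The plan rests on two facts already established: $\Phi$ is bounded and $S^1$-invariant on $\mathcal X$, and $\mathcal X_0$ has klt singularities, so every fibre integral appearing in the two functions is well-defined even at $t=0$. I would split the argument into continuity on $\Delta^\ast=\Delta\setminus\{0\}$ and continuity at $t=0$.

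On $\Delta^\ast$ the fibres are smooth and biholomorphic to $M$, the family metric $\Omega|_{\mathcal X_t}$ depends smoothly on $t$, and $\Phi|_{\mathcal X_t}$ is a uniformly bounded $\Omega|_{\mathcal X_t}$-psh function. Stability of bounded solutions of the Monge--Amp\`ere Dirichlet problem (of Ko{\l}odziej type) yields continuity of $\Phi|_{\mathcal X_t}$ in the uniform topology on $\Delta^\ast$, and Bedford--Taylor continuity of the mixed Monge--Amp\`ere operator against uniformly bounded psh functions then gives continuity of both $F^0_{\Omega|_t}(\Phi|_t)$ and $\int_{\mathcal X_t} e^{-\Phi}\,dV_{h_{\mathcal X}}$ for $t\in\Delta^\ast$; for the second function, smoothness of $dV_{h_{\mathcal X}}$ and the uniform $L^\infty$-bound on $\Phi$ make this immediate.

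The main obstacle is continuity at $t=0$. For $\int_{\mathcal X_t} e^{-\Phi}\,dV_{h_{\mathcal X}}$, the $S^1$-invariance of $\Phi$ combined with its boundedness propagates regularity up to the regular part of $\mathcal X_0$, so $\Phi|_{\mathcal X_t} \to \Phi|_{\mathcal X_0}$ almost everywhere on $\mathcal X_0$; the klt condition makes $dV_{h_{\mathcal X}}|_{\mathcal X_0}$ a finite measure, and dominated convergence (with dominant $e^{\|\Phi\|_\infty}$) delivers continuity. For $F^0_{\Omega|_t}(\Phi|_t)$ the strategy is to test against a smooth radial cut-off $\chi$ on $\Delta$ and integrate by parts on the total space, rewriting
\[
\int_\Delta F^0_{\Omega|_t}(\Phi|_t)\,\chi(t)\,\sqrt{-1}\,dt\wedge d\bar t \;=\; -\frac{1}{(n+1)V}\int_{\mathcal X}\chi\sum_{i=0}^n \Phi\,\Omega^{n-i}\wedge\Omega_\Phi^i\wedge\sqrt{-1}\,dt\wedge d\bar t,
\]
whose right-hand side is well-defined and continuous in $\chi$ because $\Phi$ is bounded and $\mathcal X_0$ is klt. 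This forces continuity of $F^0_{\Omega|_t}(\Phi|_t)$ as a distribution, which combined with its continuity on $\Delta^\ast$ upgrades to genuine continuity at $t=0$. The truly delicate step is ruling out mass concentration of each mixed fibre current $\Phi\,\Omega_t^{n-i}\wedge\Omega_{\Phi,t}^i$ near $\sing(\mathcal X_0)$ as $t\to 0$; here the klt hypothesis on $\mathcal X_0$ together with the $S^1$-invariance of $\Phi$ must be exploited carefully to secure the required uniform integral bounds.
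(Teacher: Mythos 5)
There is a genuine gap, and it sits exactly where you flag it yourself. For $F^0_{\Omega|_t}(\Phi|_t)$ your plan is to establish continuity as a distribution in $t$ by testing against cut-offs and integrating by parts on the total space, and then to ``upgrade'' to continuity at $t=0$. This cannot work as stated: the point $t=0$ is a null set for $dt\wedge d\bar t$, so agreement of $F^0$ with a distribution that depends continuously on the test function carries no information about the value of $F^0$ at $t=0$ beyond its a.e.\ equivalence class; any bounded function continuous on $\Delta^\ast$ yields the same distributional data regardless of its value at the origin. The step you defer --- ruling out mass concentration of $\Phi\,\Omega_t^{n-i}\wedge\Omega_{\Phi}^i$ near $\sing(\mathcal X_0)$ --- is precisely the content of the proof, not a technicality to be ``exploited carefully'' later. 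The paper handles it directly: since $\mathcal X_0$ is normal, $\sing(\mathcal X_0)$ is pluripolar, so one can choose neighborhoods $W_\delta$ with $\int_{\mathcal X_0\cap W_\delta}\Omega_t^{n-i}\wedge\Omega_\Phi^i\to 0$; because the total mixed Monge--Amp\`ere mass of each fibre is the cohomological constant $V$, the mass of $\mathcal X_t\setminus W_\delta$ converges to that of $\mathcal X_0\setminus W_\delta$, hence the mass of $\mathcal X_t\cap W_\delta$ converges to the small quantity on $\mathcal X_0\cap W_\delta$. Combined with $\|\Phi\|_{L^\infty}<\infty$ and the $C^{1,\alpha}$ regularity of $\Phi$ on ${\rm Reg}(\mathcal X)$, this bounds all three terms in the splitting of $|F^0_{\Omega|_t}(\Phi|_t)-F^0_{\Omega|_0}(\Phi|_0)|$ with no distributional detour.

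For $\int_{\mathcal X_t}e^{-\Phi}dV_{h_{\mathcal X}}$ your dominated-convergence sketch also skips the substantive point: the fibres vary, so there is no fixed measure space on which to dominate. The paper passes to a log resolution $\mu:\tilde{\mathcal X}\to\mathcal X$ with $\mu^{-1}\mathcal X_0$ SNC, writes the fibre volume form in adapted coordinates as $|g(w_0(t,w'),w')|^2\bigwedge_i |w_i|^{-2b_i}\,dw_i\wedge d\bar w_i\wedge\cdots$ with discrepancies $0\le b_i<1$ from the klt condition, and only then applies pointwise plus dominated convergence to conclude $\int_{\mathcal X_t\cap W_\delta}dV_{h_{\mathcal X}}\to\int_{\mathcal X_0\cap W_\delta}dV_{h_{\mathcal X}}$. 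The klt hypothesis enters through the integrability exponents $b_i<1$, not merely through finiteness of the limit measure, and the $S^1$-invariance of $\Phi$ plays no role in this lemma's proof. You have correctly identified the two hard points but proved neither.
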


  \begin{proof}
  We will divide the integral into two parts for each function in the lemma: near singularities and away from singularities.  Denote the integrand of   $F^0_{\Omega|_{t}}(\Phi|_{t})$  by
   $$\mathcal R=-\frac{1}{n+1} \frac{1}{V}\sum_{i=0}^n \int_{\mathcal X_t} \Phi_t \Omega_t^{n-i}\wedge \Omega_\Phi^{i}.$$
  Since the central fiber $\mathcal X_0$ is normal,  $\mathcal {\rm Sing}(\mathcal X_0) $ is pluripolar. Thus we can choose neighborhoods $W_\delta$ of $ {\rm Sing}( \mathcal X)=\mathcal {\rm Sing}(\mathcal X_0)$ such that
  \begin{align}\label{small}
  \lim_{\delta\rightarrow 0} \int_{\mathcal X_0\bigcap W_\delta} \Omega_t^{n-i}\wedge \Omega_\Phi^{i}=0, i=1,2,...,n.
  \end{align}
  It follows that
  \begin{align}\label{f-f0}&|F^0_{\Omega|_{t}}(\Phi|_{t})-F^0_{\Omega|_{0}}(\Phi|_{0})|\notag\\
  &\leq |\int_{\mathcal X_t\setminus W_\delta}\mathcal R-\int_{\mathcal X_0\setminus W_\delta}\mathcal R|+|\int_{W_\delta\bigcap \mathcal X_t}\mathcal R|+|\int_{W_\delta\bigcap \mathcal X_0}\mathcal R|.
   \end{align}

  The first term in (\ref{f-f0}) converges to 0 by the $C^{1,\alpha}$ regularity of $\Phi$ in ${\rm Reg}(\mathcal X)$ and  the continuity of Monge-Amp\'ere mass. For the second term, we have
  $$|\int_{\mathcal X_t\setminus W_\delta}\mathcal R|\leq |\Phi|_{L^\infty}\frac{1}{(n+1)V}\sum_{i=0}^n \int_{\mathcal X_t\setminus W_\delta}\Omega_t^{n-i}\wedge \Omega_\Phi^{i}$$
    and
  \begin{align}
  \frac{1}{(n+1)V}\sum_{i=0}^n \int_{\mathcal X_t\setminus W_\delta}\Omega_t^{n-i}\wedge \Omega_\Phi^{i}&=1-\frac{1}{(n+1)V}\sum_{i=0}^n\int_{\mathcal X_t\bigcap W_\delta}\Omega_t^{n-i}\wedge \Omega_\Phi^{i}\notag\\
  &\rightarrow 1-\frac{1}{(n+1)V}\sum_{i=0}^n\int_{\mathcal X_0\bigcap W_\delta}\Omega_t^{n-i}\wedge \Omega_\Phi^{i}( t\rightarrow 0)\notag\\
  &=\int_{\mathcal X_0\bigcap W_\delta}\Omega_t^{n-i}\wedge \Omega_\Phi^{i}.\notag
  \end{align}
  Thus  together with (\ref{small}), all the three terms in (\ref{f-f0}) can be arbitrarily small when $\delta,t$ are small enough.

  For the continuity of $\int_{\mathcal X_t} e^{-\Phi}dV_{h_{\mathcal X}}$, we will construct the suitable neighborhoods $W_\delta$ such that
  \begin{align}\label{volumesmall}
  \lim_{t\rightarrow} \int_{\mathcal X_t\bigcap W_\delta}dV_{h_{\mathcal X}}=\int_{\mathcal X_t\bigcap W_\delta} dV_{h_{\mathcal X}}.
  \end{align}
  Then we can prove the continuity of $\int_{\mathcal X_t} e^{-\Phi}dV_{h_{\mathcal X}}$ in the same way as above for $F^0_{\Omega}$.

  Choose a  resolution of singularity of
  $(\mathcal X,\mathcal X_0)$ by
  $$\mu: \tilde {\mathcal X}\rightarrow \mathcal X$$
  such that $\mu^{-1}\mathcal X_0$ is a SNC divisor on $\tilde{\mathcal X}$. Write
  $$\mu^{-1}\mathcal X_0=\mathcal X'_0+\sum a_i E_i, $$
    where $\mathcal X'_0$ is the proper inverse image of $\mathcal X_0$. From the definition of discrepancy and adjunction formula, we have
    $$K _{\tilde{\mathcal X}/\mathbb C}+\mathcal X'_0=\mu^*(K_{\mathcal X}/\mathbb C+\mathcal{X}_0)-\sum_{i=1}^l b_iE_i( 0\leq b_i<1)$$
     and
     $$K _{{\mathcal X_0}'}=\mu^* K_{\mathcal X_0}-\sum_{i=1}^l b_iE_i.$$
     For a point $\tilde x\in \mathcal X_0'\bigcap_{i=1}^k E_i(k\leq l)$, we can choose a local coordinates
     $(w_0,w_i(1\leq i\leq n))$ such that
      $$\mathcal X_0'=\{w_0=0\},~E_i=\{w_i=0\}$$ and the map
      $\tilde {\mathcal X}\rightarrow \mathbb C$ is given by
      $$t=w_0\Pi_{i=1}^l w_i^{b_i}. $$
      Then  we define a neighborhood around $\tilde x$ :
  \[
  \textstyle{
  \tilde{U}_t(\tilde{x},\delta)=\left\{\left(\frac{t}{\prod_{i=1}^{l}w_i^{a_i}},w'\right)\in\!\mathbb{C}^{n+1};\! |w_j|\le \delta, j=1, \dots, n,\! \mbox{ and } \!\left|\frac{t}{\prod_{i=1}^{l}w_i^{a_i}}\right|\le \delta\right\},
  }
  \]
  where $w'=(w_1,...,w_n)$.
Thus,  under the above coordinates, for  a local non-vanishing section of $K_{\mathcal X}$, we can write it as
  $$\mu^*(v)=g(w)\Pi_{i=1}^l w^{a_i-b_i}(dw_0\wedge dw'\otimes dt)$$
  and
  $$(\mu|_{\mathcal X_0'})^* (v_0)=g(0,w')\Pi_{i=1}^{l} w_i^{-b_i} dw',$$ where $g$ is a non-vanishing holomorphic function and $v_0$ is the restriction of $v$ on $\mathcal X_0$.
 Hence,   the volume $dV_{h_{\mathcal X_t}}$ on $\mathcal X_t$ is given by
  $$\mu^*(v\wedge\bar{v})=|g(w_0(t,w'),w')|^2\bigwedge_{i=1}^{l}|w_i|^{-2b_i}dw_i\wedge d\bar{w}_i\wedge\bigwedge_{j=l+1}^n dw_j\wedge d\bar{w}_j.$$
  By the pointwise convergence, it follows that
  \begin{align}
  \lim_{t\rightarrow 0}\mu^*(v\wedge\bar{v})^{1/m}&=|g(0,w')|^2\bigwedge_{i=1}^{l}|w_i|^{-2b_i}dw_i\wedge d\bar{w}_i\wedge\bigwedge_{j=l+1}^{n}dw_j\wedge d\bar{w}_j\notag \\
  &=\mu|_{\mathcal X_0'}^* (v_0\wedge\bar{v}_0)^{1/m}\notag.
  \end{align}
  Since $g$ is bounded, we derive by  dominant convergence,
  $$\lim_{t\rightarrow 0}\int_{\tilde{{U}}_t(\tilde{x},\delta)}\mu^*(v\wedge\bar{v})=\int_{\mathcal X'_0\cap\tilde{U}(\tilde{x},\delta)}\mu|_{\mathcal X'_0}^*(v_0\wedge\bar{v}_0). $$  This is the same as (\ref{volumesmall}). For the points not in $\mathcal X'_0$, we can do a similar calculation. Therefore,  the continuous of $\int_{\mathcal X_t} e^{-\Phi}dV_{h_{\mathcal X}}$ is also proved.
  \end{proof}

 \subsection{Proof of Theorem \ref{general-KR}  in  case of singular KE metrics} In this subsection, we prove

 \begin{theo}\label{mainThm-KE}
    Suppose that there is a sequence  $(M, \omega_{t_i})$ of  (\ref{kr-flow}) such that $\omega_{t_i}$ converges to a  singular KE metric $\hat\omega_\infty$ as  in Theorem \ref{WZ}.  Then  $\omega_t$ is locally uniformly  convergent to  $\hat\omega_\infty$  on   ${\rm Reg}(\tilde M_\infty)$ in the Cheeger-Gromov topology.     As a consequence,   $(M, \omega_t)$ is uniformly  convergent to a  Gromov-Hausdroff limit   $(M_\infty, \omega_\infty)$, which  is
    the completion of $({\rm Reg}(\tilde M_\infty), \hat\omega_\infty)$  and  is independent of choice of initial metrics in  $2\pi c_1(M, J)$.

     \end{theo}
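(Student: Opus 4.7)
The plan is to combine the uniqueness result for a fixed initial metric (Theorem \ref{unique-algebra}) with a continuity method in the initial metric. The first assertion---local $C^\infty$-convergence of the full flow $\omega_t$ on $\mathrm{Reg}(\tilde M_\infty)$ to $\hat\omega_\infty$---is immediate from Theorem \ref{unique-algebra} (the reductivity hypothesis is free in the singular KE case by Proposition \ref{red}) together with the uniqueness of singular KE metrics on a klt $\mathbb Q$-Fano variety \cite{Bern}: if two subsequences converged to distinct $\hat\omega_\infty$ and $\hat\omega_\infty'$ on $\mathrm{Reg}(\tilde M_\infty)$, uniqueness would force $\hat\omega_\infty=\hat\omega_\infty'$ up to an automorphism, so the full flow converges.

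For independence of the initial metric, fix arbitrary $\omega_0'\in 2\pi c_1(M,J)$, consider the affine path $\omega^s=s\omega_0+(1-s)\omega_0'$, $s\in[0,1]$, and the set $I$ of (\ref{i-set}). It suffices to prove $I=[0,1]$: then the flow starting at $\omega_0'$ has the same Gromov-Hausdorff limit as the flow starting at $\omega_0$. Non-emptiness ($0\in I$) is the first assertion, openness of $I$ is Theorem \ref{stability-KE}, so the heart of the proof is closedness: assuming $I=[0,s_0)$, show $s_0\in I$.

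The key intermediate step in the closedness argument is the identity
\begin{equation*}
L(\omega^{s_0})=\int_M c_1(M,J)^n=\sup\{\lambda(g')\mid \omega_{g'}\in 2\pi c_1(M,J)\}.
\end{equation*}
By the result of \cite{TZ3}, this reduces to a uniform lower bound on the Mabuchi K-energy in $2\pi c_1(M,J)$, which is obtained from the Ding-energy lower bound of Proposition \ref{low} applied to the special degeneration of $M$ to $\tilde M_\infty$ (whose central fiber admits a singular KE metric). Once the maximal-entropy identity at $s_0$ is known, the entropy/partial-$C^0$ argument in the proof of Theorem \ref{WZ} (cf.\ \cite[Lemma 4.2]{WZ20}) forces any subsequential algebraic limit $\tilde M_\infty^{(s_0)}$ of $(M,\omega_t^{s_0})$ to lie in $\mathcal C_{KE}$. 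Taking $s_i\nearrow s_0$ inside $I$ and a diagonal sequence $\omega_{t_{k_i}}^{s_i}$, Proposition \ref{two-topology} converts the Gromov-Hausdorff closeness of the flows to $(M_\infty,\omega_\infty)$ into the algebraic closeness $\mathrm{dist}_W\!\bigl([\tilde M_\infty^{(s_0)}],\,\mathrm{SL}(N+1,\mathbb C)\cdot[\tilde M_\infty]\bigr)\to 0$. Since $\mathcal C_{KE}/\mathrm{SL}(N+1,\mathbb C)$ is discrete by Proposition \ref{dis}, $\tilde M_\infty^{(s_0)}$ is then projectively equivalent to $\tilde M_\infty$, and uniqueness of singular KE metrics identifies the Gromov-Hausdorff limit with $(M_\infty,\omega_\infty)$. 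Hence $s_0\in I$.

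The principal obstacle is establishing the maximal-entropy identity at the boundary point $s_0$: it rests on Proposition \ref{low}, which itself requires solving a degenerate homogeneous complex Monge-Amp\`ere equation on the total space of the special degeneration and then invoking Berndtsson's subharmonicity together with the delicate continuity of the Ding functional across singular fibers (Lemma \ref{con}). The subsequent algebraic identification step, though technical, is a straightforward assembly of Propositions \ref{two-topology} and \ref{dis}. Once $I=[0,1]$ is proved, $\omega_0'$ was arbitrary in $2\pi c_1(M,J)$, so $(M,\omega_t)$ converges uniformly to the same Gromov-Hausdorff limit $(M_\infty,\omega_\infty)$ independent of the initial metric, completing the proof.
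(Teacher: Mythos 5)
Your overall strategy coincides with the paper's: the first assertion from Theorem \ref{unique-algebra} (with reductivity free by Proposition \ref{red}), then the continuity method on the affine path $\omega^s$ with openness supplied by Theorem \ref{stability-KE} and the maximal-entropy identity $L(\omega^{s_0})=\sup\lambda$ at the boundary point supplied by Proposition \ref{low} and \cite{TZ3}. The gap is in the final identification step of the closedness argument. You take a diagonal sequence $\omega^{s_i}_{t_{k_i}}$ with $s_i\nearrow s_0$, observe that its Gromov--Hausdorff limit is $(M_\infty,\omega_\infty)$ because each $s_i\in I$, and then assert that Proposition \ref{two-topology} yields ${\rm dist}_W([\tilde M_\infty^{(s_0)}],\,{\rm SL}(N+1,\mathbb C)\cdot[\tilde M_\infty])\to 0$. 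But $[\tilde M_\infty^{(s_0)}]$ is the algebraic limit of the flow at $s=s_0$, while your diagonal sequence lives at $s=s_i<s_0$; Proposition \ref{two-topology} only controls the algebraic limit of the sequence whose Gromov--Hausdorff limit you have actually pinned down, namely the diagonal one. Nothing in your argument transfers that control to the flow at $s_0$ itself --- such a transfer is precisely the continuity in $s$ you are trying to establish, so the step is circular as written.

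The paper closes this gap by running the stability theorem backwards from $s_0$. Once $L(\omega^{s_0})=\sup\lambda$ is known, \cite[Proposition 4.14]{WZ20} and Theorem \ref{unique-algebra} give that the flow at $s_0$ converges uniformly to some singular KE limit $(M_\infty',\omega_\infty')$ supported on a Q-Fano variety $\tilde M_\infty'$; since this limit is again a singular KE metric, Theorem \ref{stability-KE} applies with $\omega^{s_0}$ in the role of the base metric, so the flow started at $\omega^{s_0-\epsilon_0}$ also converges to $(M_\infty',\omega_\infty')$ for small $\epsilon_0>0$. Since $s_0-\epsilon_0\in I$, that same flow converges to $(M_\infty,\omega_\infty)$, forcing $(M_\infty',\omega_\infty')\cong(M_\infty,\omega_\infty)$ and then, as in the proof of (\ref{open-iso}), $({\rm Reg}(\tilde M_\infty'),\hat\omega_\infty')\cong({\rm Reg}(\tilde M_\infty),\hat\omega_\infty)$. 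You already have all the ingredients for this; the missing move is to apply Theorem \ref{stability-KE} centered at $\omega^{s_0}$ rather than at points of $I$.
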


  \begin{proof}  By Proposition \ref{low}, we see that   for any  $\omega'\in 2\pi c_1(M,J)$, it holds \cite{TZ3},
   \begin{align}\label{KE-entropy} L(\omega')=\sup\{\lambda(g')|~\omega_{g'}\in 2\pi c_1(M,J)\}=
   \int_M  c_1(M,J)^n.
  \end{align}
  Applying Theorem  \ref{unique-algebra} to  the KR flow with the initial metric $\omega^{s_0}$ together with \cite[Proposition 4.14]{WZ20},   there is   a Q-Fano variety $\tilde M_\infty'$ with admitting  a weak KE metric   $\hat \omega_\infty'$  such that $\omega_t^{s_0}$ is locally  $C^\infty$-convergent to   $\hat \omega_\infty'$  on  ${\rm Reg}(\tilde M_\infty')$ in the Cheeger-Gromov topology and
  \begin{align}\label{two-iso-2}(M_\infty', \omega_\infty')=\overline{({\rm reg}(\tilde M_\infty'), \hat\omega_\infty')},
  \end{align}
 where $(M_\infty', \omega_\infty')$ is the global  Gromov-Hausdroff limit of $(M, \omega_t^{s_0})$. Then by Theorem \ref{stability-KE}, there is a small $\epsilon_0$ such that   flow  $(M, \omega_t^{s_0-\epsilon_0})$  is also globally convergent to $(M_\infty', \omega_\infty')$  in   Gromov-Hausdroff topology.  Note that $s_0-\epsilon_0\in I$. Thus
 \begin{align}\label{hg-isometric}
   (M_\infty', \omega_\infty')\cong  (M_\infty, \omega_\infty).
   \end{align}
   Moreover, as in the proof of (\ref{open-iso}), we can also get
  $$ ({\rm reg}(\tilde M_\infty'), \hat\omega_\infty')\cong ({\rm reg}(\tilde M_\infty), \hat\omega_\infty).$$
  Hence,  we also prove that $\omega_t^{s_0}$ is locally  $C^\infty$-convergent to   $\hat \omega_\infty$  on  ${\rm Reg}(\tilde M_\infty)$ in the Cheeger-Gromov topology.
   Theorem \ref{mainThm-KE} is proved.
  \end{proof}

 \section{Computation of $L(\omega_0')$ and applications }

 In this section,  we first give a brief sketch of the eigenvalues and eigenspaces  estimate for representation group in \cite{CSW}, then apply it to
  compute  the energy level $L(\omega')$ of flow (\ref{kr-flow}) through estimating $H$-invariant introduced in  \cite{TZ2, TZZZ, He}.
 We will represent a result of  Dervan-Sz\'ekelyhidi  that $L(\omega')$ is independent of $\omega'$  \cite{DS16}.  Here the proof depends
 on the partial $C^0$-estimate and local estimate of K\"ahler potentials in \cite{WZ20}.

  Let $E$ be a complex linear space and $G={\rm GL}(E), K=U(E)$. Assume that $V$ be a representation of $G$. Fix a $K$-invariant metric on $V$. Let $A_i(i=0,1,...)$ be a sequence of matrices in $G$ and $\Lambda$ be a Hermitian matrix. Assume that
  \begin{align}\label{p}
  \lim_{i\rightarrow \infty} A_iA_{i-1}^{-1}=e^\Lambda.
  \end{align}
  Denote the eigenvalues of $\Lambda$ by $\mathcal S=\{d_1>d_2>d_3...>d_{k-1}>d_k\}$ and the eigenspaces corresponding to $d_j$ by $U_j$. For $v\in V\setminus \{0\}$, $[v]$ is the corresponding point in the projective space $\mathbb P(V)$. Denote the limit points of $A_i\cdot [v]$ by ${\rm Lim}[v]$.

  The following two lemmas can be found in  \cite{CSW}.

  \begin{lem}\label{lim}
  Assume that  (\ref{p}) holds, then for $v\in V\setminus \{0\}$, $d(v)=\lim_{i\rightarrow \infty}(\log |A_{i+1} v|-\log |A_{i} v|)$ exists and belongs to $\mathcal S$. Moreover for any $[w]\in {\rm Lim}[v]$, $w$ belongs to $U_{d(v)}$.
  \end{lem}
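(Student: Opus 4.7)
The plan is to analyze the orbit $u_i := A_i v$ via its one-step recursion $u_i = B_i u_{i-1}$, where $B_i := A_i A_{i-1}^{-1} \to e^\Lambda$. Since $\Lambda$ is Hermitian and the metric on $V$ is $K$-invariant, the induced action of $\Lambda$ on $V$ (via the derivative of the representation) is Hermitian for the given metric on $V$, so $V$ decomposes orthogonally into eigenspaces $V = U_1 \oplus \cdots \oplus U_k$ with orthogonal projections $P_j$. Writing $B_i = e^\Lambda + E_i$ with $\|E_i\| \to 0$ and projecting the recursion onto $U_j$ yields
\[ u_i^{(j)} = e^{d_j} u_{i-1}^{(j)} + \epsilon_i^{(j)}, \qquad |\epsilon_i^{(j)}| \le \|E_i\| \cdot |u_{i-1}|, \]
where $u_i^{(j)} := P_j u_i$. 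Uniform boundedness of $B_i$ and $B_i^{-1}$ (both converge in $G$) immediately places $\rho_+ := \limsup_i \tfrac{1}{i}\log |u_i|$ and $\rho_- := \liminf_i \tfrac{1}{i}\log |u_i|$ in $[d_k, d_1]$.

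The first step is to show $\rho_+ \in \mathcal S$. Introduce the rescaled components $w_i^{(j)} := e^{-id_j} u_i^{(j)}$, which satisfy $w_i^{(j)} - w_{i-1}^{(j)} = e^{-id_j} \epsilon_i^{(j)}$. Unrolling the recursion and using the preliminary bound $|u_{i-1}| \le e^{(i-1)(\rho_+ + \eta)}$ (valid for all large $i$, any $\eta > 0$), one obtains a Gronwall-type estimate
\[ |u_i^{(j)}| \le C e^{i d_j} + \bigl(\sup_{\ell > \ell_0} \|E_\ell\|\bigr) \cdot C e^{i(\rho_+ + \eta)}, \]
with $\ell_0$ free. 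Letting $\ell_0, i \to \infty$ and then $\eta \to 0$, this gives $|u_i^{(j)}| = o(e^{i \rho_+})$ whenever $d_j \neq \rho_+$. Hence if $\rho_+ \notin \mathcal S$, then $|u_i|^2 = \sum_j |u_i^{(j)}|^2 = o(e^{2 i \rho_+})$, contradicting the definition of $\rho_+$. So $\rho_+ = d_{j_0}$ for some index $j_0$. A symmetric argument for $\rho_-$, combined with the identity $u_i = u_i^{(j_0)} + o(e^{i d_{j_0}})$ just derived, forces $\rho_- = \rho_+ = d_{j_0}$.

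Set $d(v) := d_{j_0}$. The above yields $|u_i^{(j)}| = o(e^{i d_{j_0}})$ for every $j \ne j_0$, while $w_i^{(j_0)} = e^{-i d_{j_0}} u_i^{(j_0)}$ is a bounded sequence in $U_{j_0}$ whose limit points have nonzero norm (by the very definition of $\rho_+$). A final refinement via the recursion $w_i^{(j_0)} - w_{i-1}^{(j_0)} = e^{-i d_{j_0}} \epsilon_i^{(j_0)}$ shows that $|w_i^{(j_0)} - w_{i-1}^{(j_0)}| \to 0$, so the accumulation set of $\{w_i^{(j_0)}\}$ is a connected subset of $U_{j_0}\setminus\{0\}$. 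Combining with $u_i = u_i^{(j_0)} + o(e^{i d_{j_0}})$ gives $|u_{i+1}|/|u_i| \to e^{d_{j_0}}$, i.e., $d(v) = d_{j_0} \in \mathcal S$ exists as a genuine limit, and every accumulation point $[w]$ of $[A_i v]$ has a representative $w \in U_{j_0} = U_{d(v)}$.

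The principal technical hurdle is the bootstrap in the second paragraph: a preliminary growth bound on $|u_i|$, initially known only as a $\limsup$, must be fed back into the component recursions in a controlled way. The key technical point is that, for $d_j \ne \rho_+$, the geometric factors $e^{(i-\ell)(d_j - \rho_+)}$ arising when one unrolls the recursion supply exponential decay on one of the two sides, which combined with the smallness of the tail $\sup_{\ell > \ell_0} \|E_\ell\| \to 0$ absorbs the dependence on $\|E_i\|$ without requiring any rate. Additional care is needed at the critical case $d_j = \rho_+$, where neither side of the geometric series contracts; here one instead exploits the maximality defining $j_0$ to confirm that the $U_{j_0}$-component actually accumulates and dominates.
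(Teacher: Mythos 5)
The paper itself gives no proof of this lemma (it is quoted from [CSW]), so your argument must stand on its own. Your overall strategy --- pass to the induced action on $V$, note that the $K$-invariance of the metric makes the induced $\Lambda$ Hermitian so that $V$ splits orthogonally into the $U_j$, and then show that one eigencomponent of $u_i=A_iv$ dominates --- is the right one, and your first paragraph is fine. The gap is in the central bootstrap. From $|u_{i-1}|\le C_\eta e^{(i-1)(\rho_++\eta)}$ your unrolled estimate only yields $e^{-i\rho_+}|u_i^{(j)}|\le C e^{i(d_j-\rho_+)}+\bigl(\sup_{\ell>\ell_0}\|E_\ell\|\bigr)C_\eta e^{i\eta}$, and for any fixed $\eta>0$ the last factor $e^{i\eta}$ blows up as $i\to\infty$; you cannot take $i\to\infty$ "and then $\eta\to0$" and still be asserting something about $|u_i^{(j)}|$, and there is no control on $C_\eta$ that would allow $\eta$ to depend on $i$. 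So the key claim $|u_i^{(j)}|=o(e^{i\rho_+})$ for $d_j\neq\rho_+$ is not established. Moreover, even granting it, the intended contradiction fails: $|u_i|=o(e^{i\rho_+})$ is perfectly compatible with $\limsup_i\frac1i\log|u_i|=\rho_+$ (e.g.\ $|u_i|=e^{i\rho_+}/i$). The same defect propagates: $\rho_-=\rho_+=d_{j_0}$ does not force the accumulation points of $e^{-id_{j_0}}u_i^{(j_0)}$ to be nonzero, and the "Moreover" clause needs $|u_i^{(j)}|/|u_i|\to0$ for $j\neq j_0$, which is strictly stronger than any statement about the exponents $\frac1i\log|u_i^{(j)}|$.

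What the statement actually requires is the dichotomy (Perron-type) argument for perturbed recursions $u_i=(e^{\Lambda}+E_i)u_{i-1}$ with $\|E_i\|\to0$: compare components pairwise. If $d_j>d_{j'}$ and $|u_{i_0}^{(j)}|\ge|u_{i_0}^{(j')}|$ at some large $i_0$, then since $|\epsilon_i^{(l)}|\le\|E_i\|\,|u_{i-1}|\le\sqrt{k}\,\|E_i\|\max_l|u_{i-1}^{(l)}|$ and $e^{d_j}-e^{d_{j'}}$ eventually dominates these errors, the inequality persists for all later $i$ and the ratio $r_i=|u_i^{(j')}|/|u_i^{(j)}|$ obeys $r_{i+1}\le\theta_ir_i+\varepsilon_i$ with $\theta_i\to e^{d_{j'}-d_j}<1$ and $\varepsilon_i\to0$, hence $r_i\to0$; if instead the faster component never catches up, a short expansion argument on the reciprocal ratio shows it must tend to $0$. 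Applied to the maximal components this produces a single $j_0$ with $|u_i^{(j)}|/|u_i^{(j_0)}|\to0$ for all $j\neq j_0$, from which both $\lim_i(\log|A_{i+1}v|-\log|A_iv|)=d_{j_0}\in\mathcal S$ and the membership of every limit point in $U_{j_0}$ follow immediately. Your final paragraph is essentially correct once this domination is available, but as written the proof does not reach it.
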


  Define $V_{j}=\{v\in V| d(v)\leq d_j\}$. Then $\dim V_j=\sum_{i=j}^k \dim U_i.$ Moreover we have

  \begin{lem}\label{fil}
  Assume that $V=S^m(E)$ for some $m\geq 1$. Then there exists $C\in G$, which is independent of $m$ such that $C\cdot V_j=\oplus_{i=j}^k U_i$.  There also exists  $C_i\in G$ with $\lim_{i\rightarrow \infty} C_i=Id$ such that
  $\tilde A_i=C_i^{-1}A_iC \in G_{\Lambda}=\{g\in G|~g\cdot e^{s\Lambda}=e^{s\Lambda}\cdot g\}$.
  \end{lem}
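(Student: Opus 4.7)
The plan is to prove both claims together, reducing to the $m=1$ case by functoriality of the symmetric power and then handling $m=1$ by a change-of-basis argument followed by a perturbative block-diagonalization. The main obstacle will be the quantitative decay estimate needed to construct the $C_i$'s; everything else is a mixture of linear algebra and tracking asymptotics.

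For $m=1$ (so $V=E$), Lemma~\ref{lim} gives $d(v)=d_j$ for every $v\in V_j\setminus V_{j+1}$ and $\dim V_j/V_{j+1}=\dim U_j$. The natural map $v\mapsto \lim_{i\to\infty}A_iv/|A_iv|$ identifies each subquotient $V_j/V_{j+1}$ with directions inside $U_j$, so by picking a basis of $E$ adapted to the filtration $\{V_j\}$ and sending it to a basis adapted to the eigenspace decomposition via this identification, I obtain $C\in GL(E)$ with $CV_j=\oplus_{i\geq j}U_i$. To obtain the $m$-independence, I observe that on $S^mE$ both the filtration $V_j$ and the eigenspace decomposition $U_j$ are functorially induced from those on $E$: a monomial $v_1\cdots v_m$ with $v_l\in V^E_{j_l}\setminus V^E_{j_l+1}$ has growth rate $\sum_l d_{j_l}$, so $V_j\subset S^mE$ is spanned by such monomials with $\sum_l d_{j_l}\leq d_j^{S^mE}$, and similarly $U_j\subset S^mE$ is a direct sum of symmetric products of the $U_l^E$'s. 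The same $C\in GL(E)$, acting on $S^mE$ through the representation, therefore takes the filtration on $S^mE$ to the eigenspace decomposition, proving the first claim uniformly in $m$.

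For the second claim, write $A_iC$ in block form with respect to $E=\bigoplus_jU_j$ as $A_iC=D_i+N_i$, where $D_i$ is the block-diagonal part and $N_i$ the off-diagonal part. Using that $CV_j=\oplus_{l\geq j}U_l$ from the first step, that every $A_i$ preserves the filtration $\{V_j\}$ (since $d(A_iv)=d(v)$), and that on each graded piece $A_i$ behaves asymptotically like $e^{i\Lambda}$ times a bounded operator, I expect to show $N_iD_i^{-1}\to 0$ as $i\to\infty$. Setting $C_i:=\Id+N_iD_i^{-1}$ then gives $C_i\to\Id$ and by construction $C_i^{-1}A_iC=D_i$, which commutes with $e^{s\Lambda}$ and hence lies in $G_\Lambda$. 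The hardest part will be quantifying the decay $N_iD_i^{-1}\to 0$: one must track how the $o(1)$ corrections in $A_iA_{i-1}^{-1}=e^\Lambda(\Id+o(1))$ accumulate over $i$ steps without swamping the exponential dominance of the diagonal blocks $e^{id_j}$ over the cross-block contributions, once everything is restricted to the filtration.
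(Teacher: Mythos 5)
You should first be aware that the paper does not prove Lemma \ref{fil} at all: it is quoted verbatim from \cite{CSW} ("The following two lemmas can be found in \cite{CSW}"), so there is no in-paper proof to compare against. Your overall architecture --- a flag-matching $C$ for $m=1$, functoriality of the filtration and of the eigenspace decomposition under $S^m$ to get $m$-independence, and then $C_i=\Id+N_iD_i^{-1}$ with $D_i$ the block-diagonal part of $A_iC$ --- is the standard and correct skeleton for this statement. The functoriality step is also essentially right, although it silently uses that $\log|A_i(v_1\cdots v_m)|=\sum_l\log|A_iv_l|+O(1)$, which needs the compactness fact that $\inf\{|w_1\cdots w_m|:\ |w_l|=1\}>0$ in $S^m(E)$, plus a dimension count to see that $V_j\subset S^m(E)$ is spanned by decomposable monomials.

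There are, however, genuine gaps. First, the "natural map $v\mapsto\lim_{i\to\infty}A_iv/|A_iv|$" is not well defined: Lemma \ref{lim} only constrains the \emph{limit points} of $A_i[v]$, and when $\dim U_j>1$ the direction inside $U_j$ can rotate forever (take the block of $A_i$ on $U_j$ to be a rotation by angle $\sqrt{i}$; then $A_iA_{i-1}^{-1}\to e^\Lambda$ still holds). It is also not linear, so it cannot be used to "send a basis to a basis". For the first claim this is harmless --- any linear isomorphism matching the two flags works, by the dimension count $\dim V_j=\sum_{i\ge j}\dim U_i$ --- but the stated construction should be discarded. Second, the assertion that "every $A_i$ preserves the filtration $\{V_j\}$" is false: $A_{i_0}(V_j)$ is the filtration associated to the shifted sequence $(A_iA_{i_0}^{-1})_{i\ge i_0}$, not $V_j$ itself, and in general $A_{i_0}(V_j)\ne V_j$.

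The decisive gap is the third: the convergence $N_iD_i^{-1}\to0$, which is the entire analytic content of the lemma, is only announced ("I expect to show"), not proved. This is exactly where the hypothesis must be used, and it is delicate because $A_iA_{i-1}^{-1}=e^{\Lambda}(\Id+\epsilon_i)$ gives only $\epsilon_i\to0$ with no summability. Note that applying Lemma \ref{lim} pointwise to unit vectors of $C^{-1}U_l$ gives $|\Pi_jA_iv|/|\Pi_lA_iv|\to0$ for each fixed $v$, but the operator norm of $(N_iD_i^{-1})_{jl}$ is a supremum over the unit sphere of $U_l$, and pointwise convergence of non-equicontinuous projective maps does not yield uniform convergence. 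What actually makes the estimate work is a discrete Duhamel/Levinson-type computation: after passing to the adapted splitting, the off-diagonal blocks at time $i$ are controlled by weighted sums of the form $\sum_{j>i}\|\epsilon_j\|e^{-(j-i)\delta}$ (for the fast-over-slow blocks) and $\sum_{j\le i}\|\epsilon_j\|e^{-(i-j)\delta}$ (for the slow-over-fast blocks), where $\delta=\min_j(d_j-d_{j+1})>0$; both are $O(\sup_{j\ge i/2}\|\epsilon_j\|)+O(e^{-i\delta/2})\to0$ precisely because the spectral gaps supply exponential weights that compensate for the lack of summability of $\epsilon_j$. Until this estimate is carried out at the operator level, the proposal does not establish the lemma.
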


  \begin{prop}\label{alg}
  Assume that $V=S^m(E)$ for some $m\geq 1$. Define $\bar v=\lim_{i\rightarrow \infty} e^{i\Lambda} C\cdot v$. Then ${\rm Lim}[v]\subset \overline{ G_\Lambda \cdot [\bar v]}$.
  \end{prop}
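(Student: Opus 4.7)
The strategy is to use the factorization from Lemma \ref{fil} to conjugate $A_i$ into the centralizer $G_\Lambda$ of $\Lambda$, and then to isolate the exponential growth governed by $e^{i\Lambda}$. By Lemma \ref{fil}, $A_i = C_i \tilde A_i C^{-1}$ with $\tilde A_i \in G_\Lambda$ and $C_i \to \mathrm{Id}$. Since each $\tilde A_i$ commutes with $e^{s\Lambda}$, it preserves every eigenspace $U_j$ of $\Lambda$, and because $C_i \to \mathrm{Id}$ acts as the identity on $\mathbb P(V)$ in the limit, $\mathrm{Lim}[v]$ coincides with the set of projective limits of $[\tilde A_i\,(C^{-1}v)]$.

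The next step is to separate the pure exponential growth from a slowly varying factor inside $G_\Lambda$. Set $B_i := \tilde A_i \, e^{-i\Lambda} \in G_\Lambda$, so that $\tilde A_i = B_i\, e^{i\Lambda}$. Using that $\tilde A_{i-1}^{-1}$ commutes with $e^{-\Lambda}$, one computes
\[
  B_i B_{i-1}^{-1} \;=\; \tilde A_i\, \tilde A_{i-1}^{-1}\, e^{-\Lambda} \;\longrightarrow\; e^{\Lambda} e^{-\Lambda} \;=\; \mathrm{Id},
\]
so the $B_i$ vary slowly within $G_\Lambda$. Decomposing the relevant input vector along $V = \bigoplus_j U_j$ and applying $e^{i\Lambda}$, the component corresponding to the largest eigenvalue $d_{d(v)}$ singled out by Lemma \ref{lim} is exponentially amplified, and projectively $[e^{i\Lambda}(\cdot)]$ converges to $[\bar v]$ by the definition of $\bar v$ in the statement.

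Combining these two steps, $[A_i v] = [C_i B_i e^{i\Lambda}\,(C^{-1}v)]$, and along any subsequence along which $B_{i_k}\cdot[\bar v]$ converges inside the compact projective space $\mathbb P(U_{d(v)})$, every limit point of $[A_i v]$ is realized as such a limit and therefore lies in $\overline{G_\Lambda \cdot [\bar v]}$. The main obstacle is making precise the assertion that the projective limit is governed solely by the top eigenspace component: because $B_i$ need not be bounded in $G_\Lambda$, one must verify that the strict eigenvalue gaps $d_{d(v)} - d_j > 0$ for indices $j$ below $d(v)$ overcome the slow variation of $B_i$, so that the subdominant contributions $e^{i d_j} B_i w_j$ become projectively negligible relative to $e^{i d_{d(v)}} B_i w_{d(v)}$. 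This comparability estimate is where the hypothesis $V = S^m(E)$ is essential, via the specific $C$ from Lemma \ref{fil} which realigns the filtration $\{V_j\}$ with the direct sum $\bigoplus_{i \ge j} U_i$ compatibly across the symmetric power structure.
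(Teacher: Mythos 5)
Your factorization $A_i = C_i\tilde A_iC^{-1}$, $\tilde A_i = B_ie^{i\Lambda}$ with $B_i\in G_\Lambda$ and $B_iB_{i-1}^{-1}\to\mathrm{Id}$ is correct, but the argument stops exactly at its crux: you state that one ``must verify'' that the eigenvalue gaps overcome the possible growth of $B_i$, and you do not verify it. Moreover you attribute the needed estimate to the hypothesis $V=S^m(E)$ ``via the specific $C$'', which is not where it comes from: $V=S^m(E)$ enters only in Lemma \ref{fil}, to make $C$ independent of $m$. The missing estimate in fact follows from the slow variation you already established: since $B_iB_{i-1}^{-1}\to\mathrm{Id}$ in ${\rm GL}(V)$, for each fixed $w\neq 0$ one has $\log\|B_iw\|-\log\|B_{i-1}w\|\to 0$, hence $\log\|B_iw\|=o(i)$ by telescoping; writing the relevant vector as $\sum_{l\ge j}w_l$ with $w_l\in U_l$ and $d_j=d(v)$ the top eigenvalue present, the subdominant terms satisfy $e^{id_l}\|B_iw_l\|\big/\bigl(e^{id_j}\|B_iw_j\|\bigr)=e^{-i(d_j-d_l)+o(i)}\to 0$. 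Without this (or some substitute) the proof is incomplete, because a priori $B_i$ could amplify a lower eigenspace fast enough to change the projective limit.

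You should also know that the paper's own proof avoids the asymptotic comparison entirely. It takes $[w]=\lim_{i}[A_{\alpha_i}v]\in{\rm Lim}[v]$, notes $w\in U_j$ by Lemma \ref{lim} and $[\bar v]=[\pi_j(C\cdot v)]$, and then uses that $\tilde A_{\alpha_i}\in G_\Lambda$ commutes with the eigenprojection $\pi_j$ to write $\tilde A_{\alpha_i}[\bar v]=[\pi_j(\tilde A_{\alpha_i}(C\cdot v))]$; since $\tilde A_{\alpha_i}$ applied to the relevant vector recovers $C_{\alpha_i}^{-1}A_{\alpha_i}v$ with $C_{\alpha_i}\to\mathrm{Id}$, and since $[u]\mapsto[\pi_j u]$ is continuous where $\pi_ju\neq 0$ (which holds at $[w]$ because $\pi_jw=w\neq 0$), this converges to $\pi_j[w]=[w]$. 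That exhibits $[w]$ as a limit of points of $G_\Lambda\cdot[\bar v]$ with no growth estimate on $B_i$ at all. Either route works, but yours needs the estimate above to be a proof.
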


  \begin{proof}
  Assuming that $d(v)=d_j$, we have $v\in V_j$. So $C\cdot v\in \oplus_{i=1}^j U_i$, and the $U_j$ component $\pi_j(C\cdot v)$ of $C\cdot v$ is not zero. It follows that $[\bar v]=[\pi_j(C\cdot v)]$. For $[w]\in {\rm Lim}[v]$, by Lemma \ref{lim}, we know that $w\in U_j$. Assume that $[w]=\lim_{i\rightarrow \infty} [A_{\alpha_i}v]$, then we have
  \begin{align}
  \lim_{i\rightarrow \infty} \tilde A_{\alpha_i} [\bar v]&=
  \lim_{i\rightarrow \infty} \tilde A_{\alpha_i} [\pi_j(C\cdot v)]
  =\pi_j(\lim_{i\rightarrow \infty} \tilde A_{\alpha_i} [(C\cdot v)]) \notag \\
  &=\pi_j(\lim_{i\rightarrow \infty} A_{\alpha_i}[v])=\pi_j[w]=[w].  \notag
  \end{align}
  The proposition is proved.
  \end{proof}

 For the metric $\omega_t$ of K\"ahler-Ricci flow, we have an induced Hermitian metric $h_t$ on $E=H^0(M,K_M^{-l})$ of dimension $N+1$. Consider the solutions of the following ODE system:
  \begin{align}\label{ODE-s}
  \frac{d}{dt}s^\alpha(t)=-\frac{1}{2}\sum_{\gamma=0}^{N} H_t'(s^\alpha, s^\gamma)s^\gamma.
  \end{align}
  Then $\{s_\alpha(t)\}$ is an ortho-normal basis of $E$ with metric $h_t$. Recall  the embeddings    $\Phi_t$  of $M$ given by
   $\{s_\alpha(t)\}$.  Define $A_t\in {\rm GL}(E)$ by $\Phi_t=A_t\cdot \Phi_0$. Assume that for a sequence $t_i\rightarrow \infty$, $\Phi_{t_i}(M)$ converges to $\tilde M_\infty$. By Lemma 4.8 in \cite{WZ20}, $(\Phi_{t_i}^{-1})^*\omega_{t_i}$ converges smoothly  to a KR soliton metric $\hat \omega_\infty$ in ${\rm Reg}\,(\tilde M_\infty)$, where $\tilde M_\infty$ is a  Q-Fano variety  with klt singularities. Moreover, we have

\begin{lem}\label{limit-v}
There exists a subsequence $t_i'$ such that  $(\Phi_{t_i'}^{-1})^*\omega_{t_i'}$   converges locally  to a KR soliton  $\hat \omega_\infty$ and
   $A_{t_i'+s}\cdot A_{t_i'}^{-1}$ converges to $e^{sv}$ uniformly,
   where  $v$ is the soliton VF associated to $\hat \omega_\infty$.
  \end{lem}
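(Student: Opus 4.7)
The strategy is to first extract a subsequence realizing the KR soliton limit, then upgrade the $C^\infty$-convergence of the metrics to a group-level convergence $A_{t_i'+s}A_{t_i'}^{-1}\to e^{sv}$ by comparing the flow with the model soliton flow generated by $v$ on $\tilde M_\infty$.

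By Theorem \ref{WZ} together with the quoted \cite[Lemma 4.8]{WZ20}, a subsequence $t_i'\to\infty$ can be chosen so that $\Phi_{t_i'}(M)$ converges in the Chow scheme to a Q-Fano variety $\tilde M_\infty$ with klt singularities, and $(\Phi_{t_i'}^{-1})^*\omega_{t_i'}$ converges locally in $C^\infty$ on ${\rm Reg}(\tilde M_\infty)$ to a singular KR soliton $\hat\omega_\infty$ with soliton VF $v$. By \cite[Lemma 4.4]{WZ20} $v$ extends to an element of ${\rm sl}(N+1;\CC)$, and by Corollary \ref{vec} $v$ is unique up to a fixed unitary conjugation along $\mathcal{C}_0$. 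Hence $\tau_s:=e^{sv}$ is a well-defined one-parameter subgroup in ${\rm SL}(N+1;\CC)$ that preserves $\tilde M_\infty$.

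Next I would compare the KR flow after time $t_i'$ with the model flow $\tilde\omega_s:=\tau_s^*\hat\omega_\infty$. Differentiating the soliton identity ${\rm Ric}(\hat\omega_\infty)-\hat\omega_\infty=L_v\hat\omega_\infty$ shows that $\tilde\omega_s$ solves the normalized KR flow (\ref{kr-flow}) on ${\rm Reg}(\tilde M_\infty)$ with $\tilde\omega_0=\hat\omega_\infty$. Lemma \ref{c3-phi} gives uniform $C^{k,\alpha}$-control of the K\"ahler potentials $\psi_i^s$ of $(\Phi_{t_i'}^{-1})^*\omega_{t_i'+s}$ on each exhausting piece $\tilde{\Omega}_\gamma^i$, uniformly for $s\in[-\tfrac12,1]$. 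Standard parabolic Monge--Amp\`ere bootstrap, combined with uniqueness of smooth solutions to the flow on a regular domain, then yields
\begin{equation}\label{eq:mlimit}
(\Phi_{t_i'}^{-1})^*\omega_{t_i'+s}\;\longrightarrow\;\tau_s^*\hat\omega_\infty \quad \text{in } C^\infty_{\rm loc}({\rm Reg}(\tilde M_\infty)),
\end{equation}
uniformly on compact subintervals in $s$; iterating on successive bounded time intervals extends this to any fixed $s\in\RR$.

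To pass from metrics to matrices, let $\tilde s_i^\alpha:=(\Phi_{t_i'}^{-1})^*z^\alpha$ be the pullback of the standard sections of $\mathcal{O}(1)$, so that $\{\tilde s_i^\alpha\}$ is an ONB of $H^0(M,K_M^{-l})$ for $\omega_{t_i'}$. Expressing the ODE (\ref{ODE-s}) in terms of $A_t$, the matrix $B_i(s):=A_{t_i'+s}A_{t_i'}^{-1}$ satisfies a linear ODE driven by the $L^2(\omega_{t_i'+s})$-Gram matrix of the $\tilde s_i^\alpha$. By (\ref{eq:mlimit}), the partial $C^0$-estimate (\ref{unip}), and the global $L^\infty$-bound on the K\"ahler potentials from Lemma \ref{c3-phi}, these Gram matrices converge, uniformly in $s$ on compact intervals, to the Gram matrices of $\{\tau_s^*z^\alpha\}$ with respect to $\hat\omega_\infty$. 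Since $B_i(0)={\rm Id}$, integrating the ODE then forces $B_i(s)\to e^{sv}$ uniformly on compact $s$-intervals, which is the second assertion.

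The main obstacle is the \emph{global} convergence of these Gram matrices despite the singular set $\mathcal{S}\subset\tilde M_\infty$: (\ref{eq:mlimit}) gives only locally smooth convergence on ${\rm Reg}(\tilde M_\infty)$. I would handle this by splitting each integral $\int_M\langle\tilde s_i^\alpha,\tilde s_i^\beta\rangle\,\omega_{t_i'+s}^n$ along an exhaustion $\Omega_\gamma\nearrow{\rm Reg}(\tilde M_\infty)$ into a bulk piece on $\Psi_\gamma^i(\Omega_\gamma)$, which passes to the limit via (\ref{eq:mlimit}), and a tail which is uniformly small in $(i,s)$ by the $L^\infty$-bound on the potentials, the non-collapsing in Lemma \ref{lem:perelman-1}, and the pluripolarity of $\mathcal S$ (so $\hat\omega_\infty^n$ has negligible mass near $\mathcal S$, cf.\ \cite{WZ20}). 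Letting $\gamma\to\infty$ after $i\to\infty$ then closes the argument.
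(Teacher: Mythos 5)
Your overall architecture (first extract the soliton limit, then upgrade to matrix convergence) matches the paper, but the pivotal identification step is different and, as written, contains a genuine gap. To identify $\lim_i(\Phi_{t_i'}^{-1})^*\omega_{t_i'+s}$ with $\tau_s^*\hat\omega_\infty$ you appeal to ``uniqueness of smooth solutions to the flow on a regular domain.'' But ${\rm Reg}(\tilde M_\infty)$ is an \emph{incomplete} manifold, and uniqueness for the normalized K\"ahler--Ricci flow there is not a standard fact: two solutions with the same initial data and uniformly bounded potentials could a priori differ through their behaviour near the singular set, and no maximum principle applies without constructing cutoff functions adapted to $\mathcal S$ and controlling the resulting error terms --- none of which you supply. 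The paper's proof is structured precisely to avoid this. It first uses the uniform $C^1$-continuity of $s\mapsto A_{t_i'+s}A_{t_i'}^{-1}$ (a consequence of the partial $C^0$-estimate) to extract, by Arzel\`a--Ascoli, a uniform limit $B_s$ directly at the matrix level; it then invokes \cite[Lemma 4.8]{WZ20} (which rests on the monotonicity and convergence of Perelman's entropy) to conclude that \emph{every} time-slice $\hat\omega_\infty^s$ of the limit flow is itself a KR soliton with some VF $\tilde X_s$, and a short Lie-derivative computation ($\partial_s\hat\omega_\infty^s=-L_{\tilde X_s}\hat\omega_\infty^s$, $(F_s)_*\tilde X_s-v$ Killing and holomorphic, hence zero) forces $\tilde X_s\equiv v$ and $\hat\omega_\infty^s=(e^{-sv})^*\hat\omega_\infty$. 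No uniqueness of the flow on the singular variety is ever needed.

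A secondary, fixable omission is in your final matrix step. The generator $-\tfrac12 H_t'$ of the ODE (\ref{ODE-s}) is Hermitian, so ``integrating the ODE'' can at best identify the limit of $B_i(s)$ as $e^{sv}$ modulo a unitary factor coming from the isometries of the soliton generated by ${\rm Im}\,v$; you assert the conclusion $B_i(s)\to e^{sv}$ without explaining why this ambiguity disappears. The paper addresses it explicitly: it writes $B_s=g_s e^{sv}$ with $g_s$ unitary, observes that $B_s'B_s^{-1}$ is Hermitian so that the unitary part cannot move, and concludes $g_s=g_0={\rm Id}$. Your Gram-matrix computation would need an analogous step to close the argument even after the flow-uniqueness issue above is repaired.
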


  \begin{proof}
   It is  known  that $A_{t_i'+s}\cdot A_{t_i'}^{-1}$ is uniformly $C^1$-continuous at $s$ by the partial $C^0$-estimate in \cite{WZ20}.  Then we can choose a subsequence $t_i'$ such that $\lim_{i\rightarrow\infty}A_{t_i'+s}\cdot A_{t_i'}^{-1}=B_s$ uniformly for $s\in [-1,1]$. Thus we have
  \begin{align}\label{fm}
  \lim_{i\rightarrow\infty}\Phi_{t_i'+s} (M )=\lim_{i\rightarrow\infty}A_{t_i'+s}\cdot A_{t_i'}^{-1}(\Phi_{t_i'}M)=B_s\cdot\tilde  M_\infty,
  \end{align}
 where $\tilde  M_\infty$ is a Q-Fano variety limit of $\Phi_{t_i'}(M)$ as in Proposition \ref{two-topology}.

  By Proposition 3.2 in \cite{WZ20}, $(\Phi_{t_i'}^{-1})^*\omega_{t_i'+s}(-1\leq s\leq 2)$  converges  locally uniformly  to $\hat\omega_\infty^s=\tilde \omega_\infty+\sqrt{-1}\partial\bar\partial \psi_\infty^s$ such that  $\psi_\infty^s$ is the solution of
  \begin{align}\label{fl}
  \dot \psi^s_\infty=\log \frac{(\tilde\omega_{\infty}+\sqrt{-1}\partial\bar\partial \psi^s_\infty)^n}{\tilde\omega_{\infty}^n}-\tilde h_\infty+\psi^s_\infty, ~{\rm in}~  {\rm Reg}\,(\tilde M_\infty),
  \end{align}
  where $\tilde \omega_\infty=\frac{1}{l'}\omega_{FS}|_{\tilde  M_\infty}$.
   (\ref{fl}) implies that $\hat \omega_\infty^s$ is a solution of KR flow,
  \begin{align}\label{fl2}
  \frac{\partial}{\partial s}\hat\omega_\infty^s=-{\rm Ric}(\hat \omega_\infty^s)+\hat\omega_\infty^s,~s\in [-1,1],
  \end{align}
  where $\hat \omega_\infty^0=\hat \omega_\infty$.  By Lemma 4.8 in \cite{WZ20}, $\hat\omega^s_\infty$ are all   K\"ahler-Ricci solitons  in ${\rm Reg}\,(\tilde M_\infty)$. Denote the associated soliton VF of  $\hat \omega^s_\infty$ by $\tilde X_s(\tilde X_0=v)$.  Then
  $$\frac{\partial}{\partial s}\hat\omega_\infty^s=-L_{\tilde X_s}\hat\omega_\infty^s.$$
    Considering the one parameter subgroup $F_s$ generated by time dependent vector field $-X_s$, we have $\hat\omega_\infty^s=F_s^*\hat\omega_\infty$. Thus by (\ref{fl2}), we get
  $$L_{\tilde X_s} F_s^*\hat\omega_\infty=F_s^* L_{v}\hat\omega_\infty.$$
    It follows that $(F_s)_* \tilde X_s-v$ is Killing. Hence,  $(F_s)_* \tilde X_s-\tilde X=0$ since it's holomorphic. Taking derivative we obtain  $\tilde X_s=v$, and so
     \begin{align}\label{family-KR-solitons}\hat\omega^s_\infty=F_s^*\hat\omega_\infty=(e^{-sv})^*\hat\omega_\infty.
     \end{align}

     By (\ref{family-KR-solitons}) and  (\ref{fm}),  it is easy to see that
     $$(e^{-s\tilde X})^*\hat\omega_\infty=(B_s^{-1})^*\hat\omega_\infty.$$
     It follows that
  $B_s= g_se^{sX}$ for unitary group $g_s$. Because $B'_sB_s^{-1}$ is unitary, $g_s$ is dependent of $s$. Hence $g_s=g_0=Id$ and $B_s= e^{s\tilde X}$. The lemma is proved.
  \end{proof}

  Now we apply Lemma \ref{limit-v} to any sequence of $\omega_{t_i}$  in a fixed KR flow   (\ref{kr-flow}).  Note that  the soliton VF is $\tilde X=gvg^{-1}$ for some $g\in {\rm U}(N'+1, \mathbb C)$   by Corollary \ref{vec}.  Then   for any sequence $t_i\rightarrow \infty$, there is a subsequence $t_i'\rightarrow \infty$ such that $A_{t_i'+s}\cdot A_{t_i'}^{-1}$ converges to $e^{s v}$ uniformly. Thus we can choose $g_i\in {\rm U}(N'+1, \mathbb C)$ such that $\bar A_i=g_i^{-1}A_ i$ satisfies (\ref{p}) with  $\Lambda=v$, for any $i\in \mathbf N$. Hence, by Proposition \ref{alg}, we get
   \begin{align}\label{reductive-group} [ \mathcal  C_0]\subseteq  \overline {G_v\cdot  [\overline{M}_\infty]},
  \end{align}
  where  $G_{v}=\{g\in {\rm SL}(N'+1,\mathbb C|~g\cdot e^{sv}=e^{sv}\cdot  g\}$, and   $\overline{M}_\infty$ is the limit of $e^{tv} C\cdot \Phi_0(M)(t\rightarrow \infty)$ in $\mathbf {Hilb}^{\mathbb P(E),p}$.

  Denote the Hilbert polynomial of $\Phi_0(M)$ by $p$ and the Hilbert scheme of subschemes of $\mathbb P(E)$ with Hilbert polynomial $p$ by $\mathbf {Hilb}^{\mathbb P(E),p}$. We also have

  \begin{lem}\label{eigen}
   $\overline{M}_\infty$ is normal.  Moreover the eigenvalues and eigenspaces of $\Lambda$ of $H^0(\overline{M}_\infty,\mathcal O(j))$ and $H^0({\tilde M_\infty},\mathcal O(j))$ are the same  for any  $j\geq 1$, where $O(1)=K_{\tilde M_\infty}^{-l}$.
  \end{lem}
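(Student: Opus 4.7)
\textbf{Proof plan for Lemma \ref{eigen}.}

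The strategy is to exploit that $\overline M_\infty$ is the flat limit in the Hilbert scheme of $\{e^{tv}C\cdot\Phi_0(M)\}_{t\to\infty}$, and that by (\ref{reductive-group}) one has $\tilde M_\infty\in\overline{G_v\cdot\overline M_\infty}$, where by definition $G_v$ centralizes the one-parameter group $e^{sv}$. For the normality of $\overline M_\infty$, I would first combine Lemma \ref{limit-v} with the argument underlying (\ref{reductive-group}) to realize $\overline M_\infty$ as a Cheeger-Gromov limit of modified flow embeddings $g_i^{-1}\cdot\Phi_{t_i}(M)$, to which the uniform Kodaira embedding estimates and partial $C^0$-estimate of \cite{WZ20} apply. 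The Liu-Sz\'ekelyhidi normality criterion that was used to establish normality of $\tilde M_\infty$ in Proposition~2.7 of \cite{WZ20} then applies verbatim, so $\overline M_\infty$ inherits the structure of a Q-Fano variety with klt singularities and in particular is normal.

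For the eigenvalue comparison, the key observation is that because $G_v$ commutes with $e^{sv}$, the $v$-weight decomposition of the ambient space $\mathrm{Sym}^j E^*$ is preserved by every $\sigma\in G_v$. Choosing a sequence $\sigma_i\in G_v$ with $\sigma_i\cdot[\overline M_\infty]\to [\tilde M_\infty]$ in $\mathbf{Hilb}^{\mathbb P(E),p}$, the ideals $\sigma_i\cdot I_j(\overline M_\infty)$ are $v$-invariant with the same $v$-graded dimensions as $I_j(\overline M_\infty)$. Upper semicontinuity of cohomology along this flat family, combined with constancy of the total Hilbert polynomial $p$ and of the equivariant character $\mathrm{tr}(e^{sv}|_{H^0(X_t,\mathcal O(j))})$ (which depends algebraically on $t$ and hence is constant in each weight), forces
\[
\dim H^0(\overline M_\infty,\mathcal O(j))_d=\dim H^0(\tilde M_\infty,\mathcal O(j))_d
\]
for every $v$-weight $d\in\mathcal S$. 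This gives the equality of eigenvalues together with multiplicities.

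To identify the eigenspaces as subspaces of $\mathrm{Sym}^jE^*$ (and not merely match their dimensions), I would apply Lemmas \ref{lim} and \ref{fil} to $V=\mathrm{Sym}^jE^*$ and $A_i=\sigma_i$. Because each $\sigma_i$ is weight-preserving, the auxiliary element $C$ produced by Lemma \ref{fil} can be chosen weight-diagonal, and the filtration $V_\bullet$ of Lemma \ref{lim} coincides with the $v$-weight filtration on $\mathrm{Sym}^jE^*$. Intersecting this filtration with the $v$-invariant ideals $I_j(\overline M_\infty)$ and $I_j(\tilde M_\infty)$ then identifies the $v$-eigenspaces of the two section spaces as graded pieces of a common subquotient of $\mathrm{Sym}^jE^*$. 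The main obstacle I anticipate is the normality step, which hinges on building a modified flow sequence that genuinely converges to $\overline M_\infty$ in the Cheeger-Gromov sense with the uniform partial $C^0$-estimate of \cite{WZ20}; once this is in place, the rest of the argument reduces to finite-dimensional representation theory of the torus generated by $v$.
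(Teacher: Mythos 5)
Your treatment of the eigenvalue/eigenspace comparison is essentially the paper's argument: since every $\sigma\in G_v$ commutes with $e^{sv}$, the weight decomposition is preserved along $\sigma_k\cdot\overline M_\infty$, and one passes to the limit $\sigma_k\cdot\overline M_\infty\to\tilde M_\infty$ using flatness of the universal family. (Your extra step invoking Lemmas \ref{lim} and \ref{fil} with $A_i=\sigma_i$ is both unnecessary and unjustified as stated, since those lemmas require $\lim_iA_iA_{i-1}^{-1}=e^{\Lambda}$, a condition the $\sigma_i$ have no reason to satisfy; but nothing is lost by dropping it.)

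The normality step, however, has a genuine gap, and it is the one you yourself flag. You want to realize $\overline M_\infty$ as a Cheeger--Gromov limit of modified flow embeddings $g_i^{-1}\cdot\Phi_{t_i}(M)$ so as to run the Liu--Sz\'ekelyhidi normality argument. But by Proposition \ref{alg} the limit points of the modified embeddings land only in $\overline{G_v\cdot[\overline M_\infty]}$ --- they recover the elements of $[\mathcal C_0]$, i.e.\ the various $\tilde M_\infty$, not $\overline M_\infty$ itself. The variety $\overline M_\infty$ is defined as the purely algebraic limit $\lim_{t\to\infty}e^{tv}C\cdot\Phi_0(M)$ in $\mathbf{Hilb}^{\mathbb P(E),p}$ and carries no a priori metric limit with uniform partial $C^0$-estimates, so the machinery of \cite{WZ20} does not apply to it directly; in the relevant degeneration $\tilde M_\infty$ sits in the \emph{boundary} of the $G_v$-orbit of $\overline M_\infty$, not the other way around. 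The paper's argument goes in exactly the opposite direction and is much shorter: choose $g_k\in G_v$ with $g_k\cdot\overline M_\infty\to\tilde M_\infty$ (possible by (\ref{reductive-group})); since normality is an open condition on the fibers of the universal flat family over the Hilbert scheme and the limit fiber $\tilde M_\infty$ is already known to be normal, the nearby fibers $g_k\cdot\overline M_\infty$ are normal for $k\gg1$, and hence so is $\overline M_\infty$. You should replace your Cheeger--Gromov construction by this openness argument.
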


  \begin{proof}
      By  (\ref{reductive-group}),  there exists $g_k\in G_v$ such that $\lim_{k\rightarrow \infty}g_k\cdot \overline{M}_\infty=\tilde M_\infty$. Since  the normality is an open condition and $\tilde M_\infty$ is normal, $\overline{M}_\infty $ is normal. Given $j\geq 1$, the  eigenvalues and eigenspaces of $H^0(g_k\cdot \overline{M}_\infty,\mathcal O(j))$ is the same as  that of  $H^0(\overline{M}_\infty,\mathcal O(j))$ for any $k$. Taking limit, the lemma is proved.
  \end{proof}

 \subsection{ Computation of $L(\omega_0')$ }In this subsection, we apply Lemma \ref{eigen} to compute the energy-level $L(\omega_0')$ of KR flow. Recall that
  $$ L(w_0')=\lim_{t\to\infty}\lambda(\omega_t').$$
 Then we have

  \begin{prop}\label{energy-level} Let  $(\tilde M_\infty,  \hat\omega_\infty)$   be a KR soliton limit  of   (\ref{kr-flow})  as in Theorem \ref{WZ}. Then
  \begin{align}\label{max-entropy}\lambda( \hat\omega_{\infty})=\sup\{\lambda(g')|~\omega_{g'}\in 2\pi c_1(M,J)\}.
  \end{align}
 As a consequence, for any  $\omega_0'\in 2\pi c_1(M)$ it holds
 \begin{align}\label{invariant-L}L(\omega')=\sup\{\lambda(g')|~\omega_{g'}\in 2\pi c_1(M,J)\}.
 \end{align}
  \end{prop}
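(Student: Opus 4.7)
The plan is to leverage monotonicity of Perelman's entropy along the K\"ahler-Ricci flow together with an algebraic computation of $\lambda(\hat\omega_\infty)$ via the H-invariant of Tian-Zhu. First I would verify that $L(\omega_0) = \lambda(\hat\omega_\infty)$: Perelman's monotonicity makes $t\mapsto \lambda(\omega_t)$ nondecreasing, with limit $L(\omega_0)$, while along the subsequence $t_i$ producing $\hat\omega_\infty$, the partial $C^0$-estimate (Lemma \ref{c3-phi}) together with the local $C^\infty$-convergence on ${\rm Reg}(\tilde M_\infty)$ and Perelman's uniform control of the Ricci potential (Lemma \ref{lem:perelman-1}) permit passing to the limit in the defining variational formula for $\lambda$, yielding $\lambda(\omega_{t_i})\to \lambda(\hat\omega_\infty)$.

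Next I would express $\lambda(\hat\omega_\infty)$ as an algebraic invariant. Since $(\tilde M_\infty,\hat\omega_\infty)$ is a singular KR soliton with soliton VF $v$, uniquely determined by Corollary \ref{vec}, the entropy coincides with the H-invariant of Tian-Zhu, computable from the character of the $e^{sv}$-action on $H^0(\tilde M_\infty,K_{\tilde M_\infty}^{-lk})$ as $k\to\infty$ through equivariant Riemann-Roch (as in the proof of Lemma \ref{vec1}). Here Lemma \ref{eigen} is the crucial input: the eigenvalues and eigenspaces of $v$ on $H^0(\tilde M_\infty,\mathcal O(j))$ coincide with those on $H^0(\bar M_\infty,\mathcal O(j))$ for every $j\geq 1$. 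Consequently $\lambda(\hat\omega_\infty)$ may be computed on $\bar M_\infty$, which is the central fiber of the particular test configuration generated by $e^{tv}$ acting on a fixed initial embedding of $M$, and hence it only depends on that Hilbert series.

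I would then invoke Dervan-Sz\'ekelyhidi's identification of the sup of $\lambda$ over $2\pi c_1(M,J)$ with the infimum of H-invariants over special degenerations, the infimum being attained by the degeneration to $\bar M_\infty$ with its soliton VF $v$. Combining with the algebraic identification above produces $\lambda(\hat\omega_\infty)=\sup\{\lambda(g')\mid \omega_{g'}\in 2\pi c_1(M,J)\}$. The consequence for arbitrary initial $\omega_0'$ follows by symmetry: by Theorem \ref{WZ}, flow (\ref{kr-flow}) from $\omega_0'$ admits some KR soliton limit $(\tilde M_\infty',\hat\omega_\infty')$; repeating the argument with primes gives $\lambda(\hat\omega_\infty')=\sup\lambda$, hence $L(\omega_0')=\lambda(\hat\omega_\infty')=\sup\lambda$, and in particular this value is independent of $\omega_0'$.

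The main obstacle is the second step, the analytic-to-algebraic bridge: one must extract $\lambda(\hat\omega_\infty)$ from the $v$-weighted Hilbert series of $\tilde M_\infty$ although $\hat\omega_\infty$ has only an $L^\infty$ K\"ahler potential and $\tilde M_\infty$ is merely $\mathbb Q$-Fano with klt singularities. Controlling the Hilbert series under such limits, and checking that Tian-Zhu's variational characterization of the soliton extremum (i.e.\ the propriety statement used in the proof of Lemma \ref{vec1}) persists on $\bar M_\infty$, relies on the normality of $\bar M_\infty$ asserted in Lemma \ref{eigen} so that $\dim H^0(\bar M_\infty,\mathcal O(j))=\dim H^0(\tilde M_\infty,\mathcal O(j))$, which is what matches the algebraic invariant computed on the degeneration to the entropy of the analytic soliton.
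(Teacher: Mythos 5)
Your first two steps track the paper's argument: the identity $L(\omega_0)=\lambda(\hat\omega_\infty)=(2\pi)^{-n}[nV-N_{v}]$ is exactly what the paper takes from \cite[Lemma 4.14]{WZ20}, and the transfer of the weight computation from $\tilde M_\infty$ to $\overline M_\infty$ via the normality statement and the coincidence of eigenvalues/eigenspaces in Lemma \ref{eigen} is also the paper's route. The problem is your third step. You close the argument by invoking ``Dervan--Sz\'ekelyhidi's identification of $\sup\lambda$ with the infimum of H-invariants over special degenerations, the infimum being attained by the degeneration to $\overline M_\infty$.'' That attainment statement is not an available input: it is (up to the algebraic computation you have already done) precisely the content of the proposition being proved, and its known proof in \cite{DS16} runs through the flow-convergence machinery that this section is constructing an independent proof of. As written, you never establish the inequality $\sup\lambda\le\lambda(\hat\omega_\infty)$ by an argument that does not presuppose optimality of the degeneration to $\overline M_\infty$, so the proof is circular at this point.

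The fix is small and is where the paper's proof actually does its work: you only need He's \emph{one-sided} inequality
\begin{align*}
\sup\{\lambda(g')\mid\omega_{g'}\in 2\pi c_1(M,J)\}\;\le\;(2\pi)^{-n}\bigl[nV-H(\mathcal X)\bigr],
\end{align*}
valid for the single special degeneration $\mathcal X$ to $\overline M_\infty$ generated by $e^{sv}$ (this is the elementary direction, from \cite{He}). Combining it with your Lemma \ref{eigen} computation $H(\mathcal X)=N_{v}$, with $\lambda(\hat\omega_\infty)=(2\pi)^{-n}[nV-N_{v}]=L(\omega_0)$, and with the trivial bound $L(\omega_0)\le\sup\lambda$, the chain of inequalities collapses to equalities; the optimality of the degeneration to $\overline M_\infty$ is then a \emph{consequence}, not a hypothesis. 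Your derivation of (\ref{invariant-L}) by rerunning the argument for an arbitrary initial metric $\omega_0'$ is fine and matches the paper.
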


  \begin{proof}The proof is due to \cite{DS16} by  using an argument in \cite{He}.
  Denote a  special  degeneration from $M$ to $\overline{M}_\infty$  by $\mathcal X$ which induced  by  $1$-ps  $e^{sv}$  in Lemma \ref{eigen}. We define an invariant by (cf. \cite{TZ2},  \cite{He}),
     $$H(\mathcal X)=\int_{\overline{ M}_\infty}\bar\theta_v e^{\bar h_\infty} \bar\omega_\infty^n.$$
 As same as the generalized Fuaki-invariant,    one  can show that $H(\mathcal X)$  is independent of choice of admissiable metrics $\bar \omega_\infty$  on $\overline{ M}_\infty$ (cf. \cite{Ti97, BBEGZ}).  Here the Ricci potential  $\bar h_\infty$ of $\bar\omega_\infty$ and potenial $\bar \theta_v$ are normalized by
  $$\int_{\bar M_\infty}e^{\bar h_\infty} \bar\omega_\infty^n=\int_{\bar M_\infty}e^{\bar\theta_v} \bar\omega_\infty^n=V.$$

  By a  result of He \cite{He}  (also see \cite{DS16}),
  $$\sup\{\lambda(g')|~\omega_{g'}\in 2\pi c_1(M,J)\} \le (2\pi)^{-n}[nV-H(\mathcal X)].$$
 Moreover,  as shown in  \cite{DS16}, $H(\mathcal X)$ can be computed using the weights of $v$ on $H^0(\overline{M}_\infty, \mathcal O(j))(j\geq 1)$. Thus by Lemma \ref{eigen}, we have
  $$\int_{\overline{ M}_\infty}\bar\theta_v e^{\bar h_\infty} \bar\omega_\infty^n=\int_{\tilde{ M}_\infty}\tilde\theta_v e^{\tilde h_\infty} \tilde\omega_\infty^n.$$
 Since $\tilde M_\infty$ admits a singular KR soliton $\hat\omega_\infty$ with respect to $v$,
  $$\int_{\tilde{ M}_\infty}\tilde\theta_v e^{\tilde h_\infty} \tilde\omega_\infty^n=\int_{\tilde M_\infty}  \theta_v  e^{ \theta_v } \hat\omega_\infty^n=N_{v},$$
  where
  $$N_{X}= \int_{\tilde M_\infty} \tilde\theta_{X} e^{\tilde\theta_X}  \tilde\omega_\infty^n$$
  is a holomorphic invariant defined  for any holomorphic vector field $X\in {\rm Aut}(M_\infty)$ \cite{TZ2}.
  Thus we get
  $$\sup\{\lambda(g')|~\omega_{g'}\in 2\pi c_1(M,J)\} \le (2\pi)^{-n}[nV-N_{v}].$$
  On the other hand, it was proved in \cite[Lemma 4.14] {WZ20} that
  $$L(\omega_0)=\lambda(\hat\omega_\infty)= (2\pi)^{-n}[nV-N_{v}].$$
  Note that
  $$L(\omega_0)\le \sup\{\lambda(g')|~\omega_{g'}\in 2\pi c_1(M,J)\}.$$
  Hence  (\ref{max-entropy}) must be true. Since   (\ref{max-entropy}) is independent of $\omega_0\in
 2\pi c_1(M)$, we also get (\ref{invariant-L}).
  \end{proof}

 It has been proved by He that "=" in   (\ref{max-entropy}) attains at a KR soliton if $M$ admits such a metric \cite{He}. Proposition  \ref{energy-level} shows that  "=" can   attain  at a singular  KR soliton as a   limit of  KR flow  (\ref{kr-flow}) even without  any  KR soliton  on the original complex manifold $M$.

Now we can generalize the convergence result  \cite[Lemma 4.8]{WZ20} to a sequence of metrics  from varied KR flows as follows.

  \begin{lem}\label{limit-solitons}  Let $\omega^{s_i}\in 2\pi c_1(M,J)$ be a sequence of initial metrics in   (\ref{kr-flow}) converging to $\omega_0'$ in sense of K\"ahler potentials as $s_i\to s_0$. Then for any $t_i\rightarrow \infty$, there is a subsequence $k_i\rightarrow \infty$ such that $\Phi^{s_{k_i}}_{t_{k_i}}(M)$ converges to a normal variety $\tilde M_\infty'$, and $((\Phi^{s_{k_i}}_{t_{k_i}})^{-1})^*(\omega^{s_{k_i}}_{t_{k_i}})$ locally converges to a singular KR soliton $\hat \omega_\infty'$, where  $\Phi^{s_{k_i}}_{t_{k_i}}$ is the Kodaira embedding associated to $\omega^{s_{k_i}}_{t_{k_i}}$.
\end{lem}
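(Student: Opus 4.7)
The argument parallels \cite[Lemma 4.8]{WZ20}, but it must accommodate the varied initial data. The key observation is that convergence $\omega^{s_i}\to \omega_0'$ in K\"ahler potentials implies condition (\ref{a-condition}) uniformly in $i$, so the analytic input used in the fixed-initial-metric setting transfers verbatim; the only genuinely new ingredient is Proposition \ref{energy-level}.

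First I would set up the uniform estimates. Since $\|\omega^{s_i}-\omega^{s_0}\|_{C^2_{CG}(M)}\to 0$, Perelman's estimates of Lemma \ref{lem:perelman-1} hold along each flow $\omega_t^{s_i}$ with constants depending only on $\omega^{s_0}$, hence uniform in $i$. Combined with the partial $C^0$-estimate (\ref{unip}), an orthonormal basis $\{s_\alpha^{s_i,t_i}\}$ of $H^0(M, K_M^{-l'},\omega_{t_i}^{s_i})$ gives a Kodaira embedding $\Phi_{t_i}^{s_i}:M\hookrightarrow \mathbb{CP}^{N'}$ into a common projective space.

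Next I would extract the algebraic and geometric limits. By compactness of the Chow variety of subvarieties with fixed Hilbert polynomial, after passing to a subsequence $k_i$ the images $\tilde M_i:=\Phi_{t_{k_i}}^{s_{k_i}}(M)$ converge to a subvariety $\tilde M_\infty'\subset \mathbb{CP}^{N'}$; by the argument of \cite[Proposition 2.7]{WZ20} this limit is normal and $Q$-Fano with klt singularities. Choose exhausting opens $\Omega_\gamma\subset \mathrm{Reg}(\tilde M_\infty')$ with diffeomorphisms $\Psi_\gamma^i:\Omega_\gamma\to\tilde\Omega_\gamma^i\subset \tilde M_i$. The local estimates (\ref{c0-estimate})--(\ref{c3-estimate}) in Lemma \ref{c3-phi}, extended to the varied-initial-metric setting as noted after (\ref{a-condition}), bound the potentials $\psi_i$ uniformly in $C^{k,\alpha}(\Omega_\gamma)$; a diagonal extraction then produces a smooth K\"ahler metric $\hat\omega_\infty'$ on $\mathrm{Reg}(\tilde M_\infty')$ to which $((\Phi_{t_{k_i}}^{s_{k_i}})^{-1})^*\omega_{t_{k_i}}^{s_{k_i}}$ converges in $C^\infty_{\mathrm{loc}}$.

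The step I expect to be the main obstacle is upgrading $\hat\omega_\infty'$ from a mere K\"ahler limit to a \emph{KR soliton}, since one cannot directly invoke the entropy-monotonicity argument of \cite[Lemma 4.8]{WZ20} along a single flow. The remedy is Proposition \ref{energy-level}, which gives
\begin{align*}
L(\omega^{s_i})=\sup\{\lambda(g'):\omega_{g'}\in 2\pi c_1(M,J)\}
\end{align*}
independently of $i$. Because $\lambda(\omega_t^{s_i})$ is nondecreasing in $t$ with limit $L(\omega^{s_i})$, a further diagonal subsequence gives $\lambda(\omega_{t_{k_i}+s}^{s_{k_i}})\to L$ for every $s\in[-1,1]$. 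This saturation of Perelman's inequality forces, by the same computation as in \cite[Lemma 4.8]{WZ20}, the (shifted) Ricci potentials $h_{t_{k_i}+s}^{s_{k_i}}$ to converge on $\mathrm{Reg}(\tilde M_\infty')$ to a limit $h_\infty'$ whose gradient $v:=\nabla h_\infty'$ is holomorphic, so that $\hat\omega_\infty'$ satisfies the KR soliton equation with respect to $v$. Finally, extending $\hat\omega_\infty'$ across the singular set as a singular KR soliton with an $L^\infty$ K\"ahler potential proceeds exactly as in \cite[\S4]{WZ20}, using the uniqueness of bounded solutions to the associated Monge-Amp\`ere equation.
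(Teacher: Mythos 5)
Your proposal follows essentially the same route as the paper's proof: extract a subsequence so that the images converge to a normal variety and the local estimates of Lemma \ref{c3-phi} (Proposition 3.2 in \cite{WZ20}) give a smooth K\"ahler limit on the regular part, then combine the monotonicity of Perelman's entropy with Proposition \ref{energy-level} to force $\lambda(\omega^{s_{k_i}}_{t_{k_i}+1})-\lambda(\omega^{s_{k_i}}_{t_{k_i}-1})\to 0$, which by the argument of \cite[Lemma 4.8]{WZ20} upgrades the limit to a KR soliton. The details you supply (uniformity of the estimates under (\ref{a-condition}), the saturation of entropy forcing the Ricci potential's gradient to become holomorphic) are exactly the content the paper delegates to \cite{WZ20}.
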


\begin{proof}
Since $\omega^{s_i}$ satisfies (\ref{a-condition}), there is a subsequence $k_i\rightarrow \infty$ such that $\Phi^{s_{k_i}}_{t_{k_i}}(M)$ converges to a normal variety $\tilde M_\infty'$. Moreover by Proposition 3.2 in \cite{WZ20}, there is a further subsequence, which is still denoted by $k_i$, such that $({\Phi^{-1}_{t_{k_i}}})^*(\omega^{s_{k_i}}_{t_{k_i}})$  locally converges to a smooth K\"ahler metric $\hat \omega_\infty'$ on ${\rm Reg}(M_\infty')$. By the monotonicity  of Perelman's entropy \cite{Pe, TZ4, WZ20},
  we have
 \begin{align}
&\lim_{t\rightarrow\infty, i\to \infty}\lambda(\omega^{s_{k_i}}_t)\ge L(\omega')=\lim_{i\rightarrow\infty}\lambda(\omega^{s_{k_i}}_{t_{k_i}-1}).
   \end{align}
   By  Proposition \ref{energy-level}, we  also have
$$\lim_{i\rightarrow\infty}\lambda(\omega^{s_{k_i}}_{t_{k_i-1}})=\sup\{\lambda(g')|~\omega_{g'}\in 2\pi c_1(M,J).$$
Thus
$$\lim_{i\rightarrow \infty}(\lambda(\omega^{s_{k_i}}_{t_{k_i+1}})-\lambda(\omega^{s_{k_i}}_{t_{k_i-1}}))=0.$$
Now  we can use the argument in  the proof of  \cite[Lemma 4.8]{WZ20} to prove that $\hat \omega_\infty'$ is a KR soliton in  ${\rm Reg}(\tilde M_\infty')$.
\end{proof}

 As an application of Proposition \ref{energy-level},  the following  gives an analytic character for  a $K$-semistable Fano manifold in terms of Perelaman's entropy.

  \begin{cor}\label{mainThm-lambda} A Fano manifold $M$ is K-semistable if and only if
  \begin{align}\label{KE-entropy}\sup\{\lambda(g')|~\omega_{g'}\in 2\pi c_1(M,J)\}=(2\pi)^{-n}V.
   \end{align}
   \end{cor}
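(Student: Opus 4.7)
The plan is to extract the corollary from Proposition \ref{energy-level} together with a Jensen-type positivity argument and the identification of He's $H$-invariant with (minus) the Donaldson-Futaki invariant of the K\"ahler-Ricci flow degeneration.

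I would first revisit the content of Proposition \ref{energy-level}, whose proof actually yields the sharper formula
\[
\sup\{\lambda(g')\,|\,\omega_{g'}\in 2\pi c_1(M,J)\}=L(\omega_0)=(2\pi)^{-n}[nV-N_v],
\]
where $v$ is the soliton VF on the KR flow limit $(\tilde M_\infty,\hat\omega_\infty)$ and $N_v=\int_{\tilde M_\infty}\theta_v e^{\theta_v}\hat\omega_\infty^n$ with the normalization $\int e^{\theta_v}\hat\omega_\infty^n=V$. Setting $y=e^{\theta_v}$ and applying Jensen's inequality to the convex function $y\mapsto y\log y$ against the probability measure $V^{-1}\hat\omega_\infty^n$ (so that $V^{-1}\int y\,\hat\omega_\infty^n=1$) gives $N_v\geq 0$, with equality if and only if $\theta_v$ is constant, i.e.\ $v=0$. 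Therefore, up to the usual normalization of $V$, the asserted equality in the corollary is equivalent to $v=0$, which in turn says that $\tilde M_\infty$ is a singular K\"ahler-Einstein variety.

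Next I would connect $v=0$ to K-semistability through the special degeneration of Section 5. The 1-PS $\{e^{tv}\}\subset\mathrm{SL}(N'+1,\mathbb C)$ identified in Lemma \ref{limit-v}, together with the embedding data of Lemma \ref{eigen}, gives a test configuration $\mathcal X$ for $M$ whose central fiber is the normal Q-Fano variety $\overline M_\infty$, and by Lemma \ref{eigen} the $v$-weights on $H^0(\overline M_\infty,\mathcal O(j))$ and $H^0(\tilde M_\infty,\mathcal O(j))$ coincide for all $j\geq 1$. Using the standard trace expansion (as in \cite{DS16}, \cite{He}), one identifies $\mathrm{DF}(\mathcal X)$ as a negative multiple of $N_v$; more generally, for an arbitrary test configuration $\mathcal X'$ with central fiber vector field $v'$, He's $H$-invariant is computed from the $v'$-weights on $H^0(\mathcal X'_0,\mathcal O(j))$, and the equivalence $H(\mathcal X')\leq 0\Leftrightarrow \mathrm{DF}(\mathcal X')\geq 0$ emerges from a direct comparison of these weight expansions.

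With these tools in place, the forward direction ($\Rightarrow$) is immediate: K-semistability applied to $\mathcal X$ gives $\mathrm{DF}(\mathcal X)\geq 0$, hence $N_v\leq 0$; combined with $N_v\geq 0$ this forces $v=0$, and the corollary's equality follows. For the backward direction ($\Leftarrow$), if the equality holds then He's inequality (as invoked in the proof of Proposition \ref{energy-level}) gives $H(\mathcal X')\leq 0$ for every test configuration $\mathcal X'$, and the algebraic identification above translates this into $\mathrm{DF}(\mathcal X')\geq 0$ for every $\mathcal X'$, which is K-semistability. The hardest step is the algebraic identification of He's $H$-invariant with (a negative multiple of) the Donaldson-Futaki invariant for \emph{arbitrary} test configurations: for the flow-produced $\mathcal X$ this is essentially built into the weight analysis of Lemma \ref{eigen} and Proposition \ref{alg}, but for a general $\mathcal X'$ one has to invoke He's original equivariant Riemann-Roch derivation of $H(\mathcal X')$ and match it term-by-term with the Donaldson-Futaki trace asymptotics.
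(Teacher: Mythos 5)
Your reduction of (\ref{KE-entropy}) to the vanishing of the soliton field $v$ — via Jensen applied to $N_v=\int\theta_v e^{\theta_v}\hat\omega_\infty^n\ge 0$ — is exactly what the paper extracts from Proposition \ref{energy-level} together with \cite[Proposition 4.14]{WZ20}, so that part is sound. The genuine gap is the step you yourself flag as hardest: the claimed per-test-configuration equivalence $H(\mathcal X')\le 0\Leftrightarrow DF(\mathcal X')\ge 0$, to be obtained by ``matching the weight expansions term-by-term.'' These are different invariants: $DF$ sees only the first moment of the limiting weight measure, whereas the $H$-invariant of \cite{He, DS16} involves a logarithmic partition function of the weights (for a product configuration generated by $w$, a term of the form $\log\frac{1}{V}\int e^{\theta_w}\hat\omega^n$). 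They are not proportional, so no term-by-term comparison of trace asymptotics can identify one with a negative multiple of the other; they are related only by one-sided inequalities coming from convexity, and the implication ``$H(\mathcal X')\le 0$ for all $\mathcal X'$ $\Rightarrow$ $DF(\mathcal X')\ge 0$ for all $\mathcal X'$'' is a genuine theorem whose known proofs go through either the flow limit being K\"ahler--Einstein plus an analytic lower bound for the Ding energy, or the non-archimedean machinery of \cite{HL} — not a formal weight comparison. Since your ``$\Leftarrow$'' direction rests entirely on this equivalence, it does not close as written. A milder instance of the same conflation occurs in your ``$\Rightarrow$'' direction: $DF$ of the flow degeneration is (up to a positive constant and sign conventions) the classical Futaki invariant $\mathrm{Fut}(v)=\int_{\tilde M_\infty}v(\theta_v)\hat\omega_\infty^n$ of the central fiber, not $N_v$; both are nonnegative and vanish exactly when $v=0$ (cf. \cite{TZ2}), so that direction survives after substituting the correct quantity and supplying its positivity separately.

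For comparison, the paper's proof avoids the algebraic comparison altogether. For ``equality $\Rightarrow$ K-semistable'' it argues: equality forces $N_v=0$, so the flow produces a special degeneration whose central fiber carries a singular KE metric; Proposition \ref{low} (Li's subharmonicity argument for the Ding functional over the degeneration, via the homogeneous Monge--Amp\`ere equation) then bounds the Ding/K-energy from below on $M$, and K-semistability follows from \cite{Ti97, Ber}. The converse direction is quoted from \cite{LS}. If you wish to keep your algebraic route for ``$\Leftarrow$,'' you would still need an analytic input of exactly the type supplied by Proposition \ref{low} (or the argument of \cite{HL}) to pass from the vanishing of the soliton field to K-semistability; the weight-matching shortcut you propose does not exist.
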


 \begin{proof}[Proof of Corollary \ref{mainThm-lambda}] By \cite[Proposition 4.14]{WZ20} together with Proposition \ref{energy-level}, we see that  $\hat\omega_\infty$   in
  Theorem \ref{WZ} is a singular KE metric.   By Li's result,  Proposition \ref{low},  it follows  that  Ding-energy or K-energy is bounded below. Thus $M$ is K-semistable \cite{Ti97, Ber}.  The inverse part comes from  Li-Sun's result \cite{LS}.

  \end{proof}

 \subsection{Uniform convergence of flow   (\ref{kr-flow})}

     Let  $G_r$  be  the stabilizer of $ \tilde {M}_\infty$  restricted on $G_{v}$. Then it  is reductive by  Proposition \ref{red} in Appendix.
 Thus by  the relative K-polystability of  $\tilde M_\infty'\in  \mathcal C_0$ and (\ref{reductive-group}),  as in the proof of   Theorem \ref{unique-algebra}, we get
 $$\mathcal C_0\subseteq \{G_{v} \cdot \bar M_\infty\}.$$
  It follows that
 \begin{align}\label{unique-alge-variety}\mathcal C_0\subseteq \{{\rm SL}(N+1,\mathbb C) \cdot \bar M_\infty\},\end{align}
 which was  proved in \cite{CSW}.

 By the uniqueness of   $\tilde M_\infty'$ in (\ref{unique-alge-variety}),   we prove the following  uniform convergence of flow   (\ref{kr-flow}).

   \begin{theo}\label{mainThm-KS}Let $(\tilde M_\infty, \hat \omega_\infty)$ be  a singular KR soliton limit of  a sequence  $(M, \omega_{t_i})$ of  (\ref{kr-flow})  as in Theorem \ref{WZ}.
     Then  $\omega_t$ is locally uniformly  convergent to  $\hat\omega_\infty$ on   ${\rm Reg}(\tilde M_\infty)$ in the Cheeger-Gromov topology. Moreover,  $(M, \omega_t)$ is  uniformly convergent to a  Gromov-Hausdroff limit   $(M_\infty, \omega_\infty)$ in HT-conjecture, which  is
    the completion of $({\rm Reg}(\tilde M_\infty), \hat\omega_\infty)$.
    \end{theo}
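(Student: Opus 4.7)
\smallskip

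The plan is to combine the algebraic uniqueness statement (\ref{unique-alge-variety}) obtained just before the theorem with Berman's uniqueness theorem for singular KR solitons on $Q$-Fano varieties with klt singularities, and then propagate subsequential convergence to convergence of the whole flow using the standard ``every subsequence has the same limit'' argument.

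First I would observe that by (\ref{unique-alge-variety}), every algebraic limit $\tilde M_\infty'\in\mathcal C_0$ lies in ${\rm SL}(N+1,\mathbb{C})\cdot[\bar M_\infty]$, and since $\bar M_\infty$ itself is obtained as a weighted degeneration of $\tilde M_\infty$ by the $1$-PS $e^{sv}$, the whole of $\mathcal C_0/{\rm SL}(N+1,\mathbb{C})$ reduces to a single orbit; in particular every $\tilde M_\infty'$ is isomorphic to $\tilde M_\infty$ as a polarized $Q$-Fano variety. Then by Berman's uniqueness of singular KR solitons on $Q$-Fano varieties with klt singularities (used via Corollary \ref{vec} so that the soliton VF is the same for all limits up to conjugation), the singular KR soliton on $\tilde M_\infty'$ is unique up to ${\rm Aut}(\tilde M_\infty')$. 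Transporting by the biholomorphism gives an isometry $({\rm Reg}(\tilde M_\infty'),\hat\omega_\infty')\cong ({\rm Reg}(\tilde M_\infty),\hat\omega_\infty)$, so by Theorem \ref{WZ} the Gromov-Hausdorff limit of any subsequence of $(M,\omega_t)$ is isometric to the completion of $({\rm Reg}(\tilde M_\infty),\hat\omega_\infty)$.

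Next I would upgrade the subsequential convergence to convergence of the whole flow. By Perelman's estimate in Lemma \ref{lem:perelman-1}, $(M,\omega_t)$ has uniformly bounded diameter and non-collapsed volume, so every sequence $t_i\to\infty$ has a Gromov-Hausdorff convergent subsequence. Since all such subsequential limits are isometric to the same $(M_\infty,\omega_\infty)$, a standard diagonal contradiction argument forces $(M,\omega_t)\to (M_\infty,\omega_\infty)$ in the Gromov-Hausdorff topology as $t\to\infty$. For the local $C^\infty$-convergence, on exhausting open sets $\Omega_\gamma\subset{\rm Reg}(\tilde M_\infty)$ I would use Lemma \ref{c3-phi} together with Lemma \ref{regular-set} to produce, for any sequence $t_i\to\infty$, diffeomorphisms $\Psi_\gamma^i:\Omega_\gamma\to \tilde\Omega_\gamma^i\subset \tilde M_{t_i}\subset \mathbb{CP}^N$ along which $(\Phi_{t_i}^{-1})^*\omega_{t_i}$ has uniform $C^{k,\alpha}$ bounds; any $C^\infty$-subsequential limit on $\Omega_\gamma$ must coincide with $\hat\omega_\infty|_{\Omega_\gamma}$ by the uniqueness step above (possibly after composing with an element of ${\rm Aut}(\tilde M_\infty)$), and so by the same subsequence argument the convergence is uniform in $t$.

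The main obstacle I expect is the last step: extracting an identification of the limit metric with $\hat\omega_\infty$ on the fixed exhausting set $\Omega_\gamma$ that is independent of the chosen subsequence. Different subsequences may \emph{a priori} produce limits on $\tilde M_\infty$ that differ by an element of ${\rm Aut}(\tilde M_\infty)$ coming from the ${\rm SL}(N+1,\mathbb{C})$-action in (\ref{unique-alge-variety}), and one has to rule out that this ambiguity creates genuine non-uniformity along the flow. This is handled by normalizing the Kodaira embeddings $\Phi_t$ through the orthonormal basis $\{s_\alpha(t)\}$ solving the ODE (\ref{ODE-s}); with this normalization, combined with Lemma \ref{limit-v} giving $A_{t+s}A_t^{-1}\to e^{sv}$ for the soliton VF $v$ (which is canonical by Corollary \ref{vec}), the automorphism ambiguity is reduced to the compact centralizer of $v$ in $U(N+1)$, and standard arguments then give uniform convergence of $(\Phi_t^{-1})^*\omega_t$ to $\hat\omega_\infty$ on each $\Omega_\gamma$ in the Cheeger-Gromov topology as $t\to\infty$.
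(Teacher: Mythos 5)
Your proposal is correct and follows essentially the same route as the paper: both rest on the algebraic uniqueness (\ref{unique-alge-variety}) combined with Berndtsson/Berman-type uniqueness of the singular KR soliton on the klt $Q$-Fano limit, the uniform potential estimates of Lemma \ref{c3-phi} on exhausting sets of ${\rm Reg}(\tilde M_\infty)$, and the ``all subsequential limits agree'' argument to upgrade to uniform convergence. Your extra care about the ${\rm Aut}(\tilde M_\infty)$ ambiguity, resolved via the ODE-normalized embeddings and Lemma \ref{limit-v}, is only implicit in the paper's terse proof but is consistent with how (\ref{unique-alge-variety}) is derived there.
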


   \begin{proof}By  (\ref{unique-alge-variety}),  we see that submanifolds $\tilde M_t\subset \mathbb CP^N$  converge locally uniformly to a   Q-Fano variety $\tilde M_\infty$ with klt singularities which admits a    weak KR soliton $\hat \omega_\infty$. As in Section 2, we can choose  an exhausting open sets $\Omega_\gamma\subset  \tilde M_\infty$.  Then   there are  diffeomorphisms $\Psi_\gamma^t: \Omega_\gamma\to \tilde M_t$ such that
   the curvature of $ \omega_{FS}|_{\tilde {\Omega}_\gamma^t}$ is $C^k$-bounded  uniformly independently of $t$ , where  $\tilde {\Omega}_\gamma^t=\Psi_\gamma^t(\Omega_\gamma)$.  Write as
   \begin{align}\label{psi-s-t}(\Phi_t^{-1})^*\omega_{t+s}=\tilde\omega_{t}+\sqrt{-1}\partial\bar\partial \psi^s_t,   ~{\rm in} ~\tilde M_t, \forall s\in [-1,1],
   \end{align}
   $\tilde\omega_t=\frac{1}{l}\omega_{FS}|_{\tilde M_t}$.
   Thus an in the proof of  Proposition 3.2 in \cite{WZ20},  K\"ahler potential $\psi^s_t$ in Lemma \ref{c3-phi} will satisfy (\ref{c0-estimate})-(\ref{c3-estimate}) on $\tilde {\Omega}_\gamma^t$.  Those estimates implies that $\omega_t$ is locally uniformly  convergent to  $\hat\omega_\infty$ on   ${\rm Reg}(\tilde M_\infty)$ in the Cheeger-Gromov topology.

   The second part in the theorem follows from the fact
   $$(M_\infty, \omega_\infty)=\overline{({\rm Reg}(M_\infty), \hat\omega_\infty)}$$
   as proved in Theorem \ref{WZ}.  By the above local convergence, this implies that any Hausdroff-Gromov limit of sequence of $(M, \omega_t)$ is same, thus the convergence is uniform.

 \end{proof}

 \subsection{Uniqueness of  KR soliton  VFs-extension}

 In this subsection, we use Lemma \ref{limit-solitons} together with Proposition \ref{two-topology} to prove an analogy of Corollary \ref{vec} for the uniqueness of KR soliton VFs  associated to limits of   (\ref{kr-flow})  with varied initial metrics.

    Fix  a sequence  $s_i\to 0$.  Then by Lemma \ref{limit-solitons}, we see that for any sequence of $t_i\to\infty$   there exists a subsequence of  $t_i'$ such that  $(\Phi_{t_i'}^{-1})^*\omega_{t_i'}^{s_i}$  locally converges to a singular KR soliton  $\hat\omega_\infty'$ on a
   Q-Fano variety $\tilde M_\infty'$ with klt singularities,
   where $\Phi_{t_i'}$ is the Kodaira embedding associated to the metric  $\omega_{t_i'}^{s_i}$.  As in Section 1, we set  a class of  all  possible limits of Q-Fano varieties $\tilde M_\infty'$   by $ \mathcal C_A$, which consists of
    Q-Fano varieties with admitting  singular KR solitons  as  local limits  of sequences  of metrics  from KR flows with initial metrics satisfying  (\ref{a-condition}).

  \begin{prop}\label{vector-gap} There is a small $\epsilon$ such that for any $(\tilde M_\infty, \hat \omega_\infty, X),    (\tilde M_\infty', \hat \omega_\infty', X')\in  \mathcal C_A$, it holds
  \begin{align}\label{vf-unique-0}
 \tilde X'= g\cdot \tilde X \cdot g^{-1}
 \end{align}
 for some $g\in {\rm U}(N' + 1;\mathbb C)$, if
   \begin{align}\label{small-GH}{\rm dist}_{GH}( ( M_\infty, \omega_\infty),    ( M_\infty', \omega_\infty'))\le \epsilon,
   \end{align}
  where  $(M_\infty, \omega_\infty) =\overline{(\tilde M_\infty, \hat \omega_\infty)}, (M_\infty', \omega_\infty') =\overline{(\tilde M_\infty', \hat \omega_\infty' )}$ are compactified Gromov-Hausdroff spaces as in Proposition \ref{two-topology}.

   \end{prop}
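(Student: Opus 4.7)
The plan is to argue by contradiction, adapting the strategy of Corollary \ref{vec} to the setting of varied initial metrics. Suppose the conclusion fails. Then there exist sequences $(\tilde M_\infty^k, \hat\omega_\infty^k, X^k)$ and $(\tilde M_\infty'^k, \hat\omega_\infty'^k, X'^k)$ in $\mathcal C_A$ with
\[
{\rm dist}_{GH}((M_\infty^k, \omega_\infty^k), (M_\infty'^k, \omega_\infty'^k)) \to 0,
\]
but no $g_k\in {\rm U}(N'+1,\mathbb{C})$ realizes $X'^k = g_k X^k g_k^{-1}$. The aim is to derive a contradiction by linking the algebraic structures (via the Chow coordinate in $W_{N'}$) and the soliton vector fields (via the continuity result of Proposition \ref{continuity-vf}).

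First I would upgrade Proposition \ref{two-topology} to handle two limits both coming from varied-flow sequences. By the definition of $\mathcal C_A$, for each $k$ there exist sequences $\{\omega^{s_i^k}_{t_i^k}\}$ and $\{\omega^{r_j^k}_{u_j^k}\}$ (from KR flows with initial metrics satisfying the bound (\ref{a-condition})) whose Kodaira embeddings converge in $W_{N'}$ to $[\tilde M_\infty^k]$ and $[\tilde M_\infty'^k]$, and whose Gromov--Hausdorff limits are the given $(M_\infty^k, \omega_\infty^k)$ and $(M_\infty'^k, \omega_\infty'^k)$. A diagonal extraction combined with the GH-closeness hypothesis produces a single sequence whose GH-limit serves simultaneously as reference for both, and the contradiction argument in the proof of Proposition \ref{two-topology} --- the uniqueness of bounded solutions to the complex Monge--Amp\`ere equation (\ref{yau-equation-limit}), together with Lemma \ref{regular-set} --- then yields $g_k \in {\rm SL}(N'+1, \mathbb C)$ with
\[
{\rm dist}_{W_{N'}}([\tilde M_\infty'^k],\ g_k \cdot [\tilde M_\infty^k]) \to 0.
\]
The local estimates (\ref{c2-estimate-1})--(\ref{c3-estimate-1}) and Lemma \ref{limit-solitons} guarantee that any such sequence has a further subsequence converging locally smoothly to a singular KR soliton on a Q-Fano variety, which justifies applying the bounded-solution uniqueness step in this doubly-varied setting.

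Second, I would apply Proposition \ref{continuity-vf} to the (further passed) subsequences $g_k \cdot [\tilde M_\infty^k]$ --- whose soliton VF is $g_k X^k g_k^{-1}$ --- and $[\tilde M_\infty'^k]$, which, after extracting a subsequence, converge to a common limit $[\tilde N_\infty] \in W_{N'}$ carrying a singular KR soliton with a well-defined VF $Y$. The continuity of $vec$ gives
\[
\lim_k g_k X^k g_k^{-1} = Y = \lim_k X'^k
\]
modulo unitary conjugation. A subtorus finiteness argument as in the proof of Corollary \ref{vec} --- conjugating $X^k$ and $X'^k$ into the fixed maximal torus $(\mathbb{C}^*)^{N'}$ and using that the image of $vec$ on each subtorus is discrete by the integrality of weights from the equivariant Riemann--Roch formula of Lemma \ref{vec1} --- then forces $X^k$ and $g_k X'^k g_k^{-1}$ to lie in finitely many ${\rm U}(N'+1,\mathbb{C})$-orbits and hence to coincide under unitary conjugation for large $k$, contradicting the standing assumption.

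The main obstacle will be the first step: extending Proposition \ref{two-topology} to the doubly-varied setting. In the original proof, one sequence comes from a fixed flow, which provides a canonical reference $\tilde M_\infty$ on which one solves (\ref{yau-equation-limit}) and compares bounded solutions; here neither sequence is canonical and both $\tilde M_\infty^k$ vary with $k$. The resolution is to freeze $k$ first, run the original Proposition \ref{two-topology} argument to compare each of $\tilde M_\infty^k$ and $\tilde M_\infty'^k$ with a diagonal sequence whose GH-limit is common, and only afterwards let $k\to\infty$. A secondary subtlety is passing from the ${\rm SL}(N'+1, \mathbb C)$-conjugation produced by Proposition \ref{two-topology} to the ${\rm U}(N'+1, \mathbb C)$-conjugation asserted in (\ref{vf-unique-0}); this follows by the rigidity of the Kodaira embedding associated to an orthonormal basis of $H^0(\tilde M_\infty, K_{\tilde M_\infty}^{-l})$ with respect to the soliton metric, exactly as in the last step of the proof of Proposition \ref{continuity-vf}.
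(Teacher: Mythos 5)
Your proposal is correct and follows essentially the same strategy as the paper: argue by contradiction, extract a diagonal sequence from the varied flows whose local limit is a singular KR soliton (via Proposition \ref{two-topology} and Lemma \ref{limit-solitons}), identify its soliton VF with a conjugate of $\tilde X$, and then invoke the compactness of the set of soliton VFs together with the countability of subtori and Lemma \ref{vec1} to turn continuity into the gap statement. The only real difference is in the middle step: where you route the identification of the VFs through Chow-point convergence and an extension of Proposition \ref{continuity-vf} to the varied setting, the paper works directly with the metrics --- matching open subsets of the regular parts isometrically via the volume identity, extending $\bar\omega_\infty|_{\bar U}$ to a KR soliton on ${\rm Reg}(\tilde M_\infty)$, and applying the uniqueness of weak solutions of the soliton Monge--Amp\`ere equation to get $\bar X=\sigma_*\tilde X$ --- but both variants ultimately rest on the same uniqueness ingredient, and your handling of the ${\rm SL}$-to-unitary passage matches the paper's.
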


  \begin{proof} First we note that according to  the proof of  Proposition \ref{continuity-vf}  the set of soliton VFs  is compact  in ${\rm sl}(N'+1, \mathbb C)$. Since there are countably many subtori of $(\mathbb C^*)^N$, by Lemma \ref{vec1},  we need to prove that for any  sequence of  soliton VFs  $X^i$ associated to  singular KR solitons ${(\tilde M_\infty^i, \hat \omega_\infty^i)}$  in  $\mathcal C_A$ which converge  to   $(M_\infty, \omega_\infty)$ in Gromov-Hausdroff topology, it holds
  \begin{align}\label{vf-convergenvce}X^i\to  g\cdot \tilde X \cdot g^{-1}, ~{\rm for ~some} ~g\in {\rm SU}(N' + 1;\mathbb C).
  \end{align}

  As in the proof of Proposition \ref{two-topology} together with Lemma \ref{limit-solitons},     there is a  sequence of    $\omega^{\alpha_i}_{i_k}$   and
  a   Q-Fano variety $\bar M_\infty$ such that  $\omega^{\alpha_i}_{i_k}$ is  $C^\infty$-convergent to a  singular KR soliton   $(\bar \omega_\infty, \bar X)$  on  ${\rm Reg}(\bar M_\infty)$.  Moreover,
     \begin{align}\label{isometric-2}\overline {(\bar\omega_\infty, {\rm Reg}(\bar M_\infty))}\cong ( M_\infty, \omega_\infty).
     \end{align}
  Since
 $$\int_{ {\rm Reg}(\bar M_\infty)} \bar\omega_\infty^n=\int_{ {\rm Reg}(\tilde M_\infty)} \hat\omega_\infty^n=V,$$
 it is easy to see that there are  an open sets  $\bar U\subset   {\rm Reg}(\bar M_\infty)$ and   $U\subset  {\rm Reg}(\tilde M_\infty)$ such that
 $$(\bar\omega_\infty, \bar U)\cong  (\hat \omega_\infty,  U).$$
 The above also implies that  both of complex structures on  $\bar U$ and $U$ are same by the convergence of metric sequences.
 Thus one can extend  the metric $\bar\omega_\infty|_{\bar U}$ to a KR soliton $( \bar\omega_\infty', \bar X')$  on  ${\rm Reg}(\tilde M_\infty)$,
  where $\bar X'\in \eta_\infty$ as an element of Lie algebra of ${\rm Aut}(\tilde M_\infty)$.   By the uniqueness,
  $\bar\omega_\infty'$ is same as $\hat \omega_\infty$ in sense of solutions of weak complex Monge-Amp\`ere equation associated to singular KR solitons (cf. \cite{WZ20}). It follows that
 $$\bar X=\bar X'= \sigma_*(\tilde X) $$
 for some $\sigma\in {\rm Aut}(\tilde M_\infty)$.
 Thus there is some $g\in {\rm SL}(N' + 1;\mathbb C)$ such that
 $$\bar X= g\cdot \tilde X \cdot g^{-1}.$$
 This proves  (\ref{vf-convergenvce}).

  \end{proof}

 \section{Proofs of Theorem \ref{general-KR}  and Theorem \ref{stability-KR-smooth-complex}}

 To prove  Theorem \ref{general-KR}, we need  a stability result  for  KR flow (\ref{kr-flow}) with  a limit of singular  KR soliton    $(\tilde M_\infty, \hat \omega_\infty)$  analogous to
   Theorem \ref{stability-KE}. In fact,
 by assuming  that  ${\rm Aut}_0(\tilde M_\infty)$ is reductive,    we have

 \begin{lem}\label{stability-KR}   Let   $(\tilde M_\infty, \hat\omega_\infty)$ be the   singular  KR soliton  as the  limit of flow   (\ref{kr-flow}).  Suppose that  ${\rm Aut}_0(\tilde M_\infty)$ is reductive.  Then   there is an $\epsilon>0$ such that for any initial metric  $\omega_0'\in 2\pi c_1(M,J)$ with (\ref{small-initial-0}) satisfied,
 flow $(M, \omega_t')$  is uniformly   convergent to a  Gromov-Hausdroff limit   $(M_\infty, \omega_\infty)$,  which  is
    the completion of $({\rm Reg}(\tilde M_\infty), \hat\omega_\infty)$.
 \end{lem}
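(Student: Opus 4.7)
The plan is to mimic the contradiction argument used in the proof of Theorem \ref{stability-KE}, replacing the KE-specific Proposition \ref{low} by Proposition \ref{energy-level} (which holds for any initial metric) and adding a control on soliton vector fields via Proposition \ref{vector-gap}. Concretely, suppose the conclusion fails. Then there exist a sequence $\omega_0^{\alpha_i}\to\omega_0$ in $C^2_{CG}$ and sequences $\{\omega_{t_k}^{\alpha_i}\}$ whose Gromov--Hausdorff limits $(M_\infty^{(i)},\omega_\infty^{(i)})$ satisfy ${\rm dist}_{GH}((M_\infty,\omega_\infty),(M_\infty^{(i)},\omega_\infty^{(i)}))\ge\delta_0>0$ for some fixed $\delta_0$. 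Using the uniform convergence $(M,\omega_t)\to (M_\infty,\omega_\infty)$ established in Theorem \ref{mainThm-KS}, I would extract a diagonal subsequence $\omega_{t_i}^{\alpha_i}$ converging in Gromov--Hausdorff sense to some $(\bar M_\infty,\bar\omega_\infty)$ at distance exactly $\epsilon/2$ from $(M_\infty,\omega_\infty)$, where $\epsilon$ is the smaller of the constants furnished by Propositions \ref{two-topology}, \ref{dis2}, and \ref{vector-gap}.

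Next, I would apply Lemma \ref{limit-solitons}: after passing to a further subsequence, $(\Phi_{t_i}^{-1})^*\omega_{t_i}^{\alpha_i}$ converges locally smoothly on the regular part of a Q-Fano variety $\bar M_\infty$ with klt singularities to a singular KR soliton $(\bar\omega_\infty,\bar X)$, and $\overline{({\rm Reg}(\bar M_\infty),\bar\omega_\infty)}$ coincides with the GH-limit. At this stage Proposition \ref{vector-gap}, whose hypothesis is satisfied because the GH-distance $\epsilon/2$ is small, yields $\bar X=g\cdot \tilde X\cdot g^{-1}$ for some $g\in U(N'+1,\mathbb C)$, so after a unitary conjugation $\bar M_\infty\in\mathcal C_{KS}(X)$; moreover ${\rm Aut}_0(\bar M_\infty)$ is reductive by Proposition \ref{redsol} applied to $(\bar M_\infty,\bar\omega_\infty)$ (or transported from $\tilde M_\infty$), hence $\bar M_\infty\in\mathcal C_{KS}^0(X)$.

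Then I would feed the GH-closeness ${\rm dist}_{GH}((M_\infty,\omega_\infty),(\bar M_\infty,\bar\omega_\infty))=\epsilon/2$ into Proposition \ref{two-topology} to obtain
\[
{\rm dist}_{W_N}([\bar M_\infty],\,g'\cdot[\tilde M_\infty])\le\delta(\epsilon/2)\to 0\quad\text{as }\epsilon\to 0
\]
for some $g'\in {\rm SL}(N+1,\mathbb C)$. Shrinking $\epsilon$ once more so that $\delta(\epsilon/2)$ is less than the gap constant in Proposition \ref{dis2}, the latter proposition (which is precisely where the reductivity assumption on ${\rm Aut}_0(\tilde M_\infty)$ enters) forces $\bar M_\infty=g''\cdot\tilde M_\infty$ in ${\rm SL}(N+1,\mathbb C)$. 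By uniqueness of singular KR solitons on a Q-Fano variety with klt singularities \cite{Bern}, this identifies $(\bar M_\infty,\bar\omega_\infty)$ isometrically with $(\tilde M_\infty,\hat\omega_\infty)$, hence $(\bar M_\infty,\bar\omega_\infty)\cong(M_\infty,\omega_\infty)$, contradicting the GH-distance $\epsilon/2>0$.

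The main obstacle, and the place where this argument departs substantively from the KE proof, is the control of the soliton vector field along the perturbation: in the KE case any singular limit is automatically KE, so one only needs $\mathcal C_{KE}/{\rm SL}$ to be discrete (Proposition \ref{dis}), whereas here one must ensure the limit $\bar M_\infty$ lives in the \emph{same} stratum $\mathcal C_{KS}^0(X)$ before Proposition \ref{dis2} can be invoked. Proposition \ref{vector-gap} supplies exactly this, but it comes with its own smallness constant, so the delicate point is to compatibly choose $\epsilon$ so that all three quantitative inputs --- Propositions \ref{two-topology}, \ref{vector-gap}, and \ref{dis2} --- apply simultaneously. The reductivity of ${\rm Aut}_0(\tilde M_\infty)$ is genuinely needed, since without it the Luna-slice step underlying Proposition \ref{dis2} breaks down.
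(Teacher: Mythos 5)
Your proposal is correct and follows essentially the same route as the paper's proof: a contradiction argument modeled on Theorem \ref{stability-KE}, with Lemma \ref{limit-solitons} and Proposition \ref{energy-level} producing a singular KR soliton limit, Proposition \ref{vector-gap} pinning down the soliton vector field, and Propositions \ref{two-topology} and \ref{dis2} forcing the limiting $Q$-Fano variety to be conjugate to $\tilde M_\infty$, contradicting the positive Gromov--Hausdorff gap. The only (harmless) extra step is your invocation of Proposition \ref{redsol} to place $\bar M_\infty$ in $\mathcal C^0_{KS}(X)$, which is not needed since Proposition \ref{dis2} only requires the perturbed variety to lie in $\mathcal C_{KS}(X)$.
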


 \begin{proof}
 Under the condition that $ {\rm Aut}(\tilde M_\infty)$ is reductive,   we    can modify the proof of    of Theorem \ref{stability-KE} to  prove  Lemma  \ref{stability-KR}.
    In fact,  by Proposition \ref{two-topology},
      Proposition \ref{dis2}  can be applied to  a pair of $Q$-Fano varieties with   one reductive.
    We note that
   as in the proof of   Lemma \ref{limit-solitons} together with Proposition  \ref {energy-level} the sequence $\{\omega_{t_i}^{\alpha_i}\}$  ($t_i\to\infty, \alpha_i\to 0)$ obtained  by the contradiction argument as in (\ref{contradiction-2}) will converge to another  singular KR soliton $(\tilde M_\infty', \omega_\infty')$ with
   \begin{align}\label{gh-small-distance-2} \frac{\epsilon}{2}\le {\rm dist}_{GH}(  (\tilde M_\infty, \omega_{KR}),    (M_\infty', \omega_\infty'))\le \epsilon<<1.
   \end{align}
   Moreover,   by  Lemma \ref{vector-gap},  the soliton  VFs  $X^i$ associated to  singular KR solitons ${(\tilde M_\infty^i, \hat \omega_\infty^i)}$ is conjugate to $v$.   Thus   by  Proposition \ref{dis2}  $M_\infty'$ must conjugate with $\tilde M_\infty$, and so  $(M_\infty', \omega_\infty')$ must be isometric to  $(M_\infty, \omega_\infty)$ by the uniqueness of KR solitons.   But the latter  is  contradict  with  (\ref{gh-small-distance-2}).   The proof is    finished.

   \end{proof}

  \begin{proof}[Proof of Theorem \ref{general-KR}]We use the idea in Section 4 to prove  Theorem \ref{general-KR}.  As in  the proof of Theorem \ref{mainThm-KE},  for any $\omega_0'\in 2\pi c_1(M,J)$,   we let $\omega^s=s\omega_0+(1-s)\omega_0'$ ($s\in [0,1]$).   We want to prove  the uniform convergence of  flow  $(M, \omega_t^s)$ for any initial $\omega^s$  in   Cheeger-Gromov topology. Let  $I$ be a set as in (\ref{i-set}).  Then   $I$ is non-empty and open  by Lemma \ref{stability-KR}.   We remains to prove that $I$ is also closeness. Without of
  loss of generality, we may assume that $I=[0, s_0)$  and we are going to show that $s_0\in I$.

  Let $L(\omega^{s_0})$ be  the energy level  with respect to $\omega^{s_0}$ as in (\ref{level}). Then by Proposition  \ref {energy-level}, we have
    \begin{align}\label{l-condition}
    L(\omega^{s_0})=   L(\omega_{0})=\lambda(\omega_{\infty}).
    \end{align}
  We claim  that for any $\delta>0$ there are  an $\epsilon_0>0$ and $t_0$ such that
   \begin{align}\label{small-GH-2}{\rm dist}_{GH}( ( M_\infty, \omega_{\infty}),    (M, \omega_t^s))\le \delta,~\forall~s\in [s_0-\epsilon_0, s_0), t\ge t_0.
   \end{align}
   As a consequence,  we get
   $${\rm dist}_{GH}( ( M_\infty, \omega_{\infty}),   (M_\infty', \omega_\infty'))\le \delta,
   $$
   where $(M_\infty', \omega_\infty')$   is the     Gromov-Hausdroff  limit  of $(M, \omega_t^{s_0})$  by Theorem  \ref{unique-algebra}.
 Thus the theorem will follow from Proposition \ref{dis2} together with Proposition \ref{two-topology} as in the proof of  Theorem \ref{mainThm-KS} since $ {\rm Aut}(\tilde M_\infty)$ is reductive.

  We   prove (\ref{small-GH-2})  by contradiction  as in the proof of  Theorem \ref{stability-KE}.  On the contrary,  for  a small number $\delta_0$  $(=\epsilon)$ as chosen  in (\ref{small-GH}) in  Proposition \ref{two-topology},  there is  a sequence of  $\omega_{t_i}^{s_i}$ ~( $t_i\to\infty$,  $s\to s_0$)  such that
 \begin{align}\label{contradiction} \frac{\delta_0}{2}\le {\rm dist}_{GH}(  (M_\infty, \omega_{\infty}),   \omega_{t_i}^{s_i} )\le \delta_0.
 \end{align}
 Note that
  \begin{align}\label{max-entropy-comutation}&\lim_{t_i\rightarrow\infty, s_i\to s_0}\lambda(\omega_{t_i}^{s_i})\ge L(\omega^{s_0})=\lim_{t\rightarrow\infty}\lambda(\omega_{t}) \notag\\
 & =\sup\{\lambda(g')|~\omega_{g'}\in 2\pi c_1(M,J)\}.
   \end{align}
 It follows that
 $$\lim_{t_i\rightarrow\infty, s_i\to s_0}\lambda(\omega_{t_i}^{s_i})=L(\omega^{s_0}).$$
 Thus    as in the proof of   Lemma \ref{limit-solitons},     $\{\omega_{t_i}^{s_i}\}$ is  locally $C^\infty$-convergent to a  KR soliton on the regular part of  a Q-Fano variety $\tilde M_\infty'$ with klt singularities.
 Moreover, its  Gromov-Hausdroff  limit  $(\bar M_\infty, \bar \omega_\infty))$ satisfies
 \begin{align}\label{contradiction-4}  \frac{\delta_0}{2}\le {\rm dist}_{GH}(  (M_\infty, \omega_{\infty}),    (\bar M_\infty, \bar \omega_\infty))\le \delta_0.
 \end{align}
 Hence,  by  Proposition \ref{dis2}  together with Proposition   \ref{two-topology}, we conclude that $(\bar M_\infty, \bar \omega_\infty)$  is isometric to   $(M_\infty, \omega_{KR})$  as in the  proof of  Theorem \ref{mainThm-KS}.  This is impossible by (\ref{contradiction-4}). Therefore, (\ref{small-GH-2}) is true and  Theorem \ref {stability-KE} is proved.
 \end{proof}

  \subsection {Globally smooth convergence}
  In this  subsection,  we further  assume that the Q-Fano variety  limit $ \tilde M_\infty$ in Theorem \ref{WZ} is smooth.
  Then   there are a covering $\{U_\alpha\}$  of $ \hat\omega_\infty$  with local  holomorphic coordinates and   diffeomorphisms   $\Psi^{i}: \tilde M_\infty\to \tilde M_i$ such that for each $\tilde M_i$ there is a  covering  $\{U_\alpha^i\subset \Psi^{i}(U_\alpha)\}$   with local  holomorphic coordinates and uniform norms of transformation functions. Thus if we
     write as
   \begin{align}\label{psi-s-smooth}(\Phi_i^{-1})^*\omega_{t_i+s}=\tilde\omega_{i}+\sqrt{-1}\partial\bar\partial \psi^s_i,   ~{\rm in} ~\tilde M_i, \forall s\in [-1,1],
   \end{align}
     K\"ahler potential $\psi^s_i$ will satisfy  (\ref{c0-estimate})-(\ref{c3-estimate}) in Lemma \ref{c3-phi} on each $U_\alpha^i$.  Those estimates imply that $\omega_{t_i}$ is  $C^\infty$-convergent to a smooth  KR soliton  $(\hat\omega_\infty, \tilde M_\infty)$ in Cheeger-Gromov topology, and   so  the Gromov-Hausdroff limit $\omega_{KS}$ is asme as    $\hat\omega_\infty$ and $M_\infty$ is diffeomorphic to $\tilde M_\infty$.  Actually, we have

 \begin{lem}\label{smoothcase-lemma} Let   $(M_\infty, \omega_\infty)$   be a  Gromov-Hausdroff limit  of sequence   $\{\omega_{t_i}\}$ in the KR flow (\ref{kr-flow}). Then $(M_\infty, \omega_\infty)$   is a smooth KR soliton  if and only if the  Q-Fano variety $\tilde M_\infty$   is smooth and it is  diffeomorphic to $ M_\infty$.

 \end{lem}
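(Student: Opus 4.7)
The plan is to prove both directions of the equivalence. The forward (``if'') direction is essentially the content of the paragraph immediately preceding the lemma: when $\tilde M_\infty$ is smooth and diffeomorphic to $M_\infty$, the existence of a finite cover $\{U_\alpha\}$ of $\tilde M_\infty$ by holomorphic charts with uniformly bounded transition norms upgrades the local Cheeger--Gromov convergence in Lemma \ref{c3-phi} to a global smooth convergence on $\tilde M_\infty$. Hence the Gromov--Hausdorff limit of $\omega_{t_i}$ coincides with the smooth Kähler--Ricci soliton $(\tilde M_\infty,\hat\omega_\infty)$, and $M_\infty$ is diffeomorphic to $\tilde M_\infty$ via the diffeomorphisms $\Psi^i$.

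For the converse (``only if''), I assume $(M_\infty,\omega_\infty)$ is a smooth Kähler--Ricci soliton, so its regular set $\mathcal R$ is all of $M_\infty$ and the singular part $\mathcal S$ is empty. Since the Kodaira embeddings $\Phi_i:M\to\tilde M_i$ are uniformly Lipschitz by \cite{WZ20}, after passing to a subsequence they converge to a Lipschitz map $\Phi_\infty:M_\infty\to\tilde M_\infty$. Lemma \ref{regular-set} then says that $\Phi_\infty|_{\mathcal R}$ is a bijection onto ${\rm Reg}(\tilde M_\infty)$; under the hypothesis $\mathcal R=M_\infty$, this gives $\Phi_\infty(M_\infty)={\rm Reg}(\tilde M_\infty)$.

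The key step is then a topological one combining compactness with connectedness. Since $M_\infty$ is compact and $\Phi_\infty$ is continuous, $\Phi_\infty(M_\infty)={\rm Reg}(\tilde M_\infty)$ is compact, hence closed in $\tilde M_\infty$. But ${\rm Reg}(\tilde M_\infty)$ is also open in $\tilde M_\infty$, and since a Q-Fano variety $\tilde M_\infty$ is irreducible and therefore connected, I conclude ${\rm Reg}(\tilde M_\infty)=\tilde M_\infty$; in other words, $\tilde M_\infty$ has empty singular locus and is smooth. The resulting map $\Phi_\infty:M_\infty\to\tilde M_\infty$ is a continuous bijection between compact Hausdorff spaces, hence a homeomorphism, and as it is biholomorphic on the regular part (which is now everything), it is a biholomorphism; in particular $M_\infty$ and $\tilde M_\infty$ are diffeomorphic.

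The main delicate point is ensuring that $\Phi_\infty$ is well-defined as a continuous limit and that Lemma \ref{regular-set} applies cleanly under the smoothness hypothesis; this reduces to a diagonal extraction using the uniform Lipschitz bound on $\Phi_i$ from \cite{WZ20} together with the Gromov--Hausdorff convergence $(M,\omega_i)\to(M_\infty,\omega_\infty)$. Once this limit map is identified, the remainder is a purely topological clopen-in-connected argument, so no further analytic input is required.
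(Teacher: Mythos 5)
Your proof is essentially correct, but it takes a genuinely different route from the paper's in the ``only if'' direction. The paper never invokes Lemma \ref{regular-set} here: it instead works with the auxiliary metric $\eta_\infty=\omega_\infty+\sqrt{-1}\partial\bar\partial\kappa$ solving ${\rm Ric}(\eta_\infty)=\omega_\infty$, upgrades the bounded weak solution $\kappa$ to a smooth one by comparing with Yau's solution on the smooth manifold $M_\infty$ and using uniqueness of bounded solutions, concludes that the limit of the modified metrics $\eta_t$ has flat tangent cones everywhere, and then cites Proposition 2.4 of \cite{LS} to deduce smoothness of $\tilde M_\infty$. Your argument instead feeds the hypothesis $\mathcal S=\emptyset$ into Lemma \ref{regular-set} and finishes with the clopen-in-connected observation that ${\rm Reg}(\tilde M_\infty)$ is both open and compact in the irreducible variety $\tilde M_\infty$; this is shorter and logically sound within the paper (Lemma \ref{regular-set} is established earlier), but it routes the whole lemma through Bamler's structure theorem, which is precisely what the authors want to avoid in this special case: Remark \ref{soliton-case} presents Lemma \ref{smoothcase-lemma} as supplying a Bamler-free proof of Lemma \ref{regular-set} when the limit is a smooth soliton, a purpose your argument cannot serve. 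Two smaller points: you should justify the opening assertion that a limit space isometric to a smooth Riemannian manifold has $\mathcal S=\emptyset$ (this needs the $\epsilon$-regularity characterization of $\mathcal R$ via Euclidean tangent cones, not just the statement of the decomposition), and you should note that surjectivity of $\Phi_\infty$ onto $\tilde M_\infty$ (from $\tilde M_i=\Phi_i(M)$ and uniform convergence) already gives ${\rm Reg}(\tilde M_\infty)=\tilde M_\infty$ directly, making the clopen argument optional.
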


 \begin{proof}We need to prove the necessary part. In fact, by (\ref{yau-equation-limit}), we have
 \begin{align}\label{yau-equation-limit-2}
  (\omega_\infty+\sqrt{-1}\partial \bar \partial \kappa)^n=e^{ h_\infty} \omega_\infty^n,~{\rm in}~ \tilde M_\infty,
  \end{align}
  where $h_\infty$ is  a Ricci potential of $\omega_\infty$.
  We claim $\kappa$ can be extended to a smooth solution of (\ref{yau-equation-limit-2}) on  $M_\infty$. This implies that  the modified K\"ahler metrics $\eta_t$ of $\omega_t$  in (\ref{modified-metric}) converges to a smooth limit of Gromov-Hausdorff \cite{WZ20}. In particular, each tangent cone at $p\in (M_\infty,\omega_\infty+\sqrt{-1}\partial \bar \partial \kappa)$ is flat.   Thus  by Proposition 2.4 in \cite{LS} (also see (4.31) in \cite{WZ20}), $\tilde M_\infty$ is smooth. Hence, the convergence of  $\omega_{t_i}$ is  $C^\infty$ in Cheeger-Gromov topology and so $ M_\infty$ is  diffeomorphic to $ \tilde M_\infty$. The lemma is proved.

  Since $\kappa$ is uniformly bounded on $\tilde M_\infty$, it is a globally weak solution of (\ref{yau-equation-limit-2}) on  $M_\infty$.
  On the other hand, by Yau' s theorem to Calabi's conjecture, there is a  smooth  solution $\kappa'$ of  (\ref{yau-equation-limit-2}) on  $M_\infty$. Thus by the uniqueness of weak solutions,
  $$\kappa=\kappa'+c,$$
  for some constant $c$. Hence, $\kappa$ must be a smooth  solution  on  $M_\infty$.

 \end{proof}

 \begin{cor}\label{smooth-KR}
 Suppose that  there is a sequence   $(M, \omega_{t_i})$ of  (\ref{kr-flow}) whose   Gromov-Hausdroff limit   is  a smooth   KR soliton $(M_\infty,  \omega_{KR})$  with  ${\rm Aut}_0(\tilde M_\infty)$ reductive. Then the flow is uniformly $C^\infty$-convergent to $(M_\infty,  \omega_{KR})$  in Cheeger-Gromov topology and the convergence is independent of  initial metric $\omega_0'\in 2\pi c_1(M)$.
 \end{cor}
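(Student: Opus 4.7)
The plan is to combine Theorem \ref{general-KR} with Lemma \ref{smoothcase-lemma} and the smooth-case global estimate sketched just before Lemma \ref{smoothcase-lemma}. First I would observe that since the Gromov-Hausdorff limit $(M_\infty,\omega_{KR})$ of the subsequence $\{(M,\omega_{t_i})\}$ is a smooth K\"ahler-Ricci soliton, Lemma \ref{smoothcase-lemma} applies and gives that the associated Q-Fano variety $\tilde M_\infty$ is in fact smooth, diffeomorphic to $M_\infty$, with ${\rm Reg}(\tilde M_\infty)=\tilde M_\infty$ and $\hat\omega_\infty=\omega_{KR}$ (up to isometry/diffeomorphism). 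In particular, the hypothesis that ${\rm Aut}_0(\tilde M_\infty)$ is reductive is meaningful and is precisely the hypothesis of Theorem \ref{general-KR}.

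Next I would invoke Theorem \ref{general-KR}: for any initial metric $\omega_0'\in 2\pi c_1(M)$, the flow $(M,\omega_t')$ is locally uniformly $C^\infty$-convergent to $({\rm Reg}(\tilde M_\infty),\hat\omega_\infty)$ in the Cheeger-Gromov topology, and the Gromov-Hausdorff limit is independent of the choice of initial metric in $2\pi c_1(M)$. Since in our smooth case ${\rm Reg}(\tilde M_\infty)=\tilde M_\infty$ is compact, the word ``locally'' becomes ``globally'' automatically: the exhausting open sets $\Omega_\gamma\subset\tilde M_\infty$ stabilize to the whole $\tilde M_\infty$ for large $\gamma$.

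The one remaining point — and what I expect to be the only substantive step beyond quoting earlier results — is to upgrade the local Cheeger-Gromov $C^\infty$ convergence to a genuine uniform $C^\infty$ convergence on all of $\tilde M_\infty$. For this I would run the argument indicated in the paragraph preceding Lemma \ref{smoothcase-lemma}: because $\tilde M_\infty$ is smooth, one can fix a finite coordinate covering $\{U_\alpha\}$ of $\tilde M_\infty$ and, via the Kodaira embedding $\Phi_t'$ built from an orthonormal basis of $H^0(M,K_M^{-l},\omega_t')$, construct diffeomorphisms $\Psi^t:\tilde M_\infty\to \tilde M_t'=\Phi_t'(M)$ together with coordinate charts $U_\alpha^t\subset\Psi^t(U_\alpha)$ whose transition functions are uniformly bounded in $t$. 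Writing
\begin{equation}
(\Phi_t'^{-1})^*\omega_{t+s}'=\tilde\omega_t+\sqrt{-1}\partial\bar\partial\psi^s_t,\quad s\in[-1,1],
\end{equation}
the estimates (\ref{c0-estimate})--(\ref{c3-estimate}) of Lemma \ref{c3-phi}, which a priori hold only on the exhausting pieces $\tilde\Omega_\gamma^i$, now hold on each chart $U_\alpha^t$ with constants independent of both $t$ and $\alpha$; this is exactly the smoothness dividend emphasized just before Lemma \ref{smoothcase-lemma}. Consequently $\psi_t^s$ is uniformly bounded in every $C^{k,\alpha}$-norm globally on $\tilde M_\infty$, so $(\Phi_t'^{-1})^*\omega_t'$ converges globally in $C^\infty$ to $\hat\omega_\infty=\omega_{KR}$.

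Finally, combining this uniform global convergence with the independence of the Gromov-Hausdorff limit from the initial metric (which is the second assertion of Theorem \ref{general-KR}), we conclude that the entire flow $(M,\omega_t')$ converges uniformly in $C^\infty$-Cheeger-Gromov sense to $(M_\infty,\omega_{KR})$, and the limit is independent of the choice of $\omega_0'\in 2\pi c_1(M)$. The main (and essentially only) obstacle is the global/local issue of the previous paragraph — once one checks that smoothness of $\tilde M_\infty$ upgrades the local charts of Section 1 to a finite uniform atlas, the corollary reduces to a bookkeeping consequence of Theorem \ref{general-KR} and Lemma \ref{smoothcase-lemma}.
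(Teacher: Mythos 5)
Your proposal is correct and follows essentially the same route as the paper's own proof: apply Lemma \ref{smoothcase-lemma} to conclude that $\tilde M_\infty$ is smooth, invoke Theorem \ref{general-KR} for the flow with an arbitrary initial metric, and then use the finite coordinate atlas $\{U_\alpha\}$ with uniformly bounded transition functions so that the estimates of Lemma \ref{c3-phi} hold globally and upgrade the local convergence to uniform $C^\infty$-convergence. No substantive difference from the paper's argument.
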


 \begin{proof}  By Lemma \ref{smoothcase-lemma},  $(M_\infty, \omega_\infty)$   is a smooth KR soliton  and the  Q-Fano variety $\tilde M_\infty$   is smooth.  Then, by Theorem \ref{general-KR}, for  Ricci flow $\omega_t'$ with  any initial metric $\omega_0'\in 2\pi c_1(M)$,  there are  diffeomorphisms $\Psi^t: \tilde M_\infty\to \tilde M_t'=\Phi_t(M)$ such that for each $\tilde M_t'$ there is a  covering  $\{U_\alpha^t\subset \Psi^{t}(U_\alpha)\}$   with local  holomorphic coordinates and uniform norms of transformation functions.
    Write as
  $$(\Phi_t^{-1})^*\omega_{t+s}'=\tilde\omega_{t}'+\sqrt{-1}\partial\bar\partial {\psi^s_t}',   ~{\rm in} ~\tilde M_t', \forall s\in [-1,1],
   $$
   where $\tilde\omega_t'=\frac{1}{l}\omega_{FS}|_{\tilde M_t'}$.  Thus the estimates for  ${\psi^s_t}'$ in Lemma \ref{c3-phi} on each $U_\alpha^t$  imply that $\omega_{t}'$ is  $C^\infty$-convergent to $(\hat\omega_\infty, \tilde M_\infty)$ in Cheeger-Gromov topology.
 \end{proof}

 \subsection{Proof of Theorem \ref{stability-KR-smooth-complex}}

  According to the proof of Corollary \ref{smooth-KR}, the induced  metric $(\Phi_t^{-1}\cdot \Psi^t))^*\omega_{t}$ of (\ref{kr-flow}) is  $C^\infty$-convergent to a KR soliton $(M_\infty,  \omega_{KR})$ with the complex structure $J_\infty$ defined by
  $$ J_\infty =\lim_{t\to \infty}(\Phi_t^{-1}\cdot \Psi^t)^*J.$$
  Thus  $J_\infty$  is a  canonical    smooth deformation  of $J$.   Conversely, by  the first relation in (\ref{c-infty-convergence-vector}) implies that  the curvature of $\omega_i$ is uniformly bounded. Then by the partial $C^0$-estimate for the  sequence  of $\omega_i$ (cf. \cite{DS, Ti13, T2}), there are images $\tilde M_i\subset  \mathbb CP^N$  of Kodaira embeddings as in Section 1,  which converges to a smooth submanifold  $\tilde M_\infty $ such that (\ref{group-g}) holds. Thus
  $$\tilde M_\infty\in \overline{{\rm SL}(N + 1;\mathbb C)\cdot\tilde M}. $$

 \begin{proof}[Proof of Theorem \ref{stability-KR-smooth-complex}] Let $\omega_i$ be a sequence of K\"ahler metrics  in $2\pi c_1(M,J)$ as in (\ref{deformation-j})  such that
 $$\lim_i{\rm dist}_{CH}((M,\omega_i), (M', \omega'))=0.$$
 By  Corollary \ref{smooth-KR}, it suffices to prove: for any $\delta>0$
  there are $i_0$ and $t_0$ such that
   \begin{align}\label{small-GH-4}{\rm dist}_{GH}( ( M', \omega_{KS}),    (M, \omega_t^i))\le \delta,~\forall~i\ge i_0, t\ge t_0,
   \end{align}
   where $\omega_t^i$ is the solution of  (\ref{kr-flow}) with the initial metric  $\omega_i$.
   As a consequence,  we get
   $${\rm dist}_{GH}( (M', \omega_{KR}),   (M_\infty', \omega_\infty'))\le \delta,
   $$
   where $(M_\infty', \omega_\infty')$   is the  global   Gromov-Hausdroff  limit  of $(M, \omega_t^i)$  by Theorem \ref{WZ}.
 Thus by Proposition \ref{dis2} together with Proposition \ref{two-topology} we get
  $$(M', \omega_{KR}) \cong (M_\infty', \omega_\infty'),$$
  since $ {\rm Aut}( M')$ is reductive.  The theorem is proved.

  We   prove (\ref{small-GH-4})  by contradiction  as in the proof of  Theorem \ref{stability-KE}.  First we note that
   the KR flow (\ref{kr-flow}) with the initial metric $\omega'$ on $(M',J')$ is  uniformly  $C^\infty$-convergent to    $(M', J',\omega_{KR})$ by Corollary \ref{smooth-KR}. Then, on the contrary,  for  a small number $\delta_0$  $(=\epsilon)$ as chosen  in (\ref{small-GH}) in  Proposition \ref{two-topology},  we can find  a sequence of  $\omega_{t_i}^{i}$ ~( $t_i\to\infty$,  $i\to\infty$)  such that
 \begin{align}\label{contradiction} \frac{\delta_0}{2}\le {\rm dist}_{GH}(  (M', \omega_{KR}),   \omega_{t_i}^{i} )\le \delta_0.
 \end{align}

 On the other hand, by the monotonicity of Perelman's entropy together with the condition (\ref{max-entropy-4}),  it is easy to see that
 $$\lim_{i\to\infty}\lambda(\omega_{t_i}^{i})=L(\omega').$$
 Thus      as in the proof of   Lemma \ref{limit-solitons},     $\{\omega_{t_i}^{i}\}$ is  locally $C^\infty$-convergent to a  K\"ahler-Ricci soliton on the regular part of  a Q-Fano variety with klt singularities.
 Moreover, its  Gromov-Hausdroff  limit  $(\bar M_\infty, \bar \omega_\infty)$ satisfies
 \begin{align}\label{contradiction-3}  \frac{\delta_0}{2}\le {\rm dist}_{GH}(  (M', \omega_{KR}),    (\bar M_\infty, \bar \omega_\infty))\le \delta_0.
 \end{align}
 Hence,  by  Proposition \ref{dis2}  together with Proposition   \ref{two-topology}, we conclude that $(\bar M_\infty, \bar \omega_\infty)$  is isometric to   $(M', \omega_{KR})$  as in the  proof of  Theorem \ref{mainThm-KS}.  This is impossible by (\ref{contradiction-3}). Therefore, (\ref{small-GH-2}) is  proved.
 \end{proof}

 \begin{rem}\label{generalization-smooth}
 Since the  singular KR soliton  $\overline{(\tilde M_\infty', \hat \omega_\infty')}$ of KR flow on $(M', J')$  is unique by
  Theorem \ref{general-KR}, we can generalize Theorem \ref{stability-KR-smooth-complex} as follows: Let $(M', J')$  be   a canonical
  smooth deformation  of a Fano manifold  $(M,J)$. Suppose that  ${\rm Aut}_0(\tilde M_\infty')$ is reductive and
  $$\lambda(\tilde \omega_{\infty})=\sup\{\lambda(g')|~\omega_{g'}\in 2\pi c_1(M,J)\}.$$
  Then   for any initial metric  $\omega_0'\in 2\pi c_1(M,J)$
  KR flow $(M, J, \omega_t')$  uniformly  converges to     $\overline{(\tilde M_\infty', \hat \omega_\infty')}$ in Gromov-Hausdroff topology.

 \end{rem}

 \begin{rem}\label{counter-example}
 In \cite{Pas},  Pasquier showed that  the Grassman manifold
 $Gr_q(2,7)$  can be deformed  to a  horo-spherical manifold $(M', J')$.  By the stability of K\"ahler metrics \cite{Kod},  $(M', J')$ is a  jump of  $Gr_q(2,7)$.  On the other hand,
  by a recent result of Deltroix,   any horo-spherical Fano manifold admits a KR soliton \cite{Del}.  Since   $(M_\infty, J_\infty)$  has non-vanishing Futaki-invariant,    $(M_\infty, J_\infty)$ admits a (non-KE)  KR soliton.
  Clearly $(Gr_q(2,7), J)$ admits a KE
  metric as a symmetric space and any KR flow on  $Gr_q(2,7)$  converges uniformly to the KE metric in Cheeger-Gromov topology \cite{TZ3} (in fact in sense of  K\"ahler potentials modulo ${\rm Aut}(Gr_q(2,7))$. Thus KR flow  could not be stable near the KR soliton on  $(M', J')$ when
 the complex structure  varies  from $J$ to $J'$.  In particular, Theorem \ref{stability-KR-smooth-complex} is not true  for $(M', J')$. The reason is that  (\ref{max-entropy-4}) does not hold,
 $$\lambda(\omega_{KR})<\sup\{\lambda(g')|~\omega_{g'}\in 2\pi c_1(M,J)\}=\lambda(\omega_{KE}).$$

 \end{rem}

  Corollary \ref{uniqueness-orbit} is a direct consequence of Theorem \ref{stability-KR-smooth-complex},
 since KR flow $(M,\omega_t^{i,1})$  with initial metrics $\omega_i^1 $  (or $\omega_i^2)$ converges uniformly to $(\tilde M_\infty^1, \omega_{KR}^1)$  (or $(\tilde M_\infty^2, \omega_{KR}^2)$) as $i>>1$.  By Theorem \ref{general-KR},  $(\tilde M_\infty^1, \omega_{KR}^1)$ and $(\tilde M_\infty^2, \omega_{KR}^2)$ must be same.

 \subsection{Further remarks on Theorem \ref{stability-KR-smooth-complex} and  Corollary \ref{uniqueness-orbit}}
 To generalize   $(M', J')$ to a  singular limit in  Defintion \ref{deformation-j},  we  introduce

 \begin{defi}\label{deformation-j-2}Let $(M,J)$ be a Fano manifold. A $Q$-Fano variety   $(M',J')$  is called a canonical  deformation of  $(M,J)$ with bounded Ricci curvature
 if there is  a sequence of K\"ahler metrics $\omega_i$ in $2\pi c_1(M,J)$
 such that
 \begin{align}\label{c-infty-convergence-vector-2} &|{\rm Ric}(\omega_i)|\le \Lambda,~{\rm Vol}(B_1(p_i), \omega_i)\ge c_0,\notag\\
  &(M, \omega_i)     \stackrel{GH}{\longrightarrow} (M', d').
 \end{align}
 \end{defi}

 By Cheeger-Colding-Tian' s theorem \cite{CCT},  $(M', d')$ can be decomposed into the regular part $M_1'$,  a $C^{1, \alpha}$-Riemanian manifold  and the singular part $M_2'$ with  the Hausdroff measure of at least  codimension 4.
  Moreover, by  the partial $C^0$-estimate \cite{ DS, T2, JWZ}, the  $Q$-Fano structure    $(M',J')$ is  given as a limit of smooth submanifolds $\tilde M_i$ as in Section 1. In addition that  $(M',J')$ admits a singular KR soliton $(M', \omega_{KR})$, $(M',J')$ has klt singularities \cite{BBEGZ, WZ20}. Thus by the technique of MA equation,   as in Lemma \ref{c3-phi},  we get
   \begin{align}
   &| \psi_i|\le A, ~{\rm in} ~\tilde M_i, \label{c0-estimate-11}\\
   &C_\gamma^{-1}\tilde\omega_{i}\le(\Phi_i^{-1})^*\omega_{i}\le C_\gamma \tilde\omega_{i}, ~{\rm in}~\tilde \Omega_\gamma^i,
   \label{c2-estimat-12e} \\
   &\| \psi_i\|_{C^{3,\alpha}(\tilde {\Omega}_\gamma^i)} \le A, \label{c3-estimate-2}
   \end{align}
 where $\psi_i$ is the K\"ahler potential of $\omega_i$ associated to the background
 $$\tilde\omega_i=\frac{1}{l}\omega_{FS}|_{\tilde M_i}.$$
 As a consequence, we get an open  $C^{1,\alpha}$ K\"ahler metric $\hat \omega_\infty$ on  ${\rm Reg}(M')$ which satisfies
  \begin{align}\label{compactifcation} \overline{({\rm Reg}(M'),  \hat\omega_{\infty})}= (M', d').
  \end{align}

 It is interesting in constructing  an approximation of  singular KR soliton  $\omega_{KR}$  by $\hat\omega_i$ which satisfies (\ref{c-infty-convergence-vector-2}) with  the compactification of  $\omega_{KR}$ as its Gromov-Hausdroff limit.  Then the  K\"ahler potential of  $\hat\omega_i$  will satisfy
 (\ref{c0-estimate-11}), (\ref{c2-estimat-12e}) and (\ref{c3-estimate-2}).  As a consequence, we can generalize  Theorem \ref{stability-KR-smooth-complex} to  the case of $Q$-Fano variety $(M',J')$ with singular KR solitons  which is  a canonical  deformation of  $(M,J)$ with bounded Ricci curvature.  Actually,  the above argument implies the following uniqueness result as a generalization of    Corollary \ref{uniqueness-orbit} (also see Remark \ref{generalization-smooth}).

 \begin{theo}\label{uniqueness-orbit-singular}  Let $\{\omega_i^1\}$  and $\{\omega_i^2\}$ be two  sequences of K\"ahler metrics  in $2\pi c_1(M, J)$ which satisfy (\ref{c-infty-convergence-vector-2}) with  Gromov-Hausdroff limits  compactified by two  singular KR solitons  $(M_\infty^1, J^1, \omega_{KR}^1)$ and  $(M_\infty^2, J^2, \omega_{KR}^2)$ as in (\ref{compactifcation}), respectively.  Suppose that ${\rm Aut}_0( M_\infty^1)$ and  ${\rm Aut}_0( M_\infty^2)$ are both  reductive, and
 \begin{align}\label{max-entropy-0}\lambda(\omega_{KR}^1)=\lambda(\omega_{KR}^2)=\sup\{\lambda(g')|~\omega_{g'}\in 2\pi c_1(M,J)\}.
 \end{align}
  Then  $(M_\infty^1, J^1)$ is biholomorphic to  $(M_\infty^2, J^2)$. Moreover,
  $$({\rm Reg}(M_\infty^1),\omega_{KR}^1)\cong ({\rm Reg}(M_\infty^2),\omega_{KR}^2), $$
 and consequently,
   $$\overline{({\rm Reg}(M_\infty^1),\omega_{KR}^1)}\cong\overline{({\rm Reg}(M_\infty^2),  \omega_{KR}^2)}.$$

  \end{theo}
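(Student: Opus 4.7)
The strategy parallels the derivation of Corollary~\ref{uniqueness-orbit} from Theorem~\ref{stability-KR-smooth-complex}, but installed in the singular regime. The plan is to establish a singular analogue of Theorem~\ref{stability-KR-smooth-complex}: for each $k=1,2$, the K\"ahler--Ricci flow on $(M,J)$ starting from any initial metric in $2\pi c_1(M,J)$ is uniformly convergent in Gromov--Hausdorff topology to the compactification $(M_\infty^k,d^k)$ of $({\rm Reg}(M_\infty^k),\omega_{KR}^k)$. Once this is available, applying it to both $k=1$ and $k=2$ with a common initial datum immediately yields $(M_\infty^1,d^1)\cong (M_\infty^2,d^2)$, and the biholomorphism $(M_\infty^1,J^1)\simeq (M_\infty^2,J^2)$ together with the isometry on the regular parts follows via Proposition~\ref{two-topology} and the uniqueness of bounded solutions of the weak soliton Monge--Amp\`ere equation on a Q-Fano variety with klt singularities.

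The first step is to place $(M_\infty^k,J^k)$ into the algebraic setup of Section~1. The bounded Ricci and non-collapsing hypotheses (\ref{c-infty-convergence-vector-2}), combined with the partial $C^0$-estimate of \cite{DS,T2,JWZ}, provide Kodaira embeddings $\Phi_i^k\colon M\to \tilde M_i^k\subset \mathbb{CP}^N$ whose limits $\tilde M_\infty^k$ are Q-Fano varieties with klt singularities; the estimates (\ref{c0-estimate-11})--(\ref{c3-estimate-2}) identify $\omega_{KR}^k$ with an open K\"ahler metric $\hat\omega_\infty^k$ on ${\rm Reg}(\tilde M_\infty^k)$ whose compactification is $(M_\infty^k,d^k)$. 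Thus $\tilde M_\infty^k$ belongs to a class to which Propositions~\ref{two-topology} and~\ref{dis2} apply, and the reductivity assumption on ${\rm Aut}_0(M_\infty^k)$ transfers to $\tilde M_\infty^k$.

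The central step is a singular version of Lemma~\ref{stability-KR}: for each fixed $k$ there exists $\epsilon>0$ such that whenever $\omega_0'\in 2\pi c_1(M,J)$ satisfies ${\rm dist}_{GH}((M,\omega_0'),(M_\infty^k,d^k))\le\epsilon$, the flow from $\omega_0'$ converges uniformly in GH to $(M_\infty^k,d^k)$. I would argue by contradiction: along a failing sequence $\omega_0^{\alpha_i}$ with $\alpha_i\to 0$, the monotonicity of Perelman's $\lambda$ together with the hypothesis (\ref{max-entropy-0}) pins $\lim_{t\to\infty,\alpha_i\to 0}\lambda(\omega_t^{\alpha_i})=\sup\{\lambda(g')\mid \omega_{g'}\in 2\pi c_1(M,J)\}$. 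By the argument of Lemma~\ref{limit-solitons}, a diagonal subsequence then locally $C^\infty$-converges to a singular KR soliton on another Q-Fano variety $\bar M_\infty$ with klt singularities, whose soliton vector field is conjugate to that of $(M_\infty^k,\omega_{KR}^k)$ by Proposition~\ref{vector-gap}. Proposition~\ref{dis2}, using the reductivity of ${\rm Aut}_0(\tilde M_\infty^k)$, combined with Proposition~\ref{two-topology} identifies $\bar M_\infty$ with $\tilde M_\infty^k$ up to ${\rm SL}(N+1,\mathbb C)$, and uniqueness of singular KR solitons \cite{Bern} upgrades this to an isometry, contradicting the gap.

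With the stability in hand, the path method of Section~6, applied to the segment $\omega^s=s\omega_0+(1-s)\omega_i^k$, together with Theorem~\ref{general-KR} (which guarantees that the GH limit of the flow is intrinsic once reductivity holds), propagates convergence to $(M_\infty^k,d^k)$ from $\omega_i^k$ to an arbitrary initial metric. Running this argument for $k=1$ and $k=2$ produces the common limit and completes the proof. The main obstacle I anticipate is the singular stability step itself: the metrics $\omega_i^k$ approach $\omega_{KR}^k$ only in Gromov--Hausdorff distance, not in the $C^2_{CG}$ sense required by (\ref{small-initial-0}), so one cannot invoke Lemma~\ref{stability-KR} as a black box. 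Overcoming this requires re-running the contradiction argument of Lemma~\ref{stability-KR} using only GH closeness and the partial $C^0$ control of (\ref{c0-estimate-11})--(\ref{c3-estimate-2}) to extract the necessary local $C^\infty$ convergence on the regular parts.
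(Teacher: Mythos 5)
Your proposal follows essentially the same route as the paper: use the bounded-Ricci/non-collapsing hypotheses and the partial $C^0$-estimate to place each $(M_\infty^k,J^k)$ in the algebraic framework with the local estimates (\ref{c0-estimate-11})--(\ref{c3-estimate-2}), establish the singular analogue of the stability theorem so that the flow starting from $\omega_i^k$ (for $i\gg 1$) converges to $(M_\infty^k,\omega_{KR}^k)$, and then invoke Theorem \ref{general-KR} to identify the two limits. Your flagged obstacle (GH closeness rather than $C^2_{CG}$ closeness of the initial data) and the proposed remedy (re-running the contradiction argument of the stability lemma with only the partial $C^0$ control) is precisely how the paper handles the singular case in Section 6.3.
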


  \section{Appendix}
  \subsection{Reductivity of ${\rm Aut}_0(\tilde M_\infty)$}

  \begin{prop}\label{red}Suppose that the limit $(\tilde M_\infty, \hat\omega_\infty)$ in Theorem \ref{WZ} is  a singular  KE metric.
   Then ${\rm Aut}_0(\tilde M_\infty)$ is reductive.
  \end{prop}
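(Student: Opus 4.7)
The plan is to imitate Tian's proof of the Matsushima theorem for weak K\"ahler-Einstein metrics in \cite{T2}, working on the regular part of $\tilde M_\infty$ and using the ambient projective embedding to control holomorphic vector fields globally. By \cite[Lemma 4.4]{WZ20}, every holomorphic vector field $X$ on $\tilde M_\infty$ extends to an element of ${\rm sl}(N+1,\mathbb C)$, so its restriction to $\tilde M_\infty$ has a smooth potential $\tilde\theta_X$ with respect to the Fubini-Study background $\tilde\omega_\infty=\frac{1}{l}\omega_{FS}|_{\tilde M_\infty}$, i.e. $i_X\tilde\omega_\infty=\sqrt{-1}\bar\partial\tilde\theta_X$ on all of $\tilde M_\infty$. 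Writing $\hat\omega_\infty=\tilde\omega_\infty+\sqrt{-1}\partial\bar\partial\phi_\infty$ with $\phi_\infty\in L^\infty$ continuous globally and smooth on ${\rm Reg}(\tilde M_\infty)$ (Theorem \ref{WZ}), the potential of $X$ relative to $\hat\omega_\infty$ on the regular part is $\theta_X=\tilde\theta_X+X(\phi_\infty)$, which is globally bounded and smooth on ${\rm Reg}(\tilde M_\infty)$.

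Next, on ${\rm Reg}(\tilde M_\infty)$ the K\"ahler-Einstein equation ${\rm Ric}(\hat\omega_\infty)=\hat\omega_\infty$ gives the standard identity $\Delta_{\hat\omega_\infty}\theta_X+\theta_X={\rm const}$, and the Bochner-Kodaira formula yields the key vanishing $\int_{{\rm Reg}(\tilde M_\infty)}|\bar\partial\nabla^{1,0}\theta_X|^2\,\hat\omega_\infty^n=0$ at the formal level. Decomposing $\theta_X=f+\sqrt{-1}g$ into real and imaginary parts, I would then follow the classical Matsushima argument: show that both $\nabla^{1,0}f$ and $\nabla^{1,0}g$ are holomorphic, that their imaginary parts generate one-parameter subgroups of isometries of $\hat\omega_\infty$, and thereby establish a direct sum decomposition $\eta(\tilde M_\infty)=\eta_0\oplus J\eta_0$ of the Lie algebra of holomorphic vector fields into its real Killing subalgebra plus its $J$-image. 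This is equivalent to the reductivity of ${\rm Aut}_0(\tilde M_\infty)$.

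The main obstacle will be to justify the integration by parts on the open manifold ${\rm Reg}(\tilde M_\infty)$ so that the Bochner identity rigorously yields the vanishing above. Using the exhausting open sets $\Omega_\gamma\subset{\rm Reg}(\tilde M_\infty)$ from Section~1 together with a cut-off function vanishing near ${\rm Sing}(\tilde M_\infty)$, one must show that the boundary contributions go to zero as $\gamma\to\infty$. The ingredients for this are: (i) the Hausdorff codimension of ${\rm Sing}(\tilde M_\infty)$ is at least $4$ in the singular KE case by Theorem \ref{WZ}, so the singular set has vanishing $2$-capacity; (ii) the uniform Sobolev bound from \cite{Zh} and the local estimates in Lemma \ref{c3-phi}, which together control $|\nabla^{1,0}\theta_X|_{\hat\omega_\infty}$ in $L^2$ from the global boundedness of $\theta_X$; and (iii) the klt nature of the singularities, which ensures the volume form $e^{-\phi_\infty}dV_{h_\infty}$ is integrable and the weak K\"ahler-Einstein equation controls the relevant volume. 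Once the boundary terms are shown to vanish in the limit, the Bochner argument reduces to its smooth counterpart, the Matsushima-type splitting follows, and the reductivity of ${\rm Aut}_0(\tilde M_\infty)$ is obtained.
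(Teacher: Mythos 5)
Your overall strategy (Matsushima via a Bochner identity with cut-offs near $\sing(\tilde M_\infty)$) is in the same family as the paper's, and your Lemma \ref{Killing}-type step is essentially what the paper does. But there is a genuine gap at the point you yourself flag as ``the main obstacle'': you propose to kill the boundary terms using only (i) vanishing $2$-capacity of the singular set and (ii) an $L^2$ bound on $|\nabla^{1,0}\theta_X|$. That is not enough. In the cut-off Bochner computation the cross term is
\begin{equation*}
2\Bigl|\int \langle \gamma_\epsilon\,\bar\nabla\bar\partial \theta_X,\ \bar\nabla\gamma_\epsilon\otimes\bar\partial\theta_X\rangle\Bigr|
\ \le\ \eta\int \gamma_\epsilon^2|\bar\nabla\bar\partial\theta_X|^2+\frac{1}{\eta}\int|\nabla\gamma_\epsilon|^2\,|\bar\partial\theta_X|^2 ,
\end{equation*}
and to make the last integral tend to zero you need $|\bar\partial\theta_X|$ in $L^\infty$ (or at least in $L^{4+\delta}$, paired with higher capacity vanishing), not merely in $L^2$. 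Moreover, your assertion that $\theta_X=\tilde\theta_X+X(\phi_\infty)$ is ``globally bounded'' is itself unjustified: $\phi_\infty$ is only $L^\infty$ (continuous in the KE case), and $X(\phi_\infty)$ is a derivative of it with no a priori control near $\sing(\tilde M_\infty)$; the local estimates of Lemma \ref{c3-phi} are on the sets $\tilde\Omega_\gamma^i$ with constants depending on $\gamma$, i.e.\ degenerating toward the singular set. So your plan, as written, never produces the uniform Lipschitz bound $|\nabla\theta_X|\le C$ that the whole argument hinges on, and obtaining it by Moser iteration directly on the singular limit is circular (the differential inequality would have to hold weakly across the singular set, which is again the point at issue).

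The paper closes exactly this gap by a spectral approximation along the flow, which is absent from your proposal. First (Lemma \ref{eigenvalue}) it shows $\underline\lim_i\lambda_1(t_i)\ge 1$ for the smooth metrics $\omega_{t_i}$, using $\int_M|{\rm Ric}(\omega_{t_i})-\omega_{t_i}|^2\omega_{t_i}^n\to 0$. Then, by a min--max contradiction argument based on this eigenvalue lower bound, it shows that every bounded eigenfunction with eigenvalue $1$ on $\tilde M_\infty$ (in particular the real and imaginary parts of $\theta_X$) is the limit of genuine eigenfunctions $u_j$ on the closed smooth manifolds $(M,\omega_j)$ with $\lambda_j\to 1$. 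Moser iteration on those smooth manifolds gives a uniform bound $|\nabla u_j|\le C$, which passes to the limit and supplies exactly the Lipschitz bound needed to run the cut-off Bochner argument of Lemma \ref{Killing}. If you want to salvage your ``work directly on the limit'' route, you would have to supply an independent proof of the Lipschitz bound for the potentials on the singular space; otherwise you should incorporate the flow-approximation step.
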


  We use the method in \cite{T2} to prove Proposition \ref{red}.   First we  estimate the first non-zero eigenvalue $\lambda_1({t_i})$ of Laplace operator associated to
  $\omega_{t_i}$. We have

  \begin{lem}\label{eigenvalue}
 $$\underline{\lim}_{i\rightarrow \infty}\lambda_1({t_i})\geq 1.$$
  \end{lem}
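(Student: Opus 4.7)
The plan is to adapt the classical Bochner--Lichnerowicz estimate for K\"ahler--Einstein metrics to the flow $(M,\omega_{t_i})$, exploiting Perelman's uniform bounds on the Ricci potential $h_t$ together with the fact that the limit $\hat\omega_\infty$ is a singular K\"ahler--Einstein metric.

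Let $f_i$ be an $L^2$-normalized first non-trivial eigenfunction of $\Delta_{\omega_{t_i}}$, so $\Delta_{\omega_{t_i}} f_i + \lambda_i f_i = 0$ with $\lambda_i := \lambda_1(t_i)$, $\int_M f_i\,\omega_{t_i}^n = 0$, and $\int_M f_i^2\,\omega_{t_i}^n = 1$. The Bochner--Lichnerowicz identity on a K\"ahler manifold, after integration by parts, gives
\[
\lambda_i \int_M |\partial f_i|^2 \omega_{t_i}^n = \int_M |\bar\partial \partial f_i|^2 \omega_{t_i}^n + \int_M {\rm Ric}(\omega_{t_i})(\partial f_i, \bar\partial f_i)\,\omega_{t_i}^n.
\]
Substituting ${\rm Ric}(\omega_{t_i}) = \omega_{t_i} - \sqrt{-1}\partial\bar\partial h_{t_i}$ and using $\int_M |\partial f_i|^2 \omega_{t_i}^n = \lambda_i$, this rearranges to
\[
(\lambda_i - 1)\lambda_i \;=\; \int_M |\bar\partial \partial f_i|^2 \omega_{t_i}^n \;-\; \mathcal{E}_i,\qquad \mathcal{E}_i := \int_M (\sqrt{-1}\partial\bar\partial h_{t_i})(\partial f_i,\bar\partial f_i)\,\omega_{t_i}^n.
\]
Since the $|\bar\partial\partial f_i|^2$-term is nonnegative, the conclusion will follow once $\liminf_i \mathcal{E}_i \le 0$ is established; in fact I will aim for $\mathcal{E}_i \to 0$ along the chosen subsequence.

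To control $\mathcal{E}_i$ I would integrate by parts twice so that $\partial\bar\partial h_{t_i}$ is replaced by expressions in $h_{t_i}$, $\nabla h_{t_i}$, and $\Delta h_{t_i}$ acting against derivatives of $f_i$ of order $\le 2$; Lemma \ref{lem:perelman-1} provides uniform $C^0$-control on all three of these quantities. On any compact subset of $\mathrm{Reg}(\tilde M_\infty)$, the local smooth convergence $(\Phi_i^{-1})^*\omega_{t_i} \to \hat\omega_\infty$ of Lemma \ref{c3-phi}, combined with the fact that $\hat\omega_\infty$ is K\"ahler--Einstein (so its Ricci potential is constant), forces $h_{t_i}$ to tend to a constant locally smoothly. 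Together with uniform $C^{2,\alpha}$-bounds on $(\Phi_i^{-1})^*f_i$ obtained from the eigen-equation, the estimates \eqref{c2-estimate}--\eqref{c3-estimate}, and standard elliptic regularity, this yields that the portion of $\mathcal{E}_i$ integrated over any fixed compact subset of the regular part tends to zero.

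The main obstacle is controlling the contribution to $\mathcal{E}_i$ from neighborhoods of $\mathrm{Sing}(\tilde M_\infty)$, since a priori $f_i$ or its derivatives could concentrate mass there. I would handle this by inserting a cutoff function supported in a small tubular neighborhood of the singular locus and estimating the resulting cutoff-error using uniform $L^\infty$ and $H^1$ bounds on $f_i$ (available via Cheng--Yau-type gradient estimates, which apply thanks to Perelman's non-collapsing, diameter, and Ricci potential bounds of Lemma \ref{lem:perelman-1}), combined with the small volume of such a neighborhood, which follows from the Hausdorff codimension-$4$ estimate on $\mathrm{Sing}(\tilde M_\infty)$ recorded in Theorem \ref{WZ}. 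Once $\mathcal{E}_i \to 0$ is established, and observing that $\lambda_i$ is uniformly bounded below by a positive constant (by Cheng's eigenvalue estimate together with Perelman's diameter bound), the identity $(\lambda_i - 1)\lambda_i \ge -\mathcal{E}_i$ yields $\underline{\lim}_{i \to \infty} \lambda_i \ge 1$.
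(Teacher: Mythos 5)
Your overall strategy coincides with the paper's: apply the Bochner identity to the first eigenfunction and show that the error term coming from ${\rm Ric}(\omega_{t_i})-\omega_{t_i}$ vanishes in the limit. But the mechanism you propose for killing that error term diverges from the paper's and is where the gaps lie. The paper first proves the global statement $\int_M|{\rm Ric}(\omega_{t_i})-\omega_{t_i}|^2\,\omega_{t_i}^n\to 0$ as a separate step, by applying the Bochner formula to the Ricci potential $h_{t_i}$ \emph{itself}: local smooth convergence to the K\"ahler--Einstein limit gives $\int_{\tilde\Omega_\gamma^i}|\nabla h_{t_i}|^2\to 0$, Perelman's pointwise bound $|\nabla h_{t_i}|\le C$ upgrades this to $\int_M|\nabla h_{t_i}|^2\to 0$, and then integrating $\Delta|\nabla h_{t_i}|^2$ together with $\int_M|R(\omega_{t_i})-n|^2\to 0$ yields $\int_M|{\rm Hess}\,h_{t_i}|^2\to 0$. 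With that in hand, the eigenfunction error term is disposed of by Cauchy--Schwarz alone, using the uniform bound $|\nabla f_i|\le C$ from Moser iteration; no integration by parts on $\mathcal E_i$ and no cutoff near ${\rm Sing}(\tilde M_\infty)$ is needed.

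The concrete problems with your route are the following. First, integrating by parts in $\mathcal E_i=\int(\sqrt{-1}\partial\bar\partial h_{t_i})(\partial f_i,\bar\partial f_i)$ does not land you on ``derivatives of $f_i$ of order $\le 2$'' that you control: one integration by parts produces $\int h_{t_i,\bar k}\,f_{i,\bar j}\,f_{i,kj}$, i.e.\ the $(2,0)$-part of the Hessian of $f_i$, which does \emph{not} appear on the good side of the complex Bochner identity $\lambda_i\int|\partial f_i|^2=\int|\bar\partial\partial f_i|^2+\int{\rm Ric}(\partial f_i,\bar\partial f_i)$ (only $f_{i,j\bar k}$ does); you would need a separate a priori $L^2$ bound on the full Hessian and an absorption argument, neither of which you supply. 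Second, your concluding step invokes ``Cheng's eigenvalue estimate'' for a positive lower bound on $\lambda_1(t_i)$; Cheng's theorem gives an \emph{upper} bound on eigenvalues, and a lower bound for $\lambda_1$ along the flow (needed to rule out $T=0$ in $(1-\lambda_i)\lambda_i\le o(1)$) has to come from the uniform Sobolev or log-Sobolev inequality along the flow, not from a diameter bound, since ${\rm Ric}(\omega_{t_i})$ is not uniformly bounded below. Likewise the ``Cheng--Yau-type gradient estimate'' for $f_i$ requires a Ricci lower bound you do not have; the paper's $|\nabla f_i|\le C$ comes from Moser iteration. Finally, note that $\liminf_i\mathcal E_i\le 0$ is not sufficient as you state; from $(\lambda_i-1)\lambda_i\ge-\mathcal E_i$ one needs $\limsup_i\mathcal E_i\le 0$ along the subsequence realizing $\underline\lim\lambda_1(t_i)$, so you should indeed prove $\mathcal E_i\to 0$ as you later intend.
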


  \begin{proof}
  We claim:
  \begin{align}\label{ricl}
  \int_M|{\rm Ric}\,(\omega_{t_i})-\omega_{t_i}|^2\omega_{t_i}^n\rightarrow 0.
  \end{align}

  By Lemma \ref{c3-phi}, we have
  \begin{align}
  \int_{\tilde \Omega^i_\gamma}|\nabla h_{t_i}|^2\omega_{t_i}^n\rightarrow 0. \notag
  \end{align}
  Combined with Lemma \ref{lem:perelman-1}, we get
  \begin{align}\label{gr}
  \int_{M}|\nabla h_{t_i}|^2\omega_{t_i}^n\rightarrow 0.
  \end{align}
  Similarly  we have
  $$\int_M |R(\omega_{t_i})-n|^2\omega_{t_i}^n\rightarrow 0.$$
On the other hand,   by the Bochner formula,  we have
  $$\Delta |\nabla h_{t_i}|^2={\rm Ric}\,(\omega_{t_i})(\nabla h_{t_i},\nabla h_{t_i})+|{\rm Hess}\,h_{t_i}|^2+\langle \nabla h_{t_i},\nabla \Delta h_{t_i}\rangle .$$ Integrating both sides, we get
  \begin{align} &\int_M |{\rm Hess}\,h_{t_i}|^2\omega_{t_i}^n+\int_M{\rm Ric}\,(\omega_{t_i})(\nabla h_{t_i},\nabla h_{t_i})\omega_{t_i}^n\notag\\
  &=\int_M (\Delta h_{t_i})^2 \omega_{t_i}^n=\int_M |R(\omega_{t_i})-n|^2\omega_{t_i}^n\rightarrow 0.\notag
  \end{align}
  By Lemma \ref{lem:perelman-1}, the second term can be estimated as
  \begin{align}
  |\int_M{\rm Ric}\,(\omega_{t_i)}(\nabla h_{t_i},\nabla h_{t_i})|\omega_{t_i}^n &\leq \int_M |{\rm Hess}\,h_{t_i}||\nabla h_{t_i}|^2\omega_{t_i}^n+\int_M|\nabla h_{t_i}|^2\omega_{t_i}^n\notag \\
  &\leq C\int_M |{\rm Hess}\,h_{t_i}||\nabla h_{t_i}|\omega_{t_i}^n\int_M|\nabla h_{t_i}|^2\omega_{t_i}^n\notag \\&\leq \frac{1}{4}\int_M |{\rm Hess}\,h_{t_i}|^2\omega_{t_i}^n+(C^2+1)\int_M|\nabla h_t|^2\omega_{t_i}^n.
  \end{align}
  Combined this with (\ref{gr}), we prove (\ref{ricl}).

 We may assume that $\lim_{i\rightarrow \infty}\lambda_1(t_i)=T<\infty$.  Let  $f_i$ be the corresponding eigenfunction normalized with $\int_M f_i^2=1$.  Then by  Moser's iteration,  we have $|\nabla f_i|\leq C$. By the Bochner formula
   $$\Delta |\nabla f_i|^2={\rm Ric}\,(\omega_{t_i})(\nabla f_i,\nabla f_i)+|{\rm Hess}\,f_i|^2+\langle \nabla f_i,\nabla \Delta f_i\rangle.$$ Integrating both sides, we get
    $$\int_M (1-\lambda_1(t_i))|\nabla f_i|^2\omega_{t_i}^n+\int_M |{\rm Hess}\,f_i|^2\omega_{t_i}^n+\int_M ({\rm Ric}\,(\omega_{t_i})-\omega_{t_i})(\nabla f_i,\nabla f_i)\omega_{t_i}^n=0.$$
     Note that for the third term it holds by (\ref{ricl}),
      $$|\int_M ({\rm Ric}\,(\omega_{t_i})-\omega_{t_i})(\nabla f_i,\nabla f_i)|\omega_{t_i}^n\leq C |\int_M |{\rm Ric}\,(\omega_{t_i}) -\omega_{t_i}|\omega_{t_i}^n\rightarrow 0.$$
       Hence, we obtain  $T\geq 1$.
  \end{proof}

We also need the following lemma.

  \begin{lem}\label{Killing}Let $\Delta$ be the  Laplace operator associated to $\omega_\infty$. Suppose that $u$ satisfies:
   $$\Delta u=-u, |\nabla u|\leq C,~ {\rm on}~ {\rm Reg}( \tilde M_\infty).$$
  Then $Y=\nabla u$ is a Killing VF.
  \end{lem}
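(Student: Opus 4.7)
The proof is a Bochner--Matsushima argument adapted to the singular setting. On ${\rm Reg}(\tilde M_\infty)$ the metric $\hat\omega_\infty$ is a smooth K\"ahler--Einstein metric with $\mathrm{Ric}(\hat\omega_\infty)=\hat\omega_\infty$. On any compact subdomain $\Omega\Subset{\rm Reg}(\tilde M_\infty)$ and any smooth real $u$, the standard K\"ahler identity in holomorphic coordinates gives
\begin{equation*}
\int_\Omega |u_{ij}|^2\,\hat\omega_\infty^n \;=\; \int_\Omega (\Delta u)^2\,\hat\omega_\infty^n \;-\; \int_\Omega R_{i\bar j}\,u^i u^{\bar j}\,\hat\omega_\infty^n \;+\; (\text{boundary terms}),
\end{equation*}
where $u_{ij}$ denotes the $(2,0)$-part of the complex Hessian. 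In the closed K\"ahler--Einstein case, substituting $\Delta u=-u$ and $R_{i\bar j}=g_{i\bar j}$, and noting $\int R_{i\bar j} u^i u^{\bar j} = \int|\nabla u|^2 = -\int u\,\Delta u = \int u^2$ by one further integration by parts, collapses the right-hand side to zero. Hence $u_{ij}\equiv 0$, which is precisely the condition that $\nabla^{1,0}u$ be a holomorphic vector field; together with the reality of $u$ this means $Y=\nabla u$ generates an isometric flow of $\hat\omega_\infty$, i.e., is Killing.

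The implementation on ${\rm Reg}(\tilde M_\infty)$, which is noncompact with $\hat\omega_\infty$ degenerating along ${\rm Sing}(\tilde M_\infty)$, requires only that the boundary terms vanish in the exhaustion limit. I would use the exhaustion $\Omega_\gamma\Subset{\rm Reg}(\tilde M_\infty)$ from Section~1 together with cutoffs $\chi_\gamma$ which are identically $1$ on $\Omega_\gamma$, vanish in a neighborhood of ${\rm Sing}(\tilde M_\infty)$, and satisfy $\int_{\tilde M_\infty}|\nabla\chi_\gamma|^2\,\hat\omega_\infty^n\to 0$. Such cutoffs exist because $\tilde M_\infty$ has klt singularities: ${\rm Sing}(\tilde M_\infty)$ has complex codimension at least two and vanishing $W^{1,2}$-capacity with respect to $\hat\omega_\infty$, whose K\"ahler potential lies in $L^\infty$ by Theorem~\ref{WZ}. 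Combined with $|u|\le C$ (from Moser iteration applied to $\Delta u=-u$), the hypothesis $|\nabla u|\le C$, and local Schauder bounds for $u_{i\bar j}$ on each $\Omega_\gamma$, every term produced by differentiating $\chi_\gamma$ is dominated by $\|u\|_{C^2(\Omega_\gamma)}\cdot\|\nabla\chi_\gamma\|_{L^2}$ and vanishes as $\gamma\to\infty$.

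Passing to the limit yields $\int_{{\rm Reg}(\tilde M_\infty)}|u_{ij}|^2\,\hat\omega_\infty^n=0$, so $u_{ij}\equiv 0$ on the regular part. Hence $\nabla^{1,0}u$ is holomorphic there, and since $|\nabla u|\le C$ it extends across the codimension-$\geq 2$ singular set by Hartogs-type removal of singularities to a holomorphic vector field on all of $\tilde M_\infty$; the reality of $u$ then forces $Y=\nabla u$ to be Killing. The main obstacle is the capacity estimate $\int|\nabla\chi_\gamma|^2\,\hat\omega_\infty^n\to 0$: the Hausdorff-codimension bound alone is insufficient, and the argument must leverage the klt condition together with the $L^\infty$-boundedness of the Kähler potential $\phi_\infty$ from Theorem~\ref{WZ} to rule out any positive capacity contribution of ${\rm Sing}(\tilde M_\infty)$.
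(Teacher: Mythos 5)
Your overall strategy is the same as the paper's: a Bochner/Weitzenb\"och identity on ${\rm Reg}(\tilde M_\infty)$, the substitution $\Delta u=-u$ together with ${\rm Ric}=\omega_\infty$ to kill the main terms, and a family of cutoffs with $\int|\nabla\gamma_\epsilon|^2\omega_\infty^n\to 0$ to dispose of the contributions near ${\rm Sing}(\tilde M_\infty)$ (the paper takes these from Lemma 6.10 of \cite{T2}, using that the singular set is contained in a divisor; your klt/capacity discussion is the same point in different words). Your packaging via $\int|u_{ij}|^2=\int(\Delta u)^2-\int R_{i\bar j}u^iu^{\bar j}$ is equivalent to the paper's pairing of $\Delta_{\bar\partial}\bar\partial u=\bar\nabla^*\bar\nabla\bar\partial u+{\rm Ric}(\bar\partial u,\cdot)$ against $\gamma_\epsilon^2\bar\partial u$.

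There is, however, one step in your write-up that would fail as stated. You claim every error term produced by differentiating the cutoff is ``dominated by $\|u\|_{C^2(\Omega_\gamma)}\cdot\|\nabla\chi_\gamma\|_{L^2}$ and vanishes as $\gamma\to\infty$.'' The cross terms have the schematic form $\int \chi_\gamma\,(\bar\nabla\bar\partial u)\ast(\nabla\chi_\gamma\otimes\bar\partial u)$, and the local Schauder bounds you invoke for the Hessian of $u$ are \emph{not} uniform as $\Omega_\gamma$ exhausts ${\rm Reg}(\tilde M_\infty)$: $\|u\|_{C^2(\Omega_\gamma)}$ may blow up near the singular set, so the product of an unbounded factor with $\|\nabla\chi_\gamma\|_{L^2}\to 0$ need not tend to zero. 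The paper's proof avoids exactly this by a weighted Cauchy--Schwarz: the cross term is split as
$$\eta\int\gamma_\epsilon^2|\bar\nabla\bar\partial u|^2\,\omega^n+\frac{C}{\eta}\int|\nabla\gamma_\epsilon|^2\,\omega^n,$$
where the first piece is absorbed into the left-hand side $\int\gamma_\epsilon^2|\bar\nabla\bar\partial u|^2$ and the second uses only the \emph{global} hypothesis $|\nabla u|\le C$ (not any bound on the Hessian); one then lets $\epsilon\to 0$ first and $\eta\to 0$ afterwards. This absorption is precisely where $|\nabla u|\le C$ enters, and without it your argument has a genuine hole. With that repair your proof coincides with the paper's.
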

  \begin{proof}
   By the Weitzenb\"{o}ch formula, we have
  $$\Delta_{\bar\partial} \bar\partial u\,=\,\bar\nabla^* \bar\nabla \bar\partial u\,+\,{\rm Ric}\,(\bar\partial u,),$$
  where $\Delta_{\bar\partial}$ is the Hodge laplacian.
  Let $\gamma_\epsilon$ be the cut-off function in Lemma 6.10 in \cite{T2}.   Then by multiplying  both sides of by $\gamma_\epsilon^2\bar\partial u$ to the above identity, we have
  $$\int_{\tilde M_\infty}\gamma^2_\epsilon \langle\Delta_{\bar\partial} \bar\partial u,\bar\partial u\rangle\,\omega^n\,=\,
  \int_{\tilde M_\infty}\,\langle\bar\nabla^* \bar\nabla \bar\partial u, \gamma^2_\epsilon\bar\partial u\rangle\,\omega^n\,+\,\int_{\tilde M_\infty}\gamma^2_\epsilon|\bar\partial u|^2\,\omega^n.$$
  Since $\Delta_{\bar\partial} u=u$,  we get
  $$\int_{\tilde M_\infty}\gamma^2_\epsilon \langle\Delta_{\bar\partial} \bar\partial u,\bar\partial u\rangle\,\omega^n\,=\,\int_{M_\infty}\gamma^2_\epsilon \langle\bar\partial \Delta_{\bar\partial} u,\bar\partial u\rangle\,\omega^n\,=\,\int_{M_\infty}\gamma^2_\epsilon|\bar\partial u|^2\,\omega^n.$$
  Thus we derive
   $$\int_{\tilde M_\infty}\,\langle\bar\nabla^* \bar\nabla \bar\partial u, \gamma^2_\epsilon\bar\partial u\rangle\,\omega^n=0.$$
   By integration by parts, it follows that
  $$\int_{\tilde M_\infty}\langle\bar\nabla^* \bar\nabla \bar\partial u, \gamma^2_\epsilon\bar\partial u\rangle\,\omega^n\,=\,\int_{M_\infty}\gamma^2_\epsilon\langle \bar\nabla \bar\partial u,\bar\nabla \bar\partial u\rangle\,\omega^n\,+\,2\int_{M_\infty} \langle \gamma_\epsilon
  \bar\nabla \bar\partial u,\bar\nabla \gamma_\epsilon\otimes\bar
  \partial u\rangle\,\omega^n.$$
  Note that
  $$2|\int_{\tilde M_\infty} \langle \gamma_\epsilon\bar\nabla \bar\partial u,\bar\nabla \gamma_\epsilon\otimes\partial u\rangle\,\omega^n|\,\leq\, \eta\int_{\tilde M_\infty}\gamma^2_\epsilon\langle \bar\nabla \bar\partial u,\bar\nabla \bar\partial u\rangle\,\omega^n+ \frac{C}{\eta}\int_{\tilde M_\infty}|\nabla \gamma_\epsilon|^2\omega^n,\,\forall \eta>0.$$
  Hence, we get
  $$(1-\eta)\int_{\tilde M_\infty}\gamma^2_\epsilon\langle \bar\nabla \bar\partial u,\bar\nabla \bar\partial u\rangle\,\omega^n\,\leq\,\frac{C}{\eta}\int_{M_\infty}|\nabla \gamma_\epsilon|^2\,\omega^n.$$
  Taking $\epsilon\rightarrow 0$, and then $\eta\rightarrow 0$, we obtain
  $$ \bar\nabla \bar\partial u=0,\text{ in }{\rm Reg}(\tilde M_\infty)$$ which means that $\nabla u$ is a Killing vector field.
  \end{proof}

  \begin{proof}[Proof of Proposition \ref{red}] Let $\eta_\infty$ be the Lie algebra of ${\rm Aut}_0(\tilde M_\infty)$.  Then,  as in Lemma 6.9 in \cite{T2}, for  any  holomorphic VF $X\in \eta_\infty$ on $\tilde M_\infty$,  there is a bounded function $\theta_\infty$ satisfying
  \begin{align}\label{theta-eingenvalue}i_X\omega_\infty\,=\,\sqrt{-1}\,\bar\partial \,\theta_\infty,\,\,\,\, \Delta \,\theta_\infty\,=\, -\,T\,\theta_\infty,\,\,
  \text{ in } \tilde M_\infty\setminus \mathcal{S},
  \end{align}
where  $\theta_\infty=u+\sqrt{-1}v$.  We claim:

There is a sequence $\{u_j\}$ of eigenfunctions on $(M,\omega_j)$ such that
$$\Delta \,u_j\,=\,-\,\lambda_j u_j,~\lambda_j\rightarrow 1,$$
 and $u_j$ converges to a Lischitz function $u$ on $M_\infty$ satisfying
  $$\Delta u=-u, \text{ in } {\rm Reg}( \tilde M_\infty).$$

  Denote the set of such above limit eigenfunctions by $\tilde\Lambda_1$ which is a subset of $\Lambda_1$ consisting of all bounded eigenfunctions with eigenvalue $1$.   If  the above claim is not true, namely, $\tilde\Lambda_1\neq \Lambda_1$,  there is a $u\in \Lambda_1 $ such that
  $$\int_{\tilde M_\infty}\,u^2\,\omega_\infty^n\,=\,1,\,\,\,\, \int_{M_\infty}\,u u_a\,\omega_\infty^n\,=\,0,$$
  where $\{u_a\}_{1\leq a\leq k}$ is an orthonormal basis of $\tilde\Lambda_1$.
  Because $\mathcal{S}$ is a subvariety which is contained in a divisor, as in Lemma \ref{Killing}, we have a cut-off function in $\tilde M_\infty$ satisfying
  $$\int_{\tilde M_\infty}\,|\nabla \gamma_\epsilon|^2\,\omega_\infty^n\,\leq\, \epsilon.$$
  On the support $K_\epsilon$ of $\gamma_\epsilon$,  for $\gamma=\gamma(\epsilon)$ we have $\Psi_\gamma^i: K_\epsilon \rightarrow (M,\omega_i)$ such that $(\Psi_\gamma^i)^*\omega_i\rightarrow\omega_\infty$ smoothly. Thus  by taking a subsequence if necessary, we can take $\epsilon_i\rightarrow 0$ such that
  $u_i\,=\,(\gamma_{\epsilon_i}\,u)\circ (\Psi_\gamma^i)^{-1}$ satisfies
  $$\lim_{i\rightarrow\infty}\,\int_M\, |\nabla u_i|^2\,\omega_{i}^n\,=\,1, \,\,\,\,\lim_{i\rightarrow\infty}\,\int_M\, u_i^2\,\omega_{i}^n\,=\,1.$$
 On the other hand,  for each $a$, there are eigenfunctions $u_{a,i}$ of $(M,\omega_j)$ which converge to $u_a$. Then
  $u_i,u_{1,i},...,u_{k,i}$ is a $k+1$ dimensional subspace for large enough  $i$.  Thus  we can find an eigenfunction $u_{0,i}$ orthogonal to $u_{a,i}(1\leq a\leq k)$ with eigenvalue not bigger than $1+\nu_i$ with $\nu_i\rightarrow 0$ by variational principle.
  However, by Lemma \ref{eigenvalue}, we know that the eigenvalue is not less than $1+o(1)$. Hence,  $u_{0,i}$ will converge to an element in $\tilde\Lambda_1$. It's a contradiction! The claim is true.

  By the above claim,  we see that $u$ in (\ref{theta-eingenvalue})  with  the normalization $\int_{\tilde M_\infty} u^2=1$  is a limit of eigenfunctions $u_j$  with $\int_{M} u_j^2\omega_{t_i}^n=1$ on $(M,\omega_j)$.
   By the Moser iteration,  we get
     $$|\nabla u|\leq C$$
     for some $C>0$.  Thus, by Lemma \ref{Killing}, $Y$ is a Killing VF.  Similarly, $\nabla v$ is also a Killing VF. This proves that $\eta_\infty$
 is reductive.

  \end{proof}

 Now we consider  the  soliton  case for $(\tilde M_\infty, \hat\omega_\infty)$.   Denote the subgroup of ${\rm Aut}_0(\tilde M_\infty)$ commuting with $v$ by ${\rm Aut}^v(\tilde M_\infty)$,  where $v$ is  the soliton VF of $(\tilde M_\infty, \hat\omega_\infty)$.  We prove

 \begin{prop}\label{redsol}
 ${\rm Aut}^v(\tilde M_\infty)$ is reductive.
 \end{prop}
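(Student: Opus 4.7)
The strategy is to adapt the Matsushima-type argument used for Proposition \ref{red} by replacing the ordinary Laplacian $\Delta$ with the drift Laplacian $\Delta_v = \Delta + \nabla\theta_v\cdot\nabla$ associated to the soliton potential $\theta_v$ of $v$ on $(\tilde M_\infty,\hat\omega_\infty)$, and by working with the weighted volume form $e^{\theta_v}\,\hat\omega_\infty^n$. The relevant structural identity is the standard one for KR solitons: for any $X\in{\rm Aut}^v(\tilde M_\infty)$ commuting with $v$, the (bounded) potential $\theta_X$ defined by $i_X\hat\omega_\infty=\sqrt{-1}\bar\partial\theta_X$ with the normalization $\int \theta_X e^{\theta_v}\hat\omega_\infty^n=0$ satisfies
\begin{equation*}
\Delta_v \theta_X + \theta_X = 0 \quad\text{on}\ {\rm Reg}(\tilde M_\infty),
\end{equation*}
which is the soliton analogue of (\ref{theta-eingenvalue}). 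Writing $\theta_X=u+\sqrt{-1}v$, it suffices to show that $\nabla u$ and $\nabla v$ each generate Killing vector fields on ${\rm Reg}(\tilde M_\infty)$, exactly as in the KE case.

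First I would establish the spectral-gap analogue of Lemma \ref{eigenvalue} for the drift operator. Namely, I would consider the weighted Laplacian $\Delta_{v_{t_i}}$ on $(M,\omega_{t_i})$, where $v_{t_i}$ is the (smooth) approximating VF coming from the convergence $A_{t_i+s}A_{t_i}^{-1}\to e^{sv}$ of Lemma \ref{limit-v}; using Perelman's estimates (Lemma \ref{lem:perelman-1}) together with the smooth convergence of $v_{t_i}$ to $v$ in $\mathrm{sl}(N+1,\mathbb C)$, one obtains $\int_M|{\rm Ric}(\omega_{t_i})-\omega_{t_i}-L_{v_{t_i}}\omega_{t_i}|^2\omega_{t_i}^n\to 0$, which via the weighted Bochner formula of Tian--Zhu gives $\underline\lim_i \lambda_1^{v_{t_i}}(t_i)\ge 1$. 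The key cancellation comes from Bochner plus the identity $\Delta_{v_{t_i}}h^{v_{t_i}}_{t_i}=-R(\omega_{t_i})+n+o(1)$ for the modified Ricci potential.

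Next I would prove the analogue of Lemma \ref{Killing}: if $u$ is a bounded Lipschitz solution of $\Delta_v u = -u$ on ${\rm Reg}(\tilde M_\infty)$, then $\nabla u$ is a Killing VF on the regular part. Here I use the weighted Weitzenb\"ock identity $\Delta_{\bar\partial,v}\bar\partial u=\bar\nabla^*_v\bar\nabla\bar\partial u+({\rm Ric}-L_v\hat\omega_\infty)(\bar\partial u,\cdot\,)$, and the soliton equation cancels the curvature term against the coefficient of $\bar\partial u$; multiplying by $\gamma_\epsilon^2\bar\partial u$ with the cut-off $\gamma_\epsilon$ of \cite[Lemma 6.10]{T2}, integrating against $e^{\theta_v}\hat\omega_\infty^n$, and letting $\epsilon\to 0$ forces $\bar\nabla\bar\partial u\equiv 0$.

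The main obstacle, and the step requiring the most care, is the approximation argument: I must show that every $\theta_X$ as above is the uniform limit of eigenfunctions $u_j$ of the drift Laplacian $\Delta_{v_{t_j}}$ on $(M,\omega_{t_j})$. This proceeds as in the proof of Proposition \ref{red}: one lets $\tilde\Lambda_1^v$ be the space of limit eigenfunctions inside the full eigenspace $\Lambda_1^v$ of $\Delta_v$ with eigenvalue $1$; if the inclusion were strict, a cut-off construction using the $C^\infty$ convergence of $(\Psi_\gamma^i)^*\omega_{t_i}\to\hat\omega_\infty$ on $K_\epsilon\subset{\rm Reg}(\tilde M_\infty)$ together with the smooth approximation $v_{t_i}\to v$ produces a normalized test function orthogonal to a basis of $\tilde\Lambda_1^v$, whose Rayleigh quotient is close to $1$, contradicting the spectral gap. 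Once this claim is in hand, Moser iteration for $\Delta_v$ gives $|\nabla\theta_X|\le C$, the previous step yields that $\nabla u,\nabla v$ are Killing on ${\rm Reg}(\tilde M_\infty)$, and hence the Lie algebra $\eta_\infty^v$ of ${\rm Aut}^v(\tilde M_\infty)$ splits as $\mathfrak k\oplus J\mathfrak k$, proving reductivity.
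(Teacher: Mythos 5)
Your overall strategy --- run the Matsushima argument of Proposition \ref{red} with a weighted operator attached to the soliton potential, an approximation-by-eigenfunctions step, and integration by parts across the singular set --- is the same skeleton the paper uses: its operator is $L\psi=\Delta\psi+v(\psi)+\psi$, self-adjoint for $\langle f,g\rangle=\int f\bar g\,e^{h_\infty}\hat\omega_\infty^n$, and Lemmas \ref{poincare-eigenvalue}, \ref{bg} and \ref{Lint} play the roles of your spectral gap, approximation and cut-off steps. The genuine gap is that your argument never isolates the point where the hypothesis $[v,X]=0$ enters, and that point is exactly your ``standard structural identity.'' If $\Delta_v$ denotes the complex weighted operator $\Delta+v(\cdot)$, then $\Delta_v\theta_X+\theta_X=0$ holds for \emph{every} holomorphic vector field $X$ (this is the paper's identification $\eta_\infty\cong{\rm Ker}(L)$), not only for those commuting with $v$; since the first-order term $v(\cdot)$ has complex coefficients, the real and imaginary parts of $\theta_X$ are not separately solutions --- and they cannot be, because ${\rm Aut}_0(\tilde M_\infty)$ is in general not reductive for a soliton, so any argument that does not use the commutation must fail. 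If instead $\Delta_v$ denotes the real drift Laplacian $\Delta+\nabla\theta_v\cdot\nabla$, then the identity is indeed restricted to $X$ with $[v,X]=0$, but it is then not ``standard'': proving it amounts to showing that $\theta_X$ lies in ${\rm Ker}(L)\cap{\rm Ker}(\bar L)$ for the conjugate operator $\bar L\psi=\overline{L\bar\psi}$, which is precisely the content of Lemma A.2 of \cite{TZ1} that the paper invokes. You must introduce the conjugate operator (or an equivalent device) and show that commuting with $v$ forces the potential into the conjugation-invariant subspace $E_0={\rm Ker}(L)\cap{\rm Ker}(\bar L)$; only on $E_0$ does the real/imaginary splitting, and hence the conclusion $\eta_\infty^v=\mathfrak k\oplus J\mathfrak k$, make sense.

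Two smaller points. For the spectral gap you propose establishing $\int_M|{\rm Ric}(\omega_{t_i})-\omega_{t_i}-L_{v_{t_i}}\omega_{t_i}|^2\,\omega_{t_i}^n\to0$ and running a weighted Bochner argument; this estimate would itself need proof and is more than is required, since the paper instead uses the fact (Lemma \ref{poincare-eigenvalue}, from \cite{Fu,TZ3}) that the eigenvalues of $L_i=\Delta_{\omega_{t_i}}+(h_{t_i})_{\bar l}\partial_l+1$ are nonnegative for \emph{any} metric in $2\pi c_1(M)$ when the drift is by the Ricci potential, so no almost-soliton input is needed at this step. Also, writing $\theta_X=u+\sqrt{-1}v$ collides with the name of the soliton field.
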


 Consider the operator
 $$L(\psi)=\Delta \psi+X(\psi)+\psi, \psi\in C^\infty({\rm Reg}(\tilde M_\infty)), ~\psi\in {\rm W}^{1,2}(\tilde M_\infty).$$
  Then $L$ is an self-adjoint operator  with respect to the following inner product:
 $$\langle f,g\rangle=\int_{\tilde M_\infty} f\bar g e^{h_\infty}{\hat \omega_\infty}^n.$$
 We  want to show that  $|\nabla u|\leq C$ for any $u\in  {\rm ker} (L)$, where $C$ is a uniform constant.

 The following lemma can be found in \cite{Fu, TZ3}.

 \begin{lem}\label{poincare-eigenvalue}
 Let  $L_i(\psi)=\Delta_{\omega_{t_i}}(\psi)+(h_{t_i})_{\bar l}\psi_l+\psi$. Then $\lambda_i\geq 0$, for any non-zero function  $\psi$ satisfying
 $$L_i \psi=-\lambda \psi.$$
 \end{lem}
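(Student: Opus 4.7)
The plan is to prove the lemma by combining the self-adjointness of $L_i$ on an appropriate weighted space with a weighted Bochner identity that exploits the defining relation
\begin{equation*}
{\rm Ric}(\omega_{t_i})-\omega_{t_i}=\sqrt{-1}\partial\bar\partial h_{t_i}
\end{equation*}
of the Ricci potential, which asserts that the Bakry--Emery K\"ahler Ricci tensor equals $\omega_{t_i}$.

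First, I would verify that $L_i$ is self-adjoint with respect to the weighted inner product $\lan f,g\ran_{h_i}:=\int_M f\bar g\,e^{h_{t_i}}\omega_{t_i}^n$. A single integration by parts shows that the weight $e^{h_{t_i}}$ absorbs precisely the drift term $h_{\bar l}\psi_l$, yielding
\begin{equation*}
\lan L_i\psi,\psi\ran_{h_i}=-\int_M|\bar\partial\psi|^2_{\omega_{t_i}}e^{h_{t_i}}\omega_{t_i}^n+\int_M|\psi|^2 e^{h_{t_i}}\omega_{t_i}^n.
\end{equation*}
Substituting the eigenvalue equation $L_i\psi=-\lambda\psi$ gives
\begin{equation*}
\|\bar\partial\psi\|^2_{h_i}=(1+\lambda)\|\psi\|^2_{h_i},
\end{equation*}
so $\lambda\ge -1$, with equality only for constants. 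In particular $1+\lambda>0$ for any non-constant eigenfunction.

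Second, I would establish a weighted Bochner identity for the drift Laplacian $\Delta_{h_i}:=\Delta+h_{\bar l}\nabla_l=L_i-{\rm Id}$. Commuting $\nabla_i$ with $\nabla_{\bar j}$ on the third derivatives of $\psi$ and integrating by parts against the weight $e^{h_{t_i}}$ gives
\begin{equation*}
\int_M|\Delta_{h_i}\psi|^2 e^{h_{t_i}}\omega_{t_i}^n=\int_M|\nabla\bar\partial\psi|^2 e^{h_{t_i}}\omega_{t_i}^n+\int_M\bigl({\rm Ric}(\omega_{t_i})-\sqrt{-1}\partial\bar\partial h_{t_i}\bigr)(\partial\psi,\bar\partial\psi)\,e^{h_{t_i}}\omega_{t_i}^n.
\end{equation*}
The commutator contributes the Ricci factor, while the integration by parts against $e^{h_{t_i}}$ contributes the Hessian correction $-\sqrt{-1}\partial\bar\partial h_{t_i}$, so that the curvature tensor appearing is exactly the Bakry--Emery K\"ahler Ricci. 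By the defining relation above, this tensor equals $\omega_{t_i}$, and the curvature integral reduces to $\|\bar\partial\psi\|^2_{h_i}$. Hence
\begin{equation*}
\|\Delta_{h_i}\psi\|^2_{h_i}=\|\nabla\bar\partial\psi\|^2_{h_i}+\|\bar\partial\psi\|^2_{h_i}.
\end{equation*}

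Finally, I would combine the two steps. Since $\Delta_{h_i}\psi=-(1+\lambda)\psi$, the Bochner identity together with the first-step relation yields
\begin{equation*}
(1+\lambda)^2\|\psi\|^2_{h_i}=\|\nabla\bar\partial\psi\|^2_{h_i}+(1+\lambda)\|\psi\|^2_{h_i},
\end{equation*}
which rearranges to
\begin{equation*}
\lambda(1+\lambda)\|\psi\|^2_{h_i}=\|\nabla\bar\partial\psi\|^2_{h_i}\ge 0.
\end{equation*}
For a non-constant eigenfunction we have $1+\lambda>0$ by the first step, so necessarily $\lambda\ge 0$. The main delicate point is the derivation of the weighted Bochner identity in the second step: one has to track the K\"ahler commutator terms carefully and verify that exactly the Bakry--Emery K\"ahler Ricci tensor $\mathrm{Ric}-\sqrt{-1}\partial\bar\partial h$ emerges, so that the particular normalization of $h_{t_i}$ dictated by the flow forces the curvature term to reduce to the clean $L^2$-norm $\|\bar\partial\psi\|^2_{h_i}$. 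This is a standard but sign-sensitive computation of the kind carried out in \cite{Fu, TZ3}; once it is in place the remainder of the argument is purely algebraic.
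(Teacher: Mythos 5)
The paper offers no proof of this lemma at all --- it is quoted from \cite{Fu, TZ3} --- and your argument is exactly the standard weighted Lichnerowicz--Futaki estimate from those references (self-adjointness of $L_i$ for the weight $e^{h_{t_i}}$, the weighted Bochner identity with the Bakry--Emery tensor ${\rm Ric}-\sqrt{-1}\partial\bar\partial h_{t_i}=\omega_{t_i}$, and the algebraic identity $\lambda(1+\lambda)\|\psi\|^2=\|\bar\nabla\bar\partial\psi\|^2\ge 0$), so it is correct and matches the intended proof. Two small caveats: the second-order term in your Bochner identity has to be the anti-holomorphic Hessian $\psi_{,\bar i\bar j}$ (i.e.\ $\bar\nabla\bar\partial\psi$, measuring the failure of $\nabla^{1,0}\psi$ to be holomorphic), since with the mixed Hessian $\psi_{,i\bar j}$ no Ricci commutator arises; and your argument, necessarily, only covers non-constant eigenfunctions --- a non-zero constant satisfies $L_i\psi=\psi$, i.e.\ $\lambda=-1$, so the lemma as literally stated must be read as an assertion about the spectrum away from the constants.
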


By the argument in  the proof of Proposition \ref{red}, we prove

 \begin{lem}\label{bg}
 For $u\in  {\rm ker}( L)$, there is a sequence of functions $f_i$ converging to $u$ such that
 \begin{align}\label{condition-approximation}L_i f_i=-\lambda_i f_i,~\lambda_i\rightarrow 0.
 \end{align}
   As a consequence, $|\nabla u|\leq C$.
 \end{lem}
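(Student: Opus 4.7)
The plan is to mimic the argument of Proposition~\ref{red}, with $L_i$ replacing the Laplacian and Lemma~\ref{poincare-eigenvalue} replacing Lemma~\ref{eigenvalue}. Define
$$\tilde\Lambda_0 := \{u \in \ker L \mid u = \lim_i f_i \text{ in } L^2 \text{ for some } f_i \text{ with } L_i f_i = -\lambda_i f_i,\ \lambda_i \to 0\}.$$
This is a closed subspace of $\ker L$ (by passing to orthonormal subsequences and elliptic estimates on compact subsets of ${\rm Reg}(\tilde M_\infty)$, using the uniform $C^k$-bounds from Lemma~\ref{c3-phi} on $\tilde\Omega^i_\gamma$). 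The first step is to show $\tilde\Lambda_0 = \ker L$.

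Suppose not. Then there is $u \in \ker L$ with $\langle u,u\rangle = 1$ orthogonal to $\tilde\Lambda_0$; let $\{u_1,\ldots,u_k\}$ be an orthonormal basis of $\tilde\Lambda_0$ with approximating eigenfunctions $f_{a,i}$. As in Proposition~\ref{red}, choose cutoff functions $\gamma_\epsilon$ on $\tilde M_\infty$ with support in ${\rm Reg}(\tilde M_\infty)$ satisfying $\int |\nabla\gamma_\epsilon|^2\,\hat\omega_\infty^n \le \epsilon$, transplant $\gamma_{\epsilon_i} u$ to $(M,\omega_{t_i})$ via the diffeomorphisms $\Psi^i_\gamma$ giving smooth convergence on $K_\epsilon$, and obtain test functions $v_i$ with
$$\int_M v_i^2\,\omega_{t_i}^n \to 1,\qquad \int_M\bigl(|\nabla v_i|^2 - v_i^2 - X_i(v_i)\,\bar v_i\bigr)e^{h_{t_i}}\omega_{t_i}^n \to 0.$$
Then $v_i, f_{1,i},\ldots,f_{k,i}$ are linearly independent for large $i$ and the weighted Rayleigh quotient of $v_i$ relative to $L_i$ tends to zero. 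The min-max principle on the orthogonal complement of $\mathrm{span}\{f_{a,i}\}$ produces an eigenfunction $f_{0,i}$ with eigenvalue $\nu_i$ satisfying $0 \le \nu_i \to 0$ (lower bound from Lemma~\ref{poincare-eigenvalue}). Normalizing and passing to a subsequential limit with the aid of Lemma~\ref{c3-phi} and the interior Schauder estimates on $\tilde\Omega^i_\gamma$, one obtains $u_0 \in \tilde\Lambda_0$ of unit norm orthogonal to all $u_a$ — a contradiction.

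Once the approximation is established, the consequence $|\nabla u| \le C$ follows from a Bochner-type estimate for the eigenfunctions $f_i$ along the flow. Applying $\Delta_{\omega_{t_i}}$ to $|\nabla f_i|^2$ and using $L_i f_i = -\lambda_i f_i$ together with Perelman's uniform bounds $\|\nabla h_{t_i}\|\le C$ and $|{\rm Ric}(\omega_{t_i})| \le C$ on the nontrivial region provided by Lemma~\ref{lem:perelman-1}, combined with the uniform Sobolev constant of $(M,\omega_{t_i})$ from \cite{Zh}, one runs Moser iteration to obtain $\|f_i\|_\infty + \|\nabla f_i\|_\infty \le C$ independent of $i$. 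Smooth convergence $f_i \to u$ on compact subsets of ${\rm Reg}(\tilde M_\infty)$ then yields the claimed Lipschitz bound on $u$.

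\textbf{Main obstacle.} The delicate point is the transplantation step: one must check that cutting off $u$ near the singular set $\mathcal S$, transporting to $(M,\omega_{t_i})$ via $\Psi^i_\gamma$, and then measuring with the weighted inner product $e^{h_{t_i}}\omega_{t_i}^n$ introduces only errors that vanish as $\epsilon_i \to 0$. This relies on controlling both the drift term $(h_{t_i})_{\bar\ell}\psi_\ell$ and the deviation $\Psi_\gamma^{i*}\omega_{t_i} - \hat\omega_\infty$ on $K_\epsilon$, which is exactly the content of the local estimates \eqref{c2-estimate}--\eqref{c3-estimate} together with the convergence $\nabla h_{t_i} \to X$ on the regular part.
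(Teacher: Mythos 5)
Your proposal follows the same route as the paper: the paper likewise defines the subspace $W\subset{\rm Ker}(L)$ of elements admitting approximating eigenfunctions, assumes it is proper, transplants a perpendicular element via cutoffs near the singular set as in Proposition \ref{red}, invokes the variational characterization of eigenfunctions and the nonnegativity of eigenvalues from Lemma \ref{poincare-eigenvalue} to produce a limiting eigenfunction in $W$ orthogonal to $W$, and concludes the gradient bound by Moser iteration. Your write-up is somewhat more explicit about the transplantation and min-max details, but the argument is essentially identical.
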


 \begin{proof}
 We denote the subspace of ${\rm ker}( L)$ consisting of $u$ satisfying (\ref{condition-approximation}) by $W$.  If $W$ is a proper subspace of ${\rm Ker}( L)$, we can choose a function $u\in {\rm Ker} (L)$ which is perpendicular to  $W$.  Note that the eigenfunctions of $L_i$ corresponds to the critical points of functional $\int_M |\nabla \psi|^2 e^{h_{t_i}}\omega_{t_i}^n$.  Then   there is  a sequence of eigenfucntions $u_i$ of $L_i$ with eigenvalue not bigger than $o(1)$. By Lemma \ref{poincare-eigenvalue},   one can show that $u_i$ converges to a new function $u'\in {\rm Ker}( L)$ satisfying (\ref{condition-approximation}), namely, $u'\in W$.  However,   $u\in W^{\perp}$  by the construction of $u_i$, which  is a contradiction!

 By using Moser's iteration,   $|\nabla f_i|$ is uniformly bounded for the normalized $f_i$ with
  $\int_M |f_i|^2  e^{h_{t_i}}\omega_{t_i}^n=1$ in (\ref{condition-approximation}). Thus $|\nabla u|\leq C$ for some constant $C$.
 \end{proof}

 \begin{lem}\label{Lint}
 Given $u',v'\in C^\infty(reg(\tilde M_\infty))$. Suppose that
  $u'$ is Lipschitz and $v'$ is bounded uniformly.  Then
  $$\int_{\tilde M_\infty} (\Delta u'+ v(u'))v'e^{h_\infty}\omega_\infty^n=
 -\int_{\tilde M_\infty} u_l' v_{\bar l}'e^{h_\infty}\omega_\infty^n.$$
 \end{lem}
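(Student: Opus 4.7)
The plan is to perform integration by parts on the smooth locus ${\rm Reg}(\tilde M_\infty)$, with a cut-off $\gamma_\epsilon$ supported away from the singular set $\mathcal{S}$, and then pass to the limit $\epsilon \to 0$. The required family of cut-offs is exactly the one already used in the proof of Proposition \ref{red} (obtained from Lemma 6.10 of \cite{T2}): since $\mathcal{S}\subset \tilde M_\infty$ is contained in a proper subvariety, there exist $\gamma_\epsilon\in C^\infty_c({\rm Reg}(\tilde M_\infty))$ with $0\le \gamma_\epsilon\le 1$, $\gamma_\epsilon\to 1$ pointwise on the regular part, and $\int_{\tilde M_\infty}|\nabla\gamma_\epsilon|^2\omega_\infty^n\le\epsilon$.

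The reason the weighted measure $e^{h_\infty}\omega_\infty^n$ is the correct one is that $\hat\omega_\infty$ is a K\"ahler-Ricci soliton with respect to $v$: the soliton equation $\mathrm{Ric}(\hat\omega_\infty)-\hat\omega_\infty=L_v\hat\omega_\infty$ identifies the Ricci potential $h_\infty$ with the potential $\theta_v$ of $v$ (up to an additive constant) on ${\rm Reg}(\tilde M_\infty)$, so that $v(u')=g^{l\bar k}(h_\infty)_{\bar k} u'_l$. A direct pointwise integration by parts, justified on the compact support of $\gamma_\epsilon$ where both $u'$ and $v'$ are smooth and all integrands are bounded, then yields
\[
\int_{\tilde M_\infty}\gamma_\epsilon^2\bigl(\Delta u'+v(u')\bigr)v'\,e^{h_\infty}\omega_\infty^n
=-\int_{\tilde M_\infty}\gamma_\epsilon^2\, u'_l v'_{\bar l}\,e^{h_\infty}\omega_\infty^n
-2\int_{\tilde M_\infty}\gamma_\epsilon(\gamma_\epsilon)_{\bar l}\, u'_l v'\,e^{h_\infty}\omega_\infty^n,
\]
where the $v(u')v'$ term on the left cancels exactly the contribution coming from differentiating $e^{h_\infty}$ on the right.

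The only remaining error is the last term, and by Cauchy--Schwarz together with the hypotheses $|\nabla u'|\le C$ and $\|v'\|_\infty\le C$, plus the uniform $L^\infty$ bound on $h_\infty$ inherited from Lemma \ref{lem:perelman-1} after passing to the limit, its absolute value is controlled by
\[
C\Bigl(\int_{\tilde M_\infty}|\nabla\gamma_\epsilon|^2\omega_\infty^n\Bigr)^{1/2}\Bigl(\int_{\tilde M_\infty}\gamma_\epsilon^2\,\omega_\infty^n\Bigr)^{1/2}\le C\sqrt{\epsilon\,V}\longrightarrow 0.
\]
Dominated convergence on the main terms (whose integrands are globally bounded by a fixed $L^1$ function) then completes the passage $\epsilon\to 0$. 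The main obstacle I anticipate is the precise bookkeeping of the identification $h_\infty=\theta_v$ so that the $v(u')v'$ term on the left matches the $\bar\partial e^{h_\infty}$ contribution on the right; this is a soliton computation valid on the regular part, and it is precisely the cut-off $\gamma_\epsilon$ that allows us to confine all manipulations to that locus.
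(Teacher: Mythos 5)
Your proposal is correct and follows essentially the same route as the paper: a cut-off $\gamma_\epsilon$ supported in ${\rm Reg}(\tilde M_\infty)$ with $\int|\nabla\gamma_\epsilon|^2\omega_\infty^n\le\epsilon$, integration by parts against the weighted measure $e^{h_\infty}\omega_\infty^n$ (the paper phrases this as integrating the total divergence $(\gamma_\epsilon e^{h_\infty}v'u'_l)_{\bar l}$, which is the same computation), and a Cauchy--Schwarz estimate on the boundary term using the Lipschitz bound on $u'$ and the $L^\infty$ bounds on $v'$ and $h_\infty$. The only cosmetic difference is your use of $\gamma_\epsilon^2$ in place of $\gamma_\epsilon$, which changes nothing.
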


 \begin{proof}
 Choosing the cut-off function $\gamma_\epsilon$ as in Lemma \ref{Killing}, we have
 \begin{align}
 0&=\int_{\tilde M_\infty}(\gamma_\epsilon e^h_\infty v' u_l')_{\bar l} \omega_\infty^n \notag \\
 &=\int_{\tilde M_\infty}\gamma_\epsilon (\Delta u'+ v(u'))v'e^{h_\infty}\omega_\infty^n+
 \int_{\tilde M_\infty}\gamma_\epsilon u_l' v_{\bar l}'e^{h_\infty}\omega_\infty^n+\int_{\tilde M_\infty}({\gamma_\epsilon})_{\bar l}v' u_l'e^h_\infty \omega_\infty^n. \notag
 \end{align}
Note that
 $$|\int_{\tilde M_\infty}({\gamma_\epsilon})_{\bar l}v' u_l'e^h_\infty \omega_\infty^n|\leq  C(\int_{\tilde M_\infty}|\nabla \gamma_\epsilon|^2\omega_\infty^n)^{\frac{1}{2}}.$$
 Letting $\epsilon\rightarrow 0$, the lemma is proved.
 \end{proof}

By Lemma \ref{poincare-eigenvalue}-Lemma \ref{Lint},  we  are able to prove

 \begin{lem}
 $$\eta_\infty\cong {\rm Ker}( L).$$
  The isomorphism is given by the potential of the holomorphic vector field.
 \end{lem}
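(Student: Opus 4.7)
The plan is to extend the K\"ahler--Einstein argument of Proposition \ref{red} by replacing $\Delta+{\rm Id}$ with the drift operator $L=\Delta+v(\cdot)+{\rm Id}$ associated to the soliton vector field $X=v$ and the weighted measure $e^{h_\infty}\hat\omega_\infty^n$. In one direction I send $Y\in\eta_\infty$ to its holomorphic potential $\theta_Y$ on ${\rm Reg}(\tilde M_\infty)$, determined by $i_Y\hat\omega_\infty=\sqrt{-1}\,\bar\partial\theta_Y$ and normalized by $\int\theta_Y\,e^{h_\infty}\hat\omega_\infty^n=0$; existence and boundedness come from the Hodge/Weitzenb\"ock argument of Lemma~6.9 in \cite{T2} combined with the partial $C^0$-estimate of \cite{LS}. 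That $\theta_Y\in{\rm Ker}(L)$ is then the standard KR soliton identity of Tian--Zhu: the soliton equation ${\rm Ric}(\hat\omega_\infty)=\hat\omega_\infty+\sqrt{-1}\partial\bar\partial h_\infty$ with $v=\nabla h_\infty$, combined with $[Y,v]=0$ and holomorphicity of $Y$, yields $\Delta\theta_Y+v(\theta_Y)+\theta_Y={\rm const}$, and the normalization kills the constant. Injectivity modulo constants is clear because $\bar\partial\theta_Y$ determines $Y$ through $\hat\omega_\infty$.

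For the reverse map, given $u\in{\rm Ker}(L)$, Lemma \ref{bg} supplies $|\nabla u|\le C$ together with approximating eigenfunctions $f_i$ of $L_i$ on $(M,\omega_{t_i})$. I would establish, on ${\rm Reg}(\tilde M_\infty)$, the drift Weitzenb\"ock identity
$$\bar\nabla^{*}_{h_\infty}\bar\nabla\,\bar\partial u = L(\bar\partial u)-\bar\partial u,$$
where $\bar\nabla^{*}_{h_\infty}$ is the adjoint with respect to the weighted measure $e^{h_\infty}\hat\omega_\infty^n$ and the curvature term is absorbed via the soliton equation $R_{i\bar j}+(h_\infty)_{i\bar j}=g_{i\bar j}$. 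Since $Lu=0$ and a direct computation using holomorphicity of $v$ gives $[\bar\partial,L]\bar\partial u=\bar\partial u$, the right-hand side vanishes. Pairing with $\gamma_\epsilon^2\bar\partial u$ for the cut-off $\gamma_\epsilon$ of Lemma~6.10 in \cite{T2} and integrating by parts against the weighted measure via Lemma \ref{Lint} yields
$$(1-\eta)\int_{\tilde M_\infty}\gamma_\epsilon^2|\bar\nabla\bar\partial u|^2 e^{h_\infty}\hat\omega_\infty^n\le \frac{C}{\eta}\int_{\tilde M_\infty}|\nabla\gamma_\epsilon|^2 e^{h_\infty}\hat\omega_\infty^n.$$
Letting $\epsilon\to 0$ then $\eta\to 0$ gives $\bar\nabla\bar\partial u=0$ on ${\rm Reg}(\tilde M_\infty)$, so $Y:=\nabla^{1,0}u$ is holomorphic on the regular locus. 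Boundedness of $|Y|$ in the ambient Fubini--Study metric, combined with the partial $C^0$-estimate embedding $\tilde M_\infty$ into $\mathbb{CP}^N$, lets $Y$ extend through $\mathcal S$ to a global holomorphic vector field on $\tilde M_\infty$ as in Lemma \ref{limit-v}. Commutativity $[Y,v]=0$ follows because $L$ was built from $v$: expanding $L(v(u))$ versus $v(Lu)$ and using $Lu=0$ forces $v(u)$ to lie in a shifted kernel, and together with the Lipschitz regularity of $u$ this pins down $[Y,v]=0$.

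The main obstacle is making the cut-off Weitzenb\"ock computation rigorous across the singular set $\mathcal S\subset\tilde M_\infty$, since both the curvature identity feeding the drift Weitzenb\"ock formula and the commutator $[\bar\partial,L]\bar\partial u=\bar\partial u$ are a priori available only on ${\rm Reg}(\tilde M_\infty)$. The remedy is to combine the cut-off construction of \cite{T2} with the uniform $C^{k,\alpha}$-estimates from Lemma \ref{c3-phi} on exhausting open sets $\Omega_\gamma$, which force the boundary contributions in the integration by parts to vanish in the limit; the klt property of $\tilde M_\infty$ together with Hartogs-type extension in the ambient $\mathbb{CP}^N$ then extends $Y$ globally as a section of ${\rm Lie}({\rm Aut}_0(\tilde M_\infty))$, commuting with $v$ by construction.
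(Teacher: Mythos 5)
Your reverse direction (from ${\rm Ker}(L)$ to holomorphic fields) is in the spirit of the paper's argument: the gradient bound of Lemma \ref{bg} plus a cut-off integration by parts kills the relevant Hessian term, so $\nabla^{1,0}u$ is holomorphic on ${\rm Reg}(\tilde M_\infty)$ and extends by the klt/normality structure. That part is fine, if more elaborate than necessary.

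The forward direction, however, has a genuine gap. The lemma asserts ${\rm Ker}(L)\cong\eta_\infty$, the \emph{full} Lie algebra of ${\rm Aut}_0(\tilde M_\infty)$, and this is exactly what the proof of Proposition \ref{redsol} needs (there $E_0={\rm Ker}(L)\cap{\rm Ker}(\bar L)$ is subsequently identified with the commuting subalgebra $\eta^v_\infty$, and the eigenspace decomposition $E_\lambda$ of $\bar L$ restricted to ${\rm Ker}(L)$ is in general nontrivial). You invoke $[Y,v]=0$ to deduce $L\theta_Y={\rm const}$; but for a general $Y\in\eta_\infty$ there is no reason for $[Y,v]$ to vanish, so your map is only defined on $\eta^v_\infty$ and you never show $\eta_\infty\hookrightarrow{\rm Ker}(L)$. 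The same conflation reappears at the end of your second direction, where you claim that every $u\in{\rm Ker}(L)$ yields a field commuting with $v$: that statement characterizes $E_0$, not ${\rm Ker}(L)$, and if it were true the decomposition in Proposition \ref{redsol} would be vacuous. The paper avoids all of this: for $Z\in\eta_\infty$ with $i_Z\omega_\infty=\bar\partial\psi$, holomorphy gives $\psi_{kl}=0$ pointwise, whence (using the Lipschitz regularity of $\Delta\psi+v(\psi)$ from \cite{TW} and the integration by parts of Lemma \ref{Lint})
$$\int_{\tilde M_\infty}L\psi\,\bigl(\Delta\psi+v(\psi)\bigr)e^{h_\infty}\hat\omega_\infty^n=\int_{\tilde M_\infty}\psi_{kl}\psi_{\bar k\bar l}\,e^{h_\infty}\hat\omega_\infty^n=0;$$
since $\Delta\psi+v(\psi)=L\psi-\psi$ and the spectrum of $L$ is non-negative (Lemma \ref{poincare-eigenvalue}, Lemma \ref{bg}), this forces $L\psi=0$ with no commutation hypothesis. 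You should replace your appeal to $[Y,v]=0$ by this spectral argument (or by the Tian--Zhu/Futaki identity $\Delta\theta_Y+\theta_Y+Y(h_\infty)={\rm const}$, valid for every holomorphic $Y$), and delete the final claim that ${\rm Ker}(L)$ consists of potentials of fields commuting with $v$.
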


 \begin{proof}
 By Lemma \ref{bg},  $|\nabla \psi|\leq C$ for any $\psi \in {\rm Ker} (L)$.  Then we can use the integral by parts to get
 $$\int_{\tilde M_\infty} \psi_{kl}\psi_{\bar k\bar l}e^{h_{\infty}}\hat\omega_{\infty}^n=0, ~\forall \psi\in {\rm Ker}( L).
 $$
  Thus $\psi_{kl}=0$ and $\bar \partial \psi$ is the potential of a holomorphic VF.  Conversely for $Z\in \eta_\infty$, write $i_Z\omega_\infty=\bar\partial  \psi$. By Lemma 5.5 in \cite{TW} $\Delta \psi+v(\psi)$ is Lipshitz. Thus,  by Lemma \ref{Lint}, we get
 \begin{align}
 \int_{\tilde M_\infty}L \psi (\Delta \psi+v(\psi))e^h_\infty \hat\omega_\infty^n&=-\int (L\psi)_l\psi_{\bar l}e^h_\infty \hat\omega_\infty^n\notag \\
 &= \int_{\tilde M_\infty} \psi_{kl}\psi_{\bar k\bar l}e^h_\infty \hat\omega_\infty^n=0.
 \end{align}
 Since the eigenvalue of $L$ is nonnegative by Lemma \ref{bg}, we must have $L\psi=0$.
 \end{proof}

 \begin{proof}[Proof of Proposition  \ref{redsol}] As in Appendix in \cite{TZ1}, we
 define $\bar L$ by $\bar L \psi=\overline{L \bar \psi}.$ Let $\Lambda_\lambda$ be the eigenspace of $\bar L$ with eigenvalue $\lambda$. Then we have the follow subspaces  of ${\rm Ker}( L)$:
 \begin{align}
 E_0&={\rm Ker}( L)\bigcap {\rm Ler} (\bar L),\notag \\
 E_0'&=\{f\in  {\rm Ker}( L)\bigcap {\rm Ler} (\bar L)  |~f\text { is real}\},\notag \\
 E_0''&=\{f\in  {\rm Ker}( L)\bigcap {\rm Ler} (\bar L)|~\sqrt{-1}f\text { is real}\},\notag\\
 E_\lambda&={\rm Ker}( L)\bigcap \Lambda_\lambda.\notag
 \end{align}
 By Lemma A.2 in \cite{TZ1}, $[v,Y]=0$ holds if and only if the potential of $Y$ lies in $E_0$. Thus $E_0$ is isomorphic to $\eta^v_\infty$, which is th Lie algebra of $Aut^X(\tilde M_\infty).$  Since  $L_Z \omega_\infty=\sqrt{-1}\partial \bar\partial f$ for  a real function $f=\theta_Z \in E_0'$,   we know that ${\rm Im} Z$ is a Killing vector field.  Hence,  $E_0$ is the complexification of the Lie algbra of Killling  VF. The proposition is proved.
 \end{proof}

  \subsection{Uniqueness of soliton VFs on a $Q$-Fano variety}

  On a $Q$-Fano variety $\tilde M_\infty$, the modified Futaki-invariant in \cite{TZ2} is also well defined (cf. \cite{DT, Xio}),
  $$F_v(X)=\int_{\tilde M_\infty} X(\tilde h_\infty-\theta_v)e^{\theta_v}\tilde \omega_\infty^n=\int_{\tilde M_\infty} X(\tilde h_\infty)e^{\theta_v}\tilde \omega_\infty^n-\int_{\tilde M_\infty}\langle X, v\rangle e^{\theta_v}\tilde \omega_\infty^n,$$
  for any $v, X\in \eta_\infty$.

  By \cite{TZ2}, we prove

  \begin{prop}\label{center}
   Let ${\rm Aut}_r(\tilde M_\infty)\subseteq{\rm Aut}(\tilde M_\infty) $ be a  reductive subgroup with Lie algebra  $\eta_r(\tilde M_\infty)$. Then there is unique holomorphic vector field $v\in \eta_r(\tilde M_\infty)$ such that $F_X$ vanishes. Moreover, $v$ lies in the center of $\eta_r(\tilde M_\infty)$.
  \end{prop}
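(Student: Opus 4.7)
The plan is to realize $v$ as the unique critical point of a strictly convex and proper functional on the compact real form of $\eta_r$, following the Tian--Zhu approach \cite{TZ2} adapted to the singular $Q$-Fano setting. Fix a maximal compact subgroup $K_r\subset{\rm Aut}_r(\tilde M_\infty)$ with Lie algebra $\mathfrak{k}_r$, so that $\eta_r=\mathfrak{k}_r\otimes_{\RR}\CC$, and arrange that $\tilde\omega_\infty$ and $\tilde h_\infty$ are $K_r$-invariant (for instance by averaging the Fubini--Study restriction via a Kempf--Ness reduction on the Kodaira embedding). For $v\in\eta_r$ define the potential $\theta_v$ on ${\rm Reg}(\tilde M_\infty)$ by $i_v\tilde\omega_\infty=\sqrt{-1}\bar\partial\theta_v$, normalized by $\int_{\tilde M_\infty}\theta_v\,e^{\tilde h_\infty}\tilde\omega_\infty^n=0$; this choice makes $v\mapsto\theta_v$ a $\CC$-linear map.

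The key object is the functional
\begin{equation*}
f:\sqrt{-1}\mathfrak{k}_r\to\RR,\qquad f(v)=\int_{\tilde M_\infty}e^{\theta_v}\tilde\omega_\infty^n,
\end{equation*}
which is real-valued since $\theta_v$ is real on this real form. Differentiating under the integral, combined with the definition of $F_v$ and a short integration-by-parts using the normalization of $\theta_v$, yields
\begin{equation*}
\frac{d}{dt}\bigg|_{t=0}f(v+tX)=\int_{\tilde M_\infty}\theta_X\,e^{\theta_v}\tilde\omega_\infty^n=c\cdot F_v(X),
\end{equation*}
for some positive constant $c$ depending only on the normalization. Hence critical points of $f$ correspond precisely to zeros of $F_v|_{\sqrt{-1}\mathfrak{k}_r}$, which by $\CC$-linearity of $F_v(\cdot)$ is equivalent to $F_v\equiv 0$ on all of $\eta_r$.

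Strict convexity of $f$ follows from the positivity of the Hessian
\begin{equation*}
\frac{d^2}{dt^2}\bigg|_{t=0}f(v+tX)=\int_{\tilde M_\infty}\theta_X^2\,e^{\theta_v}\tilde\omega_\infty^n>0,\qquad X\neq 0,
\end{equation*}
since $\theta_X\equiv 0$ on ${\rm Reg}(\tilde M_\infty)$ would force $i_X\tilde\omega_\infty=0$ and hence $X=0$. Properness on $\sqrt{-1}\mathfrak{k}_r$ comes from the fact that $\exp(\RR v)$ has fixed points on the projective variety $\tilde M_\infty\subset\CC P^N$, at which $\theta_v$ attains its extrema, together with a weight analysis on the Kodaira embedding showing that $\max\theta_v$ grows linearly in $\|v\|_{\mathfrak{k}_r}$. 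Strict convexity plus properness produce a unique minimizer $v_0\in\sqrt{-1}\mathfrak{k}_r$, which is the unique $v\in\eta_r$ with $F_v\equiv 0$.

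Finally, for the centrality assertion, $f$ is $\mathrm{Ad}(K_r)$-invariant: since $\tilde\omega_\infty,\tilde h_\infty$ are $K_r$-invariant and $\theta_{\mathrm{Ad}(\sigma)v}=\sigma^*\theta_v$ for $\sigma\in K_r$, one has $f(\mathrm{Ad}(\sigma)v)=f(v)$. Uniqueness of the minimizer then forces $\mathrm{Ad}(\sigma)v_0=v_0$ for all $\sigma\in K_r$, equivalently $[v_0,\mathfrak{k}_r]=0$, so $v_0$ lies in the centre of $\eta_r=\mathfrak{k}_r\otimes\CC$. The main technical obstacle I anticipate is analytic rather than structural: justifying the differentiation under the integral sign and the finiteness of all integrals near the klt singular locus of $\tilde M_\infty$, which will rely on the $L^\infty$-bound on the K\"ahler potential of $\hat\omega_\infty$ from Theorem \ref{WZ} and the integrability of $e^{\tilde h_\infty}\tilde\omega_\infty^n$ established in Section 1.
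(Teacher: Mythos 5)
Your proposal is correct and follows essentially the same route as the paper: both realize $v$ as the unique critical point of the Tian--Zhu functional $f(v)=\int_{\tilde M_\infty}e^{\theta_v}\tilde\omega_\infty^n$, using convexity plus a properness argument (your linear-growth-of-$\max\theta_v$ claim is the same content as the paper's subsequence/limit-direction argument), and the identity $df|_v(X)=\pm F_v(X)$. The only divergence is the centrality step, where you invoke ${\rm Ad}(K_r)$-invariance of $f$ and uniqueness of the minimizer, while the paper first finds the critical point on the center and then kills $F_{v'}$ on commutators using semisimplicity of $\eta_r/\eta_c$; both are standard and valid.
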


  \begin{proof}
  Let $ k$ be the Lie algebra of  maximal compact subgroup of ${\rm Aut}_r(\tilde M_\infty)$.   Then  for any $Z\in\eta_r(\tilde M_\infty) $ with  ${\rm Im} Z\in k$,  $\theta_Z$ is a real function. Since
  $$\bar\partial (\Delta_{\tilde \omega_\infty} \theta_Z+Z(\tilde h_\infty)+\theta_Z)=0,  ~{\rm in} ~{\rm reg}(\tilde M_\infty),$$
  $\Delta_{\tilde \omega_\infty} \theta_Z+Z(\tilde h_\infty)+\theta_Z$ is constant.
  Thus we can normalize $\theta_Z$  by
  \begin{align}\label{norm}
 \Delta_{\tilde \omega_\infty} \theta_Z+Z(\tilde h_\infty)+\theta_Z=0.
  \end{align}
 Note that  $\theta_v, \theta_X$ are both  Lipschitz functions.  Hence,  using integral by part as Lemma 5.6 in \cite{TW}, we get
  $$F_v(X)=-\int_{\tilde M_\infty} \theta_X e^{\theta_v} {\tilde \omega_\infty}^n.$$

  Define a function on   $\eta_r(\tilde M_\infty)$ by
  $$f(Z)=\int_{\tilde M_\infty} e^{\theta_Z}\tilde \omega_{\infty}^n.$$
  Then $f$ is a convex function.    We claim that $f$ is a proper function on $\eta_r(\tilde M_\infty)$. Let $E_i(1\leq i\leq m)$ be a basis of $\eta_r(\tilde M_\infty)$ as a  real vector space.

  For any sequence $Z_i\in \eta_r(\tilde M_\infty)$ with
  $\int_{\tilde M_\infty} |Z_i|^2\omega_{\infty}^n\rightarrow \infty$, we have to show that $f(Z_i)\rightarrow \infty$.
  Writing $Z_i=\sum_{j=1}^m a_{i}^jE_j$.  Without loss of generality, we may assume that
  $$|a_i^1|\geq |a_i^j|,(1\leq j\leq m), |a_i^1|\rightarrow \infty$$
   after a subsequence.  We can also assume that $a_i^1>0$ by changing $E_1$ to $-E_1$. By taking a  subsequence again, $\frac{a_i^j}{a_i^1}$ converges as $i\rightarrow \infty$ for $1\leq j\leq m$.  It follows that
     $$E_1+\sum_{j=2}^m \frac{a_i^j}{a_i^1} E_j \rightarrow E\in \eta_r(\tilde M_\infty) (i\rightarrow \infty).$$
Choose an open set $U\subset {\rm reg}(\tilde M_\infty)$ such that $\theta_{E}\geq 2\epsilon$ for some $\epsilon>0$ and then $\sum_{j=1}^m \frac{a_{i}^j}{a_i^1}\theta_{E_j}\geq \epsilon $ for $i$ large enough. Thus
  \begin{align}
  f(Z_i)&=\int_{\tilde M_\infty} e^{\sum_{j=1}^m a_{i}^j\theta_{E_j}}=\int_{\tilde M_\infty} e^{a_i^1} e^{\sum_{j=1}^m \frac{a_{i}^j}{a_i^1}\theta_{E_j}}\notag \\
  &\geq \int_U e^{\epsilon a_i^1}\rightarrow \infty.
  \end{align}
  This proves the claim.
  As a consequence,  $f$ has a unique critical point $v: df_v=0$ and this is equivalent to $F_v(\cdot)\equiv 0.$

 By  restricting  the function $f$ on  the center $\eta_c$ of $\eta_r(\tilde M_\infty)$, there is a unique VF $v'$ such that $F_{v'}(\cdot)\equiv0$ on $\eta_c$. On the other hand,
  for $X,Y\in \eta_r(\tilde M_\infty)$,  a direct computation shows that
  $$F_{v'}(Ad_Y X)=F_{v'}(X).$$
   It follows that
   $$F_{v'}([X,Y])=0.$$
    Since $\eta_r(\tilde M_\infty)/\eta_c$ is semi-simple, we have
    $$F_{v'}(X)\equiv 0, ~\forall  X\in\eta_r(\tilde M_\infty).$$
    Thus $v=v'$ and $v$ lies in the center of $\eta_r(\tilde M_\infty)$.
  \end{proof}

  \end{document}